\def\Id{\mathop{\rm Id}\nolimits}
\def\Ad{\mathop{\rm Ad}\nolimits}
\def\ad{\mathop{\rm ad}\nolimits}
\def\det{\mathop{\rm det}\nolimits}
\def\Hom{\mathop{\rm Hom}\nolimits}
\def\Tot{\mathop{\rm Tot}\nolimits}
\def\Cb{{\mathbb C}}
\def\Rb{{\mathbb R}}
\def\Zb{{\mathbb Z}}
\def\Fc{{\cal F}}
\def\Hc{{\cal H}}
\def\Lc{{\cal L}}
\def\Mc{{\cal M}}
\def\Pc{{\cal P}}
\def\Uc{{\cal U}}
\def\Vc{{\cal V}}
\def\Lc{{\cal L}}
\def\Dc{{\cal D}}
\def\Uc{{\cal U}}
\def\Vc{{\cal V}}
\def\a{\alpha}
\def\b{\beta}
\def\d{\delta}
\def\D{\Delta}
\def\g{\gamma}
\def\om{\omega}
\def\s{\sigma}
\def\t{\theta}
\def\z{\zeta}
\def\ve{\varepsilon}
\def\vp{\varphi}
\def\nr{\natural}
\def\x{\xi}
\def\fl{\forall}
\def\nb{\nabla}
\def\ot{\otimes}
\def\ra{\rightarrow}
\def\rt{\triangleright}
\def\lt{\triangleleft}
\def\cl{\blacktriangleright\hspace{-4pt} < }
\def\al{>\hspace{-4pt}\vartriangleleft}
\def\acl{\blacktriangleright\hspace{-4pt}\vartriangleleft }
\def\bD{\blacktriangledown}
\def\bi{\bowtie}
\def\hd{\overset{\ra}{\partial}}
\def \vd{\uparrow\hspace{-4pt}\partial}
\def\hs{\overset{\ra}{\sigma}}
\def \vs{\uparrow\hspace{-4pt}\sigma}
\def\hta{\overset{\ra}{\tau}}
\def \vta{\uparrow\hspace{-4pt}\tau}
\def\hb{\overset{\ra}{b}}
\def\hP{\overset{\ra}{\p}}
\def \vb{\uparrow\hspace{-4pt}b}
\def \vP{{\uparrow\hspace{-1pt}\p}}
\def\hB{\overset{\ra}{B}}
\def \vB{\uparrow\hspace{-4pt}B}
\def\p{\partial}
\def\0D{\Delta^{(0)}}
\def\1D{\Delta^{(1)}}
\def\Db{\blacktriangledown}
\def\wg{\wedge}
\def\td{\tilde}
\newcommand{\FD}{\mathfrak{D}}
\newcommand{\Fa}{\mathfrak{a}}
\newcommand{\Fd}{\mathfrak{d}}
\newcommand{\Fg}{\mathfrak{g}}
\newcommand{\Fh}{\mathfrak{h}}
\newcommand{\Fk}{\mathfrak{k}}
\newcommand{\Fl}{\mathfrak{l}}
\newcommand{\Fs}{\mathfrak{s}}
\newtheorem{theorem}{Theorem}[section]
\newtheorem{remark}[theorem]{Remark}
\newtheorem{proposition}[theorem]{Proposition}
\newtheorem{lemma}[theorem]{Lemma}
\newtheorem{corollary}[theorem]{Corollary}
\newtheorem{definition}[theorem]{Definition}
\def\build#1_#2^#3{\mathrel{
\mathop{\kern 0pt#1}\limits_{#2}^{#3}}}
\newcommand{\ps}[1]{~\hspace{-4pt}_{^{(#1)}}}
\newcommand{\ns}[1]{~\hspace{-4pt}_{_{{<#1>}}}}
\newcommand{\sns}[1]{~\hspace{-4pt}_{_{{<\overline{#1}>}}}}
\def\odots{\ot\cdots\ot}
\def\wdots{\wedge\dots\wedge}
\numberwithin{equation}{section}
 \def\cf{{\it cf.\/}\ }
\def\a{\alpha}
\def\b{\beta}
\def\d{\delta}
\def\g{\gamma}
\def\i{\iota}
\def\om{\omega}
\def\s{\sigma}
\def\t{\theta}
\def\ve{\varepsilon}
\def\vp{\varphi}
\def\z{\zeta}
\def\D{\Delta}
\def\dt{\left.\frac{d}{dt}\right|_{_{t=0}}}
\def\fl{\forall}
\def\nb{\nabla}
\def\ot{\otimes}
\def\part{\partial}
\def\wdg{\wedge}
\def\ra{\rightarrow}
\def\text{\hbox}
\def\fl{\forall}
\def\nb{\nabla}
\def\ot{\otimes}
\def\ra{\rightarrow}
\def\wdg{\wedge}
\def\Ad{\mathop{\rm Ad}\nolimits}
\def\Hom{\mathop{\rm Hom}\nolimits}
\def\Id{\mathop{\rm Id}\nolimits}
\def\build#1_#2^#3{\mathrel{
\mathop{\kern 0pt#1}\limits_{#2}^{#3}}}
\numberwithin{equation}{section}
\newcommand{\comment}[1]{\relax}
\def\rt{\triangleright}
\def\lt{\triangleleft}
\def\cl{\blacktriangleright\hspace{-4pt} < }
\def\al{>\hspace{-4pt}\vartriangleleft}
\def\acl{\blacktriangleright\hspace{-4pt}\vartriangleleft }
\def\dcp{\vartriangleright\hspace{-4pt}\vartriangleleft }
\def\dt{\left.\frac{d}{dt}\right|_{_{t=0}}}
\def\ds{\left.\frac{d}{ds}\right|_{_{s=0}}}
\newcommand{\mdt}[1]{\left.\frac{d}{dt_{#1}}\right|_{_{t_{#1}=0}}}
\begin{document}
\title{\bf  Lie-Hopf  algebras and  their  Hopf cyclic cohomology}
\author{
\begin{tabular}{cc}
Bahram Rangipour \thanks{Department of Mathematics  and   Statistics,
     University of New Brunswick, Fredericton, NB, Canada}\quad and \quad  Serkan S\"utl\"u $~^\ast$
      \end{tabular}}

\maketitle
\begin{abstract}
The correspondence between Lie algebras, Lie groups, and algebraic groups, on one side and commutative Hopf algebras on the other side  are known for a long time by works of Hochschild-Mostow and others. We extend  this correspondence by associating a  noncommutative noncocommutative  Hopf algebra to any matched pair of Lie algebras, Lie groups, and affine algebraic groups. We canonically associate a  modular pair in involution to any of these Hopf algebras. More precisely, to any locally finite representation of a matched pair object as above we associate a  SAYD module to the corresponding Hopf algebra. At the end, we compute the   Hopf cyclic cohomology of the associated Hopf algebra with coefficients in the aforementioned SAYD module in terms of Lie algebra cohomology of the Lie algebra associated to the matched pair object relative to an appropriate Levi subalgebra with coefficients induced by the original representation.
\end{abstract}

\section{Introduction}
 Lie groups and Lie algebras are well-known for a long time and their  correspondence has been  adopted to many other algebraic structures such as  algebraic groups and their Lie algebras.  This correspondence  was advanced  by the work of Hochschild and Mostow  when they defined the notion of representative functions for Lie groups, Lie algebras, and algebraic groups.  They showed that  the algebra of representative functions on these objects forms a commutative Hopf algebra whose coalgebra structure  encodes the  group structure of the group  or the Lie bracket of the Lie algebra  respectively \cite{Hochschild-Mostow-57,Hochschild-Mostow-61}. They also defined a cohomology theory,  for Lie algebras , Lie groups, and affine algebraic groups, called  the representative cohomology due to the fact that it is based on the representatively injective resolutions \cite{Hochschild-Mostow-62}.  They proved their  van Est type  theorem, see \cite{vEst}, by computing the representative  cohomology  in terms of relative Lie algebra cohomology  for suitable pair of Lie algebras for each case   \cite{Hochschild-Mostow-62}. The important point for us is to see  this cohomology as the cohomology of the underlaying coalgebra of the commutative Hopf algebra with coefficients in the comodule induced by the representation in question.

\medskip

We extend the work of Hochschild-Mostow by  associating a not necessarily commutative or cocommutative Hopf algebra   to any matched pair of Lie algebras, Lie groups, and  algebraic groups. The resulting Hopf algebra   is  a bicrossed product Hopf algebra.
Such a Hopf algebra is made of two Hopf algebras in such a way that both algebra and coalgebra structures interact with each other.   We refer the reader to  \cite{Majid-book} for a comprehensive account on  these  Hopf algebras.  One of the interesting examples of bicrossed product Hopf algebras is $\Hc_n$, the Hopf algebra of general transverse symmetry on $\Rb^n$, defined by Connes and  Moscovici  \cite{Connes-Moscovici-98}. It is shown  in \cite{Khalkhali-Rangipour-06,Hadfield-Majid-07} that $\Hc_1$ is a  bicrossed product Hopf algebra. In \cite{Moscovici-Rangipour-09}, Moscovici and the first  author associated  to each infinite primitive Lie pseudogroup a bicrossed product Hopf algebra by means of  transverse symmetries.

One of our minor aims in this paper is to develop a short method for proving that a Hopf algebra is of the form of bicrossed product. We refer the reader to \cite{Moscovici-Rangipour-09,Hadfield-Majid-07}  for other solutions to this  problem. We introduce the notion of Lie-Hopf algebras as the  responsible objects for this method.

We extend the whole theory developed in \cite{Moscovici-Rangipour-09,Moscovici-Rangipour-011} including coefficients,  the Chevalley-Eilenberg    bicomplex, and van Est isomorphism  to the case of abstract Lie-Hopf algebras. In \cite{Moscovici-Rangipour-09} the  total Hopf algebra was first constructed by  means of an algebra of operators on a crossed product algebra and then it was shown to be of the form of a  bicrossed product Hopf algebra. However  in this paper we do not assume to have the total Hopf algebra from the beginning. This forces us to glue  two different Hopf algebras together by  dealing  with them conceptually.

Hopf cyclic cohomology was defined by Connes and Moscovici in \cite{Connes-Moscovici-98} and then was generalized by Hajac, Khalkhali, Sommerhauser, and the first author in \cite{Hajac-Khalkhali-Rangipour-Sommerhauser-04-1,Hajac-Khalkhali-Rangipour-Sommerhauser-04-2} to include the appropriate  coefficients.

In Section \ref{S1}  we first define the notion of Lie-Hopf algebras.  In three separated subsections, we explain carefully how to construct  Lie-Hopf algebras by having a matched pair of Lie algebras, a matched pair of Lie groups, and a matched pair of affine algebraic groups.

In Section \ref{S2} we deal with coefficients. In Subsection \ref{Canonical MPI associated to Lie-Hopf algebras}  we canonically associate a modular pair in involution to any Lie-Hopf algebra. In Subsection \ref{Induced Hopf cyclic coefficients} we classify a subcategory of Yetter-Drinfeld modules over the bicrossed product Hopf algebras  under the name of {\it induced } module. 

In Section \ref{S3} we study the Hopf cyclic cohomology of commutative Hopf algebras.  We define the notion of Hopf-Levi decomposition and prove a general van Est isomorphism by computing the Hopf cyclic cohomology of such  a Hopf algebra  with coefficients in an induced module. 

The Section \ref{S4} contains  the main results  and is in fact  the main motivation of the paper. In Subsection \ref{SS-Bicocyclic module associated to Lie Hopf algebras} we advance the machinery developed by Moscovici and the first author in \cite{Moscovici-Rangipour-07,Moscovici-Rangipour-09,Moscovici-Rangipour-011}  to cover all Lie-Hopf algebras. We prove that  for any $\Fg$-Hopf algebra $\Fc$ and any induced module $M$ there is a bicomplex computing the Hopf cyclic cohomology of $\Fc\acl U(\Fg)$.   We then prove  Theorem \ref{Theorem-main} in its full generality as is stated  here.
\addtocounter{section}{4}
\addtocounter{theorem}{8}
\begin{theorem}
Let $(\Fg_1,\Fg_2)$ be a matched pair of Lie algebras and $\Fc$ be a $(\Fg_1,\Fg_2)$-related Hopf algebra. Let assume that $\Fg_2=\Fh\ltimes \Fl$ is a $\Fc$-Levi decomposition such that $\Fh$ is $\Fg_1$-invariant and the natural action of $\Fh$ on $\Fg_1$ is given by derivations. Then for any $\Fc$-comodule and $\Fg_1$-module $M$, the map $\Vc$, defined in \eqref{VE}, is a map of bicomplexes and induces an isomorphism between Hopf cyclic cohomology of $\Fc\acl U(\Fg_1)$ with coefficients in $^\s{M}_\d$ and the Lie algebra cohomology of $\Fa:=\Fg_1\bowtie \Fg_2$ relative to $\Fh$ with coefficients in the $\Fa$-module induced by $M$. In other words,
\begin{equation}
HP^\bullet(\Fc\acl U(\Fg_1), ^\s{M}_\d)\cong \bigoplus_{i=\bullet\;\text{mod 2}} H^i(\Fa, \Fh,\;M).
\end{equation}
\end{theorem}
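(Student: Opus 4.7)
The strategy is to reduce the computation of $HP^\bullet(\Fc\acl U(\Fg_1),{}^\s M_\d)$ to relative Lie algebra cohomology in two stages, via the bicomplex constructed in Subsection \ref{SS-Bicocyclic module associated to Lie Hopf algebras}. By that construction, $HP^\bullet(\Fc\acl U(\Fg_1),{}^\s M_\d)$ is computed by a bicomplex whose rows are (a suitable truncation of) the Hopf cyclic complex of the commutative Hopf algebra $\Fc$ with induced coefficients, and whose columns are Chevalley-Eilenberg cochains of $\Fg_1$ valued in exterior powers of $\Fg_2^*$ tensored with $M$. The total differential couples these two directions through the matched pair actions $\Fg_1\acr\Fg_2$ that are built into the bicrossed product structure.

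The first step is to apply the Hopf cyclic van Est theorem of Section \ref{S3} row-by-row. Since $\Fg_2=\Fh\ltimes\Fl$ is a $\Fc$-Levi decomposition, that theorem identifies the Hopf cyclic cohomology of $\Fc$ with coefficients in the induced comodule with the relative Lie algebra cohomology $H^\bullet(\Fg_2,\Fh;-)$. I would verify that this isomorphism is realized at the cochain level by the restriction of $\Vc$ from \eqref{VE} to the $\Fc$-factor, and that this restriction intertwines the column differential, so that $\Vc$ is a genuine map of bicomplexes and not merely a quasi-isomorphism in one direction. The $\Fg_1$-invariance of $\Fh$ is needed precisely here, to ensure that the induced $\Fg_1$-action descends to cochains on $\Fg_2/\Fh$.

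The second step is to identify the $E_1$ page of the van Est-reduced bicomplex with the relative Chevalley-Eilenberg complex of $\Fa=\Fg_1\bowtie\Fg_2$ modulo $\Fh$, with values in the induced $\Fa$-module. Using the vector space decomposition $\wedge^\bullet\Fa\simeq\wedge^\bullet\Fg_1\otimes\wedge^\bullet\Fg_2$, the two partial Chevalley-Eilenberg differentials of $\Fg_1$ and of $\Fg_2$ already appear as the row and column differentials of the reduced bicomplex; the remaining piece, namely the cross-terms produced by the brackets between $\Fg_1$ and $\Fg_2$ inside $\Fa$, must be matched with the twisting contribution in the total differential of our bicomplex. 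This is the delicate point, and the assumption that $\Fh$ acts on $\Fg_1$ by derivations is what guarantees that this cross-term, when pulled down to the $\Fh$-relative subcomplex, is exactly the Chevalley-Eilenberg cross-term for $\Fa$ relative to $\Fh$.

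The main obstacle, therefore, is the careful verification that the matched-pair cross-differential computed in the Hopf cyclic bicomplex coincides, after the application of $\Vc$, with the Chevalley-Eilenberg contribution of $[\Fg_1,\Fg_2]\subset\Fa$ on the relative cochains. This is a direct but nontrivial consequence of the compatibility axioms defining a $(\Fg_1,\Fg_2)$-related Hopf algebra, combined with the derivation property of the $\Fh$-action on $\Fg_1$. Once this identification of bicomplexes is established, the stated isomorphism
\begin{equation*}
HP^\bullet(\Fc\acl U(\Fg_1),{}^\s M_\d)\cong\bigoplus_{i=\bullet\;\text{mod 2}} H^i(\Fa,\Fh,M)
\end{equation*}
follows from the standard two-periodic collapse of the total complex onto the relative Chevalley-Eilenberg complex of $\Fa$.
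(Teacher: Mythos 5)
Your proposal follows essentially the same route as the paper: establish that $\Vc$ is a map of bicomplexes (the paper does this via the $U(\Fg_1)$-linearity of $\t_{\Fc,U(\Fg_2)}$ and of the antisymmetrization map), invoke the $\Fc$-Levi decomposition to get a row-wise quasi-isomorphism, compare spectral sequences, and identify the target total complex with the relative Chevalley--Eilenberg complex of $\Fa$ modulo $\Fh$ (the paper's Lemma on the map $\nr$). The only cosmetic difference is that you phrase the matched-pair cross-terms as a separate piece of the total differential to be matched, whereas in the paper they are absorbed into the definitions of the row and column coboundaries; the substance is the same.
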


\addtocounter{section}{-4}
 We get three corollaries by specializing  this theorem to our geometric cases. 
\bigskip

\tableofcontents

\section{Geometric noncommutative Hopf algebras}
\label{S1}
In this paper  all Lie algebras, Lie groups, and affine algebraic groups are over complex numbers.  All  groups are assumed to be connected and all Lie algebras are finite dimensional.

 \bigskip 
 
In Subsection \ref{SS-bicrossed} we recall the needed definitions and basics of bicrossed and double crossed Hopf algebras. In Subsection \ref{SS-Lie-Hopf} we introduce Lie-Hopf algebras and proves that they are equivalent to bicrossed product Hopf algebras.  In Subsection \ref{SS-Lie algebra} we consider a matched pair of Lie algebra $(\Fg_1,\Fg_2)$ and construct the bicrossed product Hopf algebra  $R(\Fg_2)\acl U(\Fg_1)$, where $R(\Fg_2)$ is the Hopf algebra of representative functions on $U(\Fg_2)$. In Subsection \ref{SS-Lie group} we associate the Hopf algebra $R(G_2)\acl U(\Fg_1)$ to a matched pair of Lie groups $(G_1,G_2)$, where $R(G_2)$ is the Hopf algebra of representative smooth functions on $G_2$ and $\Fg_1$ is the Lie algebra of $G_1$. We finish  this section by Subsection \ref{SS-algebraic group} where we construct the Hopf algebra $\Pc(G_2)\acl U(\Fg_1)$ for a matched pair of connected affine algebraic group $(G_1,G_2)$, where $\Pc(G_2)$ is the Hopf algebra of all polynomial representative functions on $G_2$ and $\Fg_1$ is the Lie algebra of $G_1$.

\subsection{Bicrossed and double crossed product Hopf algebras}
\label{SS-bicrossed}
It is always helpful to decompose complicated algebraic structures into product of less complicated objects. In the category of Hopf algebras there are many of such decompositions. The first one helping us in this paper is bicrossed product and the second one is double crossed product. We refer the interested reader to \cite{Majid-book} for a comprehensive account, however we briefly  recall below the most basic notions concerning the bicrossed and double crossed product construction.

\medskip

Let  $\Uc$ and $\Fc$ be two Hopf algebras.  A linear map $$ \Db:\Uc\ra\Uc\ot \Fc , \qquad \Db u \, = \, u\ns{0} \ot u\ns{1} \, , $$ defines a {\em right
coaction}, and thus equips
 $\Uc$ with a  {right $\Fc-$comodule coalgebra} structure,  if the
following conditions are satisfied for any $u\in \Uc$:
\begin{align}\label{cc1}
&u\ns{0}\ps{1}\ot u\ns{0}\ps{2}\ot u\ns{1}= u\ps{1}\ns{0}\ot u\ps{2}\ns{0}\ot u\ps{1}\ns{1}u\ps{2}\ns{1}\\\label{cc2} &\epsilon(u\ns{0})u\ns{1}=\epsilon(u)1.
\end{align}
One  then  forms a  cocrossed product coalgebra $\Fc\cl\Uc$, that has $\Fc\ot \Uc$ as underlying vector space and the following coalgebra structure:
\begin{align}\label{cocross}
&\Delta(f\cl u)= f\ps{1}\cl u\ps{1}\ns{0}\ot  f\ps{2}u\ps{1}\ns{1}\cl u\ps{2}, \\ &\epsilon(f\cl u)=\epsilon(f)\epsilon(u).
\end{align}
In a dual fashion,   $\Fc$ is called a {left $\Uc-$module algebra}, if $\Uc$ acts from the left on $\Fc$ via a left action $$
\rt : \Fc\ot \Uc \ra \Fc
$$ which satisfies the following conditions for any $u\in \Uc$, and $f,g\in \Fc$ :
\begin{align}\label{ma1}
&u\rt 1=\epsilon(u)1\\\label{ma2}
 &u\rt(fg)=(u\ps{1}\rt
f)(u\ps{2}\rt g).
\end{align}
This time  we can endow the underlying vector space $\Fc\ot \Uc$ with
 an algebra structure, to be denoted by $\Fc\al \Uc$, with  $1\al 1$
 as its unit and the product given by
\begin{equation}
(f\al u)(g\al v)=f \;u\ps{1}\rt g\al u\ps{2}v
\end{equation}

$\Uc$ and $\Fc$ are said to form a matched pair of Hopf algebras if they are equipped, as above,  with an action and a coaction which satisfy the following
compatibility conditions for any $u\in\Uc$, and any $f\in \Fc$.
\begin{align}\label{mp1}
&\epsilon(u\rt f)=\epsilon(u)\epsilon(f), \\  \label{mp2} &\Delta(u\rt f)=u\ps{1}\ns{0} \rt f\ps{1}\ot u\ps{1}\ns{1}(u\ps{2}\rt f\ps{2}), \\  \label{mp3}
 &\Db(1)=1\ot
1, \\ \label{mp4} &\Db(uv)=u\ps{1}\ns{0} v\ns{0}\ot u\ps{1}\ns{1}(u\ps{2}\rt v\ns{1}),\\  \label{mp5} &u\ps{2}\ns{0}\ot (u\ps{1}\rt
f)u\ps{2}\ns{1}=u\ps{1}\ns{0}\ot u\ps{1}\ns{1}(u\ps{2}\rt f).
\end{align}
One then forms  a new Hopf algebra $\Fc\acl \Uc$, called the  bicrossed product of the matched pair  $(\Fc , \Uc)$ ; it has $\Fc\cl \Uc$ as underlying
coalgebra, $\Fc\al \Uc$ as underlying algebra and the antipode is defined by
\begin{equation}\label{anti}
S(f\acl u)=(1\acl S(u\ns{0}))(S(fu\ns{1})\acl 1) , \qquad f \in \Fc , \, u \in \Uc.
\end{equation}

On the other hand, we need to recall the notion of double crossed product Hopf algebra \cite{Majid-book}.  Let $\Uc$ and $\Vc$ be two Hopf algebras such that $\Vc$ is a
right $\Uc-$module coalgebra and $\Uc$ is left  $\Vc-$module coalgebra. We call them mutual pair if   their actions  satisfy the following  conditions.
\begin{align}\label{mutual-1}
&v\rt(u^1u^2)= (v\ps{1}\rt u^1\ps{1})((v\ps{2}\lt u^1\ps{2})\rt  u^2),\quad  1\lt u=\ve(u),\\ \label{mutual-2}
&(v^1v^2)\lt u= (v^1\lt(v^2\ps{1}\rt u\ps{1}))(v^2\ps{2}\lt u\ps{2}),\quad
v\rt 1=\ve(v),\\\label{mutual-3}
 &\sum v\ps{1}\lt u\ps{1}\ot v\ps{2}\rt u\ps{2}=\sum v\ps{2}\lt u\ps{2}\ot v\ps{1}\rt u\ps{1}.
\end{align}
Having a  mutual   pair of Hopf algebras, one constructs the double crossed product Hopf algebra $\Uc\dcp \Vc$. As a coalgebra $\Uc\dcp\Vc$ is  $\Uc\ot\Vc $,
however its algebra structure is defined by the following rule together with $1\dcp 1$ as its unit.
\begin{equation}
(u^1\dcp v^1)(u^2\dcp v^2):= u^1(v^1\ps{1}\rt u^2\ps{1})\dcp (v^2\ps{2}\lt u^2\ps{2})v^2
\end{equation}
The antipode of $\Uc\dcp \Vc$ is defined by
\begin{equation}
S(u\dcp v)=(1\dcp S(v))(S(u)\dcp 1)= S(v\ps{1})\rt S(u\ps{1})\dcp S(v\ps{2})\lt S(u\ps{2}).
\end{equation}

\subsection{Lie-Hopf algebras and associated  bicrossed product Hopf algebras}
\label{SS-Lie-Hopf}
It is now clear that bicrossed product Hopf algebras play a crucial r\^ole in many places, especially when one tries to compute Hopf cyclic cohomology of nontrivial Hopf algebras \cite{Moscovici-Rangipour-07,Moscovici-Rangipour-09,Moscovici-Rangipour-011}. It is always a lengthy task to show that a particular Hopf algebra is of the form of bicrossed product. In this subsection, we introduce the minimum criteria by which one verifies a Hopf algebra is of this form.
Based on our interest, we restrict ourself  to the case that one of the building block Hopf algebra is commutative and the other one is enveloping algebra of a Lie algebra. It is clear that everything can be extended to more general situations but we do not try it here.

Let $\Fc$ be a commutative Hopf algebra on which a Lie algebra  $\Fg$ acts  by derivations. We endow the vector space $\Fg\ot \Fc$ with the following bracket.
\begin{equation}\label{bracket}
[X\ot f, Y\ot g]= [X,Y]\ot fg+ Y\ot \ve(f)X\rt g- X\ot \ve(g) Y\rt f.
\end{equation}
\begin{lemma}
Let $\Fg$ act on a commutative Hopf algebra $\Fc$ and $\ve(X\rt f)=0$ for any $X\in \Fg$ and $f\in \Fc$. Then  the bracket defined in \eqref{bracket} endows  $\Fg\ot \Fc$ with a  Lie algebra structure.
\end{lemma}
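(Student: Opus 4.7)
The plan is to verify the three axioms of a Lie bracket in turn. Bilinearity over the base field is immediate from the defining formula~\eqref{bracket}. For antisymmetry, one computes
\begin{equation*}
[Y\ot g,X\ot f] \,=\, [Y,X]\ot gf \,+\, \ve(g)\,X\ot(Y\rt f) \,-\, \ve(f)\,Y\ot(X\rt g),
\end{equation*}
then uses commutativity of $\Fc$ to replace $gf$ by $fg$ and antisymmetry of the bracket of $\Fg$; the result is $-[X\ot f,\,Y\ot g]$.

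The substance of the proof is the Jacobi identity. I would fully expand $[[X\ot f,\,Y\ot g],\,Z\ot h]$ and its two cyclic rotations in $(X\ot f,\,Y\ot g,\,Z\ot h)$, producing seven terms from each iterated bracket and twenty-one summands in all. A key observation: when applying $[\,\cdot\,,\,Z\ot h]$ to a term of the form $Y\ot(X\rt g)$, the would-be ``middle'' summand $\ve(X\rt g)\,Z\ot(Y\rt h)$ vanishes by the hypothesis $\ve(X\rt f)=0$; thus no fourth family of terms is generated and the count really is twenty-one.

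I would then partition the twenty-one summands according to how many $\ve$-factors they carry and check cancellation family by family. \textbf{(a)} The pure Lie-bracket contribution is $[[X,Y],Z]\ot fgh$ together with its two cyclic rotations, and sums to zero by the Jacobi identity in $\Fg$. \textbf{(b)} The double-$\ve$ terms fall into three symmetric classes, indexed by $\ve(f)\ve(g)$, $\ve(g)\ve(h)$, and $\ve(h)\ve(f)$; each class collapses to a multiple of
\begin{equation*}
[X,Y]\rt h \,-\, X\rt(Y\rt h) \,+\, Y\rt(X\rt h) \,=\, 0,
\end{equation*}
which holds because $\Fg\to\mathrm{Der}(\Fc)$ is a Lie algebra homomorphism. \textbf{(c)} The single-$\ve$ terms similarly split into three classes indexed by $\ve(f)$, $\ve(g)$, $\ve(h)$; a representative one collapses to a multiple of
\begin{equation*}
(X\rt g)h \,-\, X\rt(gh) \,+\, (X\rt h)g,
\end{equation*}
which vanishes because $X$ acts as a derivation on the \emph{commutative} algebra $\Fc$, so $X\rt(gh)=(X\rt g)h+g(X\rt h)=(X\rt g)h+(X\rt h)g$.

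Conceptually the argument is just the tight interplay of the three hypotheses---commutativity of $\Fc$, the derivation property of $\rt$, and $\ve\circ(X\rt\cdot)=0$---together with Jacobi in $\Fg$. The only real obstacle is the combinatorial bookkeeping required to organize the twenty-one Jacobi summands into the three cancelling families above; no individual computation is delicate.
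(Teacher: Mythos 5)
Your proposal is correct and follows essentially the same route as the paper: expand the three cyclic iterated brackets (seven summands each, the would-be eighth killed by $\ve(X\rt f)=0$) and cancel using the Jacobi identity in $\Fg$, the fact that the action is a Lie algebra map into derivations, and commutativity of $\Fc$. Your grouping of the twenty-one terms by the number of $\ve$-factors is just a more explicit organization of the cancellation that the paper asserts after displaying \eqref{jacobi-1}--\eqref{jacobi-3}.
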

\begin{proof}
It is obvious that the bracket is antisymmetric.  We need to check the Jacobi identity. Indeed, after routine   computation we observe that,
\begin{align}
\begin{split}\label{jacobi-1}
&[[X\ot f, Y\ot g], Z\ot h]=[[X,Y],Z]\ot fgh+Z\ot \ve(fg)[X,Y]\rt h-\\
&[X,Y]\ot \ve(h)Z\rt (fg)+[Y,Z]\ot \ve(f)hX\rt g - Y\ot \ve(h)\ve(f)Z\rt (X\rt g)-\\
&[X,Z]\ot \ve(g)hY\rt f+X\ot \ve(h)\ve(g)Z\rt(Y\rt f).
\end{split}
\end{align}
\begin{align}
\begin{split}\label{jacobi-2}
&[[ Y\ot g, Z\ot h],X\ot f]= [[Y,Z],X]\ot fgh+ X\ot \ve(gh)[Y,Z]\rt f-\\
 &[Y,Z]\ot \ve(f) X\rt (gh)+[Z,X]\ot \ve(g)f Y\rt h- Z\ot \ve(f)\ve(g) X\rt(Y\rt h)-\\
&[Y,X]\ot \ve(h)fZ\rt g+ Y\ot \ve(f)\ve(h) X\rt(Z\rt g).
\end{split}
\end{align}

\begin{align}
\begin{split}\label{jacobi-3}
&[[Z\ot h,X\ot f],Y\ot g]=[[Z,X],Y]\ot fgh+ Y\ot \ve(hf)[Z,X]\rt g- \\
&[Z,X]\ot \ve(g)Y\rt(hf)+[X,Y]\ot \ve(h)g Z\rt f- X\ot \ve(h)\ve(g)Y\rt(Z\rt f)-\\
&[Z,Y]\ot \ve(f)g X\rt h+ Z\ot \ve(g)\ve(f) Y\rt(X\rt h).
\end{split}
\end{align}
Summing up \eqref{jacobi-1}, \eqref{jacobi-2}, and \eqref{jacobi-3} and using the fact that $\Fg$ is a Lie algebra  acting on $\Fc$ by derivations, we get
 \begin{equation}\label{jocobi}
 [[X\ot f, Y\ot g], Z\ot h]+[[ Y\ot g, Z\ot h],X\ot f]+ [[Z\ot h,X\ot f],Y\ot g]=0.
 \end{equation}
\end{proof}
Now let assume that $\Fc$ coacts on $\Fg$ from right via $\Db_\Fg:\Fg\ra \Fg\ot \Fc$. We define the first-order  matrix coefficients $ f^i_j\in \Fc$ of $\Db_\Fg$ by ,
\begin{equation}\label{def-Db-Fg}
\Db_\Fg(X_i)= \sum_i X_i\ot f_j^i.
\end{equation}
One use the fact that $\Db_\Fg$ is a coaction to observe
\begin{equation}\label{D-f-j-i}
\D(f_i^j)=\sum_{k=1}^n f_k^j\ot f_i^k.
\end{equation}
We define the second-order  matrix coefficients by
\begin{equation}
 f_{j,k}^i:=  X_{k}\rt f^{i}_j.
 \end{equation}
  Let $C^k_{i,j}$ stand for the structure constants of the Lie algebra $\Fg$, i.e,
  \begin{equation}
  [X_i,X_j]=\sum_kC_{i,j}^k X_k
  \end{equation}
\begin{definition}
We say that a coaction $\Db_\Fg:\Fg\ra \Fg\ot \Fc$ satisfies the structure identity of $\Fg$  if
\begin{equation}\label{Bianchi}
f_{j,i}^k-f_{i,j}^k=\sum_{s,r} C_{s,r}^k f_{i}^rf_{j}^s+\sum_lC_{i,j}^lf_l^k\,.
\end{equation}
\end{definition}
\begin{lemma}\label{Lemma-structure-identity}
The coaction  $\Db_\Fg:\Fg\ra \Fg\ot \Fc$ satisfies the structure identity of $\Fg$  if and only  if  $\Db_\Fg:\Fg\ra \Fg\ot \Fc$ is a Lie algebra map.
\end{lemma}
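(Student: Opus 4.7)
The plan is to check the homomorphism property of $\Db_\Fg$ on a basis and compare coefficients. Both sides of $\Db_\Fg([X,Y])=[\Db_\Fg(X),\Db_\Fg(Y)]$ are bilinear in $X,Y\in\Fg$ (the bracket on $\Fg\ot\Fc$ was built to be bilinear), so it suffices to verify this identity on basis elements $X_i,X_j$ of $\Fg$. First I would expand the left-hand side directly from \eqref{def-Db-Fg}:
\begin{equation*}
\Db_\Fg([X_i,X_j])=\sum_k C_{i,j}^k\,\Db_\Fg(X_k)=\sum_{k,l}C_{i,j}^k\,X_l\ot f_k^l.
\end{equation*}

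Next I would compute the right-hand side by expanding $\Db_\Fg(X_i)=\sum_p X_p\ot f_i^p$ and $\Db_\Fg(X_j)=\sum_q X_q\ot f_j^q$, then applying the bracket formula \eqref{bracket} termwise. This produces three sums: the ``diagonal'' sum $\sum_{p,q}[X_p,X_q]\ot f_i^p f_j^q=\sum_{p,q,k}C_{p,q}^k X_k\ot f_i^p f_j^q$ coming from the first term of \eqref{bracket}, and two cross sums $\sum_{p,q}X_q\ot\ve(f_i^p)(X_p\rt f_j^q)$ and $-\sum_{p,q}X_p\ot\ve(f_j^q)(X_q\rt f_i^p)$. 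The key simplification, and the point at which the counit of the coaction enters, is that $(\id\ot\ve)\circ\Db_\Fg=\id$ gives $\sum_p\ve(f_i^p)X_p=X_i$; since the action $\rt$ is linear in its first slot, the first cross sum collapses to $\sum_q X_q\ot(X_i\rt f_j^q)=\sum_q X_q\ot f_{j,i}^q$, and analogously the second collapses to $\sum_p X_p\ot f_{i,j}^p$.

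Collecting everything and reading off the coefficient of each basis vector $X_l\ot(-)$ on both sides, the identity $\Db_\Fg([X_i,X_j])=[\Db_\Fg(X_i),\Db_\Fg(X_j)]$ becomes, for all $i,j,l$,
\begin{equation*}
\sum_k C_{i,j}^k f_k^l \;=\; \sum_{p,q}C_{p,q}^l f_i^p f_j^q+f_{j,i}^l-f_{i,j}^l.
\end{equation*}
Rearranging and using the antisymmetry $C_{p,q}^l=-C_{q,p}^l$ (so that $\sum_{p,q}C_{p,q}^l f_i^p f_j^q=-\sum_{s,r}C_{s,r}^l f_i^r f_j^s$) rewrites this identity as
\begin{equation*}
f_{j,i}^l-f_{i,j}^l=\sum_{s,r}C_{s,r}^l f_i^r f_j^s+\sum_k C_{i,j}^k f_k^l,
\end{equation*}
which is precisely the structure identity \eqref{Bianchi}. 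Since each step of the computation is reversible, the two conditions are equivalent.

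The proof is essentially a bookkeeping exercise; the only subtle point is that the ``cross'' terms in the bracket \eqref{bracket} involve the counit rather than generic products, which is exactly what allows them to be identified with the second-order matrix coefficients $f_{j,i}^l$ via the counit identity for the coaction. The main potential pitfall is the sign/index convention in the structure identity: one must keep track of the transposition of indices between $C_{s,r}^k$ in \eqref{Bianchi} and $C_{r,s}^k$ arising naturally from $[X_r,X_s]$, which is why the antisymmetry of the structure constants must be invoked at the end.
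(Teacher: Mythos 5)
Your proof is correct and follows essentially the same route as the paper's: evaluate both sides of the homomorphism condition on basis elements, expand the bracket \eqref{bracket}, use the counit identity $\sum_p\ve(f_i^p)X_p=X_i$ to collapse the cross terms into the second-order coefficients $f_{j,i}^l$ and $f_{i,j}^l$, and match coefficients of $X_l$ against the structure identity \eqref{Bianchi} (with the same relabeling/antisymmetry of the structure constants). Your explicit observation that every step is reversible cleanly handles the converse, which the paper merely asserts is "similar."
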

\begin{proof}
Indeed we  just need to check for two basis elements.
\begin{equation}
\Db_\Fg([X_i,X_j])=\Db_\Fg(C^k_{i,j}X_k)= C^k_{i,j}X_l\ot f^l_k.
\end{equation}
On the other hand, by using \eqref{Bianchi} we observe
\begin{align}
\begin{split}
&[\Db_\Fg(X_i), \Db_\Fg(X_j)]=[X_p\ot f^p_i, X_q\ot f^q_j]=\\
&[X_p,X_q]\ot f^p_if^q_j+ X_q\ot \ve(f^p_i)X_p\rt f^q_j- X_p\ot \ve(f^q_j)X_q\rt f^p_i=\\
& C^k_{r,s}X_k\ot f^r_if^s_j+  X_k\ot X_i\rt f^k_j- X_k\ot X_j\rt f^k_i=\\
&C^k_{r,s}X_k\ot f^r_if^s_j+ X_k\ot (f^k_{j,i}- f^k_{i,j})=\\
&C^k_{r,s}X_k\ot f^r_if^s_j+ C^k_{s,r}X_k\ot f^r_if^s_j+ C^k_{i,j}X_l\ot f^l_k=C^k_{i,j}X_l\ot f^l_k.
\end{split}
\end{align}
The converse argument is similar to the above.
\end{proof}
One uses the  action of $\Fg$ on $\Fc$ and the coaction of  $\Fc$  on $\Fg$ to define an action $\Fg$ on $\Fc\ot \Fc$ by
\begin{equation}
X\bullet (f^1 \ot f^2)= X\ns{0}\rt f^1\ot X\ns{1} f^2 + f^1\ot X\rt f^2.
\end{equation}
\begin{definition}\label{def-Lie-Hopf}
Let a Lie algebra $\Fg$ act on a commutative Hopf algebra $\Fc$ by derivations. We say that $\Fc$ is a $\Fg$-Hopf algebra if
 \begin{enumerate}
   \item $\Fc$ coacts on $\Fg$ and its coaction satisfies the structure identity of $\Fg$.
   \item $\D$ and $\ve$ are $\Fg$-linear, i.e,  $\D(X\rt f)=X\bullet\D(f)$, \quad $\ve(X\rt f)=0$, \quad $f\in \Fc$ and $X\in \Fg$.
    \end{enumerate}
\end{definition}
Let $\Fc$ be a  $\Fg$-Hopf algebra. Then  $U(\Fg)$ acts on $\Fc$ in the obvious way and makes it a $U(\Fg)$-module algebra. We extend the coaction $\Db_\Fg$ of $\Fc$ on $\Fg$ to a coaction $\Db$ of $\Fc$  on $U(\Fg)$ inductively via the rule \eqref{mp4} and $\Db(1)=1\ot 1$.
\begin{lemma}\label{U-coaction}
The extension of $\Db_\Fg:\Fg\ra \Fg\ot \Fc$  to $\Db:U(\Fg)\ra U(\Fg)\ot \Fc$  via \eqref{mp4} is well-defined.
\end{lemma}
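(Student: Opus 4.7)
The plan is to treat $\Db$ as a linear map on the tensor algebra $T(\Fg)$, defined recursively by $\Db(1) = 1\ot 1$, $\Db(X) = \Db_\Fg(X)$ for $X\in\Fg$, and iteration of the rule \eqref{mp4} on products. Since $U(\Fg) = T(\Fg)/I$ with $I$ the two-sided ideal generated by the commutator relations $XY - YX - [X,Y]$ for $X,Y \in \Fg$, well-definedness of $\Db$ on $U(\Fg)$ reduces to the single identity $\Db(XY) - \Db(YX) = \Db_\Fg([X,Y])$ for all $X, Y \in \Fg$.

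To verify this, we use primitivity of $X$ in $U(\Fg)$, i.e.\ $\Delta(X) = X\ot 1 + 1\ot X$, together with $\Db(1) = 1\ot 1$, and apply \eqref{mp4} directly to obtain
\[
\Db(XY) = X\ns{0}Y\ns{0} \ot X\ns{1}Y\ns{1} + Y\ns{0}\ot X\rt Y\ns{1},
\]
with the analogous expression for $\Db(YX)$. Subtracting, and invoking commutativity of $\Fc$, we get
\[
\Db(XY) - \Db(YX) = [X\ns{0}, Y\ns{0}]\ot X\ns{1}Y\ns{1} + Y\ns{0}\ot X\rt Y\ns{1} - X\ns{0}\ot Y\rt X\ns{1}.
\]
On the other hand, by Lemma \ref{Lemma-structure-identity} the coaction $\Db_\Fg$ is a morphism of Lie algebras into $\Fg\ot\Fc$ equipped with the bracket \eqref{bracket}, so $\Db_\Fg([X,Y]) = [\Db_\Fg(X), \Db_\Fg(Y)]$ produces the same three terms, except that the action-type contributions acquire counit factors of the form $\ve(X\ns{1})(X\ns{0}\rt Y\ns{1})$ and $\ve(Y\ns{1})(Y\ns{0}\rt X\ns{1})$. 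The counit axiom $X\ns{0}\ve(X\ns{1}) = X$ then collapses these to $X\rt Y\ns{1}$ and $Y\rt X\ns{1}$, matching the previous display.

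The main subtlety is to ensure that the recursive definition on $T(\Fg)$ is unambiguous before descending to the quotient, since a priori one could iterate \eqref{mp4} on a monomial $X_1\cdots X_n$ through different associations. Fixing the left-to-right recursion removes this ambiguity, after which the only relations to be checked are the commutator ones already handled. Beyond this, the proof uses only the data built into Definition \ref{def-Lie-Hopf}: the Lie algebra map property of $\Db_\Fg$, commutativity of $\Fc$, counitality of $\Db_\Fg$, and primitivity of elements of $\Fg$ inside $U(\Fg)$.
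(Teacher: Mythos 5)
Your proof is correct and follows essentially the same route as the paper's: reduce well-definedness to the single relation $\Db(XY-YX)=\Db_\Fg([X,Y])$, compute the left-hand side from \eqref{mp4} using primitivity of $X,Y$ and commutativity of $\Fc$, and identify the result with $[\Db_\Fg(X),\Db_\Fg(Y)]$, which equals $\Db_\Fg([X,Y])$ by Lemma \ref{Lemma-structure-identity}. Your additional remark about fixing the left-to-right recursion on the tensor algebra before descending to the quotient is a point the paper passes over silently, but it does not change the substance of the argument.
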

\begin{proof}
 It is necessary and sufficient to prove that $\Db_\Fg([X,Y])=\Db(XY-YX)$. Using Lemma \ref{Lemma-structure-identity},    the rule \eqref{mp4}  and the fact that $\Fc$ is commutative, we see that
\begin{multline}
\Db(XY-YX)= [X\ns{0},Y\ns{0}]\ot X\ns{1}Y\ns{1}+ Y\ns{0}\ot X\rt Y\ns{1}-\\ X\ns{0}\ot Y\rt X\ns{1}=\Db_\Fg([X,Y]).
\end{multline}
\end{proof}
\begin{theorem}\label{Theorem-Lie-Hopf-matched-pair}
Let $\Fc$ be a commutative $\Fg$-Hopf algebra. Then via the coaction of $\Fc$ on $U(\Fg)$ defined above and the natural action of $U(\Fg)$ on $\Fc$, the pair $(U(\Fg), \Fc)$ becomes a matched pair of Hopf algebras. Conversely, for a commutative Hopf algebra $\Fc$, if $(U(\Fg), \Fc)$ is a matched pair of Hopf algebras then $\Fc$ is a $\Fg$-Hopf algebra .
\end{theorem}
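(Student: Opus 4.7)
The strategy is to verify the five matched pair axioms \eqref{mp1}--\eqref{mp5} for the forward direction, and then to read the defining properties of a $\Fg$-Hopf algebra out of a given matched pair for the converse.

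For the forward direction, I would first extend the derivation action of $\Fg$ on $\Fc$ to a $U(\Fg)$-module algebra action in the standard way; the Leibniz rule on $\Fg$ is precisely \eqref{ma2} on primitive elements, and the full rule propagates multiplicatively. Lemma \ref{U-coaction} already supplies the extended coaction $\Db : U(\Fg) \ra U(\Fg) \ot \Fc$, so the comodule coalgebra axioms \eqref{cc1}--\eqref{cc2} reduce to an induction whose base case combines primitivity of elements of $\Fg$ with $\ve(X \rt f) = 0$. Axioms \eqref{mp3} and \eqref{mp4} hold by the very definition of the extension. Axioms \eqref{mp1} and \eqref{mp2} on $X \in \Fg$ are exactly the $\Fg$-linearity of $\ve$ and $\D$ from Definition \ref{def-Lie-Hopf}, and they propagate to arbitrary $u \in U(\Fg)$ by combining the module algebra property with the coproduct of a product.

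The only genuinely substantive axiom is \eqref{mp5}. On a primitive $X \in \Fg$, unfolding $\D(X) = X \ot 1 + 1 \ot X$ together with $\Db(1) = 1 \ot 1$ reduces the two sides to $X\ns{0} \ot f \cdot X\ns{1} + 1 \ot X \rt f$ and $X\ns{0} \ot X\ns{1} \cdot f + 1 \ot X \rt f$ respectively, which coincide precisely because $\Fc$ is commutative. The extension to an arbitrary product $uv$ is an induction on filtration degree, driven by \eqref{mp4} applied to $\Db(uv)$, the Leibniz identity for $\rt$, and repeated use of \eqref{mp5} at strictly lower degree. This bookkeeping is the main obstacle, but it is routine shuffling once the base case is in hand.

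For the converse, the crucial first step is to show that the coaction $\Db$ supplied by the matched pair structure restricts to $\Fg$. Writing $\Db(X) = \sum_i X_i \ot c_i$ with the $c_i$ linearly independent, applying $(\D \ot \id) \circ \Db$ and invoking \eqref{cc1} together with $\Db(1) = 1 \ot 1$ and primitivity of $X$ forces each $X_i$ to satisfy $\D(X_i) = X_i \ot 1 + 1 \ot X_i$, hence $X_i \in \Fg$. This yields the desired $\Db_\Fg : \Fg \ra \Fg \ot \Fc$. The structure identity \eqref{Bianchi} is then extracted by applying \eqref{mp4} to $XY - YX$ and comparing with $\Db_\Fg([X,Y])$, i.e., running Lemma \ref{Lemma-structure-identity} in reverse. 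The remaining clauses of Definition \ref{def-Lie-Hopf} --- $\Fg$-linearity of $\D$ and $\ve$, and the derivation property --- come out by restricting \eqref{mp2}, \eqref{mp1}, and \eqref{ma2} to primitive elements. The main subtlety here is the restriction step; everything else is a direct unpacking.
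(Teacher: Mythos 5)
Your proposal is correct and follows essentially the same route as the paper: verify \eqref{mp2}, \eqref{mp4} and the comodule-coalgebra axiom \eqref{cc1} by induction from the primitive case, and for the converse show the coaction restricted to $\Fg$ lands in the primitive elements and then run Lemma \ref{Lemma-structure-identity} backwards. The one place you overcomplicate matters is \eqref{mp5}: no induction on filtration degree is needed, since for every $u\in U(\Fg)$ it follows in one line from cocommutativity of $U(\Fg)$ together with commutativity of $\Fc$ (which is presumably why the paper does not even record it), whereas the genuinely laborious induction is the one for \eqref{cc1}.
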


\begin{proof}
We need to verify that the matched pair conditions are satisfied. The axioms \eqref{mp1} and \eqref{mp3} are held by the definition. We prove the other two.

By definition of the coaction $\Db:U(\Fg)\ra U(\Fg)\ot \Fc$, the axiom \eqref{mp4} is held for any $u,v\in U(\Fg)$.

 Next we check \eqref{mp2}. By Definition \ref{def-Lie-Hopf} (2) we observe that \eqref{mp2} is satisfied  for any $X\in \Fg$ and $f\in \Fc$. Let assume that  it  is held for $u,v\in U(\Fg)$, and any $f\in \Fc$. By using \eqref{mp4}  we see that
\begin{align}\label{proof-label-1}
\begin{split}
&(uv)\ps{1}\ns{0}\ot (uv)\ps{1}\ns{1}\ot (uv)\ps{2}=\\
&u\ps{1}\ns{0}v\ps{1}\ns{0}\ot u\ps{1}\ns{1}(u\ps{2}\rt v\ps{1}\ns{1})\ot u\ps{3}v\ps{2}.
\end{split}
\end{align}
Using \eqref{proof-label-1} and the fact that $\Fc$ is $U(\Fg)$-module algebra we prove our claim.
\begin{align}
\begin{split}
&\D(uv\rt f)= u\ps{1}\ns{0}\rt (v\rt f)\ps{1}\ot u\ps{1}\ns{1}(u\ps{2}\rt (v\rt f)\ps{2})=\\
&u\ps{1}\ns{0}\rt (v\ps{1}\ns{0}\rt f\ps{1})\ot u\ps{1}\ns{1}(u\ps{2}\rt (v\ps{1}\ns{1}(v\ps{2}\rt f\ps{2})=\\
&u\ps{1}\ns{0}\rt (v\ps{1}\ns{0}\rt f\ps{1})\ot u\ps{1}\ns{1}(u\ps{2}\rt (v\ps{1}\ns{1})(u\ps{3}v\ps{2}\rt f\ps{2})=\\
&(uv)\ps{1}\ns{0}\rt f\ps{1}\ot (uv)\ps{1}\ns{1}((uv)\ps{2}\rt f\ps{2}).
\end{split}
\end{align}
Finally we check that $U(\Fg)$ is  a $\Fc$-comodule coalgebra i.e, we verify \eqref{cc1} which is obvious for any $X\in \Fg$. Let assume that \eqref{cc1} is satisfied for $u,v\in U(\Fg)$, and prove it is so for $uv$. Indeed, by using \eqref{mp4} and the fact that $U(\Fg)$ is cocommutative, $\Fc$ is commutative, and $\Fc$ is $U(\Fg)$-module algebra, we observe that
\begin{align}
\begin{split}
&(uv)\ps{1}\ns{0}\ot (uv)\ps{2}\ns{0}\ot (uv)\ps{1}\ns{1}(uv)\ps{2}\ns{1}=\\
&(u\ps{1}v\ps{1})\ns{0}\ot (u\ps{2}v\ps{2})\ns{0}\ot (u\ps{1}v\ps{1})\ns{1}(u\ps{2}v\ps{2})\ns{1}=\\
&u\ps{1}\ns{0}v\ps{1}\ns{0}\ot u\ps{3}\ns{0}v\ps{2}\ns{0}\ot u\ps{1}\ns{1}(u\ps{2}\rt v\ps{1}\ns{1})u\ps{3}\ns{1}(u\ps{4}\rt v\ps{2}\ns{1})=\\
&u\ps{1}\ns{0}v\ps{1}\ns{0}\ot u\ps{2}\ns{0}v\ps{2}\ns{0}\ot u\ps{1}\ns{1} u\ps{2}\ns{1}(u\ps{3}\rt (v\ps{1}\ns{1}v\ps{2}\ns{1})=\\
&u\ps{1}\ns{0}\ps{1}v\ns{0}\ps{1}\ot u\ps{1}\ns{0}\ps{2}v\ns{0}\ps{2}\ot u\ps{1}\ns{1} (u\ps{2}\rt (v\ns{1})=\\
&(u\ps{1}\ns{0}v\ns{0})\ps{1}\ot (u\ps{1}\ns{0}v\ns{0})\ps{2}\ot u\ps{1}\ns{1}(u\ps{2}\rt v\ns{1})=\\
&(uv)\ns{0}\ps{1}\ot (uv)\ns{0}\ps{2}\ot (uv)\ns{1}.
\end{split}
\end{align}
Conversely, let $\Fc$ be a commutative Hopf algebra and $(U(\Fg), \Fc)$ be a matched pair of Hopf algebras. Let us denote the coaction of  $\Fc$ on $U(\Fg)$ by $\Db_{\Uc}$. First we prove that the restriction of $\Db_\Uc:U(\Fg)\ra U(\Fg)\ot \Fc$ on $\Fg$ lands in $\Fg\ot \Fc$. Indeed, since $U(\Fg)$ is $\Fc$-comodule coalgebra, we see that
\begin{align}
\begin{split}
&X\ns{0}\ot 1\ot X\ns{1}+ 1\ot X\ns{0}\ot X\ns{1}=\\
 &X\ps{1}\ns{0}\ot X\ps{2}\ns{0} \ot X\ps{1}\ns{1}X\ps{2}\ns{1}=\\
&X\ns{0}\ps{1}\ot X\ns{0}\ps{2}\ot X\ns{1}.
\end{split}
\end{align}
This shows that for any $X\in\Fg$, $\Db_\Uc(X)$ belongs to $P\ot \Fc$, where $P$ is the Lie algebra of primitive elements of $U(\Fg)$. But we know that $P=\Fg$ by \cite{Hoch}. So we get a coaction $\Db_\Fg:\Fg\ra \Fg\ot \Fc$ which is the restriction of $\Db_\Uc$. Since $\Fc$ is  a $U(\Fg)$-module algebra, $\Fg$ acts on $\Fc$ by derivations. An argument like the one in the proof of Lemma \ref{U-coaction} shows that the coaction $\Db_g:\Fg\ra \Fg\ot \Fc$ is a coalgebra map. Finally \eqref{mp2} implies that $\D$ is $\Fg$-equivariant. So we have proved that $\Fc$ is a $\Fg$-Hopf algebra.
\end{proof}

\subsection{Matched pair of  Hopf algebras associated to matched pair of  Lie algebras}
\label{SS-Lie algebra}
In this subsection we associate  a bi-crossed product Hopf algebra to any matched pair of Lie algebras $(\Fg_1,\Fg_2)$.
Let us recall the notion of matched pair of Lie algebras from \cite{Majid-book}.
 A pair of Lie algebras  $(\Fg_1, \Fg_2)$  is called a matched pair  if there
 are linear maps
\begin{equation}\label{g-1-g-2 action}
\alpha: \Fg_2\ot \Fg_1\ra \Fg_2, \quad \alpha_X(\z)=\z\lt X, \quad \beta:\Fg_2\ot \Fg_1\ra \Fg_1, \quad \beta_\z(X)=\z\rt X,
\end{equation}
satisfying the following conditions,
\begin{align}\label{mp-L-1}
&[\z,\x]\rt X=\z\rt(\x\rt X)-\x\rt(\z\rt X),\\\label{mp-L-2}
 & \z\lt[X, Y]=(\z\lt X)\lt Y-(\z\lt Y)\lt X, \\\label{mp-L-3}
 &\z\rt[X, Y]=[\z\rt X, Y]+[X,\z\rt Y] +
(\z\lt X)\rt Y-(\z\lt Y)\rt X\\\label{mp-L-4}
&[\z,\x]\lt X=[\z\lt X,\x]+[\z,\x\lt X]+ \z\lt(\x\rt X)-\x\lt(\z\rt X).
\end{align}

Given a matched pair of  Lie algebras $(\Fg_1,\Fg_2)$, one defines a double crossed sum Lie algebra
 $\Fg_1\bowtie \Fg_2$. Its underlying vector space is $\Fg_1\oplus\Fg_2$ and its Lie bracket
is defined  by:
\begin{equation}
[X\oplus\z,  Z\oplus\x]=([X, Z]+\z\rt Z-\x\rt X)\oplus ([\z,\x]+\z\lt Z-\x\lt X).
\end{equation}
 One checks that both $\Fg_1$ and $\Fg_2$ are Lie subalgebras of $\Fg_1\bowtie\Fg_2$ via obvious inclusions. Conversely,  if for a
Lie algebra $\Fg$ there are  two Lie subalgebras $\Fg_1$ and $\Fg_2$ so that $\Fg=\Fg_1\oplus\Fg_2$ as vector  spaces,  then
 $(\Fg_1, \Fg_2)$  forms a matched pair of Lie algebras and $\Fg\cong \Fg_1\bowtie \Fg_2$ as Lie algebras \cite{Majid-book}.
  In this case the actions of $\Fg_1$ on $\Fg_2$ and $\Fg_2$ on $\Fg_1$  for $\z\in \Fg_2$ and $X\in\Fg_1$ are uniquely determined  by
\begin{equation}\label{lie-actions}
[\z,X]=\z\rt X+\z\lt X.
\end{equation}

\begin{proposition}[\cite{Majid-book}] Let $\Fg=\Fg_1\bowtie\Fg_2$ be a double crossed sum of Lie algebras. Then the
enveloping algebras  $(U(\Fg_1),U(\Fg_2))$  becomes a mutual pair of Hopf algebras. Moreover, $U(\Fg)$ and $ U(\Fg_1)\dcp U(\Fg_2)$ are isomorphic as Hopf algebras.
\end{proposition}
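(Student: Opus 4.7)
The plan is to exploit the Poincar\'e--Birkhoff--Witt theorem applied to the Lie algebra $\Fg=\Fg_1\bowtie\Fg_2$. Since $\Fg_1$ and $\Fg_2$ are complementary Lie subalgebras of $\Fg$, PBW furnishes a linear isomorphism
\begin{equation*}
\mu:U(\Fg_1)\ot U(\Fg_2)\ra U(\Fg),\qquad u\ot v\mpo u\cdot v.
\end{equation*}
Through $\mu^{-1}$ I would transport the algebra structure of $U(\Fg)$ onto $U(\Fg_1)\ot U(\Fg_2)$, and the entire proof reduces to identifying this transported structure with the double crossed product, simultaneously extracting the required mutual pair actions.

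First, I would define the actions $\rt:U(\Fg_2)\ot U(\Fg_1)\ra U(\Fg_1)$ and $\lt:U(\Fg_2)\ot U(\Fg_1)\ra U(\Fg_2)$ implicitly by the unique PBW decomposition
\begin{equation*}
v\cdot u=(v\ps{1}\rt u\ps{1})\cdot(v\ps{2}\lt u\ps{2})\qquad\text{in }U(\Fg),
\end{equation*}
which makes sense because the right-hand side realizes $vu$ in normal form with $U(\Fg_1)$-factors preceding $U(\Fg_2)$-factors. On the generators $\z\in\Fg_2$ and $X\in\Fg_1$ this collapses, via $\z X=X\z+[\z,X]=X\z+(\z\rt X)+(\z\lt X)$, to the original Lie-algebraic actions of \eqref{lie-actions}. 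The module coalgebra axioms for these extensions follow because multiplication in $U(\Fg)$ is a coalgebra map and the coproducts on $U(\Fg_i)$ coincide with those inherited from $U(\Fg)$.

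Next, I would verify the mutual pair conditions \eqref{mutual-1}--\eqref{mutual-3}. Condition \eqref{mutual-1}, describing how the left $U(\Fg_2)$-action interacts with products in $U(\Fg_1)$, drops out of associativity $v(u^1u^2)=(vu^1)u^2$ in $U(\Fg)$ after both sides are rewritten in PBW normal form; condition \eqref{mutual-2} is the symmetric consequence of $(v^1v^2)u=v^1(v^2u)$; and condition \eqref{mutual-3} reflects the cocommutativity of $U(\Fg)$, which forces the two ways of passing $v\cdot u$ through the coproduct to agree. The unit normalizations $1\lt u=\ve(u)$ and $v\rt 1=\ve(v)$ are immediate from $1\cdot u=u$ and $v\cdot 1=v$.

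Finally, with $(U(\Fg_1),U(\Fg_2))$ established as a mutual pair, the double crossed product $U(\Fg_1)\dcp U(\Fg_2)$ is defined, and by construction its multiplication is precisely the one extracted from $U(\Fg)$ via $\mu^{-1}$, while its coalgebra is the tensor product coalgebra; the latter is carried by $\mu$ onto the coalgebra of $U(\Fg)$ because $\Fg_1,\Fg_2\sbs\Fg$ consist of primitives. Hence $\mu$ is a Hopf algebra isomorphism. I expect the main technical hurdle to be the inductive verification of \eqref{mutual-1} and \eqref{mutual-2}: one must check by induction on the PBW degree of $u$ (respectively $v$) that the actions defined implicitly through commutation in $U(\Fg)$ satisfy these Leibniz-type identities, which amounts to repeatedly combining the degree-one compatibilities \eqref{mp-L-1}--\eqref{mp-L-4} with cocommutativity of $U(\Fg_i)$.
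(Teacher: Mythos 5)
Your proposal is correct and takes essentially the same route as the paper: the paper likewise realizes the isomorphism as the multiplication map $\mu\circ(i_1\ot i_2)$ (a linear isomorphism by PBW), introduces the normal-ordering map $\Psi$ determined by $\mu\circ(i_2\ot i_1)=\mu\circ(i_1\ot i_2)\circ\Psi$, and extracts the mutual actions as $\rt=(\id\ot\ve)\circ\Psi$ and $\lt=(\ve\ot\id)\circ\Psi$, with the mutual-pair axioms following from associativity and cocommutativity of $U(\Fg)$ exactly as you indicate. The only cosmetic difference is that you define $\rt$ and $\lt$ implicitly through $vu=(v\ps{1}\rt u\ps{1})(v\ps{2}\lt u\ps{2})$, whereas to make this a genuine definition one should, as the paper does, first take the PBW decomposition $\Psi$ and then apply counits to its two legs.
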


In terms of the inclusions
\begin{align}
i_1:U(\Fg_1) \to U(\Fg_1 \dcp \Fg_2) \quad \mbox{ and } \quad i_2:U(\Fg_2) \to U(\Fg_1 \dcp \Fg_2)
\end{align}
the isomorphism mentioned in the above proposition is
\begin{align}
\mu\circ (i_1 \ot i_2):U(\Fg_1) \dcp U(\Fg_2) \to U(\Fg)
\end{align}
Here $\mu$ is the multiplication on $U(\Fg)$. One easily observes that there is a linear map
\begin{align}
\Psi:U(\Fg_2) \dcp U(\Fg_1) \to U(\Fg_1) \dcp U(\Fg_2),
\end{align}
satisfying
\begin{align} \label{calculationofPsi}
\mu \circ (i_2 \ot i_1) = \mu \circ (i_1 \ot i_2) \circ \Psi\,.
\end{align}
 The mutual actions of $U(\Fg_1)$ and $U(\Fg_2)$ are defined as follows
\begin{align} \label{actionstoeachother}
\rhd := (id_{U(\Fg_2)} \ot \ve) \circ \Psi \quad \mbox{ and } \quad \lhd := (\ve \ot id_{U(\Fg_1)}) \circ \Psi\,.
\end{align}
We now recall  the definition of   $R(\Fg)$, the  Hopf algebra of representative functions on $U(\Fg)$, for a Lie algebra $\Fg$.
\begin{equation}\notag
R(\Fg)=\{f\in \Hom(U(\Fg),\Cb)\mid \exists \text{   a   finite codimensional ideal}  \; I\subseteq \ker f   \}
\end{equation}
 The finite codimensionality condition  in the definition of $R(\Fg)$ guarantees that  for any $f\in R(\Fg)$  there exist  a  finite number of  functions $f_i',
 f_i''\in R(\Fg)$   such that  for any $u^1,u^2\in U(\Fg)$.
\begin{equation}
f(u^1u^2)=\sum_{i}f_i'(u^1)f_i''(u^2).
\end{equation}
The Hopf algebraic structure of $R(\Fg)$ is summarized by:
\begin{align}
&\mu: R(\Fg)\ot R(\Fg)\ra R(\Fg), && \mu(f\ot g)(u)=f(u\ps{1})g(u\ps{2}),\\ &\eta:\Cb\ra R(\Fg),&& \eta(1)=\ve,\\ &\D:R(\Fg)\ra R(\Fg)\ot R(\Fg),&&\\\notag & \D(f)=\sum_if_i'\ot
f_i'',&& \text{if}\;\; f(u^1u^2)= \sum f_i'(u^1)f_i''(u^2),\\
& S: R(\Fg)\ra R(\Fg), &&S(f)(u)=f(S(u)).
\end{align}

Let $<X_1,\dots, X_n>$  be a basis for $\Fg_1$ and  $<\t^1,\dots,\t^n>$ be its dual basis for $\Fg_1^\ast$, that is $\langle\t^i,X_j\rangle=\d^i_j$, where $\d^i_j$ are Kronecker's delta function. We define the following liner functionals  on $U(\Fg_2)$,
\begin{equation}
f_i^j(v)=<v\rt X_i,\t^j>,
\end{equation}
 equivalently
\begin{equation}
v\rt X_i=f_i^j(v) X_j.
\end{equation}

\begin{lemma}\label{f^i_j}
For any $1\le i,j\le n$, the functions  $f_i^j$ are representative.
\end{lemma}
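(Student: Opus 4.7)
The plan is to exhibit a single two-sided ideal $J\subseteq U(\Fg_2)$ of finite codimension lying in $\ker f_i^j$ for every $i,j$ simultaneously; that suffices to conclude each $f_i^j$ is representative. The natural candidate is the annihilator $J:=\{v\in U(\Fg_2)\mid v\rt X=0 \text{ for all } X\in\Fg_1\}$, and the argument will split into three short steps: first, that $\rt$ carries $U(\Fg_2)\ot\Fg_1$ into $\Fg_1$ so that $J$ is the kernel of a representation on a finite-dimensional space; second, that $J$ is a two-sided ideal; and third, that $J\subseteq\ker f_i^j$.

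First I would prove that $\rt$ preserves $\Fg_1\subseteq U(\Fg_1)$. Writing $v=Y^1\cdots Y^m\in U(\Fg_2)$ as a PBW-monomial with $Y^k\in\Fg_2$, I proceed by induction on $m$. The base case $m=1$ is precisely the matched pair axiom $\Fg_2\rt\Fg_1\subseteq\Fg_1$. For the inductive step I use that $\rt$ is a \emph{left action of the algebra} $U(\Fg_2)$ on $U(\Fg_1)$, so $(Y^1\cdots Y^m)\rt X=Y^1\rt((Y^2\cdots Y^m)\rt X)$; the inner expression lies in $\Fg_1$ by the inductive hypothesis, and then $Y^1$ sends it back into $\Fg_1$ by the base case.

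Granting this, $\rt$ descends to a linear map $\rho:U(\Fg_2)\to\End(\Fg_1)$, and $J=\ker\rho$ has codimension at most $n^2=(\dim\Fg_1)^2$. It is a two-sided ideal: for $v\in J$ and $u\in U(\Fg_2)$, $(uv)\rt X=u\rt(v\rt X)=0$, while $(vu)\rt X=v\rt(u\rt X)=0$ since $u\rt X\in\Fg_1$ is annihilated by $v$. Finally, $v\in J$ forces $f_i^j(v)=\langle v\rt X_i,\theta^j\rangle=0$, so $J\subseteq\ker f_i^j$ and $f_i^j\in R(\Fg_2)$.

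I anticipate no serious obstacle; the only substantive ingredient is recognizing that $\rt$ is genuinely an algebra action, which legitimizes the iterated application used in the induction. As a byproduct, the very same manipulation $(v^1v^2)\rt X_i=v^1\rt(v^2\rt X_i)=\sum_{j,k}f_i^k(v^2)f_k^j(v^1)X_j$ yields $f_i^j(v^1v^2)=\sum_k f_k^j(v^1)f_i^k(v^2)$, which is exactly the coproduct formula \eqref{D-f-j-i} used immediately afterward.
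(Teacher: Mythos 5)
Your proof is correct and follows essentially the same route as the paper, which sets $I_i=\{v\in U(\Fg_2)\mid v\rt X_i=0\}$ and notes that finite-dimensionality of $\Fg_1$ forces this left ideal to have finite codimension inside $\ker f_i^j$. You merely replace the individual left ideals by their intersection $J=\ker\rho$ (a two-sided ideal of codimension at most $n^2$) and, usefully, make explicit the fact the paper leaves tacit, namely that $U(\Fg_2)\rt\Fg_1\subseteq\Fg_1$, which follows by your induction from the matched-pair axiom and the left-module property of $\rt$.
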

\begin{proof}
 For $1\le i\le n$, we set $I_i=\{v\in U(\Fg_2)\mid v\rt X_i=0\}$. Since $\Fg_1$ is finite dimensional, $I_i$ is a finite codimensional left ideal of $U(\Fg_2)$ sitting in
 the null space of $f_i^j$ for any $j$.
\end{proof}
As a result, we have the following coaction admitting $f_i^j$'s as the first order matrix coefficients, c.f. \eqref{def-Db-Fg}.
\begin{align}\label{g-coaction}
\Db_{\rm Alg}:\Fg_1\ra \Fg_1\ot R(\Fg_2),&&
\Db_{\rm Alg}(X_i)=\sum_{k=1}^nX_j\ot f_i^j,
\end{align}

By the work of  Harish-Chandra  \cite{HC} we know that $R(\Fg_2)$ separates elements of $U(\Fg_2)$. This results  a nondegenerate Hopf pairing  between  $R(\Fg_2)$
and $U(\Fg_2)$.
\begin{equation}\label{F-V-pairing}
<f,v>:=f(v).
\end{equation}
We use the pairing \eqref{F-V-pairing} to define the natural action of $U(\Fg_1)$ on $R(\Fg_2)$,
\begin{equation}\label{actio-U-Uo}
U(\Fg_1)\ot R(\Fg_2) \ra R(\Fg_2), \quad <u\rt f, v>=<f, v\lt u>.
\end{equation}

         We define the new elements of $R(\Fg_2)$ by
 \begin{equation}
 f_{i_1,\dots,i_k}^j:= X_{i_k}\cdots X_{i_2}\rt f_{i_1}^j.
 \end{equation}

 \begin{lemma}\label{Lemma-structure-identity-g-1}
The coaction $\Db_{\rm Alg}$ satisfy the structure identity of $\Fg_1$.
\end{lemma}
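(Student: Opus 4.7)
The strategy is to evaluate the identity \eqref{Bianchi} on an arbitrary $v\in U(\Fg_2)$ by using the action $\rt\colon U(\Fg_2)\ot U(\Fg_1)\to U(\Fg_1)$ coming from the mutual pair of Hopf algebras $(U(\Fg_1),U(\Fg_2))$ attached to the double crossed sum $\Fg_1\bowtie\Fg_2$. A preliminary observation is that for every $X\in\Fg_1$ and every $v\in U(\Fg_2)$ the element $v\rt X$ still lies in $\Fg_1$: this holds on $\Fg_2\ot\Fg_1$ by \eqref{g-1-g-2 action}, and extends to all of $U(\Fg_2)$ by induction on monomial length since $\rt$ is a genuine left action. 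Consequently the matrix coefficients $f_i^j\in R(\Fg_2)$ of \eqref{g-coaction} record the entire extended action through $v\rt X_i=f_i^j(v)X_j$.

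Next I compute $v\rt(X_iX_j)$ in $U(\Fg_1)$ via \eqref{mutual-1}. Because $X_i$ is primitive, $\D X_i=X_i\ot 1+1\ot X_i$, and because $v\lt 1=v$ while $\ve(v\ps{1})v\ps{2}=v$, the formula collapses to
\[
v\rt(X_iX_j)=(v\ps{1}\rt X_i)(v\ps{2}\rt X_j)+(v\lt X_i)\rt X_j.
\]
Subtracting the analogous identity with $i$ and $j$ swapped and using the cocommutativity of $U(\Fg_2)$ to recognize the first pair of terms as a commutator in $U(\Fg_1)$, I obtain
\[
v\rt[X_i,X_j]=[v\ps{1}\rt X_i,\,v\ps{2}\rt X_j]+(v\lt X_i)\rt X_j-(v\lt X_j)\rt X_i.
\]

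Finally, I translate every term into the $f_i^j$'s and their $\Fg_1$-derivatives. From $[X_i,X_j]=C_{i,j}^l X_l$ and \eqref{g-coaction}, the left hand side equals $C_{i,j}^l f_l^k(v)X_k$. Expanding $v\ps{1}\rt X_i=f_i^r(v\ps{1})X_r$ and likewise for the other factor, and recognizing $f_i^r(v\ps{1})f_j^s(v\ps{2})=(f_i^rf_j^s)(v)$ as the convolution product in $R(\Fg_2)$, the commutator contributes $C_{r,s}^k(f_i^rf_j^s)(v)X_k$. By the pairing \eqref{actio-U-Uo}, $(v\lt X_i)\rt X_j=f_j^k(v\lt X_i)X_k=(X_i\rt f_j^k)(v)X_k=f_{j,i}^k(v)X_k$, and symmetrically for the swapped term. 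Comparing coefficients of $X_k$ and using the antisymmetry $-C_{r,s}^k=C_{s,r}^k$ of the structure constants delivers exactly \eqref{Bianchi}. The main technical point to watch is the preliminary closure $v\rt\Fg_1\subseteq\Fg_1$, which ensures that the $f_i^j$ alone capture the extended action; once this is in hand, the rest is a direct unfolding of \eqref{mutual-1}, Sweedler notation, and primitivity of the $X_i$.
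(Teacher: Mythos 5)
Your proposal is correct and follows essentially the same route as the paper: evaluate both sides of \eqref{Bianchi} on $v\in U(\Fg_2)$, use \eqref{mutual-1} together with primitivity of the $X_i$ and cocommutativity of $U(\Fg_2)$ to get $v\rt[X_i,X_j]=[v\ps{1}\rt X_i,v\ps{2}\rt X_j]+(v\lt X_i)\rt X_j-(v\lt X_j)\rt X_i$, and translate via \eqref{actio-U-Uo} into the $f^j_i$ and $f^k_{j,i}$. Your explicit preliminary check that $v\rt\Fg_1\subseteq\Fg_1$ is a point the paper leaves implicit in Lemma \ref{f^i_j}, but it does not change the argument.
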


\begin{proof}
We should prove that
\begin{equation}
f_{j,i}^k-f_{i,j}^k=\sum_{s,r} C_{s,r}^k f_{i}^rf_{j}^s+\sum_lC_{i,j}^lf_l^k,
\end{equation}
where $C^i_{j,k}$'s are the structure constants of $\Fg_1$.

We first use \eqref{mutual-1} to observe that
\begin{align}
\begin{split}
v\rt[X,Y]=[v\ps{1}\rt X,v\ps{2}\rt Y]+ (v\lt X)\rt Y-(v\lt Y)\rt X.
\end{split}
\end{align}

We apply two hand sides of \eqref{Bianchi} on an arbitrary element of $v\in U(\Fg_2)$ and use the above observation to finish the proof.
\begin{align}
\begin{split}
& C^l_{i,j}f^k_l(v)X_k= C^l_{i,j}v\rt X_l =v\rt[X_i, X_j]=\\
&[v\ps{1}\rt X_i, v\ps{2}\rt X_j]+ (v\lt X_i)\rt X_j-(v\lt X_j)\rt X_i=\\
& f^r_i(v\ps{1})f^s_j(v\ps{2})[X_r,X_s]+  f^k_j(v\lt X_i)X_k-f^k_i(v\lt X_j)X_k=\\
&C^k_{r,s} (f^r_if^s_j)(v)X_k+f^k_{j,i}(v)X_k- f^k_{i,j}(v)X_k.
\end{split}
\end{align}
\end{proof}

\begin{proposition}\label{Proposition-matched-Lie-Hopf-Lie}
For any matched pair of Lie algebras $(\Fg_1,\Fg_2)$, the Hopf algebra $R(\Fg_2)$ is a $\Fg_1$-Hopf algebra.
\end{proposition}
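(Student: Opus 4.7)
The plan is to verify the axioms in Definition \ref{def-Lie-Hopf} for the commutative Hopf algebra $R(\Fg_2)$ equipped with the $\Fg_1$-action \eqref{actio-U-Uo} and the coaction $\Db_{\rm Alg}$ of \eqref{g-coaction}. The structure-identity half of axiom (1) is already Lemma \ref{Lemma-structure-identity-g-1}, so the remaining tasks are: (a) closure, $X\rt f\in R(\Fg_2)$ whenever $f\in R(\Fg_2)$; (b) that $\Fg_1$ acts by derivations; (c) $\ve$-equivariance $\ve(X\rt f)=0$; and (d) $\Delta$-equivariance $\Delta(X\rt f)=X\bullet \Delta(f)$.

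Items (b) and (c) are short dualisations. For (c), using $1\lt X=\ve(X)$ from \eqref{mutual-1} together with primitivity of $X$, one has $\ve(X\rt f)=(X\rt f)(1)=f(1\lt X)=0$. For (b), the mutual-pair axioms make $U(\Fg_2)$ a right $U(\Fg_1)$-module coalgebra, so for primitive $X$ one has
\begin{equation*}
\Delta(v\lt X)=(v\ps{1}\lt X)\ot v\ps{2}+v\ps{1}\ot (v\ps{2}\lt X).
\end{equation*}
Dualising against the pairing \eqref{F-V-pairing} gives $X\rt(fg)=(X\rt f)g+f(X\rt g)$ at once.

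The heart of the proof is (d). The approach is to apply \eqref{mutual-2} to $(v^1v^2)\lt X$, expand $\Delta X=X\ot 1+1\ot X$, and use $v\rt 1=\ve(v)$ from \eqref{mutual-2} to collapse the resulting sum to $(v^1\lt(v^2\ps{1}\rt X))v^2\ps{2}+v^1(v^2\lt X)$. Applying $f$, using $f(ab)=f\ps{1}(a)f\ps{2}(b)$, and substituting $v^2\ps{1}\rt X_i=f_i^j(v^2\ps{1})X_j$ from the very definition of the first-order matrix coefficients then produces
\begin{equation*}
\Delta(X_i\rt f)=(X_j\rt f\ps{1})\ot (f_i^j f\ps{2})+f\ps{1}\ot (X_i\rt f\ps{2}),
\end{equation*}
which is precisely $X_i\bullet \Delta(f)$ read off from \eqref{g-coaction}.

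The main obstacle is (a), the closure of the $\Fg_1$-action. The coproduct formula above already does most of the work: the $f_i^j$ are representative by Lemma \ref{f^i_j} and $R(\Fg_2)$ is a subalgebra, so the right tensor factor of $\Delta(X\rt f)$ lies in $R(\Fg_2)\ot R(\Fg_2)$. The remaining piece $X_j\rt f\ps{1}$ is handled by induction on the dimension of the smallest subcoalgebra of $R(\Fg_2)$ containing $f$, using the coefficient-space characterization of representative functions together with finite-dimensionality of $\Fg_1$ to keep the $\Fg_1$-orbit inside a finite-dimensional subcoalgebra. Combining (a)--(d) with Lemma \ref{Lemma-structure-identity-g-1} then establishes the proposition.
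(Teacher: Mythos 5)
Your proof follows the paper's own argument: the structure identity is delegated to Lemma \ref{Lemma-structure-identity-g-1}, the counit condition is checked by $\ve(X\rt f)=(X\rt f)(1)=f(1\lt X)=0$, and the $\Delta$-equivariance is obtained by evaluating $\Delta(X\rt f)$ on $v^1\ot v^2$, expanding $(v^1v^2)\lt X$ via \eqref{mutual-2} with $X$ primitive, and substituting $v\ps{1}\rt X_i=f_i^j(v\ps{1})X_j$ --- exactly the computation in the paper. The only difference is that you additionally verify that the action preserves $R(\Fg_2)$ and acts by derivations, two points the paper absorbs into the definition of the action \eqref{actio-U-Uo} without comment; your dualization argument for the derivation property is correct, and your finite-dimensionality argument for closure, though only sketched, is along sound lines.
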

\begin{proof}
Lemma \ref{Lemma-structure-identity-g-1} proves that the coaction $\Db_{\rm Alg}$ satisfies the structure identity of $\Fg_1$. So, by using Theorem \ref{Theorem-Lie-Hopf-matched-pair} it suffices to prove that $\ve(X\rt f)=0$ and $\D(X\rt f)=X\bullet \D(f)$. We observe that $\ve(X\rt f)= (X\rt f)(1)= f(1\lt X)=0$.
Then it remains to show that $\D(X\rt f)=X\bullet\D(f)$. Indeed,
\begin{align}
\begin{split}
&\D(X\rt f)(v^1\ot v^2)= X\rt f(v^1v^2)= f(v^1v^2\lt X)= \\
&f(v^1\lt(v^2\ps{1}\rt X)v^2\ps{2})+ f(v^1(v^2\rt X))=\\
&f\ps{1}(v^1\lt(X\ns{1})(v^2\ps{1}) X\ns{0}) f\ps{2}(v^2\ps{2}) + f\ps{1}(v^1) f\ps{2}(v^2\lt X)=\\
&(X\ns{0}\rt f\ps{1})(v^1)X\ns{1}(v^2\ps{1}) f\ps{2}(v^2\ps{2}) + f\ps{1}(v^1) (X\rt f\ps{2})(v^2)=\\
&(X\bullet \D(f))(v^1\ot v^2).
\end{split}
\end{align}
\end{proof}
We summarize   the main result  of this section as follows.
\begin{theorem}\label{U-V-o-Theorem}
Let $(\Fg_1, \Fg_2)$ be a  matched pair Lie algebras. Then,   via  the canonical action and coaction defined in  \eqref{actio-U-Uo} and   \eqref{g-coaction} respectively,  $(U(\Fg_1), R(\Fg_2))$ is a matched pair of Hopf algebras.
\end{theorem}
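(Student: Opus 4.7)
The plan is to recognize this theorem as an essentially immediate consequence of the machinery developed in Subsection \ref{SS-Lie-Hopf}. First I would invoke Proposition \ref{Proposition-matched-Lie-Hopf-Lie}, which states that $R(\Fg_2)$, equipped with the action \eqref{actio-U-Uo} of $\Fg_1$ (by restriction from $U(\Fg_1)$) and the coaction \eqref{g-coaction} on $\Fg_1$, is a $\Fg_1$-Hopf algebra in the sense of Definition \ref{def-Lie-Hopf}. Then I would apply the forward direction of Theorem \ref{Theorem-Lie-Hopf-matched-pair}, which takes any commutative $\Fg$-Hopf algebra $\Fc$ to a matched pair of Hopf algebras $(U(\Fg), \Fc)$. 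Specialised to $\Fg = \Fg_1$ and $\Fc = R(\Fg_2)$, this delivers exactly the matched pair statement in question.

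The one point requiring a small amount of unpacking is the coaction side. The coaction in \eqref{g-coaction} is defined only on $\Fg_1$, whereas the matched pair structure requires a coaction $\Db_\Uc : U(\Fg_1) \to U(\Fg_1) \ot R(\Fg_2)$. Theorem \ref{Theorem-Lie-Hopf-matched-pair} resolves this by extending inductively using the multiplicativity rule \eqref{mp4}, the extension being well-defined by Lemma \ref{U-coaction}. For the statement of Theorem \ref{U-V-o-Theorem} to be natural, I would additionally verify that this inductive extension coincides with the coaction obtained by duality from the right action of $U(\Fg_1)$ on $U(\Fg_2)$ provided by the mutual pair structure of the previous subsection, namely the unique $\Db_\Uc$ satisfying $\langle \Db_\Uc(u), v \rangle = v \lt u$ under the pairing \eqref{F-V-pairing}. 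This identification follows from the mutual pair axiom \eqref{mutual-2} together with the fact that \eqref{F-V-pairing} is a nondegenerate Hopf pairing, by a routine induction on the Poincar\'e--Birkhoff--Witt filtration degree of $u \in U(\Fg_1)$ with the base case $u \in \Fg_1$ given by \eqref{g-coaction}.

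With these two ingredients in place, there is no serious obstacle left; the axioms \eqref{mp1}--\eqref{mp5} and \eqref{cc1}--\eqref{cc2} that define a matched pair are already built into the $\Fg_1$-Hopf algebra structure by Proposition \ref{Proposition-matched-Lie-Hopf-Lie} and the general Theorem \ref{Theorem-Lie-Hopf-matched-pair}. The only part that might look delicate is ensuring that the action of $U(\Fg_1)$ on $R(\Fg_2)$ defined by duality in \eqref{actio-U-Uo} really restricts on $\Fg_1$ to derivations of the commutative algebra $R(\Fg_2)$; this is immediate since $X \in \Fg_1$ is primitive in $U(\Fg_1)$ and $U(\Fg_2)$ is a right $U(\Fg_1)$-module coalgebra, so dualising the coproduct on $U(\Fg_2)$ converts the coaction of $X$ into a Leibniz rule on $R(\Fg_2)$.
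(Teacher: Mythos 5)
Your proposal is correct and follows exactly the paper's own two-step argument: cite Proposition \ref{Proposition-matched-Lie-Hopf-Lie} to establish that $R(\Fg_2)$ is a $\Fg_1$-Hopf algebra, then apply Theorem \ref{Theorem-Lie-Hopf-matched-pair}. The additional verification you sketch (that the inductive extension of the coaction via \eqref{mp4} agrees with the one dual to the right action of $U(\Fg_1)$ on $U(\Fg_2)$) is a reasonable sanity check but goes beyond what the paper records, which stops after the two citations.
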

\begin{proof}
By Proposition \ref{Proposition-matched-Lie-Hopf-Lie},  $R(\Fg_2)$ is a $\Fg_1$-Hopf algebra. One  then applies Theorem \ref{Theorem-Lie-Hopf-matched-pair}.
\end{proof}
As a result of Theorem \ref{U-V-o-Theorem}, to any  matched pair of Lie algebras $(\Fg_1,\Fg_2)$, we canonically  associate the  Hopf algebra
\begin{equation}
\Hc(\Fg_1,\Fg_2):=R(\Fg_2)\acl U(\Fg_1).
\end{equation}

\subsection{Matched pair of  Hopf algebras associated to matched pair of Lie groups}
\label{SS-Lie group}
In this subsection, our aim is to associate a bicrossed product Hopf algebra to any matched pair of Lie groups. We first recall the notion of matched pair of Lie groups.
Let $(G_1,G_2)$ be a pair of Lie groups with mutual smooth actions $\rhd: G_2 \times G_1 \to G_1$ and $\lhd: G_2 \times G_1 \to G_2$. Then $(G_1,G_2)$ is called a matched pair of Lie groups provided for any $\varphi, \varphi_1, \varphi_2 \in G_1$ and any $\psi, \psi_1, \psi_2 \in G_2$ the following compatibilities are satisfied
\begin{align}\label{matched-pair-general}
\begin{split}
&\psi \rhd \varphi_1\varphi_2 = (\psi \rhd \varphi_1)((\psi \lhd \varphi_1) \rhd \varphi_2), \qquad \psi \rhd e = e\\
&\psi_1\psi_2 \lhd \varphi = (\psi_1 \lhd (\psi_2 \rhd \varphi))(\psi_2 \lhd \varphi), \qquad e \lhd \varphi = e
\end{split}
\end{align}
Let  $\Fg_1$ and $\Fg_2$ be the Lie algebras of $G_1$ and $G_2$ respectively. One defines the action of $\Fg_1$ on $\Fg_2$  by taking derivative of the action of $G_1$ on $G_2$. Similarly, one defines the corresponding action of $\Fg_2$ on $\Fg_1$ and shows with a routine argument that if $(G_1,G_2)$ is a matched pair of Lie groups, then $(\Fg_1,\Fg_2)$ is a matched pair of Lie algebras.

Let $G$ be a Lie group and $\rho: G \to GL(V)$ a finite dimensional smooth representation. Then, the composition $\pi \circ \rho$ is called a representative function of $G$, where   $\pi \in End(V)^*$ is a linear functional.

We have the following characterization for the representative function due to \cite{Hochschild-Mostow-61}. For a smooth function  $f:G \to \mathbb{C}$ the following are equivalent:
1) $f$ is a representative function. 2) The right translates $\{f \cdot \psi | \psi \in G\}$ span a finite dimensional vector space over $\mathbb{C}$. 3) The left translates $\{\psi \cdot f | \psi \in G\}$ span a finite dimensional vector space over $\mathbb{C}$.
 Here the left and the right translation actions of $G_2$ on $R(G_2)$ are defined  by
\begin{equation}\label{auxiliary-action}
\psi \cdot f := f\ps{1}f\ps{2}(\psi), \quad f \cdot \psi := f\ps{1}(\psi)f\ps{2}.
\end{equation}

It is well-known that the representative functions form a commutative Hopf algebra via
\begin{align}
&(f^1f^2)(\psi)=f^1(\psi)f^2(\psi), \quad 1_{R(G)}(\psi)=1,\\
&\D(f)(\psi_1,\psi_2)=f(\psi_1\psi_2), \quad \ve(f)=f(e),\\
&S(f)(\psi)=f(\psi^{-1}).
\end{align}
 Our main objective now is  to prove that $R(G_2)$ is a $\Fg_1-$Hopf algebra.

  We first define a right coaction of $R(G_2)$ on $\Fg_1$. To do so, we differentiate the right action of $G_2$ on $G_1$ to get a right action of $G_2$ on $\Fg_1$. We then  introduce the functions $f_i^j:G_2 \to \mathbb{C}$.
\begin{align}\label{psi-act-X-j}
\psi \rhd X_i = X_jf_i^j(\psi), \quad X\in \Fg_1, \psi\in G_2.
\end{align}

\begin{lemma}\label{Lemma f-i-j-in-R(G)}
The functions $f_i^j:G_2 \to \mathbb{C}$ defined above are representative functions.
\end{lemma}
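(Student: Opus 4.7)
The plan is to invoke the Hochschild--Mostow characterization recalled just before the lemma: it suffices to show that the right $G_2$-translates of each $f_i^j$ span a finite-dimensional subspace of $C^\infty(G_2)$. The key is to exhibit an explicit ``matrix coefficient'' identity
\begin{equation}\label{mc-identity}
f_i^j(\psi_1\psi_2) \;=\; \sum_{k=1}^n f_k^j(\psi_1)\,f_i^k(\psi_2),
\end{equation}
from which $f_i^j\cdot\psi_1 = \sum_k f_k^j(\psi_1)\,f_i^k$ lies in the $n$-dimensional span of $\{f_i^1,\ldots,f_i^n\}$.

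First I would recall that, in a matched pair of Lie groups, the map $\rhd\colon G_2\times G_1\to G_1$ is by hypothesis a smooth (left) action of $G_2$ on $G_1$, and that $\psi\rhd e = e$ is part of the matched pair axioms \eqref{matched-pair-general}. Differentiating the smooth diffeomorphism $\varphi\mapsto \psi\rhd\varphi$ at the identity $e\in G_1$ produces a linear automorphism of $\Fg_1$, denoted again $\psi\rhd(-)$; smoothness of $\rhd$ in both arguments shows that the matrix entries $f_i^j$ defined by $\psi\rhd X_i=\sum_j X_j\,f_i^j(\psi)$ are smooth functions on $G_2$. The action property $(\psi_1\psi_2)\rhd\varphi = \psi_1\rhd(\psi_2\rhd\varphi)$ at the group level, combined with $\psi_2\rhd e=e$, yields by the chain rule the corresponding identity at the Lie algebra level,
\begin{equation}
(\psi_1\psi_2)\rhd X_i \;=\; \psi_1\rhd(\psi_2\rhd X_i).
\end{equation}

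Expanding both sides in the basis $X_1,\ldots,X_n$ gives \eqref{mc-identity}: the left side equals $\sum_j X_j\,f_i^j(\psi_1\psi_2)$, while the right side equals $\sum_k f_i^k(\psi_2)\,\psi_1\rhd X_k = \sum_{j,k} X_j\,f_k^j(\psi_1)\,f_i^k(\psi_2)$. Comparing coefficients of $X_j$ proves \eqref{mc-identity}. Consequently, for any $\psi_1\in G_2$, the right translate $f_i^j\cdot\psi_1$ is a linear combination of $f_i^1,\ldots,f_i^n$, so the space of right translates of $f_i^j$ is at most $n$-dimensional. The Hochschild--Mostow characterization then immediately gives $f_i^j\in R(G_2)$.

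There is no serious obstacle here; the only point that requires a moment of care is verifying that the differentiated $\rhd$ is genuinely a $G_2$-action on $\Fg_1$, which uses the fact $\psi\rhd e=e$ built into the matched pair axioms. Everything else is a transparent matrix-coefficient computation parallel to the algebraic case (Lemma \ref{f^i_j}), and it also prepares the coaction $\Db_{\rm Grp}\colon \Fg_1\to\Fg_1\otimes R(G_2)$, $X_i\mapsto \sum_j X_j\otimes f_i^j$, whose coassociativity is an immediate rephrasing of \eqref{mc-identity}.
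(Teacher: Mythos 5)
Your proposal is correct and follows essentially the same route as the paper: differentiate the $G_2$-action on $G_1$ at $e$, use $(\psi_1\psi_2)\rhd X_i=\psi_1\rhd(\psi_2\rhd X_i)$ to obtain the matrix-coefficient identity $f_i^j(\psi_1\psi_2)=\sum_k f_k^j(\psi_1)f_i^k(\psi_2)$, and conclude via the Hochschild--Mostow characterization that the translates of $f_i^j$ span a finite-dimensional space. The only cosmetic difference is that the paper phrases the conclusion in terms of the translates $\psi_2\cdot f_i^j\in\mathrm{span}\{f_l^j\}$ while you use $f_i^j\cdot\psi_1\in\mathrm{span}\{f_i^k\}$; both are covered by the equivalence recalled before the lemma.
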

\begin{proof}
For $\psi_1,\psi_2 \in G_2$, we observe
\begin{align}
\begin{split}
& X_jf_i^j(\psi_1\psi_2) = \psi_1\psi_2 \rhd X_i = \psi_1 \rhd (\psi_2 \rhd X_i) = \\
 & \psi_1 \rhd X_lf_i^l(\psi_2) = X_jf_l^j(\psi_1)f_i^l(\psi_2),
 \end{split}
\end{align}
Therefore,
\begin{align}
\psi_2 \cdot f_i^j = f_i^l(\psi_2)f_l^j
\end{align}
In other words, $\psi_2 \cdot f_i^j \in span\{f_i^j\}$ for any $\psi_2 \in G_2$.
\end{proof}

As a direct consequence of Lemma \ref{Lemma f-i-j-in-R(G)}, the equation \eqref{psi-act-X-j} defines a coaction:
\begin{proposition}
The map $\Db_{\rm Gr}: \Fg_1 \to \Fg_1 \otimes R(G_2)$ defined by
\begin{align}\label{coactionforliegroup}
\Db_{\rm Gr}(X_i):= X_j \otimes f_i^j
\end{align}
is a right coaction of $R(G_2)$ on $\Fg_1$.
\end{proposition}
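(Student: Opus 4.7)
The plan is to verify directly the two coaction axioms, namely the counit axiom $(\mathrm{id}_{\Fg_1} \otimes \varepsilon) \circ \Db_{\rm Gr} = \mathrm{id}_{\Fg_1}$ and the coassociativity $(\Db_{\rm Gr} \otimes \mathrm{id}_{R(G_2)}) \circ \Db_{\rm Gr} = (\mathrm{id}_{\Fg_1} \otimes \Delta) \circ \Db_{\rm Gr}$. The central observation is that the matrix coefficients $f_i^j$ are essentially determined by the fact that $\rhd$ is an action of $G_2$ on $\Fg_1$ (obtained by differentiating the smooth action of $G_2$ on $G_1$), so both axioms will fall out of the group action identities evaluated on the basis $X_1, \dots, X_n$ of $\Fg_1$.

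For the counit axiom, I would first note that since $e \in G_2$ acts trivially, $e \rhd X_i = X_i$ for all $i$. Plugging $\psi = e$ into \eqref{psi-act-X-j} yields $X_j f_i^j(e) = X_i$, hence $f_i^j(e) = \delta_i^j$. Recalling that $\varepsilon(f) = f(e)$ on $R(G_2)$, this immediately gives
\begin{equation}
(\mathrm{id} \otimes \varepsilon)\,\Db_{\rm Gr}(X_i) \;=\; X_j\,\varepsilon(f_i^j) \;=\; X_j\,\delta_i^j \;=\; X_i.
\end{equation}

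For coassociativity, I would reuse the computation already carried out inside the proof of Lemma \ref{Lemma f-i-j-in-R(G)}. From $(\psi_1\psi_2) \rhd X_i = \psi_1 \rhd (\psi_2 \rhd X_i)$ and the linear independence of the $X_j$, one obtains
\begin{equation}
f_i^j(\psi_1\psi_2) \;=\; f_l^j(\psi_1)\, f_i^l(\psi_2),
\end{equation}
which, by the definition $\Delta(f)(\psi_1,\psi_2) = f(\psi_1\psi_2)$, is exactly the coproduct formula $\Delta(f_i^j) = f_l^j \otimes f_i^l$. With this in hand, both sides of the coassociativity axiom equal $X_k \otimes f_j^k \otimes f_i^j$ after an obvious relabeling of summation indices, so they agree.

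There is no substantial obstacle here: the content of the proposition is essentially that the associativity and unitality of the group action $\rhd$ translate, under \eqref{psi-act-X-j}, into coassociativity and counitality of $\Db_{\rm Gr}$. The only subtlety worth flagging is that one must know the $f_i^j$ are actually elements of $R(G_2)$ (not merely smooth functions on $G_2$) for the expression $X_j \otimes f_i^j$ to live in $\Fg_1 \otimes R(G_2)$; this has already been secured by Lemma \ref{Lemma f-i-j-in-R(G)}.
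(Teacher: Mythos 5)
Your proof is correct and follows exactly the route the paper intends: the authors state the proposition as a "direct consequence" of Lemma \ref{Lemma f-i-j-in-R(G)} without writing out the verification, and your argument simply makes explicit what they leave implicit, namely that the identity $f_i^j(\psi_1\psi_2)=f_l^j(\psi_1)f_i^l(\psi_2)$ established in that lemma gives coassociativity while $e\rhd X_i=X_i$ gives the counit axiom. Nothing is missing.
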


 Let us recall the natural left action of $G_1$ on $C^{\infty}(G_2)$ defined by $\vp\rt f(\psi):= f(\psi\lt \vp)$,  and define the derivative of this action by
\begin{align}\label{actionforliegroup}
X \rhd f := \dt  {\rm exp}(tX) \rhd f, \quad X\in \Fg_1, f\in R(G_2).
\end{align}
  In fact, considering $R(G_2) \subseteq C^{\infty}(G_2)$, this is nothing but $d_e \rho(X)|_{R(G_2)}$, derivative of the representation $\rho: G_1 \to GL(C^{\infty}(G_2))$ at identity.

\begin{lemma}
For any $X \in \Fg_1$ and any $f \in R(G_2)$, we have $X \rhd f \in R(G_2)$. Moreover we have
\begin{equation}\label{Db-equivariancy}
\D(X\rt f)= X\ns{0}\rt f\ps{1}\ot X\ns{1}f\ps{2}+ f\ps{1}\ot X\rt f\ps{2}.
\end{equation}
\end{lemma}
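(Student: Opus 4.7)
The plan is to derive the coproduct identity \eqref{Db-equivariancy} by a direct calculation from the matched pair compatibility $(\psi_1\psi_2)\lt \vp = (\psi_1\lt(\psi_2\rt \vp))(\psi_2\lt\vp)$ in \eqref{matched-pair-general}, and then to read off $X\rt f\in R(G_2)$ from the fact that the resulting formula exhibits $(X\rt f)(\psi_1\psi_2)$ as a finite sum of products of functions of $\psi_1$ alone and $\psi_2$ alone. Since the Hopf coproduct on $R(G_2)$ is characterized by $\D(g)(\psi_1,\psi_2)=g(\psi_1\psi_2)$, it suffices to compute $(X\rt f)(\psi_1\psi_2)$ for arbitrary $\psi_1,\psi_2\in G_2$.

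By \eqref{actionforliegroup} together with the matched pair axiom,
\[
(X\rt f)(\psi_1\psi_2) \;=\; \dt f\bigl((\psi_1\lt (\psi_2\rt \exp(tX)))\,(\psi_2\lt \exp(tX))\bigr).
\]
Expanding $f$ through the product via $\D(f)=f\ps{1}\ot f\ps{2}$ and applying the Leibniz rule at $t=0$ splits this derivative into two terms. Differentiating the second factor yields $f\ps{1}(\psi_1)(X\rt f\ps{2})(\psi_2)$ directly from the definition \eqref{actionforliegroup}. For the first factor I would use the chain rule along the curve $\g(t):=\psi_2\rt \exp(tX)$ in $G_1$, which starts at $e$ with tangent vector $\psi_2\rt X$; by the defining formula \eqref{coactionforliegroup} for the coaction, $\psi_2\rt X = X\ns{0}\,X\ns{1}(\psi_2)$. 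Combining the chain rule, linearity of $\rt$, and \eqref{actionforliegroup} then gives $\dt f\ps{1}(\psi_1\lt \g(t)) = X\ns{1}(\psi_2)(X\ns{0}\rt f\ps{1})(\psi_1)$, and multiplying by the value $f\ps{2}(\psi_2)$ of the second factor at $t=0$ produces the contribution $(X\ns{0}\rt f\ps{1})(\psi_1)(X\ns{1}f\ps{2})(\psi_2)$.

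Summing the two contributions writes $(X\rt f)(\psi_1\psi_2)$ as a finite sum of products of functions of $\psi_1$ and $\psi_2$ separately, the finiteness coming both from $f\in R(G_2)$ (only finitely many terms in $\D(f)=f\ps{1}\ot f\ps{2}$) and from $\dim\Fg_1<\infty$ (only finitely many terms in the coaction sum of \eqref{coactionforliegroup}). This decomposition simultaneously certifies $X\rt f\in R(G_2)$ via the coproduct characterization of representative functions and identifies the coproduct as the right-hand side of \eqref{Db-equivariancy}. The step requiring the most care is the chain-rule computation for the first factor: I must justify commuting the derivative with the outer composition $\psi_1\lt(\cdot)$ and invoke the defining property of the coaction $\Db_{\rm Gr}$ to recognize the tangent vector of $t\mapsto \psi_2\rt \exp(tX)$ at $0$ as $X\ns{0}\,X\ns{1}(\psi_2)$, both of which rely on the smoothness of the matched pair actions and on \eqref{coactionforliegroup}.
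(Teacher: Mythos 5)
Your proposal is correct and follows essentially the same route as the paper: apply the matched pair identity to $f(\psi_1\psi_2\lt\exp(tX))$, split the $t$-derivative by Leibniz, identify the tangent of $t\mapsto\psi_2\rt\exp(tX)$ as $\psi_2\rt X=X\ns{0}X\ns{1}(\psi_2)$ to get the first term, and read off both membership in $R(G_2)$ (the paper phrases this via finite-dimensionality of the span of left translates, which is what your product decomposition gives) and the coproduct formula. No gaps.
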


\begin{proof}
For any $\psi_1,\psi_2 \in G_2$, by using  the fact that ${(\rt )\circ \rm exp }={\rm exp}\circ {d_e\rt}$, we observe
\begin{align}
\begin{split}
& (X \rhd f)(\psi_1\psi_2) = \dt  f(\psi_1\psi_2 \lhd {\rm exp}(tX)) = \\
& \dt  f((\psi_1 \lhd (\psi_2 \rhd {\rm exp}(tX)))(\psi_2 \lhd {\rm exp}(tX))) = \\
 & \dt  f((\psi_1 \lhd (\psi_2 \rhd {\rm exp}(tX))\psi_2) + \dt f(\psi_1(\psi_2 \lhd {\rm exp}(tX))) = \\
 & \dt  (\psi_2 \cdot f)(\psi_1 \lhd (\psi_2 \rhd {\rm exp}(tX))) + \dt  (f \cdot \psi_1)(\psi_2 \lhd {\rm exp}(tX))=\\
 & f\ps{2}(\psi_2)((\psi_2 \rhd X) \rhd f\ps{1})(\psi_1) + f\ps{1}(\psi_1)(X \rhd f\ps{2})(\psi_2).
 \end{split}
\end{align}
Which shows
\begin{align}
\psi_2 \cdot (X \rhd f) = f\ps{2}(\psi_2)(\psi_2 \rhd X) \rhd f\ps{1} + (X \rhd f\ps{2})(\psi_2)f\ps{1}.
\end{align}
We conclude that $\psi_2 \cdot (X \rhd f) \in span\{X\ns{0} \rhd f\ps{1}, f\ps{1}\},$
that is, left translates of $X \rhd f$ span a finite dimensional vector space. $\Fg_1$-linearity  of $\D$ is shown by
\begin{align*}
& f\ps{2}(\psi_2)((\psi_2 \rhd X) \rhd f\ps{1})(\psi_1) + f\ps{1}(\psi_1)(X \rhd f\ps{2})(\psi_2)=\\
 &(X\ns{0}\rt f\ps{1})(\psi_1) X\ns{1}(\psi_2)f\ps{2}(\psi_2)+ f\ps{1}(\psi_1)(X \rhd f\ps{2})(\psi_2).
 \end{align*}
\end{proof}

\begin{proposition}
The map $\Fg_1 \otimes R(G_2) \to R(G_2)$ is a left action.
\end{proposition}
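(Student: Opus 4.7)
The plan is to recognize the map $(X, f) \mapsto X \rhd f$ as the infinitesimal version of a genuine $G_1$-representation, and then to invoke the standard Lie group–Lie algebra correspondence. Linearity in both $X$ and $f$ is immediate from the definition
\begin{equation*}
X \rhd f \,:=\, \dt \exp(tX) \rhd f,
\end{equation*}
so the real content is the bracket identity
\begin{equation*}
[X,Y] \rhd f \,=\, X \rhd (Y \rhd f) - Y \rhd (X \rhd f), \qquad X,Y \in \Fg_1,\; f \in R(G_2).
\end{equation*}

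First, I would verify that the formula $(\vp \rhd f)(\psi) := f(\psi \lhd \vp)$ defines an honest left action of $G_1$ on $C^\infty(G_2)$; this follows directly from the fact that $\lhd$ is a right action of $G_1$ on $G_2$ (part of the matched pair data). The previous lemma then shows this $G_1$-action preserves $R(G_2)$, giving a representation $\rho : G_1 \to \mathrm{Aut}(R(G_2))$. By the Hochschild–Mostow characterization of representative functions recalled earlier, for each $f \in R(G_2)$ the orbit $\{\vp \rhd f \mid \vp \in G_1\}$ spans a \emph{finite-dimensional} $G_1$-invariant subspace $V_f \subseteq R(G_2)$.

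Second, I would restrict $\rho$ to $V_f$ to obtain a finite-dimensional representation $\rho_f : G_1 \to GL(V_f)$. Smoothness of $\rho_f$ reduces, after choosing a basis of $V_f$ and a finite collection of separating points of $G_2$, to smoothness of the maps $\vp \mapsto f_i(\psi_j \lhd \vp)$, which is immediate from the smoothness of the matched pair action $\lhd$. By the classical Lie group–Lie algebra correspondence, the derivative $d_e \rho_f : \Fg_1 \to \End(V_f)$ is then a Lie algebra homomorphism. Since $X \rhd f$ coincides by construction with $\bigl(d_e\rho_f(X)\bigr)(f)$, and every $f \in R(G_2)$ lies in some such $V_f$, the bracket identity holds on all of $R(G_2)$.

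The main technical obstacle is ensuring that $\rho_f$ is genuinely a smooth finite-dimensional representation so that the standard Lie functoriality argument applies; this is what makes the passage to the invariant subspace $V_f$ essential, since $R(G_2)$ itself is typically infinite-dimensional and one does not want to differentiate a representation on an infinite-dimensional space directly. Once this reduction is made, the rest of the argument is a formal application of Lie theory combined with the previous lemma.
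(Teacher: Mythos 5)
Your reduction to a finite-dimensional representation $\rho_f:G_1\to GL(V_f)$ rests on the claim that the orbit $\{\vp\rhd f\mid \vp\in G_1\}$ spans a finite-dimensional subspace of $R(G_2)$, and you attribute this to the Hochschild--Mostow characterization of representative functions. That characterization, however, concerns the left and right \emph{translates of $f$ by elements of $G_2$}, i.e.\ the maps $\psi\mapsto f(\psi\psi')$ and $\psi\mapsto f(\psi'\psi)$. The action $(\vp\rhd f)(\psi)=f(\psi\lhd\vp)$ is composition with the map $\psi\mapsto\psi\lhd\vp$, which is not a translation of $G_2$, so the characterization says nothing about the $G_1$-orbit of $f$. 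Likewise, the preceding lemma in the paper only establishes that the \emph{infinitesimal} action preserves $R(G_2)$, i.e.\ $X\rhd f\in R(G_2)$ for $X\in\Fg_1$; it does not show that $\vp\rhd f\in R(G_2)$ for $\vp\in G_1$, let alone that the $G_1$-action on $R(G_2)$ is locally finite. (A short computation with the matched pair identity shows that the $G_2$-translates of $\vp\rhd f$ land in the span of $\{\vp'\rhd f\ps{1}\mid\vp'\in G_1\}$, so one would need exactly the local finiteness you are trying to establish in order to even get $\vp\rhd f\in R(G_2)$.) This is a genuine gap: the finite-dimensional invariant subspace $V_f$, which you correctly identify as the crux of your argument, is not available.

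The paper avoids this issue entirely by proving the bracket identity directly and pointwise: using $\Ad\circ\exp=\exp\circ\ad$, one writes
$[X,Y]\rhd f=\dt\,\Ad_{\exp(tX)}(Y)\rhd f=\dt\ds\,\exp(tX)\exp(sY)\exp(-tX)\rhd f$,
and expands this into $X\rhd(Y\rhd f)-Y\rhd(X\rhd f)$ by differentiating the smooth function $(t,s)\mapsto f\bigl(\psi\lhd(\exp(tX)\exp(sY)\exp(-tX))\bigr)$ for each fixed $\psi$. This only uses that $\rhd$ is a genuine left action of the group $G_1$ on $C^{\infty}(G_2)$ (which is immediate from $\lhd$ being a right action) together with the chain rule, and requires no local finiteness. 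If you want to salvage your approach, you would have to first prove that the $G_1$-action on $R(G_2)$ is locally finite, which is a substantive additional statement not contained in the cited results; otherwise the direct computation is the right route.
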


\begin{proof}
Using that fact that $\Ad\; \circ\; {\rm exp}= {\rm exp}\; \circ\; ad$, we prove the compatibility of the action and the bracket,
\begin{align}
\begin{split}
& [X,Y] \rhd f = \dt  {\rm exp}([tX,Y]) \rhd f = \\
 & \dt  Ad_{{\rm exp}(tX)}(Y) \rhd f = \\
 & \dt \ds  {\rm exp}(tX){\rm exp}(sY){\rm exp}(-tX) \rhd f = \\
 & \dt \ds {\rm exp}(tX){\rm exp}(sY) \rhd f - \dt \ds {\rm exp}(sY){\rm exp}(tX) \rhd f = \\
 & X \rhd (Y \rhd f) - Y \rhd (X \rhd f).
 \end{split}
\end{align}
\end{proof}

\begin{lemma}\label{Lemma-Lie-group-Bianchi}
The coaction $\Db_{Gr}:\Fg_1 \to \Fg_1 \ot R(G_2)$ satisfies the structure identity of $\Fg_1$.
\end{lemma}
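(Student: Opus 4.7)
The plan is to adapt the proof of Lemma \ref{Lemma-structure-identity-g-1} to the Lie group setting, with the matched pair axioms \eqref{matched-pair-general} playing the role of the Lie algebra mutual pair relation \eqref{mutual-1}. Since $R(G_2) \subseteq C^\infty(G_2)$, it suffices to check the structure identity
$$f_{j,i}^k - f_{i,j}^k = \sum_{s,r} C^k_{s,r} f_i^r f_j^s + \sum_l C^l_{i,j} f_l^k$$
pointwise at an arbitrary $\psi \in G_2$, and to unravel the two sides via \eqref{coactionforliegroup} and \eqref{actionforliegroup}. Writing $\t^k$ for the basis of $\Fg_1^\ast$ dual to $X_k$,
$$f_{j,i}^k(\psi) = (X_i \rhd f_j^k)(\psi) = \dt f_j^k(\psi \lhd \exp(tX_i)) = \t^k\!\left(\dt\bigl((\psi \lhd \exp(tX_i)) \rhd X_j\bigr)\right),$$
so the left-hand side is the $X_k$-component of the $\Fg_1$-valued expression $\dt\bigl((\psi \lhd \exp(tX_i)) \rhd X_j\bigr) - \dt\bigl((\psi \lhd \exp(tX_j)) \rhd X_i\bigr)$. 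On the other side, substituting $\psi \rhd X_l = f_l^m(\psi)X_m$ identifies $\sum_l C^l_{i,j} f_l^k(\psi)X_k = \psi \rhd [X_i,X_j]$ and $\sum_{s,r} C^k_{s,r} f_i^r(\psi) f_j^s(\psi)X_k = -[\psi \rhd X_i,\, \psi \rhd X_j]$, so the right-hand side at $\psi$ is the $X_k$-component of $\psi \rhd [X_i,X_j] - [\psi \rhd X_i,\, \psi \rhd X_j]$.

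The crux is therefore to establish the Lie group analog of \eqref{mutual-1}, namely
$$\psi \rhd [X_i,X_j] = [\psi \rhd X_i,\; \psi \rhd X_j] + \dt\bigl((\psi \lhd \exp(tX_i)) \rhd X_j\bigr) - \dt\bigl((\psi \lhd \exp(tX_j)) \rhd X_i\bigr).$$
I would derive this by substituting $\varphi_1 = \exp(tX_i)$ and $\varphi_2 = \exp(sX_j)$ into the matched pair axiom $\psi \rhd (\varphi_1\varphi_2) = (\psi \rhd \varphi_1)((\psi \lhd \varphi_1) \rhd \varphi_2)$ of \eqref{matched-pair-general}, applying $\partial_t\partial_s|_{t=s=0}$, and antisymmetrizing in $(i,j)$ to isolate the bracket via the standard commutator formula for $[X_i,X_j]$. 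Combined with the unraveling above, this yields the desired pointwise identity at $\psi$.

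The main obstacle is the Leibniz-rule bookkeeping in this differentiation: the factor $\psi \lhd \varphi_1$ contributes correction terms under $\partial_t$, and one must interpret $\dt(\psi \lhd \exp(tX_i)) \in T_\psi G_2$ as a tangent vector acting via $\rhd$ on the smooth $\Fg_1$-valued map $\psi' \mapsto \psi' \rhd X_j$, mirroring the $v\lhd X$ terms in \eqref{mutual-1}. A cleaner alternative that avoids all analysis is descent to Lie algebras: differentiating $(G_1,G_2)$ yields the matched pair $(\Fg_1,\Fg_2)$ whose matrix coefficients $\tilde f_i^j \in R(\Fg_2)$ already satisfy the structure identity by Lemma \ref{Lemma-structure-identity-g-1}; since $G_2$ is connected, evaluation along the natural inclusion $G_2 \hookrightarrow \widehat{U(\Fg_2)}$ matches $\tilde f_i^j$ with $f_i^j$ and transports the identity to $R(G_2)$.
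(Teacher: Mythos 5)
Your proposal is correct and follows essentially the same route as the paper: the paper's proof also reduces everything to the pointwise identity $\psi \rhd [X_i,X_j] = [\psi \rhd X_i, \psi \rhd X_j] + \dt\bigl((\psi \lhd \exp(tX_i)) \rhd X_j\bigr) - \ds\bigl((\psi \lhd \exp(sX_j)) \rhd X_i\bigr)$ (obtained there by realizing $\Fg_1$ as derivations on $C^{\infty}(G_1)$ and using $\Ad\circ\exp=\exp\circ\ad$, i.e.\ the same differentiation of the matched pair axiom you describe), and then identifies the three terms with $X_kC^l_{i,j}f^k_l(\psi)$, $X_kC^k_{r,s}(f^r_if^s_j)(\psi)$ and $X_kf^k_{j,i}(\psi)$, $X_kf^k_{i,j}(\psi)$ exactly as you do. Your alternative via $\theta:R(G_2)\to R(\Fg_2)$ and Lemma \ref{Lemma-structure-identity-g-1} would also work (injectivity of $\theta$ follows from connectedness and analyticity of representative functions), but it is not the argument the paper uses.
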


\begin{proof}
We will prove that
\begin{align}
f_{j,i}^k - f_{i,j}^k = \sum_{r,s}C_{s,r}^kf_i^rf_j^s + \sum_lC_{i,j}^lf_l^k\,.
\end{align}
Realizing the elements of $\Fg_1$ as local derivations  on $C^{\infty}(G_1)$,
\begin{align}
\begin{split}
& (\psi \rhd [X_i,X_j])(\hat{f}) = \dt  \hat{f}( {\rm exp}(t(\psi \rhd [X_i,X_j]))) = \\
 & \dt  \hat{f}(\psi \rhd {\rm exp}(ad(tX_i)(X_j)) =\\
 &  [\psi \rhd X_i, \psi \rhd X_j](\hat{f}) + \dt  ((\psi \lhd {\rm exp}(tX_i)) \rhd X_j)(\hat{f}) - \\
 &  \ds  ((\psi \lhd {\rm exp}(sX_j)) \rhd X_i)(\hat{f}),
\end{split}
\end{align}
for any $\psi \in G_2$, $\hat{f} \in C^{\infty}(G_1)$ and $X_i,X_j \in \Fg_1$.
  Hence we conclude that
\begin{align}
\begin{split}
& \psi \rhd [X_i,X_j] = [\psi \rhd X_i, \psi \rhd X_j] + \dt  ((\psi \lhd {\rm exp}(tX_i)) \rhd X_j) - \\
& \ds  ((\psi \lhd {\rm exp}(sX_j)) \rhd X_i).
\end{split}
\end{align}
Now, writing $[X_i,X_j] = C_{i,j}^lX_l$ and recalling that by the definition of the coaction we have $\psi \rhd [X_i,X_j] = X_kC_{i,j}^lf_l^k(\psi).$
Similarly,
\begin{equation}
 [\psi \rhd X_i, \psi \rhd X_j] = [X_rf_i^r(\psi), X_sf_j^s(\psi)] =  C_{r,s}^kX_kf_i^r(\psi)f_j^s(\psi) = X_kC_{r,s}^k(f_j^sf_i^r)(\psi).
\end{equation}
Finally
\begin{align}
\begin{split}
& \dt  ((\psi \lhd {\rm exp}(tX_i)) \rhd X_j) = \dt  X_kf_j^k(\psi \lhd {\rm exp}(tX_i)) = \\
 & X_k (X_i \rhd f_j^k)(\psi) = X_kf_{j,i}^k(\psi),
 \end{split}
\end{align}
and in the same way
\begin{align}
\ds ((\psi \lhd {\rm exp}(sX_j)) \rhd X_i) = X_kf_{i,j}^k(\psi).
\end{align}
\end{proof}

\begin{theorem}
Let $(G_1, G_2)$ be a matched pair of Lie groups. Then by the action (\ref{actionforliegroup}) and the coaction (\ref{coactionforliegroup}), the pair  $(U (\Fg_1), R(G_2))$ is a matched pair of Hopf algebras.
\end{theorem}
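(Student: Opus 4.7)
The plan is to mirror the strategy used in the Lie algebra case (Theorem \ref{U-V-o-Theorem}): reduce the matched-pair statement to the claim that $R(G_2)$ is a $\Fg_1$-Hopf algebra in the sense of Definition \ref{def-Lie-Hopf}, and then invoke Theorem \ref{Theorem-Lie-Hopf-matched-pair} to obtain the matched pair structure on $(U(\Fg_1), R(G_2))$ automatically. Since most of the structural ingredients have already been established in the preceding lemmas, the proof reduces to assembling them.

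To verify Definition \ref{def-Lie-Hopf}, four items need to be checked. First, the action \eqref{actionforliegroup} is by derivations: because the product in $R(G_2)$ is pointwise, differentiating $(fg)(\psi \lhd \exp(tX))$ at $t=0$ and applying the ordinary Leibniz rule gives $X \rhd (fg) = (X \rhd f)g + f(X \rhd g)$. Second, $\ve$ is $\Fg_1$-linear: using $\ve(h) = h(e)$ together with the matched-pair identity $e \lhd \varphi = e$ (which forces $e \lhd \exp(tX) = e$ for all $t$), one has
\begin{equation}
\ve(X \rhd f) \; = \; \left.\tfrac{d}{dt}\right|_{t=0} f(e \lhd \exp(tX)) \; = \; \left.\tfrac{d}{dt}\right|_{t=0} f(e) \; = \; 0.
\end{equation}
Third, the coaction $\Db_{\rm Gr}$ satisfies the structure identity of $\Fg_1$ by Lemma \ref{Lemma-Lie-group-Bianchi}. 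Fourth, $\D$ is $\Fg_1$-equivariant, which is exactly equation \eqref{Db-equivariancy} established in the lemma preceding this theorem.

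Once these four points are in hand, Proposition-type reasoning identical to Proposition \ref{Proposition-matched-Lie-Hopf-Lie} shows that $R(G_2)$ is a $\Fg_1$-Hopf algebra, and Theorem \ref{Theorem-Lie-Hopf-matched-pair} then yields that $(U(\Fg_1), R(G_2))$ is a matched pair of Hopf algebras. The genuine content of the proof — differentiation of the matched-pair identities for $(G_1,G_2)$ along one-parameter subgroups, and the derivation of the structure identity from the Lie-group matched-pair axioms — has already been carried out in Lemma \ref{Lemma-Lie-group-Bianchi} and the lemma establishing \eqref{Db-equivariancy}. Consequently the only remaining obstacle is purely bookkeeping: assembling these ingredients and invoking Theorem \ref{Theorem-Lie-Hopf-matched-pair}, with no new analytic or algebraic subtleties expected.
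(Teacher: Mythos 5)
Your proposal is correct and follows essentially the same route as the paper: the paper's proof likewise reduces the statement via Theorem \ref{Theorem-Lie-Hopf-matched-pair} to showing that $R(G_2)$ is a $\Fg_1$-Hopf algebra, verifies $\ve(X\rt f)=0$ by the same computation using $e\lhd\exp(tX)=e$, and cites Lemma \ref{Lemma-Lie-group-Bianchi} for the structure identity and equation \eqref{Db-equivariancy} for the $\Fg_1$-equivariance of $\D$. Your additional explicit check that the action is by derivations is a harmless (and reasonable) piece of bookkeeping that the paper leaves implicit.
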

\begin{proof}

According to Theorem \ref{Theorem-Lie-Hopf-matched-pair} we need to prove that  the Hopf algebra $R(G_2)$ is a $\Fg_1-$Hopf algebra.
Considering the Hopf algebra structure of $R(G_2)$, we see that,
\begin{equation}
 \varepsilon(X \rhd f) = (X \rhd f)(e) = \dt  f(e \lhd {\rm exp}(tX)) =  \dt  f(e) = 0.
\end{equation}
By the  Lemma \ref{Lemma-Lie-group-Bianchi}, we know that $\Db_{\rm Gr}$  satisfies the structure identity of $\Fg_1$. The equation \eqref{Db-equivariancy} proves that  $\Delta(X \rhd f) = X \bullet \Delta(f)$.
\end{proof}
 As a result, to any matched pair of Lie groups $(G_1, G_2)$, we associate the Hopf algebra
\begin{align}
\mathcal{H} (G_1, G_2) := R(G_2) \acl U (\Fg_1).
\end{align}

We proceed by providing the relation between the Hopf algebras $\mathcal{H} (G_1, G_2)$ and $\mathcal{H} (\Fg_1, \Fg_2)$. To this end, we first introduce a map
\begin{align}\label{map-R(G)->R(g)}
\theta: R(G_2) \to R(\Fg_2), \qquad \pi \circ \rho \to \pi \circ d_e\rho
\end{align}
for any finite dimensional representation $\rho: G_2 \to GL(V)$, and linear functional $\pi: End(V) \to \mathbb{C}$. Here we identify the representation $d_e\rho: \Fg_2 \to \Fg\Fl(V)$ of $\Fg_2$ and the unique algebra map $ d_e\rho:U(\Fg_2) \to \Fg\Fl(V)$ making the following diagram commutative
$$
\xymatrix {
\ar[d]_i \Fg_2 \ar[r]^{d_e\rho} & \Fg\Fl(V) \\
U(\Fg_2)\ar[ur]^{ d_e\rho}.
}
$$
Let $G$ and $H$ be two Lie groups, where $G$ is  simply connected. Let also $\Fg$ and $\Fh$ be the corresponding Lie algebras respectively. Then, a linear map $\sigma: \Fg \to \Fh$ is the differential of a map $\rho:G \to H$ of Lie groups if and only if it is a map of Lie algebras \cite{FultHarr}. Therefore, in case of $G_2$ to be  simply connected, the map $\theta: R(G_2) \to R(\Fg_2)$ is bijective.

We can express $\theta: R(G_2) \to R(\Fg_2)$ explicitly. The map $d_e\rho:U(\Fg_2) \to \Fg\Fl(V)$ sends $1 \in U(\Fg_2)$ to $Id_V \in \Fg\Fl(V)$, hence for $f \in R(G_2)$
\begin{align}
\theta(f)(1) = f(e)
\end{align}
and since it is multiplicative, for any $\xi_1,...,\xi_n \in \Fg_2$
\begin{align}
\theta(f)(\xi^1...\xi^n) = \frac{d}{dt_1}\big|_{t_1=0}...\frac{d}{dt_n}\big|_{t_n=0}f({\rm exp}(t_1\xi_1)...{\rm exp}(t_n\xi_n))
\end{align}

\begin{proposition}
The following is a map of Hopf algebras
\begin{align}
\Theta: \mathcal{H}(G_1, G_2) \to \mathcal{H}(\Fg_1, \Fg_2), \qquad \Theta(f \acl u)=\theta(f) \acl u.
\end{align}
Moreover, $\mathcal{H}(G_1, G_2) \cong \mathcal{H}(\Fg_1, \Fg_2)$ provided $G_2$ is simply connected.
\end{proposition}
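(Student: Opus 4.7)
The plan is to reduce the statement to properties of the map $\theta:R(G_2)\to R(\Fg_2)$ alone, and then to bootstrap from $\theta$ to $\Theta$ using the bicrossed product description. Since both $\Hc(G_1,G_2)$ and $\Hc(\Fg_1,\Fg_2)$ have $U(\Fg_1)$ as the right tensor factor and $\Theta$ acts as the identity there, the compatibility of $\Theta$ with all structure maps will follow from three ingredients: (i) $\theta$ is a Hopf algebra map; (ii) $\theta$ is $U(\Fg_1)$-equivariant for the two actions \eqref{actio-U-Uo} and \eqref{actionforliegroup}; (iii) $\theta$ intertwines the coactions \eqref{coactionforliegroup} and \eqref{g-coaction} in the sense that $(\id_{\Fg_1}\ot\theta)\circ \Db_{\mathrm{Gr}}=\Db_{\mathrm{Alg}}$.

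For (i), I would first check that $\theta$ is well-defined: the formula $\theta(\pi\circ\rho)=\pi\circ d_e\rho$ is independent of the presentation because $U(\Fg_2)$ is generated by $\Fg_2=T_eG_2$ and $d_e\rho$ is uniquely determined. That $\theta$ is an algebra map follows by writing the product $fg$ in $R(G_2)$ as coming from the tensor product representation $\rho_1\ot\rho_2$ and noting $d_e(\rho_1\ot\rho_2)=d_e\rho_1\ot 1 + 1\ot d_e\rho_2$ extends to the algebra map $d_e\rho_1\cdot d_e\rho_2$ on $U(\Fg_2)$ by cocommutativity of $\D$ on primitives. The coalgebra map property is most cleanly checked pointwise: evaluating $\D(\theta(f))$ on a pair of PBW monomials $\xi_1\cdots\xi_n\ot\eta_1\cdots\eta_m$ using the explicit formula for $\theta(f)$ displayed in the excerpt gives the same value as $\theta(f\ps{1})\ot\theta(f\ps{2})$ on the same pair, because $\Delta$ in $R(G_2)$ is dual to multiplication and $\exp(t_1\xi_1)\cdots\exp(t_n\xi_n)\exp(s_1\eta_1)\cdots\exp(s_m\eta_m)$ produces precisely the two halves after differentiation. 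Compatibility with $\eps$ is immediate since $\theta(f)(1)=f(e)$, and compatibility with $S$ follows from the functoriality of the inverse under the exponential.

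For (ii) and (iii), the key observation is that the Lie algebra structures on $\Fg_1$ and $\Fg_2$ and the mutual actions $\Fg_2\rhd\Fg_1$, $\Fg_2\lhd\Fg_1$ are obtained by differentiating the corresponding group structures. Thus for the coactions, the matrix coefficients $f_i^{j,\mathrm{Gr}}$ satisfying $\psi\rhd X_i=X_j f_i^{j,\mathrm{Gr}}(\psi)$ and $f_i^{j,\mathrm{Alg}}$ satisfying $v\rhd X_i=X_j f_i^{j,\mathrm{Alg}}(v)$ are related by $\theta(f_i^{j,\mathrm{Gr}})=f_i^{j,\mathrm{Alg}}$: indeed, evaluating the left side at $\xi\in\Fg_2$ yields $\dt f_i^{j,\mathrm{Gr}}(\exp(t\xi))$, which by definition of the induced Lie algebra action equals $f_i^{j,\mathrm{Alg}}(\xi)$, and extension to $U(\Fg_2)$ then follows because both sides are multiplicative on products $\exp(t_1\xi_1)\cdots\exp(t_n\xi_n)$ via the iterated derivation formula for $\theta$ and the PBW basis. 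For the actions, the identity $\langle X\rhd_{\mathrm{Alg}}\theta(f),v\rangle=\langle\theta(f),v\lhd X\rangle$ needs to be matched with $\theta(X\rhd_{\mathrm{Gr}} f)$; writing $v=\exp(t_1\xi_1)\cdots\exp(t_n\xi_n)$ and $X\rhd_{\mathrm{Gr}} f=\dt f(\cdot\lhd\exp(tX))$, both sides agree after expanding the iterated derivatives and using that the Lie algebra right action $\lhd$ is the differential of the group right action.

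With (i)–(iii) in hand, the verification that $\Theta$ respects multiplication, comultiplication, unit, counit and antipode in the bicrossed product becomes formal: the product in $\Fc\acl U(\Fg_1)$ depends only on the action of $U(\Fg_1)$ on the commutative piece, and the coproduct only on the coaction of the commutative piece on $U(\Fg_1)$, so (ii) and (iii) plus (i) directly imply that $\Theta$ intertwines the structures given in \eqref{cocross}–\eqref{anti}. I expect the most delicate step to be the coaction intertwining in (iii), because one must verify it on all of $\Fg_1$ uniformly and then argue that the coactions extend to $U(\Fg_1)$ by the inductive rule \eqref{mp4} in a way that is preserved by $\theta$; this amounts to showing that the inductive extension only sees $\theta$ through its values on the matrix coefficients, which is automatic once the base case and (ii) are established. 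For the final claim, when $G_2$ is simply connected the Lie group/Lie algebra correspondence cited in the excerpt makes $\theta$ bijective, and then $\Theta=\theta\ot\id_{U(\Fg_1)}$ is a bijective Hopf algebra map, hence an isomorphism.
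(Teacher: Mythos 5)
Your proposal is correct and follows essentially the same route as the paper: reduce everything to the facts that $\theta$ is a Hopf algebra map, is $U(\Fg_1)$-equivariant (the paper's computation \eqref{theta-is-g-linear}), and intertwines the coactions (the paper's diagram \eqref{diagram-R(G)-R(g)}), then conclude formally for the bicrossed product and use bijectivity of $\theta$ for simply connected $G_2$. You supply somewhat more detail than the paper on the points it declares easy (that $\theta$ is a Hopf algebra map, and the extension of the coaction intertwining from $\Fg_2$ to $U(\Fg_2)$), but the argument is the same.
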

\begin{proof}

First we show that $\Theta$ is an algebra map. To this end, we need to prove that $\theta$ is a map of $U(\Fg_1)$-module algebras. It is easy to observe that $\theta: R(G_2) \to R(\Fg_2)$ is a map of Hopf algebras. However, we prove that   $\theta$ is a $U(\Fg_1)-$module map.
Indeed for any $X \in \Fg_1$ and any $\xi \in \Fg_2$,
\begin{align}\label{theta-is-g-linear}
\begin{split}
& \theta(X \rhd f)(\xi) = \frac{d}{dt}\big|_{t=0}\frac{d}{ds}\big|_{s=0}f({\rm exp}(s\xi) \lhd {\rm exp}(tX)) = \\
 & \frac{d}{dt}\big|_{t=0}f({\rm exp}(t(\xi \lhd X))) = X \rhd \theta(f)(\xi).
 \end{split}
\end{align}
Next we prove  that the following diagram is commutative
\begin{equation}\label{diagram-R(G)-R(g)}
\xymatrix{  \ar[rrd]_{\Db_{\rm alg}}\Fg_1\ar[rr]^{\Db_{\rm Gr}}&& \Fg_1\ot R(G_2)\ar[d]^{\theta}\\
&&    \Fg_1\ot R(\Fg_2)  }
\end{equation}
Indeed, by valuating on $\xi \in \Fg_2$ we have
\begin{equation}
 X^j\theta(g^i_j)(\xi) = \frac{d}{dt}\big|_{t=0}X^jg^i_j({\rm exp}(t\xi)) = \frac{d}{dt}\big|_{t=0}({\rm exp}(t\xi) \rhd X^i) =  \xi \rhd X^i =  X^jf^i_j(\xi).
\end{equation}
This shows that $\Theta$ is a map of coalgebras and hence the proof is complete.
\end{proof}

\subsection{Matched pair of  Hopf algebras associated to matched pair of affine algebraic groups}
\label{SS-algebraic group}
In this subsection we aim to associate a bicrossed product Hopf algebra to any matched pair of affine algebraic groups. Let $G_1$ and $ G_2$ be two affine algebraic groups. We assume that there  are  maps of affine algebraic sets
\begin{align}
\rhd: G_2 \times G_1 \to G_1\qquad \text{and}\qquad\lhd: G_2 \times G_1 \to G_2,
\end{align}
 which means the existence of  the following maps \cite[Chap 22]{TauvYu}
\begin{align}
\begin{split}
& \Pc(\rhd):\Pc(G_1) \to \Pc(G_2 \times G_1) = \Pc(G_2) \ot \Pc(G_1) \\
& f \mapsto f^{\ns{-1}} \ot f^{\ns{0}} \quad \mbox{ such that } \quad f^{\ns{-1}}(\psi) f^{\ns{0}}(\varphi) = f(\psi \rhd \varphi)
\end{split}
\end{align}
and
\begin{align}
\begin{split}
& \Pc(\lhd):\Pc(G_2) \to \Pc(G_2 \times G_1) = \Pc(G_2) \ot \Pc(G_1) \\
& f \mapsto f^{\ns{0}} \ot f^{\ns{1}} \quad \mbox{ such that } \quad f^{\ns{0}}(\psi) f^{\ns{1}}(\varphi) = f(\psi \lhd \varphi)
\end{split}
\end{align}

We say $(G_1,G_2)$ is a matched pair if they satisfy (\ref{matched-pair-general}).

Then we  define mutual actions
\begin{align}
f \lhd \psi := f^{\ns{-1}}(\psi)f^{\ns{0}}, \qquad\text{and }\qquad \varphi \rhd f := f^{\ns{0}} f^{\ns{1}}(\varphi)
\end{align}
to get  representations of $G_2$ on $\Pc(G_1)$ and  $G_1$ on $\Pc(G_2)$. We denote the action of $G_1$ on $\Pc(G_2)$ by $\rho$.

 In a similar fashion to the Lie group case, we define the action $\Fg_1$ as the derivative of the action of $G_1$. Here $\rho^{\circ}$ being the derivative of $\rho$. By \cite{Hoch3}
\begin{align}\label{polaction}
X \rhd f := \rho^{\circ}(X)(f) = f^{\ns{0}}X(f^{\ns{1}}).
\end{align}

\begin{remark}
In the case of Lie groups, assuming that  the action
\begin{align}
\lhd: G_2 \times G_1 \to G_2\;,
\end{align}
induces a map
\begin{align}
R(\lhd): R(G_2) \to R(G_2 \times G_1) = R(G_2) \ot R(G_1),
\end{align}
we arrive
\begin{align}
\begin{split}
& (X \rhd f) (\psi) = f^{\ns{0}}(\psi)X(f^{\ns{1}}) = \dt f^{\ns{0}}(\psi)f^{\ns{1}}(exptX) = \\
& \dt f(\psi \lhd exptX).
\end{split}
\end{align}
That is, we derive exactly the same action as we got in the \eqref{actionforliegroup}.
\end{remark}

The action of $G_2$ on $\Fg_1$ is also defined as before
\begin{align}
\psi \rhd X := (L_{\psi})^{\circ}(X)
\end{align}
where
\begin{align}
L_{\psi}: G_1 \to G_1, \qquad \varphi \mapsto \psi \rhd \varphi.
\end{align}

Now we prove that $\Pc(G_2)$ coacts on $\Fg_1$. In view of Lemma 1.1 of \cite{Hoch2}, there exists a basis $\{X_1, \cdots, X_n\}$ of $\Fg_1$ and a corresponding subset $S = \{f^1, \cdots, f^n\} \subseteq \Pc(G_1)$ such that $X_j(f^i) = \delta_j^i$. Using the left action of $G_2$ on $\Fg_1$ and dual basis $\{\theta^1, \cdots , \theta^n\}$ for $\Fg_1^\ast$, we introduce the functions $f_i^j:G_2 \to \mathbb{C}$ exactly as before
\begin{align}
f_i^j(\psi) := <\psi \rhd X_i, \theta^j>,
\end{align}
that is
\begin{align}
\psi \rhd X_i = f_i^j(\psi)X_j.
\end{align}

\begin{lemma}
The functions $f_i^j:G_2 \to \mathbb{C}$ defined above are polynomial functions.
\end{lemma}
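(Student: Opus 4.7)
The plan is to exhibit $f_i^j$ explicitly as an element of $\Pc(G_2)$ by pulling back through the comorphism of the algebraic action $\rt: G_2\ts G_1 \ra G_1$, mimicking algebraically the argument already done for Lie groups in Lemma \ref{Lemma f-i-j-in-R(G)}.

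First I would unwind definitions. Since $\{X_k\}$ and $\{f^k\}$ are dual via $X_k(f^j)=\d_k^j$, with $X_k$ viewed as a scalar-valued derivation at $e\in G_1$, pairing the defining equation $\psi\rt X_i = f_i^k(\psi) X_k$ against $f^j$ gives
\begin{equation}
f_i^j(\psi) \;=\; (\psi \rt X_i)(f^j) \;=\; \bigl((L_\psi)^\circ X_i\bigr)(f^j) \;=\; X_i(f^j \circ L_\psi),
\end{equation}
where $L_\psi:G_1\ra G_1$, $L_\psi(\vp)=\psi\rt\vp$.

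Next I would apply the comorphism $\Pc(\rt):\Pc(G_1)\ra \Pc(G_2)\ot \Pc(G_1)$. Writing $\Pc(\rt)(f^j) = (f^j)^{\ns{-1}} \ot (f^j)^{\ns{0}}$ in Sweedler notation (with an implicit finite sum), its defining property yields
\begin{equation}
(f^j \circ L_\psi)(\vp) \;=\; f^j(\psi\rt \vp) \;=\; (f^j)^{\ns{-1}}(\psi)\,(f^j)^{\ns{0}}(\vp).
\end{equation}
Fixing $\psi$, the scalar $(f^j)^{\ns{-1}}(\psi)\in\Cb$ is constant in $\vp$, so the derivation $X_i$ (acting in the $\vp$-variable at $e$) passes through it:
\begin{equation}
f_i^j(\psi) \;=\; (f^j)^{\ns{-1}}(\psi)\, X_i\bigl((f^j)^{\ns{0}}\bigr).
\end{equation}
Since $X_i\bigl((f^j)^{\ns{0}}\bigr)\in\Cb$, this exhibits $f_i^j$ as a scalar linear combination, over the implicit Sweedler sum, of polynomial functions $(f^j)^{\ns{-1}}\in \Pc(G_2)$, hence $f_i^j\in\Pc(G_2)$.

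There is no substantive obstacle; the argument is essentially a direct unwinding of the comorphism. The only point that must be kept straight is the dual interpretation of $X_i$ as a scalar-valued derivation at the identity, which is what makes $X_i\bigl((f^j)^{\ns{0}}\bigr)$ a constant and places all the $\psi$-dependence of $f_i^j$ inside the polynomial function $(f^j)^{\ns{-1}}\in\Pc(G_2)$. In this way the algebraic comorphism cleanly replaces the $\dt$-limit computation used in the Lie-group counterpart.
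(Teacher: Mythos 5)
Your proof is correct and follows essentially the same route as the paper: both identify $(\psi \rhd X_i)(f) = X_i(f \lhd \psi)$, expand $f \lhd \psi = f^{\ns{-1}}(\psi) f^{\ns{0}}$ via the comorphism so that the derivation $X_i$ passes through the scalar $f^{\ns{-1}}(\psi)$, and then evaluate at the dual functions $f^j$ with $X_k(f^j)=\delta_k^j$ to read off $f_i^j = X_i\bigl((f^j)^{\ns{0}}\bigr)(f^j)^{\ns{-1}} \in \Pc(G_2)$. No gaps.
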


\begin{proof}
On one hand,
\begin{align}
(\psi \rhd X_i)(f) = X_i(f \lhd \psi) = X_i(f^{\ns{-1}}(\psi)f^{\ns{0}}) = f^{\ns{-1}}(\psi)X_i(f^{\ns{0}}),
\end{align}
while on the other
\begin{align}
(\psi \rhd X_i)(f) = f_i^j(\psi)X_j(f).
\end{align}
Hence, for $f = f^k \in S$ we have $f_i^k(\psi) = (f^k)^{\ns{-1}}(\psi)X_i((f^k)^{\ns{0}}),$
that is,  $f_i^k = X_i((f^k)^{\ns{0}})(f^k)^{\ns{-1}} \in \Pc(G_2).$
\end{proof}
As before, by the very definition of $f_i^j$'s, we have the following coaction.
\begin{align}\label{polcoaction}
\begin{split}
& \Db_{pol}:\Fg_1 \to \Fg_1 \ot \Pc(G_2),\qquad
 X_i \mapsto X_j \ot f_i^j,
\end{split}
\end{align}
and the second order matrix coefficients,
\begin{align}
X_k \rhd f_i^j = f_{i, k}^j.
\end{align}
\begin{proposition}\label{polstructureidentity}
The coaction $\Db_{pol}:\Fg_1 \to \Fg_1 \ot \Pc(G_2)$ satisfies the structure identity of $\Fg_1$.
\end{proposition}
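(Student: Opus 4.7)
The plan is to mirror the proof of Lemma \ref{Lemma-Lie-group-Bianchi}, transposed from the smooth to the algebraic category. Since $G_2$ is an affine algebraic group over $\Cb$, the polynomial functions $\Pc(G_2)$ separate the points of $G_2$, so it suffices to show that both sides of the structure identity take the same value at each $\psi \in G_2$. After pairing with $\theta^k$ and using bilinearity of the bracket on $\Fg_1$, the identity reduces to the equality in $\Fg_1$
\begin{equation*}
\psi \rhd [X_i,X_j] \, - \, [\psi \rhd X_i,\psi \rhd X_j] \; = \; E_{ij}(\psi) \, - \, E_{ji}(\psi),
\end{equation*}
where $E_{ij}(\psi) \in \Fg_1$ is the element whose $\theta^k$-component is $f_{j,i}^k(\psi)$.

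First, I would fix $\psi \in G_2$ and consider the morphism $L_\psi: G_1 \to G_1$, $\vp \mapsto \psi \rhd \vp$ of algebraic varieties, which fixes $e$ and has differential $\psi \rhd -:\Fg_1 \to \Fg_1$ at $e$. The matched pair axiom (\ref{matched-pair-general}) reads $L_\psi(\vp_1\vp_2) = L_\psi(\vp_1)\cdot L_{\psi \lhd \vp_1}(\vp_2)$, a twisted multiplicativity relation whose failure to be a group homomorphism is governed by the $\lhd$-factor. Identifying $X_i, X_j \in \Fg_1$ with left-invariant derivations on $\Pc(G_1)$, the bracket $[X_i,X_j]$ is their commutator; differentiating the twisted relation at $(e,e)$ via the Leibniz rule produces four terms, two of which combine to give $[\psi \rhd X_i,\psi \rhd X_j]$, while the remaining two are $E_{ij}(\psi)$ and $-E_{ji}(\psi)$, arising from differentiating the $\psi \lhd \vp_1$-factor in the $X_i$- and $X_j$-directions respectively.

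To identify the correction terms, I would invoke the definition (\ref{polaction}) together with the coaction $\Pc(\lhd): \Pc(G_2) \to \Pc(G_2) \ot \Pc(G_1)$. Applying $X_i$ at $e$ to $(f_j^k)^{\ns{1}}$ and pairing with $(f_j^k)^{\ns{0}}$ evaluated at $\psi$ gives $(X_i \rhd f_j^k)(\psi) = f_{j,i}^k(\psi)$, which is precisely the $\theta^k$-component of $E_{ij}(\psi)$, the algebraic counterpart of $\dt (\psi \lhd {\rm exp}(tX_i)) \rhd X_j$ from the smooth case. Substituting this together with the expansions $\psi \rhd [X_i,X_j] = C_{i,j}^l f_l^k(\psi) X_k$ and $[\psi \rhd X_i,\psi \rhd X_j] = C_{r,s}^k f_i^r(\psi) f_j^s(\psi) X_k$ yields the structure identity.

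The hardest part will be making the ``differentiation at the identity'' step rigorous in the algebraic setting, since no exponential map is available; all derivatives must be reinterpreted as applications of derivations at $e$ to compositions of regular maps, and one must carefully track how the twisting in $L_\psi(\vp_1\vp_2) = L_\psi(\vp_1)L_{\psi \lhd \vp_1}(\vp_2)$ splits the single commutator computation into two separate Leibniz expansions. A possible shortcut is to observe that $G_1, G_2$ are in particular complex Lie groups sharing the same Lie algebras $\Fg_1,\Fg_2$, that the polynomial action coincides with the smooth action, and that $\Pc(G_2) \subseteq R(G_2)$ with $\Db_{pol}$ being the restriction of $\Db_{\rm Gr}$; the identity for $\Db_{pol}$ then transfers directly from Lemma \ref{Lemma-Lie-group-Bianchi}.
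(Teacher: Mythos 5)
Your main line of attack is exactly the paper's proof: the paper computes $(\psi\rhd[X_i,X_j])(f)=[X_i,X_j](f\lhd\psi)$ by first deriving the formula for $\D(f\lhd\psi)$ from the matched-pair relation $f(\psi\rhd\vp\vp')=f\ps{1}(\psi\rhd\vp)f\ps{2}((\psi\lhd\vp)\rhd\vp')$, then applying the point derivations $X_i,X_j$ with the Leibniz rule so that two of the resulting terms assemble into $[\psi\rhd X_i,\psi\rhd X_j](f)$ and the other two are identified, via $\Pc(\lhd)$ and \eqref{polaction}, with $X_k f_{j,i}^k(\psi)$ and $X_k f_{i,j}^k(\psi)$ --- precisely the ``rigorous algebraic differentiation'' you flag as the hard step. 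The reduction-to-Lie-groups shortcut you mention at the end is not what the paper does (it keeps the algebraic case self-contained), but your primary argument matches the published proof in both structure and detail.
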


\begin{proof}
We have to show that
\begin{align}
f_{j,i}^k - f_{i,j}^k = \sum_{r,s}C_{s,r}^kf_i^rf_j^s + \sum_lC_{i,j}^lf_l^k.
\end{align}

We first observe
\begin{align}
\begin{split}
& (f \lhd \psi)\ps{1}(\varphi)(f \lhd \psi)\ps{2}(\varphi') = (f \lhd \psi)(\varphi \varphi') = f(\psi \rhd \varphi \varphi') = \\
& f\ps{1}(\psi \rhd \varphi)f\ps{2}((\psi \lhd \varphi) \rhd \varphi') = \\
& (f\ps{2})^{\ns{-1}\ns{0}}(\psi)((f\ps{1} \lhd \psi) \cdot (f\ps{2})^{\ns{-1}\ns{1}})(\varphi)(f\ps{2})^{\ns{0}}(\varphi'),
\end{split}
\end{align}
which implies that
\begin{align}\label{proof-aux-101}
(f \lhd \psi)\ps{1} \ot (f \lhd \psi)\ps{2} = (f\ps{2})^{\ns{-1}\ns{0}}(\psi)(f\ps{1} \lhd \psi) \cdot (f\ps{2})^{\ns{-1}\ns{1}} \ot (f\ps{2})^{\ns{0}}.
\end{align}

Next, by using \eqref{proof-aux-101} we have
\begin{align}
\begin{split}
& (\psi \rhd [X_i, X_j])(f) = [X_i, X_j](f \lhd \psi) = (X_i \cdot X_j - X_j \cdot X_i)(f \lhd \psi) = \\
& X_i((f \lhd \psi)\ps{1})X_j((f \lhd \psi)\ps{2}) - X_j((f \lhd \psi)\ps{1})X_i((f \lhd \psi)\ps{2})=\\
  &   (f\ps{2})^{\ns{-1}\ns{0}}(\psi)[X_i(f\ps{1} \lhd \psi)(f\ps{2})^{\ns{-1}\ns{1}}(e_1) + \\
 & (f\ps{1} \lhd \psi)(e_1)X_i((f\ps{2})^{\ns{-1}\ns{1}})]X_j((f\ps{2})^{\ns{0}}) - \\
 & (f\ps{2})^{\ns{-1}\ns{0}}(\psi)[X_j(f\ps{1} \lhd \psi)(f\ps{2})^{\ns{-1}\ns{0}}(e_1) + \\
  & (f\ps{1} \lhd \psi)(e_1)X_j((f\ps{2})^{\ns{-1}\ns{1}})]X_i((f\ps{2})^{\ns{0}}) = \\
 & [\psi \rhd X_i, \psi \rhd X_j](f) + f^{\ns{-1}\ns{0}}(\psi)X_i(f^{\ns{-1}\ns{1}})X_j(f^{\ns{0}}) - \\
 & f^{\ns{-1}\ns{0}}(\psi)X_j(f^{\ns{-1}\ns{1}})X_i(f^{\ns{0}}).
\end{split}
\end{align}
We finally notice that
\begin{align}
\begin{split}
& (f^{\ns{-1}}X_j(f^{\ns{0}}))(\psi) = f^{\ns{-1}}(\psi)X_j(f^{\ns{0}}) = X_j(f \lhd \psi) = (\psi \rhd X_j)(f) = \\
& (X_kf_j^k(\psi))(f) = (X_k(f)f_j^k)(\psi).
\end{split}
\end{align}
Hence,
\begin{align}
\begin{split}
& f^{\ns{-1}\ns{0}}(\psi)X_i(f^{\ns{-1}\ns{1}})X_j(f^{\ns{0}}) = (X_i \rhd f^{\ns{-1}})(\psi)X_j(f^{\ns{0}}) = \\
& (X_i \rhd f^{\ns{-1}}X_j(f^{\ns{0}}))(\psi) = (X_k(X_i \rhd f_j^k)(\psi))(f)
\end{split}
\end{align}
Similarly
\begin{align}
f^{\ns{-1}\ns{0}}(\psi)X_j(f^{\ns{-1}\ns{1}})X_i(f^{\ns{0}}) = (X_k(X_j \rhd f_i^k)(\psi))(f).
\end{align}
So we have observed the following
\begin{align}
\psi \rhd [X_i, X_j] = [\psi \rhd X_i, \psi \rhd X_j] + X_k(X_i \rhd f_j^k)(\psi) - X_k(X_j \rhd f_i^k)(\psi),
\end{align}
which implies the structure equality immediately.
\end{proof}

We now express the main result of this subsection.

\begin{theorem} \label{Delta(X rhd f)}
Let $(G_1, G_2)$ be a matched pair of affine algebraic groups. Then by the action (\ref{polaction}) and the coaction (\ref{polcoaction}) defined above, $(U(\Fg_1), \Pc(G_2))$ is a matched pair of Hopf algebras.
\end{theorem}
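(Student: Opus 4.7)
The plan is to reduce everything to the criterion provided by Theorem \ref{Theorem-Lie-Hopf-matched-pair}: it suffices to verify that $\Pc(G_2)$ is a $\Fg_1$-Hopf algebra in the sense of Definition \ref{def-Lie-Hopf}. Three items need attention. First, $\Fg_1$ must act on $\Pc(G_2)$ by derivations; this is immediate from \eqref{polaction} since the action of $G_1$ on $\Pc(G_2)$ is by algebra automorphisms, and derivations are recovered by differentiation. Second, the coaction $\Db_{pol}$ must satisfy the structure identity of $\Fg_1$; this was already established in Proposition \ref{polstructureidentity}. What remains is condition (2) of Definition \ref{def-Lie-Hopf}, namely $\ve(X\rhd f)=0$ and the $\Fg_1$-equivariance of the coproduct $\D(X\rhd f)=X\bu \D(f)$.

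For the counit condition, I would argue algebraically using the matched pair identity $e\lhd \vp = e$. Translating to polynomial terms, for any $f\in\Pc(G_2)$ and any $\vp\in G_1$,
\begin{equation}
f^{\ns{0}}(e)\, f^{\ns{1}}(\vp) = f(e\lhd \vp) = f(e),
\end{equation}
so that $f^{\ns{0}}(e)\,f^{\ns{1}} = f(e)\cdot 1_{\Pc(G_1)}$ as elements of $\Pc(G_1)$. Applying $X\in\Fg_1$ (viewed as a derivation at the identity of $G_1$) then yields $f^{\ns{0}}(e)\,X(f^{\ns{1}}) = 0$, which is precisely $\ve(X\rhd f) = (X\rhd f)(e) = 0$.

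The main work is the coproduct equivariance. Here the strategy is to mimic the Lie group argument, replacing the infinitesimal manipulation via $\exp$ with the polynomial avatar of the matched pair identity
\begin{equation}
\psi_1\psi_2 \lhd \vp \;=\; (\psi_1 \lhd (\psi_2 \rhd \vp))(\psi_2 \lhd \vp).
\end{equation}
Applying $f\in \Pc(G_2)$ to both sides and expanding via $\Pc(\lhd)$ and $\Pc(\rhd)$ gives a polynomial identity in $\psi_1,\psi_2,\vp$. Evaluating $\D(X\rhd f)(\psi_1\ot\psi_2) = (X\rhd f)(\psi_1\psi_2) = f^{\ns 0}(\psi_1\psi_2)\,X(f^{\ns 1})$ and using the coproduct of $\Pc(G_2)$ together with the expansion above, the right hand side splits into two pieces: one in which $X$ differentiates the $\psi_2 \lhd \vp$ factor (producing $f\ps 1 \ot (X\rhd f\ps 2)$) and one in which $X$ differentiates the inner $\psi_2\rhd\vp$ (producing the twisted term $X\ns 0\rhd f\ps 1 \ot X\ns 1 \, f\ps 2$ via the definition \eqref{polcoaction} of $\Db_{pol}$). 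This is exactly $X\bu \D(f)$.

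The main obstacle I anticipate is bookkeeping in this last step: one must carefully identify, inside the polynomial identity coming from $\Pc(\lhd)\circ\mu$, the factor that produces the matrix coefficients $f_i^j$ and hence the $\ns 0, \ns 1$ legs of $\Db_{pol}$. The calculation is essentially a Leibniz rule argument analogous to the Lie group computation leading to \eqref{Db-equivariancy}, with $\frac{d}{dt}\big|_{t=0}\exp(tX)$ replaced by evaluating the appropriate $X\in\Fg_1$ as a derivation on $\Pc(G_1)$-factors arising from $\Pc(\rhd)$ and $\Pc(\lhd)$. Once these two verifications are complete, Theorem \ref{Theorem-Lie-Hopf-matched-pair} closes the argument and yields the bicrossed product Hopf algebra $\Pc(G_2)\acl U(\Fg_1)$.
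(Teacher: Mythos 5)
Your proposal follows exactly the paper's route: reduce to Theorem \ref{Theorem-Lie-Hopf-matched-pair}, cite Proposition \ref{polstructureidentity} for the structure identity, deduce $\ve(X\rhd f)=0$ from $e\lhd\vp=e$ making $f^{\ns{0}}(e)f^{\ns{1}}$ constant, and obtain $\D(X\rhd f)=X\bullet\D(f)$ by applying $\Pc(\lhd)$ and $\Pc(\rhd)$ to the matched-pair identity and letting $X$ act by the Leibniz rule on the resulting $\Pc(G_1)$-legs. The two terms you identify are precisely the two terms in the paper's computation, so the argument is correct and essentially identical.
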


\begin{proof}
In view of Theorem \ref{Theorem-Lie-Hopf-matched-pair} it is enough to show that $\Pc(G_2)$ is a $\Fg_1-$Hopf algebra. In Proposition \ref{polstructureidentity} we prove  that the structure identity is satisfied. Therefore, here we need to prove
\begin{align}
\Delta(X \rhd f) = X \bullet \Delta(f) \quad \mbox{ and } \quad \ve(X \rhd f) = 0.
\end{align}
First we observe that
\begin{align}
\begin{split}
& f^{\ns{0}}\ps{1}(\psi_1)f^{\ns{0}}\ps{2}(\psi_2)f^{\ns{1}}(\varphi) = f^{\ns{0}}(\psi_1\psi_2)f^{\ns{1}}(\varphi) = \\
& f(\psi_1\psi_2 \lhd \varphi) = f\ps{1}(\psi_1 \lhd (\psi_2 \rhd \varphi))f\ps{2}(\psi_2 \lhd \varphi) = \\
& (f\ps{1})^{\ns{0}}(\psi_1)((f\ps{1})^{\ns{1}\ns{-1}} \cdot (f\ps{2})^{\ns{0}})(\psi_2)((f\ps{1})^{\ns{1}\ns{0}} \cdot (f\ps{2})^{\ns{1}})(\varphi),
\end{split}
\end{align}
which  shows that
\begin{align}
\begin{split}
&f^{\ns{0}}\ps{1} \ot f^{\ns{0}}\ps{2} \ot f^{\ns{1}} = \\
&(f\ps{1})^{\ns{0}} \ot (f\ps{1})^{\ns{1}\ns{-1}} \cdot (f\ps{2})^{\ns{0}} \ot (f\ps{1})^{\ns{1}\ns{0}} \cdot (f\ps{2})^{\ns{1}}
\end{split}
\end{align}
Therefore,
\begin{align}
\begin{split}
& \D(X\rt f)(\psi_1,\psi_2)=(X \rhd f)(\psi_1\psi_2) = f^{\ns{0}}\ps{1}(\psi_1)f^{\ns{0}}\ps{2}(\psi_2)X(f^{\ns{1}}) = \\
& (f\ps{1})^{\ns{0}}(\psi_1) (\psi_2 \rhd X)((f\ps{1})^{\ns{1}})f\ps{2}(\psi_2) + f\ps{1}(\psi_1 \lhd e)(X \rhd f\ps{2})(\psi_2) = \\
& ((\psi_2 \rhd X) \rhd f\ps{1})(\psi_1)f\ps{2}(\psi_2) + f\ps{1}(\psi_1)(X \rhd f\ps{2})(\psi_2) = \\
& (X\ns{0} \rhd f\ps{1})(\psi_1)(X\ns{1} \cdot f\ps{2})(\psi_2) + f\ps{1}(\psi_1)(X \rhd f\ps{2})(\psi_2)=\\
&(X\bullet \D(f))(\psi_1,\psi_2).
\end{split}
\end{align}
 Next, we want to prove that $\ve(X \rhd f) = 0$. To this end, we  notice that
\begin{align}
\ve(X \rhd f) = (X \rhd f)(e_2) = f^{\ns{0}}(e_2)X(f^{\ns{1}}) = X(f^{\ns{0}}(e_2)f^{\ns{1}})=0,
\end{align}
 because $f^{\ns{0}}(e_2)f^{\ns{1}} \in \Pc(G_1)$ is constant. The proof  is  done.
\end{proof}

Now we construct the following pairing
\begin{align}\label{pairing-polynomial}
\begin{split}
& \langle ,\rangle :\Pc(G_2) \times U(\Fg_2) \to \mathbb{C} \\
& (f, v) \mapsto \langle f,v\rangle  := f^*(v) \quad \mbox{ where } \quad f^*(v) := (v \cdot f)(e_2)
\end{split}
\end{align}
Here, the left action $U(\Fg_2)$ comes from the derivative of the left translation by $G_2$.

\begin{proposition}\label{starmapcomultiplicationcompatibility}
The pairing \eqref{pairing-polynomial} defines a Hopf duality between  $\Pc(G_2)$ and $U(\Fg_2)$. In other words
\begin{align}
&\langle f, v^1v^2\rangle =  \langle f\ps{1}, v^1\rangle\langle f\ps{2}, v^2\rangle,\quad \langle f, 1\rangle=\ve(f),\\
&\langle fg, v\rangle=\langle f, v\ps{1}\rangle\langle g, v\ps{2}\rangle,\quad \langle 1, v\rangle=\ve(v).
\end{align}
\end{proposition}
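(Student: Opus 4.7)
The plan is to verify the four duality identities by unwinding the star map on PBW monomials and then using the Hopf structures of $\Pc(G_2)$ and $U(\Fg_2)$.  First I would show by induction on $n$, starting from $(X\rhd f)(\psi)=\dt f(\psi\exp(tX))$ and $(v^1v^2)\rhd f = v^1\rhd(v^2\rhd f)$, that for any PBW monomial $v = X_1\cdots X_n\in U(\Fg_2)$ one has
\begin{equation*}
f^\ast(v) \;=\; \frac{d}{dt_1}\Big|_{0}\!\cdots\frac{d}{dt_n}\Big|_{0} f\bigl(\exp(t_1X_1)\cdots\exp(t_nX_n)\bigr),
\end{equation*}
which is exactly the restriction of the map $\theta$ of \eqref{map-R(G)->R(g)} to $\Pc(G_2)$.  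This reduces everything to differentiation identities on $G_2$.

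Identity (2) is then immediate: $\langle f,1\rangle = (1\rhd f)(e_2) = f(e_2) = \ve_{\Pc(G_2)}(f)$.  For identity (1), write $v^1 = X_1\cdots X_k$ and $v^2 = Y_1\cdots Y_l$; applying the formula above together with $\Delta_{\Pc(G_2)}(f)(\psi_1,\psi_2)=f(\psi_1\psi_2)$ and the fact that the $t$-derivatives commute with the $s$-derivatives, one obtains
\begin{equation*}
f^\ast(v^1v^2) \;=\; \prod_a\partial_{t_a}\!\prod_b\partial_{s_b}\Big|_{0}\, f\ps{1}\!\bigl(\exp(t_1X_1)\cdots\exp(t_kX_k)\bigr)\, f\ps{2}\!\bigl(\exp(s_1Y_1)\cdots\exp(s_lY_l)\bigr) \;=\; f\ps{1}^\ast(v^1)\,f\ps{2}^\ast(v^2).
\end{equation*}

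For the identities on the first slot, the key input is the module-algebra property $v\rhd(fg) = \sum(v\ps{1}\rhd f)(v\ps{2}\rhd g)$ for all $v\in U(\Fg_2)$ and $f,g\in\Pc(G_2)$.  This holds because the underlying $G_2$-action on $\Pc(G_2)$ is by Hopf (in particular algebra) automorphisms, so its differential makes $\Fg_2$ act on $\Pc(G_2)$ by derivations (which matches $\Delta(X) = X\ot 1 + 1\ot X$), and the identity then propagates to all PBW monomials by $\Delta(vw)=\Delta(v)\Delta(w)$ together with $(vw)\rhd f = v\rhd(w\rhd f)$.  Evaluating at $e_2$ gives identity (3).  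Finally, any derivation annihilates the constant function $1$, so $v\rhd 1 = \ve_{U(\Fg_2)}(v)\cdot 1$, which yields identity (4).

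The only delicate step is justifying the module-algebra identity feeding (3); once it is in place, identities (1)--(4) reduce to routine applications of the comultiplications and counits of $\Pc(G_2)$ and $U(\Fg_2)$, evaluated via exponentials at the identity of $G_2$.  Nondegeneracy of the pairing, if needed, follows from the separating property of $\Pc(G_2)$ on $U(\Fg_2)$ analogous to Harish-Chandra's argument used for $R(\Fg_2)$.
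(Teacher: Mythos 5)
Your proof is correct and follows essentially the same route as the paper's: multiplicativity in the $U(\Fg_2)$-slot against $\Delta_{\Pc(G_2)}$ comes from the formula for the iterated action together with coassociativity of $\Delta$, and multiplicativity in the $\Pc(G_2)$-slot comes from the fact that elements of $\Fg_2$ act as derivations (primitivity) plus induction on the length of $v$ --- the paper simply runs that induction directly on the pairing instead of first isolating the module-algebra identity and then evaluating at $e_2$. The only cosmetic difference is that you phrase the computations through exponentials, whereas the paper stays purely algebraic with point derivations via $\xi\cdot f = f\ps{1}\,\xi(f\ps{2})$; over $\Cb$ the two agree, though the algebraic formulation is the more natural one for $\Pc(G_2)$.
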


\begin{proof}
Let $v^1 = \xi_1 \cdots \xi_n $ and $v^2 = \xi'_1 \cdots \xi'_m$ for $\xi_1, \cdots , \xi_n, \xi'_1, \cdots , \xi'_m \in \Fg_2$. Since  left translation is given by $\psi \rhd f = f\ps{1}f\ps{2}(\psi) $, we observe   $ \xi \cdot f = f\ps{1}\xi(f\ps{2})$,
which evidently implies that $\langle f,\xi_1 \cdots \xi_n\rangle  = \xi_1(f\ps{1}) \cdots \xi_n(f\ps{n})$. Therefore
\begin{align}
\begin{split}
& \langle f,v^1v^2\rangle  = f^*(v^1v^2) = \xi_1(f\ps{1}) \cdots \xi_n(f\ps{n})\xi'_1(f\ps{n+1}) \cdots \xi'_m(f\ps{n+m}) = \\
& (f\ps{1})^*(\xi_1 \cdots \xi_n)(f\ps{2})^*(\xi'_1 \cdots \xi'_m) = (f\ps{1})^*(v^1)(f\ps{2})^*(v^2) = \\
& \langle f\ps{1},v^1\rangle \langle f\ps{2},v^2\rangle .
\end{split}
\end{align}
We easily see that
\begin{align}
\langle   f,1\rangle  = (1 \cdot f)(e_2) =  f(e_2) = \ve(f).
\end{align}

To show the other compatibility, we first observe that
\begin{align}
\begin{split}
& \langle f  g,\xi\rangle  = (f g)^*(\xi) = \xi(f \cdot g) = \\
& \xi(f)g(e_2) + f(e_1)\xi(g) = f^*(\xi)g^*(1) + f^*(1)g^*(\xi) =  \langle f,\xi\ps{1}\rangle \langle g,\xi\ps{2}\rangle
\end{split}
\end{align}
and by induction
\begin{align}
\begin{split}
& \langle f g,v\xi\rangle  = (f g)^*(v\xi) = \\
& (f\ps{1} g\ps{1})^*(v)(f\ps{2}  g\ps{2})^*(\xi) = \\
& (f\ps{1})^*(v\ps{1})(g\ps{1})^*(v\ps{2})[(f\ps{2})^*(\xi)(g\ps{2})^*(1) + (f\ps{2})^*(1)(g\ps{g})^*(\xi)] = \\
& f^*(v\ps{1}\xi)g^*(v\ps{2}) + f^*(v\ps{1})g^*(v\ps{2}\xi) = f^*((v\xi)\ps{1})g^*((v\xi)\ps{2}) = \\
& \langle f,(v\xi)\ps{1}\rangle \langle g,(v\xi)\ps{2}\rangle.
\end{split}
\end{align}
\end{proof}
Finally we prove the following proposition.
\begin{proposition}\label{starmapisg1linear}
The pairing $\langle ,\rangle :\Pc(G_2) \times U(\Fg_2) \to \mathbb{C}$ is $U(\Fg_1)-$balanced.
In other words
\begin{equation}
\langle f\,,\, v\lt X\rangle =\langle X\rt f\,,\, v\rangle.
\end{equation}
\end{proposition}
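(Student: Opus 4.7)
The plan is to induct on the PBW-degree of $v \in U(\Fg_2)$, holding $X \in \Fg_1$ and $f \in \Pc(G_2)$ arbitrary. Once the identity is established for primitive $X \in \Fg_1$, it extends to all $u \in U(\Fg_1)$ by a trivial second induction, using the module-algebra property $(uu') \rt f = u \rt (u' \rt f)$ on $\Pc(G_2)$ and the iterated action $v \lt (uu') = (v \lt u) \lt u'$ on $U(\Fg_2)$.

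The base case $v = 1$ is immediate from $\ve(X \rt f) = 0$, proved in Theorem \ref{Delta(X rhd f)}, together with $1 \lt X = \ve(X) = 0$. For $v = \xi \in \Fg_2$ both sides unwind to $\xi(f^{\ns{0}})\, X(f^{\ns{1}})$: on the right, $\langle X \rt f, \xi\rangle = \xi(X \rt f) = \xi\bigl(f^{\ns{0}} X(f^{\ns{1}})\bigr) = \xi(f^{\ns{0}}) X(f^{\ns{1}})$ since $X(f^{\ns{1}}) \in \mathbb{C}$; on the left, by the very definition of $\xi \lt X \in \Fg_2$ as the cross-derivative of the morphism $\lhd: G_2 \times G_1 \to G_2$ at $(e_2, e_1)$ in the directions $\xi$ and $X$, evaluation at $f$ yields the same $\xi(f^{\ns{0}}) X(f^{\ns{1}})$ via the pullback $\Pc(\lhd)(f) = f^{\ns{0}} \otimes f^{\ns{1}}$.

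For the inductive step, write $v = u\xi$ with $u$ of smaller degree and $\xi \in \Fg_2$. The mutual-pair compatibility \eqref{mutual-2}, specialized using the primitivity of $\xi$ and $X$ and the normalizations $1 \rt X = X$, $\xi \lt 1 = \xi$, $\xi \rt 1 = 0$, $1 \lt X = 0$, collapses to
\begin{equation*}
(u\xi) \lt X = u \lt (\xi \rt X) + (u \lt X)\xi + u(\xi \lt X).
\end{equation*}
Applying Hopf duality (Proposition \ref{starmapcomultiplicationcompatibility}) together with the induction hypothesis term-by-term transforms $\langle f, (u\xi) \lt X\rangle$ into
\begin{equation*}
\langle (\xi \rt X) \rt f, u\rangle + \langle X \rt f\ps{1}, u\rangle \langle f\ps{2}, \xi\rangle + \langle f\ps{1}, u\rangle \langle X \rt f\ps{2}, \xi\rangle.
\end{equation*}
On the other side, expanding $\langle X \rt f, u\xi\rangle$ by Hopf duality together with $\Delta(\xi) = \xi \otimes 1 + 1 \otimes \xi$ and the coproduct formula $\Delta(X \rt f) = X\ns{0} \rt f\ps{1} \otimes X\ns{1} f\ps{2} + f\ps{1} \otimes X \rt f\ps{2}$ from Theorem \ref{Delta(X rhd f)}, and using the counital identities $\ve(X\ns{1}) X\ns{0} = X$ and $\ve(f\ps{2}) f\ps{1} = f$, produces the same three summands.

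The only nontrivial matching is $\langle X\ns{0} \rt f, u\rangle \langle X\ns{1}, \xi\rangle = \langle (\xi \rt X) \rt f, u\rangle$, which reduces to the Lie-algebraic identity $X\ns{0} \langle X\ns{1}, \xi\rangle = \xi \rt X$ in $\Fg_1$. Taking $X = X_i$ and differentiating the defining relation $\psi \rt X_i = f_i^j(\psi) X_j$ of the matrix coefficients $f_i^j$ of $\Db_{\rm pol}$ against $\xi$ at $\psi = e_2$ produces exactly $X_j \xi(f_i^j) = \xi \rt X_i$. This is the conceptual heart of the proof: it is where the polynomial coaction $\Db_{\rm pol}$ is forced to record the infinitesimal $\Fg_2$-action on $\Fg_1$, allowing the pairing to cross between the action and coaction sides of the matched pair. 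Everything else is routine Sweedler bookkeeping.
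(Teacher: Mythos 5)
Your proof is correct and follows essentially the same route as the paper's: induction on the length of $v\in U(\Fg_2)$, with the base case on $\Fg_2$ obtained from the pullback $\Pc(\lhd)$ of the action morphism, and the inductive step matching the three terms coming from $\D(X\rt f)$ against the three terms of $(v\xi)\lt X$. The only (harmless) differences are that you run the chain of equalities in the opposite direction, make the decomposition $(v\xi)\lt X = v\lt(\xi\rt X)+(v\lt X)\xi+v(\xi\lt X)$ explicit via \eqref{mutual-2} where the paper leaves it implicit in its last line, and isolate the identity $X\ns{0}\langle X\ns{1},\xi\rangle=\xi\rt X$ which the paper also uses, silently, in the step $(X\ns{0}\rt f\ps{1})^*(v)\,\xi(X\ns{1})=f^*(v\lt(\xi\rt X))$.
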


\begin{proof}
The right action of $G_1$ on $\Fg_2$  leads to the coaction
\begin{align}
\begin{split}
& \Db:\Fg_2 \to \Pc(G_1) \ot \Fg_2, \\
& \xi \mapsto \xi\ns{-1} \ot \xi\ns{0} \quad \mbox{ such that } \xi\ns{-1}(\vp)\xi\ns{0} = \xi \lhd \vp.
\end{split}
\end{align}
We first observe that
\begin{align}
\begin{split}
& \xi\ns{-1}(\vp)\xi\ns{0}(f) = (\xi \lhd \vp)(f) = \xi(\vp \rhd f) = \\
& \xi(f^{\ns{0}}f^{\ns{1}}(\vp)) = \xi(f^{\ns{0}})f^{\ns{1}}(\vp)),
\end{split}
\end{align}
that is,
\begin{align}
\xi\ns{-1}\xi\ns{0}(f) = \xi(f^{\ns{0}})f^{\ns{1}}.
\end{align}
Therefore,
\begin{equation*} \langle f,\xi \lhd X\rangle  = \xi(f^{\ns{0}})X(f^{\ns{1}}) = \xi(f^{\ns{0}}X(f^{\ns{1}})) =
 \xi(X \rhd f) = \langle X \rhd f,\xi\rangle.
\end{equation*}
We finish the proof by induction as follows
\begin{align}
\begin{split}
& \langle X \rhd f, v\xi\rangle  = (X \rhd f)^*(v\xi) = ((X \rhd f)\ps{1})^*(v)((X \rhd f)\ps{2})^*(\xi) = \\
& (X\ns{0} \rhd f\ps{1})^*(v)(X\ns{1} \cdot f\ps{2})^*(\xi) + (f\ps{1})^*(v)(X \rhd f\ps{2})^*(\xi) = \\
& (X\ns{0} \rhd f\ps{1})^*(v)\xi(X\ns{1} \cdot f\ps{2}) + (f\ps{1})^*(v)\xi(X \rhd f\ps{2}) = \\
& (X\ns{0} \rhd f\ps{1})^*(v)[\xi(X\ns{1})f\ps{2}(e_2) + X\ns{1}(e_2)\xi(f\ps{2})] + \\
& (f\ps{1})^*(v)(\xi \lhd X)(f\ps{2}) \\
&  f^*(v \lhd (\xi \rhd X)) + f^*((v \lhd X)\xi) + f^*(v(\xi \lhd X)) = f^*(v\xi \lhd X) = \\
& \langle f,v\xi \lhd X\rangle.
\end{split}
\end{align}
\end{proof}

\section{Hopf cyclic coefficients}
\label{S2}
In this section we first recall the definition of  modular pair in involution (MPI)  and stable-anti-Yetter-Drinfeld (SAYD)  module  from \cite{Connes-Moscovici-00} and \cite{Hajac-Khalkhali-Rangipour-Sommerhauser-04-1} respectively. In  Subsection \ref{Canonical MPI associated to Lie-Hopf algebras},
 we canonically associate a modular pair in involution over $\Fc\acl U(\Fg)$ to  any $\Fg$-Hopf algebra $\Fc$. This modular pair in involution plays an important r\^ole in the last section. In Subsection \ref{Induced Hopf cyclic coefficients}, we characterize a subcategory of the category of SAYD modules over a Lie-Hopf algebra and call them induced SAYD modules.  In Subsection \ref{Induced Hopf cyclic coefficients in geometric cases}, we completely determine the induced modules over the  geometric Hopf algebras that  we constructed in Subsections \ref{SS-Lie algebra}, \ref{SS-Lie group}, and \ref{SS-algebraic group}, based on the modules over the geometric object on which  the Hopf algebra is constructed.

\medskip

Let $\Hc$ be a Hopf algebra. By definition, a character $\d: \Hc\ra \Cb$ is an algebra map.
A group-like  $\s\in \Hc$ is the dual object of the character, i.e, $\D(\s)=\s\ot \s$. The pair $(\d,\s)$ are called  modular pair in involution \cite{Connes-Moscovici-00} if
 \begin{equation}\label{def-MPI}
 \d(\s)=1, \quad \text{and}\quad  S_\d^2=Ad_\s,
 \end{equation}

 where  $Ad_\s(h)= \s h\s^{-1}$ and  $S_\d$ is defined
by
\begin{equation}
S_\d(h)=\d(h\ps{1})S(h\ps{2}).
\end{equation}
We recall from \cite{Hajac-Khalkhali-Rangipour-Sommerhauser-04-1} the definition of a  right-left  stable-anti-Yetter-Drinfeld module over a Hopf algebra $\Hc$. Let $\Mc$ be a right module and left comodule over a Hopf algebra $\Hc$. We say it is stable-anti-Yetter-Drinfeld (SAYD) if
\begin{equation}\label{def-SAYD}
\Db(m\cdot h)= S(h\ps{3})m\ns{-1}h\ps{1}\ot m\ns{0}\cdot h\ps{2},\qquad  m\ns{0}m\ns{-1}=m,
\end{equation}
for any $m\in \Mc$ and $h\in \Hc$.
 It is shown in \cite{Hajac-Khalkhali-Rangipour-Sommerhauser-04-1} that any MPI defines a one dimensional SAYD module and all one dimensional SAYD modules come this way.

\subsection{ Canonical MPI associated to Lie-Hopf algebras}
\label{Canonical MPI associated to Lie-Hopf algebras}
In this subsection, we associate a  canonical modular pair in involution to any  Hopf algebra we constructed in the previous section. More generally let $\Fc$ be a $\Fg$-Hopf algebra. We associate a canonical modular pair in involution to the Hopf algebra  $\Fc\acl U(\Fg)$.

First we define
\begin{equation}\label{trace-ad}
\d_\Fg:\Fg\ra\Cb, \quad \d_\Fg(X)=Tr(Ad_X).
 \end{equation}

 It is known that $\d_\Fg$ is a derivation of Lie algebras. So we  extend $\d_\Fg$ to an algebra map on $U(\Fg)$ which is
again denoted by  $\d_\Fg$.   We extend $\d_\Fg$ to an algebra map  $\d:=\ve\acl \d_\Fg:\Fc\acl U(\Fg)\ra \Cb$ by
\begin{equation}\label{delta}
\d(f\acl u)=\ve(f)\d_\Fg(u).
\end{equation}
Let us introduce a canonical  element  $\s_\Fc\in\Fc$ which plays an important r\^ole   in our study.   For a $\Fg$-Hopf algebra $\Fc$, we recall  $f^i_j\in \Fc$  defined in \eqref{def-Db-Fg} for a fixed basis $X_1, \ldots, X_m$ of $\Fg$, i.e,

\begin{equation}
\Db(X_j)=\sum_{i=1}^{m=\dim\Fg} X_i\ot f^i_j.
\end{equation}
As it is shown, $f^i_j$ are independent of $\{X_1, \ldots, X_m\}$ as a basis for $\Fg$. Now we define,

\begin{equation}\label{sigma}
\s_\Fc:= \det M_f=\sum_{\pi\in S_m}(-1)^\pi f_1^{\pi(1)}\dots f_m^{\pi(m)}.
\end{equation}
where  $M_f\in M_m(\Fc)$ is the matrix  $[f^i_j]\in M_m(\Fc)$.
\begin{lemma}\label{group-like-lemma}
The element $\s_\Fc\in \Fc$ is group-like.
\end{lemma}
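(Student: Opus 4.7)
The plan is to compute $\Delta(\sigma_\Fc)$ directly from the determinant formula \eqref{sigma} and show it equals $\sigma_\Fc \otimes \sigma_\Fc$, then verify $\varepsilon(\sigma_\Fc) = 1$ so that $\sigma_\Fc$ is a nonzero group-like element. The crucial input is equation \eqref{D-f-j-i}, which tells us that the matrix $M_f = [f^i_j]$ is a ``comatrix'' in the sense that $\Delta(f^i_j) = \sum_k f^i_k \otimes f^k_j$ (after reading indices appropriately), together with the counit identity $\varepsilon(f^i_j) = \delta^i_j$, which follows from the coaction axiom $(\mathrm{id} \otimes \varepsilon) \Db_\Fg = \mathrm{id}$ applied to $X_j = \sum_i X_i \varepsilon(f^i_j)$.

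Concretely, expanding $\Delta(\sigma_\Fc)$ and using that $\Fc$ is commutative together with \eqref{D-f-j-i} gives
\begin{align*}
\Delta(\sigma_\Fc) \;=\; \sum_{\pi \in S_m} (-1)^\pi \prod_{i=1}^m \Delta(f_i^{\pi(i)}) \;=\; \sum_{\pi \in S_m} (-1)^\pi \sum_{k_1,\dots,k_m} \prod_{i} f_{k_i}^{\pi(i)} \otimes \prod_i f_i^{k_i}.
\end{align*}
For fixed tuple $(k_1, \dots, k_m)$, the inner sum $\sum_\pi (-1)^\pi \prod_i f_{k_i}^{\pi(i)}$ is antisymmetric in the indices $k_i$, so it vanishes unless $(k_1, \dots, k_m) = (\tau(1), \dots, \tau(m))$ for some $\tau \in S_m$; in that case a reindexing $\sigma = \pi \tau^{-1}$ identifies the sum with $(-1)^\tau \sigma_\Fc$. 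Summing over $\tau$ then yields
\begin{equation*}
\Delta(\sigma_\Fc) \;=\; \sigma_\Fc \otimes \sum_{\tau \in S_m} (-1)^\tau \prod_i f_i^{\tau(i)} \;=\; \sigma_\Fc \otimes \sigma_\Fc .
\end{equation*}

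Finally, applying $\varepsilon$ to \eqref{sigma} and using $\varepsilon(f^i_j) = \delta^i_j$ gives $\varepsilon(\sigma_\Fc) = \det(I_m) = 1$, so $\sigma_\Fc$ is nonzero and group-like as claimed. The only genuinely non-routine point in this plan is the combinatorial reduction of the multi-index sum to a single sum over permutations via the antisymmetry of the determinant; everything else follows directly from the coaction/coalgebra axioms and the commutativity of $\Fc$.
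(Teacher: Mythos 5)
Your proof is correct and follows essentially the same route as the paper: expand $\Delta(\s_\Fc)$ using $\D(f_i^j)=\sum_k f_k^j\ot f_i^k$, kill the terms with repeated indices via antisymmetry of the determinant and commutativity of $\Fc$, and reindex the remaining sum over permutations to obtain $\s_\Fc\ot\s_\Fc$. The only addition is your explicit verification that $\ve(\s_\Fc)=1$ via $\ve(f^i_j)=\d^i_j$, which the paper leaves implicit but which is a legitimate (and welcome) part of checking group-likeness.
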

\begin{proof}
We know that  $\D(f_i^j)=\sum_k f_k^j\ot f_i^k$.  So,
\begin{align*}
&\D(\s_\Fc)= \sum_{\pi\in S_m}\sum_{i_1,\dots, i_m}(-1)^\pi f_{i_1}^{\pi(1)}\cdots f_{i_m}^{\pi(m)} \ot f_1^{i_1}\cdots f_m^{i_m}=\\ &\sum_{\pi\in
S_m}\sum_{\underset{\text{  \footnotesize are  distinct }}{i_1,\dots, i_m}}(-1)^\pi f_{i_1}^{\pi(1)}\cdots f_{i_m}^{\pi(m)} \ot f_1^{i_1}\cdots f_m^{i_m}+\\
&\sum_{\pi\in S_m}\sum_{\underset{\text{  \footnotesize are not  distinct }}{i_1,\dots, i_m}}(-1)^\pi f_{i_1}^{\pi(1)}\cdots f_{i_m}^{\pi(m)} \ot f_1^{i_1}\cdots
f_m^{i_m}.
\end{align*}
One uses the fact that $\Fc$ is commutative to prove that the second sum is zero. We now deal with the the first sum. We associate a unique permutation
$\mu\in S_m$ to each distinct  $m$-tuple $(i_1,\dots,i_m)$ by the rule $\mu(j)=i_j$.  So we have
\begin{align*}
&\D(\s_\Fc)= \sum_{\pi\in S_m} \sum_{\mu\in S_m}(-1)^\pi f_{\mu(1)}^{\pi(1)}\cdots f_{\mu(m)}^{\pi(m)} \ot f_1^{\mu(1)}\cdots f_m^{\mu(m)}=\\ & \sum_{\pi\in S_m}
\sum_{\mu\in S_m}(-1)^\pi(-1)^{\mu}(-1)^{\mu^{-1}} f_{1}^{\pi(\mu^{-1}(1))}\cdots f_{m}^{\pi(\mu^{-1}(m))} \ot f_1^{\mu(1)}\cdots f_m^{\mu(m)}=\\ &
\sum_{\eta,\mu\in S_m} (-1)^\eta(-1)^{\mu} f_{1}^{\eta(1)}\cdots f_{m}^{\eta(m)} \ot f_1^{\mu(1)}\cdots f_m^{\mu(m)}=\s_\Fc\ot\s_\Fc.
\end{align*}
\end{proof}

One easily sees that $\s:=\s_\Fc\acl 1$ is a group-like element in the Hopf algebra  $\Fc\acl U(\Fg)$.

\begin{theorem}\label{MPI-theorem}
For any $\Fg$-Hopf algebra  $\Fc$, the pair $(\delta, \sigma)$ is a modular  pair in involution  for the Hopf algebra $\Fc\acl U(\Fg)$.
\end{theorem}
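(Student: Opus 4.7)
The plan is to verify the two conditions of \eqref{def-MPI} in turn. The first, $\d(\s)=1$, is immediate: by \eqref{delta} and \eqref{sigma}, $\d(\s_\Fc\acl 1)=\ve(\s_\Fc)\d_\Fg(1)$; here $\d_\Fg(1)=1$ because $\d_\Fg$ extends to an algebra homomorphism on $U(\Fg)$, and $\ve(\s_\Fc)=\det[\ve(f^i_j)]=\det I=1$ since the counit axiom $(\id\ot\ve)\Db=\id$ applied to \eqref{def-Db-Fg} forces $\ve(f^i_j)=\d^i_j$. For the second condition $S_\d^2=\Ad_\s$, both sides are algebra endomorphisms of $\Fc\acl U(\Fg)$ (the former because $S_\d$ is an algebra anti-homomorphism, the latter because $\s$ is group-like), so it is enough to verify the identity on the generating subspaces $\Fc\acl 1$ and $1\acl\Fg$. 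On $\Fc$ the restriction $\d|_\Fc=\ve$ gives $S_\d|_\Fc=S|_\Fc$, whence $S_\d^2|_\Fc=S^2|_\Fc=\id$ by commutativity of $\Fc$; the map $\Ad_\s$ likewise acts trivially on $\Fc$ because $\s_\Fc$ is central in $\Fc$ and $\s_\Fc S(\s_\Fc)=\ve(\s_\Fc)=1$ by Lemma~\ref{group-like-lemma}.

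The only substantive case is $h=1\acl X$ with $X\in\Fg$. Expanding $\D^2(1\acl X)$ via the cocrossed product formula \eqref{cocross} and coassociativity of $\Db$, and applying the rewriting $S_\d^2(h)=\d(h\ps{1})\d^{-1}(h\ps{3})S^2(h\ps{2})$ together with $\d(1\acl u)=\d_\Fg(u)$, the short calculation $\d^{-1}(1\acl X)=-\d_\Fg(X)$ (obtained from \eqref{anti} and the axiom $\ve(X\rt f)=0$ of Definition~\ref{def-Lie-Hopf}), $\d|_\Fc=\ve$, $S^2|_\Fc=\id$, and the counit of $\Db$, the three summands collapse to
\begin{equation*}
S_\d^2(1\acl X)\;=\;\d_\Fg(X\ns{0})\,X\ns{1}\acl 1\;+\;S^2(1\acl X)\;-\;\d_\Fg(X)\cdot 1.
\end{equation*}
A further application of \eqref{anti} twice, invoking the antipode axiom in $\Fc$ and $S^2|_\Fc=\id$, produces
\begin{equation*}
S^2(1\acl X)\;=\;1\acl X\;-\;S\bigl(X\ns{0}\rt S(X\ns{1})\bigr)\acl 1,
\end{equation*}
whereas a direct expansion of the bicrossed product multiplication $(\s_\Fc\acl 1)(1\acl X)(S(\s_\Fc)\acl 1)$ gives
\begin{equation*}
\Ad_\s(1\acl X)\;=\;\s_\Fc\bigl(X\rt S(\s_\Fc)\bigr)\acl 1\;+\;1\acl X.
\end{equation*}
Substituting and cancelling the common $1\acl X$ terms, the MPI property on this generator reduces to establishing the scalar identity in $\Fc$
\begin{equation*}
\d_\Fg(X\ns{0})X\ns{1}\;-\;S\bigl(X\ns{0}\rt S(X\ns{1})\bigr)\;-\;\d_\Fg(X)\cdot 1\;=\;\s_\Fc\bigl(X\rt S(\s_\Fc)\bigr).
\end{equation*}

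This scalar identity is the principal obstacle. Fixing a basis $X_1,\dots,X_m$ of $\Fg$ and setting $X=X_l$, Leibniz applied to $\s_\Fc=\det M_f$ via Cramer's rule (using $M_f^{-1}=S(M_f)$) gives $X_l\rt\s_\Fc=\s_\Fc\sum_{i,k}S(f^k_i)f^i_{k,l}$, and then $X_l\rt(\s_\Fc S(\s_\Fc))=X_l\rt 1=0$ yields $\s_\Fc(X_l\rt S(\s_\Fc))=-\sum_{i,k}S(f^k_i)f^i_{k,l}$. The structure identity \eqref{Bianchi} rewrites $f^i_{k,l}$ as $f^i_{l,k}$ plus the pieces $\sum_{s,r}C^i_{s,r}f^r_l f^s_k$ and $\sum_mC^m_{l,k}f^i_m$. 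Contracting with $S(f^k_i)$ by means of the antipode axioms $\sum_i f^s_iS(f^i_k)=\d^s_k$ and $\sum_kS(f^k_i)f^i_m=\d^k_m$, and invoking $\sum_iC^i_{i,r}=-\d_\Fg(X_r)$ (by antisymmetry of $C^\bullet_{\bullet,\bullet}$ together with $\d_\Fg=\Tr\circ\ad$) and $\sum_mC^m_{l,m}=\d_\Fg(X_l)$, the structure-constant contributions reproduce precisely the terms $\sum_r\d_\Fg(X_r)f^r_l$ and $-\d_\Fg(X_l)$ appearing on the left-hand side, and so cancel. What is left is the core matrix identity $\sum_i S(X_i\rt S(f^i_l))=\sum_{i,k}S(f^k_i)(X_k\rt f^i_l)$, which is obtained by differentiating the antipode relation $S(M_f)\cdot M_f=I$ with respect to the derivation $X_l$, solving for $X_l\rt S(f^i_p)$, applying $S$, and using $S^2|_\Fc=\id$ together with the commutativity of $\Fc$. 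The conceptual content of the whole calculation is that $\d_\Fg=\Tr\circ\ad$ and the ``logarithmic derivative'' $X_l\rt\s_\Fc/\s_\Fc$ are interrelated precisely through the Lie algebra structure constants $C^\bullet_{\bullet,\bullet}$, which is the information encoded in \eqref{Bianchi}.
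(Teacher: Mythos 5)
Your overall route is the paper's: verify $\d(\s)=1$, reduce $S_\d^2=\Ad_\s$ to the generators $f\acl 1$ and $1\acl X$, compute $S_\d^2(1\acl X)$ and $\Ad_\s(1\acl X)$, and reconcile them through the structure identity \eqref{Bianchi} and the logarithmic derivative of $\s_\Fc$. All of that bookkeeping is correct, and your reduction to the single identity $\sum_iS\bigl(X_i\rt S(f^i_l)\bigr)=\sum_{i,k}S(f^k_i)(X_k\rt f^i_l)$ is precisely the step the paper performs (tacitly) when it rewrites $-S(X_j\rt S(f_i^j))$ as $-S(X_j\ns{1})(X_j\ns{0}\rt f_i^j)$ inside its computation of $S^2(1\acl X_i)$.

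The gap is in your justification of that core identity. Differentiating $\sum_kS(f^i_k)f^k_p=\d^i_p$ along $X_l$ and inverting only yields $X_l\rt S(f^i_q)=-\sum_{k,p}S(f^i_k)(X_l\rt f^k_p)S(f^p_q)$, a relation in which the \emph{same} element $X_l$ acts on both sides; specializing $l=i$, $q=l$, summing over $i$ and applying $S$ gives $\sum_iS(X_i\rt S(f^i_l))=-\sum_{i,k,p}f^i_k\,S(X_i\rt f^k_p)\,f^p_l$, which is not the right-hand side you need. The identity you need transfers the action from $X_i$ to the components $X_k$ of its coaction at the cost of the factor $S(f^k_i)$, and no manipulation of $S(M_f)M_f=I$ can create that coupling between the acting index and the coaction index. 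The missing ingredient is axiom (2) of Definition~\ref{def-Lie-Hopf}, $\D(X\rt f)=X\bullet\D(f)$ --- which, tellingly, your argument never invokes, although it is part of the hypothesis that $\Fc$ is a $\Fg$-Hopf algebra and must enter somewhere. From it one gets, for every $X\in\Fg$ and $f\in\Fc$,
\begin{equation*}
S(X\ns{0}\rt f\ps{1})\,X\ns{1}f\ps{2}\;=\;-\,S(f\ps{1})(X\rt f\ps{2})\;=\;(X\rt S(f\ps{1}))f\ps{2},
\end{equation*}
the first equality by applying $m\circ(S\ot\Id)$ to $\D(X\rt f)=X\bullet\D(f)$ and the second by differentiating $\ve(f)1=f\ps{1}S(f\ps{2})$; convolving with $S(f\ps{3})$ gives $X\rt S(f)=S(X\ns{0}\rt f)X\ns{1}$, and applying $S$ together with $S^2|_\Fc=\Id$ and commutativity yields $S(X\rt S(f))=S(X\ns{1})(X\ns{0}\rt f)$. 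Taking $f=f^i_l$, $X=X_i$ and summing over $i$ is exactly your core identity. With this replacement your argument closes and coincides with the paper's proof.
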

\begin{proof}
Let us first do some preliminary computations. For an element
 $1 \acl X_i \in \Hc:=\Fc\acl U(\Fg)$,  the action of iterated comultiplication $\D^{(2)}$ is calculated by
\begin{align*}
&\D^{(2)}(1 \acl X_i) =  (1 \acl X_i)\ps{1} \ot (1 \acl X_i)\ps{2} \ot (1 \acl X_i)\ps{3} \\
 & =  1 \acl X_i\ns{0} \ot X_i\ns{1} \acl 1 \ot X_i\ns{2} \acl 1 + \\
 &1 \acl 1 \ot 1 \acl X_i\ns{0} \ot X_i\ns{1} \acl 1   + 1 \acl 1 \ot 1 \acl1 \ot 1 \acl X_i.
\end{align*}
By definition of the antipode \eqref{anti}, we observe
\begin{align*}
&S(1 \acl X_i)  =  (1 \acl S(X_i\ns{0})) (S(X_i\ns{1}) \acl 1) =  - (1 \acl X_j)  (S(f_i^j) \acl 1) \\
 & =  - X_j\ps{1} \rhd S(f_i^j) \acl X_j\ps{2}  =  - X_j \rhd S(f_i^j) \acl 1 - S(f_i^j) \acl  X_j,
\end{align*}
and hence
\begin{align*}
&S^2(1 \acl  X_i)  = - S(X_j \rhd S(f_i^j)) \acl  1 - (1 \acl  S(X_j\ns{0})) \cdot (S(S(f_i^j)X_j\ns{1}) \acl  1) \\
 & =  - S(X_j\ns{1})(X_j\ns{0} \rhd f_i^j) \acl  1 + (1 \acl  X_k) \cdot (f_i^jS(f_j^k) \acl  1) \\
  &=  - S(f_j^k)(X_k \rhd f_i^j) \acl  1 + 1 \acl  X_i   =  - S(f_j^k)f_{i,k}^j \acl  1 + 1 \acl  X_i.
\end{align*}
Finally, for the twisted antipode $S_{\delta}:\cal H \to \cal H$, we  simplify   its square action by
\begin{equation}
  S_{\delta}^2(h) =  \delta(h\ps{1})\delta(S(h\ps{3}))S^2(h\ps{2}), \quad h\in \Hc.
\end{equation}
We aim to prove that
\begin{equation}\label{MPI-equation}
S_{ \delta}^2 = \Ad_{\sigma}.
\end{equation}

Since the twisted antipode is anti-algebra map, it is enough to  prove \eqref{MPI-equation}  is held for the elements of the form $1 \acl X^i$ and $f\acl 1$.
 For the latter elements, it is seen that  $S_\d(f\acl 1)=S(f)\acl 1$. Hence there is nothing to prove, since $S^2(f)=f=\s f\s^{-1}$.
  According to the above preliminary calculations, we observe that
\begin{equation}\label{aux6}
S_{ \delta}^2(1 \acl X_i)  =   \delta_\Fg(X_j)(f_i^j \acl 1) + (1 \acl X_i) - S(f_j^k)f_{i,k}^j \acl 1 - \delta_\Fg(X_i)1 \acl 1).
\end{equation}
Multiplying  $S(f_j^k)$ to both hand sides of  the structure  identity \eqref{Bianchi}  which is recalled here
\begin{align}
f_{i,k}^j - f_{k,i}^j = \sum_{r,s}C_{s,r}^jf_k^rf_i^s - \sum_lC_{i,k}^lf_l^j,
\end{align}
we obtain the following expression
\begin{align}\notag
&- S(f_j^k)f_{k,i}^j =  - S(f_j^k)f_{i,k}^j + \sum_{r,s}C_{s,r}^jS(f_j^k)f_k^rf_i^s - \sum_lC_{i,k}^lS(f_j^k)f_l^j \\\label{aux5}
 & =  - S(f_j^k)f_{i,k}^j + \sum_{s,j}C_{s,j}^jf_i^s - \sum_lC_{i,l}^l  1_\Fc  =  - S(f_j^k)f_{i,k}^j + \delta_\Fg(X_s)f_i^s - \delta_\Fg(X_i) 1_\Fc.
\end{align}
Combining \eqref{aux5} and  \eqref{aux6} we get
\begin{equation}\label{aux7}
S_{\delta}^2(1 \acl X_i) = - S(f_j^k)f_{k,i}^j \acl 1 + 1 \acl X_i.
\end{equation}
On the other hand, since $\Fg$ acts on $\Fc$ by derivation, we see that
\begin{align}\notag
&0  =  X_i \rhd (S(f_j^k)f_k^j)  =  f_k^j (X_i \rhd S(f_j^k)) + S(f_j^k)(X_i \rhd f_k^j) \\\label{aux8}
& =  f_k^j(X_i \rhd S(f_j^k)) + S(f_j^k)f_{k,i}^j.
\end{align}

From  \eqref{aux8}
 and \eqref{aux7}  we deduce  that
\begin{equation}\label{aux9}
S_{\delta}^2(1 \acl X_i) = f_k^j(X_i \rhd S(f_j^k)) \acl 1 + 1 \acl X_i
\end{equation}

Now we  consider the element
\begin{equation}
  \sigma^{-1} = det[S(f_j^k)] = \sum_{\pi \in S_m}(-1)^{\pi}S(f_1^{\pi(1)})S(f_2^{\pi(2)}) \dots S(f_m^{\pi(m)}),
\end{equation}
 and by using the fact that   $\Fg$ acts on $\Fc$   by  derivation we observe that
\begin{align}\notag
&X_i \rhd \sigma^{-1} = X\rt\det[S(f_j^k)] =\\\label{aux10}
  &\sum_{1\le j\le m,\;\pi \in S_m}(-1)^{\pi}S(f_1^{\pi(1)})S(f_2^{\pi(2)}) \cdots\; X_i \rhd S(f_j^{\pi(j)})\;\cdots S(f_m^{\pi(m)}).
\end{align}
Since   $\s=\det[f_i^j]$ and $\Fc$ is commutative, we observe  $\s=\det [f_i^j]^T$. Here  $[f_i^j]^T$ is meant the  transpose of the matrix $[f_i^j]$. We can then conclude that
\begin{align}
\sigma(\sum_{\pi \in S_m}(-1)^{\pi}[X_i \rhd S(f_1^{\pi(1)})]S(f_2^{\pi(2)}) \dots S(f_m^{\pi(m)})) = f_k^1(X_i \rhd S(f_1^k)),
\end{align}
which  implies
\begin{align}\label{aux11}
\sigma(X_i \rhd \sigma^{-1}) = f_k^j(X_i \rhd S(f_j^k)).
\end{align}
Finally, by
\begin{align*}
&Ad_{\sigma}(1 \acl X_i) =  (\sigma \acl 1)(1 \acl X_i)(\sigma^{-1} \acl 1)= \sigma X_i \rhd \sigma^{-1} \acl 1 + 1 \acl X_i,
\end{align*}
followed we substitute   \eqref{aux11} in \eqref{aux9} finishes the proof verifying
\begin{align}\label{MPI-equation-proof}
S_{\delta}^2(1 \acl X_i) = Ad_{\sigma}(1 \acl X_i).
\end{align}
\end{proof}

\subsection{Induced Hopf cyclic coefficients}
\label{Induced Hopf cyclic coefficients}
We investigate the category of SAYD  modules over the Hopf algebra associated to a  $\Fg$-Hopf algebra $\Fc$, i.e, ${\nobreak \Hc:=\Fc\acl U(\Fg)}$. We determine a subcategory of it whose objects are  called  by us induced SAYD modules over $\Hc$. Our strategy  is to find a YD module over $\Hc$, simple enough to work with and rich enough to give us a representation of the geometric  ambient object i.e,  the Lie algebra,  Lie group, and  algebraic group that we started with. Next, we tensor the induced  YD module with the canonical modular pair in involution and use \cite[Lemma 2.3]{Hajac-Khalkhali-Rangipour-Sommerhauser-04-1} to get our desired SAYD module.

\medskip

Let $M$ be a left   $\Fg$-module and a  right $\Fc$-comodule via $\Db_M:M\ra M\ot \Fc$. We say that $M$ is an induced $(\Fg,\Fc)$-module if
\begin{equation}\label{def-induced}
  \Db_M(X\cdot m)= X\bullet \Db_M(m).
\end{equation}
Here, as before, $X\bullet ( m\ot f)= X\ns{0}m\ot X\ns{1}f+ m\ot X\rt f$.
One extend the action of $\Fg$ on $M$ to an action of $U(\Fg)$ on $M$ in the natural way.
We observe that
\begin{lemma}The coaction $\Db:M\ra F\ot M$ is $U(\Fg)$-linear. In other words,
\begin{equation}\label{YD-proof}
\Db_M(u\cdot m)= u\bullet \Db_M(m):= u\ps{1}\ns{0}\cdot m\ns{0}\ot u\ps{1}\ns{1}(u\ps{2}\rt m\ns{1}).
\end{equation}
\end{lemma}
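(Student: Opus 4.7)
The plan is to argue by induction on the PBW-filtration degree of $u \in U(\Fg)$, using the matched pair identity \eqref{proof-label-1} (already established in the proof of Theorem \ref{Theorem-Lie-Hopf-matched-pair}) to upgrade the $\Fg$-equivariance \eqref{def-induced} to the full $U(\Fg)$-equivariance \eqref{YD-proof}. Recall that $M$ is being viewed as a $U(\Fg)$-module via the natural extension of its $\Fg$-action, so both sides of \eqref{YD-proof} make sense.

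The base case $u = 1$ is immediate, since $\Db(1) = 1 \ot 1$ forces the right-hand side of \eqref{YD-proof} to collapse to $\Db_M(m)$. The case $u = X \in \Fg$ is essentially the defining identity \eqref{def-induced}: substituting $\D(X) = X \ot 1 + 1 \ot X$ and $\Db(1) = 1 \ot 1$ into the right-hand side of \eqref{YD-proof} reduces it to $X\ns{0} \cdot m\ns{0} \ot X\ns{1} m\ns{1} + m\ns{0} \ot X \rt m\ns{1}$, which is exactly $X \bullet \Db_M(m)$.

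For the inductive step, I would write $u = X w$ with $X \in \Fg$ and $w \in U(\Fg)$ of strictly smaller degree. Using $Xw \cdot m = X \cdot (w \cdot m)$, the base case applied to the element $w \cdot m \in M$, and the inductive hypothesis for $w$, one obtains
\[
\Db_M(Xw \cdot m) \;=\; X \bullet \Db_M(w \cdot m) \;=\; X \bullet \bigl(w \bullet \Db_M(m)\bigr).
\]
It therefore suffices to verify that $\bullet$ is actually an action of $U(\Fg)$ on $M \ot \Fc$, i.e.\ that $X \bullet (w \bullet \xi) = (Xw) \bullet \xi$ for every $\xi \in M \ot \Fc$.

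This last equality is the main calculational step but requires no new input. Expanding $X \bullet (w \bullet (m \ot f))$ by applying the inner $\bullet$ first and then the outer one, and using the Leibniz rule \eqref{ma2} for the $U(\Fg)$-action on $\Fc$, yields
\[
X\ps{1}\ns{0} w\ps{1}\ns{0} \cdot m \,\ot\, X\ps{1}\ns{1}\bigl(X\ps{2} \rt w\ps{1}\ns{1}\bigr)\bigl(X\ps{3} w\ps{2} \rt f\bigr),
\]
while expanding $(Xw) \bullet (m \ot f)$ directly and rewriting $\Db(Xw)$ by \eqref{mp4} produces exactly the same expression, by virtue of \eqref{proof-label-1}. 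The only real obstacle is the Sweedler-index bookkeeping required to match the two sides; conceptually everything reduces to the matched pair axioms already verified for $(U(\Fg),\Fc)$, and no further structural input is needed.
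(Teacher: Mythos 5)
Your proof is correct and follows essentially the same route as the paper: an induction on products in $U(\Fg)$ with base case given by the defining identity \eqref{def-induced}, where the inductive step reduces to the associativity of $\bullet$, verified exactly as the paper does via \eqref{mp4} and the module-algebra property \eqref{ma2}. The only cosmetic difference is that you factor $u = Xw$ with $X\in\Fg$ and isolate the associativity of $\bullet$ on all of $M\ot\Fc$ as a separate step, while the paper treats a general product $u^1u^2$ and inlines the same Sweedler computation.
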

\begin{proof}
For any $X\in \Fg$, the condition \eqref{YD-proof} is obviously satisfied. Let assume that it is satisfied for $u^1, u^2\in U(\Fg)$ and any $m\in M$ we show it is also held for $u^1u^2$ and any $m\in M$. Using \eqref{mp4} we observe
\begin{align}
\begin{split}
&\Db_M(u^1u^2\cdot m)= u^1\ps{1}(u^2\cdot m)\ns{0}\ot u^1\ps{1}\ns{1}(u^1\ps{2}\rt (u^2\cdot m)\ns{1})=\\
& u^1\ps{1}\ns{0} u^2\ps{1}\ns{0}\cdot m\ns{0}\ot u^1\ps{1}\ns{1} (u^1\ps{2}\rt (u^2\ps{1}\ns{1}(u^2\ps{1}\rt m\ns{1})))=\\
& u^1\ps{1}\ns{0} u^2\ps{1}\ns{0} \cdot m\ns{0} \ot u^1\ps{1}\ns{1} u^1\ps{2}\rt u^2\ps{1}\ns{1}( u^1\ps{3}u^2\ps{1}\rt m\ns{1})=\\
& (u^1u^2)\ps{1}\ns{0}\cdot m\ns{0} \ot (u^1u^2)\ps{1}\ns{1} (u^1u^2)\ps{2}\rt m\ns{1}.
\end{split}
\end{align}
\end{proof}

Now we let $\Hc$ act on $M$ from left via
\begin{equation}\label{action-M}
 \Hc\ot M \ra M, \quad (f\acl u)m=\ve(f)um.
\end{equation}
Since $\Fc$ is a Hopf subalgebra of $\Hc$ one easily extend the coaction of $\Fc$ on $M$ to a right coaction of $\Hc$ on $M$ via,
\begin{equation}\label{coaction-M}
\Db_M: M\ra  M\ot \Hc, \quad \Db_M(m)= m\ns{0}\ot m\ns{1}\acl 1.
\end{equation}

For a Hopf algebra $\Hc$, a left module and right comodule $M$, via $\Db_M(m)=m\ns{-1}\ot m\ns{0}$ is called Yetter-Drinfeld module (YD for short)  if the following condition satisfied
\begin{equation}\label{YD-condition-1}
(h\ps{2}\cdot m)\ns{0}\ot (h\ps{2}\cdot m)\ns{1}h\ps{1}= h\ps{1}\cdot  m\ns{0}\ot h\ps{2}m\ns{1}.
\end{equation}
 The above condition equivalent to the following one in case  $\Hc$ has invertible antipode.
\begin{equation}\label{YD-condition-2}
\Db_M(h\cdot m)= h\ps{2}\cdot m\ns{0}\ot  h\ps{3}m\ns{1} S^{-1}(h\ps{1}).
\end{equation}
\begin{proposition}
Let $\Fc$ be a $\Fg$-Hopf algebra and  $M$ an induced $(\Fg, \Fc)$-module. Then, via the action and coaction  defined in \eqref{action-M} and \eqref{coaction-M},  $M$ is a YD-module over $\Fc\acl U(\Fg)$.
\end{proposition}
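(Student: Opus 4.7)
The plan is to verify the Yetter--Drinfeld compatibility \eqref{YD-condition-1} for the proposed action \eqref{action-M} and coaction \eqref{coaction-M} of $\Hc=\Fc\acl U(\Fg)$ on $M$. A direct Sweedler computation shows that \eqref{YD-condition-1} is multiplicative in the Hopf algebra variable: if it holds for $h,k\in\Hc$ then it holds for $hk$. Since $\Hc$ is generated as an algebra by its two subalgebras $\Fc\acl 1$ and $1\acl U(\Fg)$, it is enough to check the condition on elements of the form $f\acl 1$ and $1\acl u$.

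The case $h=f\acl 1$ is immediate: the coproduct $\D(f\acl 1)=f\ps{1}\acl 1 \ot f\ps{2}\acl 1$ stays inside $\Fc\acl 1$, the action reduces to $(g\acl 1)\cdot m=\ve(g)\,m$, and the commutativity of $\Fc$ forces both sides of \eqref{YD-condition-1} to collapse to $m\ns{0}\ot m\ns{1}f\acl 1$.

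The substantive case is $h=1\acl u$. From \eqref{cocross} one has $\D(1\acl u)=(1\acl u\ps{1}\ns{0})\ot(u\ps{1}\ns{1}\acl u\ps{2})$. The computation of the left-hand side of \eqref{YD-condition-1} will proceed as follows. First, use the $U(\Fg)$-linearity of the $\Fc$-coaction established in \eqref{YD-proof} to expand $(u\ps{2}\cdot m)\ns{0}\ot(u\ps{2}\cdot m)\ns{1}$; then multiply by $h\ps{1}=1\acl u\ps{1}\ns{0}$ using the bicrossed product multiplication rule; finally collapse $\ve(u\ps{1}\ns{1})u\ps{1}\ns{0}=u\ps{1}$ via the counitality of $\Db\colon U(\Fg)\to U(\Fg)\ot\Fc$. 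The result is an expression in which the $\Fc$-coaction sits on $u\ps{2}$ while the outer $U(\Fg)$-component is $u\ps{1}$. The right-hand side of \eqref{YD-condition-1}, computed directly from the bicrossed multiplication, has the $\Fc$-coaction sitting on $u\ps{1}$ and outer component $u\ps{3}$. To reconcile them I will use the cocommutativity of $U(\Fg)$ to rewrite $u\ps{1}\ot u\ps{2}\ot u\ps{3}=u\ps{3}\ot u\ps{2}\ot u\ps{1}$, and then invoke the matched pair axiom \eqref{mp5} together with the commutativity of $\Fc$ in the form $u\ps{2}\ns{0}\ot u\ps{2}\ns{1}(u\ps{1}\rt f)=u\ps{1}\ns{0}\ot u\ps{1}\ns{1}(u\ps{2}\rt f)$, which lands exactly on the right-hand side.

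The only real obstacle is bookkeeping: three layers of Sweedler notation are in play simultaneously, namely $\D$ applied to $u\in U(\Fg)$, the coaction $\Db$ taking pieces of $\D u$ into $U(\Fg)\ot\Fc$, and the coaction of $\Fc$ on $m\in M$. The identity \eqref{mp5} must be invoked at precisely the moment when the $u\ps{i}$ indices have been brought into the correct position by cocommutativity. Once this is handled correctly, every remaining step is a standard identity from Subsection \ref{SS-bicrossed} or a direct consequence of the inducedness hypothesis \eqref{def-induced} on $M$.
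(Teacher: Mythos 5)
Your proposal is correct and follows essentially the same route as the paper: reduce via multiplicativity of \eqref{YD-condition-1} to the generators $f\acl 1$ and $1\acl u$, dispose of the former trivially, and for $1\acl u$ combine the $U(\Fg)$-linearity of the coaction from \eqref{YD-proof} with the cocommutativity of $U(\Fg)$. The only (harmless) redundancy is your appeal to \eqref{mp5}: once cocommutativity has cyclically permuted the Sweedler legs of $u$, the two sides already coincide, and indeed \eqref{mp5} is itself automatic here, being a consequence of the cocommutativity of $U(\Fg)$ and the commutativity of $\Fc$.
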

\begin{proof}
Since condition \eqref{YD-condition-1} is multiplicative, it suffices to check it for
$f\acl 1$ and $1\acl u$. We check it for the latter element as for  the former it is obviously satisfied.
We see that $\D(1\acl u)= 1\acl u\ps{1}\ns{0}\ot u\ps{1}\ns{1}\acl u\ps{2}$. We use \eqref{YD-proof} and the fact that $U(\Fg)$ is cocommutative to observe that for $h= 1\acl u$ we have
\begin{align}
\begin{split}
&(h\ps{2}\cdot m)\ns{0}\ot (h\ps{2}\cdot m)\ns{1}h\ps{1}=\\
&((u\ps{1}\ns{1}\acl u\ps{2})\cdot m)\ns{0}\ot ((u\ps{1}\ns{1}\acl u\ps{2})\cdot m)\ns{1}1\acl u\ps{1}\ns{0}=\\
&u\ps{2}\ns{0}\cdot m\ns{0}\ot u\ps{2}\ns{1}(u\ps{3}\rt m\ns{1})\acl u\ps{1}=\\
&h\ps{1}\cdot m\ns{0}\ot h\ps{2}m\ns{1}.
\end{split}
\end{align}
\end{proof}

Now, one knows that by composing $S$ with the action and  $S^{-1}$with the coaction, we change  a left-right YD module over $\Hc$ into a right-left YD module over $\Hc$. However, since the coaction always lands in $\Fc$ and $S^{-1}(f\acl 1)= S^{-1}(f)\acl= S(f)\acl$,  we  conclude that the following defines a right-left YD module over $\Hc$.
\begin{equation}\label{right-action-M}
  M \ot \Hc\ra M, \quad m(f\acl u)=\ve(f)S(u)\cdot m.
\end{equation}
\begin{equation}
\Db_M: M\ra \Hc\ot  M, \quad \Db_M(m)= S(m\ns{1})\acl 1\ot m\ns{0}.
\end{equation}
It means that the above right $\Hc$-module and left $\Hc$-comodule  $M$ satisfies
\begin{equation}\label{right-YD-condition-1}
h\ps{2}(m\cdot h\ps{1})\ns{-1}\ot (m\cdot h\ps{1})\ns{0}= m\ns{-1}h\ps{1}\ot m\ns{0}\cdot h\ps{2},
\end{equation}
 or equivalently,
\begin{equation}\label{right-YD-condition-2}
\Db_M(m\cdot h)= S^{-1}(h\ps{3})m\ns{-1} h\ps{1}\ot m\ns{0}\cdot h\ps{2}.
\end{equation}

The main aim of this section is to provide a class of SAYD module over the Hopf algebra $\Fc\acl U(\Fg)$ for a $\Fg$-Hopf algebra $\Fc$.

It is known that tensor product of an AYD module and a  YD module is an AYD \cite{Hajac-Khalkhali-Rangipour-Sommerhauser-04-1}.
Let us give a proof for this fact here.
Let $N$ be a right-left AYD module  over a Hopf algebra $\Hc$ and $M$ be a right-left   YD module over $\Hc$. We endow $N\ot M$ with the the action $(n\ot m)h:= nh\ps{2}\ot mh\ps{1}$ we also endow $N\ot M$ with the coaction $\Db_{N\ot M}(n\ot m)= n\ns{-1}m\ns{1}\ot n\ns{0}\ot m\ns{0}$.
We observe that
\begin{align}
\begin{split}
&\Db_{N\ot M}((n\ot m)\cdot h)=\Db_{N\ot M}(n\cdot h \ps{2}\ot m\cdot h\ps{1})=\\
& S(h\ps{6})n\ns{-1}h\ps{4} S^{-1}(h\ps{3})m\ns{-1} h\ps{1}\ot n\ns{0}\cdot h\ps{5}\ot m\ns{0}\cdot h\ps{2}=\\
& S(h\ps{3})n\ns{-1}m\ns{-1}h\ps{1}\ot (n\ns{0}\ot m\ns{0})\cdot h\ps{2}.
\end{split}
\end{align}

In general, even if $N$ and $M$ are stable, there is no grantee that $N\ot M$ becomes stable. However, our case is special and one easily sees that $N\ot M$ is stable. Since any MPI is a one dimensional AYD module,  $^\s\Cb_\d\ot M$ is an AYD module over $\Fc\acl U(\Fg)$.  We denote $^\s\Cb_\d\ot M$ by $^\s{M}_\d$. We simplify the action of $\Fc\acl U(\Fg)$ as follows.

\begin{align}
\begin{split}\label{action-SAYD}
&\; ^\s{M}_\d\ot \Fc\acl U(\Fg)\ra\; ^\s{M}_\d,\quad m\lt_{(\d,\s)}(f\acl u):= \ve(f) \d(u\ps{2}) S(u\ps{2})m\,
 \end{split}
\end{align}
Its coaction is given by,
\begin{align}\label{coaction-SAYD}
\begin{split}
&^\s\Db_\d: \; ^\s{M}_\d\ra  \Fc\acl U(\Fg)\ot\; ^\s{M_\d},\\
&\Db_{(\d,\s)}(m):= \s S(m\ns{1})\acl 1\ot m\ns{0}.
 \end{split}
\end{align}

We summarize this subsection in the following.
\begin{theorem}\label{Theorem-SAYD}
  For any $\Fg$-Hopf algebra $\Fc$ and any induced $(\Fg, \Fc)$-module $M$, there is a  SAYD module structure on $^\s{M}_\d:= \; ^\s\Cb_\d\ot M$ over $\Fc\acl U(\Fg)$ defined in \eqref{action-SAYD} and \eqref{coaction-SAYD}. Here,  $(\d,\s)$ is the canonical modular pair in involution associated to $(\Fg,\Fc)$.
\end{theorem}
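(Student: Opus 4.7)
The strategy is to assemble the SAYD structure on $^\s M_\d$ out of three pieces already established in the paper: the one-dimensional SAYD associated to the canonical MPI $(\d,\s)$ on $\Hc := \Fc\acl U(\Fg)$; the YD structure on $M$ over $\Hc$ induced by the $(\Fg,\Fc)$-module structure; and the general fact, verified in the discussion immediately preceding the theorem, that the tensor product of a right-left AYD and a right-left YD module is again a right-left AYD module.

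First, Theorem \ref{MPI-theorem} gives a modular pair in involution $(\d,\s)$ for $\Hc$, so by \cite{Hajac-Khalkhali-Rangipour-Sommerhauser-04-1} the one-dimensional space $^\s\Cb_\d$ is a right-left SAYD module over $\Hc$, with right action $1\lt h=\d(h)$ and left coaction $\Db(1)=\s\otimes 1$. Next, the proposition preceding the theorem promotes the induced module $M$ to a left-right YD module over $\Hc$ via \eqref{action-M}--\eqref{coaction-M}, and the antipode twist recorded in \eqref{right-action-M} turns it into the right-left YD module with $m\cdot(f\acl u)=\ve(f)S(u)\cdot m$ and $\Db_M(m)=S(m\ns{1})\acl 1\otimes m\ns{0}$. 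Applying the tensor product construction spelled out just before the theorem then endows $^\s\Cb_\d\otimes M$ with a right-left AYD structure over $\Hc$. Computing the resulting action and coaction on $1\otimes m$ and using $\D(f\acl u)=f\ps{1}\acl u\ps{1}\ns{0}\otimes f\ps{2}u\ps{1}\ns{1}\acl u\ps{2}$, together with the counit axiom $u\ns{0}\ve(u\ns{1})=u$ for the $\Fc$-coaction on $U(\Fg)$, reduces these formulas precisely to \eqref{action-SAYD} and \eqref{coaction-SAYD}; this is essentially a bookkeeping step.

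The one nontrivial remaining point is stability, since the general tensor product of a stable AYD and a stable YD need not be stable. Here, however, the coaction lands in the subalgebra $\Fc\acl 1$ and therefore pairs against the action in an especially transparent way. Concretely, for $m\in{}^\s M_\d$ one computes
\begin{equation}
m\ns{0}\lt_{(\d,\s)}\bigl(\s S(m\ns{1})\acl 1\bigr)=\ve\bigl(\s S(m\ns{1})\bigr)\,S_\d(1)\,m\ns{0}=\ve(\s)\,\ve(m\ns{1})\,m\ns{0}=m,
\end{equation}
using that $\s$ is group-like hence $\ve(\s)=1$, that $S$ and $\ve$ are compatible, and the comodule counit axiom $m\ns{0}\ve(m\ns{1})=m$. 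Thus $^\s M_\d$ is stable.

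The main obstacle that I anticipate is simply verifying that the various formulas glue correctly; in particular, care is needed when converting the left-right YD on $M$ coming from \eqref{action-M}--\eqref{coaction-M} into a right-left YD via $S$ and $S^{-1}$, and when unwrapping the tensor-product coaction $\Db_{N\otimes M}(n\otimes m)=n\ns{-1}m\ns{-1}\otimes n\ns{0}\otimes m\ns{0}$ into the concise formula \eqref{coaction-SAYD}. Once these identifications are made, both the AYD identity \eqref{def-SAYD} and stability are immediate from the pieces above, completing the proof.
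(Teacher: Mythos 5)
Your proposal is correct and follows essentially the same route as the paper: the paper's proof of Theorem \ref{Theorem-SAYD} is exactly the discussion preceding its statement, namely tensoring the one-dimensional SAYD module $^\s\Cb_\d$ arising from the canonical modular pair in involution with the right-left YD module obtained (via the antipode twist) from the induced $(\Fg,\Fc)$-module $M$, and then checking stability of the result. Your explicit stability computation is in fact slightly more detailed than the paper's, which merely asserts that one easily sees the tensor product is stable.
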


\subsection{Induced Hopf cyclic coefficients in geometric cases}
\label{Induced Hopf cyclic coefficients in geometric cases}
In Section \ref{S1}, we associated a  Hopf algebra  to any  matched  pair of Lie algebras,  Lie groups, and  affine algebraic groups. These geometric objects admit their own representations. In this section, to such a representation we associate an induced module over the corresponding matched pair and  hence a SAYD module over the Hopf algebra in question.

 \medskip

 We start with the $\Fg_1$-Hopf algebra $R(\Fg_2)$, where $(\Fg_1,\Fg_2)$ is a matched pair of Lie algebras. A left module $M$ over the double crossed sum Lie algebra $\Fg_1\bi\Fg_2$ is naturally a left  $\Fg_1$-module as well as a left $\Fg_2$-module. In addition, it satisfy the following compatibility condition.
 \begin{equation}\label{Lie-module-compatibility}
 \z\cdot(X\cdot m)- X\cdot (\z\cdot m)= (\x\rt X)\cdot m+ (\z\lt X)\cdot m.
 \end{equation}
 Conversely, if $M$ is left module over $\Fg_1$ and $\Fg_2$ satisfying \eqref{Lie-module-compatibility}, then $M$ is a $\Fg_1\bi\Fg_2$-module in its natural way, i.e. $(X\oplus \z)\cdot m:= X\cdot m+\z\cdot m$. This is generalized in the following lemma whose proof is elementary and is omitted.

 \begin{lemma}\label{lemma-U-V-module}
Let $\Uc$ and $\Vc$ be a mutual pair of Hopf algebras and  $M$  a left module over both Hopf algebras. Then $M$ is a left $\Uc \bi \Vc$-module via the action
 \begin{equation}
 (u \bi v)\cdot m:=u \cdot (v \cdot m),
  \end{equation}
  if and only if
  \begin{equation}\label{Hopf-module-compatibility}
  (v\ps{1}\rt u\ps{1})\cdot ((v\ps{2}\lt u\ps{2}) \cdot m)=v\cdot(u \cdot m),
   \end{equation}
    for any $u \in \Uc$, $v\in \Vc$ and $m \in M$. Conversely, every left module over $\Uc\bi\Vc$ comes this way.
\end{lemma}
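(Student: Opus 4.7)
The plan is to test the $\Uc\bi\Vc$-module axiom against the multiplication rule of the double crossed product. Specifically, once one verifies the unit axiom $(1\dcp 1)\cdot m = 1\cdot(1\cdot m) = m$ (which is immediate), the remaining content is the multiplicativity identity
\begin{equation}
\big((u^1\dcp v^1)(u^2\dcp v^2)\big)\cdot m \;=\; (u^1\dcp v^1)\cdot\big((u^2\dcp v^2)\cdot m\big),
\end{equation}
which must be compared with the definition of the product in $\Uc\bi\Vc$.

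First I would expand both sides. Using the defining rule of the double crossed product together with the given $\Uc$- and $\Vc$-module structures on $M$, the left-hand side becomes
\begin{equation}
u^1\cdot\Big((v^1\ps{1}\rt u^2\ps{1})\cdot\big((v^1\ps{2}\lt u^2\ps{2})\cdot(v^2\cdot m)\big)\Big),
\end{equation}
while the right-hand side equals $u^1\cdot\big(v^1\cdot(u^2\cdot(v^2\cdot m))\big)$. Specializing to $u^1 = 1$ and $v^2 = 1$ collapses these expressions precisely to the two sides of \eqref{Hopf-module-compatibility}, so \eqref{Hopf-module-compatibility} is necessary. Conversely, if \eqref{Hopf-module-compatibility} holds, then applying it to the inner pair $(v^1\ps{1}\rt u^2\ps{1})\cdot\big((v^1\ps{2}\lt u^2\ps{2})\cdot(-)\big)$ with $v^2\cdot m$ in the slot immediately turns the expanded left-hand side into the right-hand side, so multiplicativity holds in general.

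For the converse statement, I would use the fact that $u\mapsto u\dcp 1$ and $v\mapsto 1\dcp v$ are algebra maps $\Uc\to\Uc\bi\Vc$ and $\Vc\to\Uc\bi\Vc$ (a direct consequence of the product formula). Pulling back a given $\Uc\bi\Vc$-action along these yields compatible left actions of $\Uc$ and $\Vc$ on $M$, and evaluating the product $(1\dcp v)(u\dcp 1) = (v\ps{1}\rt u\ps{1})\dcp (v\ps{2}\lt u\ps{2})$ on $m$ recovers \eqref{Hopf-module-compatibility}. I expect no serious obstacle: the argument is entirely bookkeeping with Sweedler notation, and the only care required is to ensure that the coproducts of $v^1$ and $u^2$ are handled correctly when decomposing the mixed product in $\Uc\bi\Vc$.
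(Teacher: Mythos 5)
Your proof is correct; the paper itself omits the proof of this lemma, declaring it elementary, and your argument is exactly the Sweedler-notation bookkeeping the authors had in mind: specializing $u^1=1$, $v^2=1$ in the multiplicativity identity yields necessity, applying \eqref{Hopf-module-compatibility} to the inner factor acting on $v^2\cdot m$ yields sufficiency, and pulling back along the algebra embeddings $u\mapsto u\dcp 1$, $v\mapsto 1\dcp v$ and evaluating $(1\dcp v)(u\dcp 1)$ gives the converse. The only point worth flagging is that the paper's displayed product formula for $\Uc\dcp\Vc$ contains a typo (the second tensor factor should read $(v^1\ps{2}\lt u^2\ps{2})v^2$, not $(v^2\ps{2}\lt u^2\ps{2})v^2$), which your computation implicitly and correctly repairs.
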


Now let $M$ be a left $\Fg:=\Fg_1\bi\Fg_2$-module such that the restriction of action $M$  is  locally finite over $\Fg_2$.  We let $U(\Fg_1)$ and $U(\Fg_2)$ act from left on $M$ in the natural way. Hence the resulting actions satisfy the condition \eqref{Hopf-module-compatibility}. We know that the category of locally finite $\Fg_2$ modules is equivalent with the category of $R(\Fg)$-comodules \cite{Hochschild-Mostow-57}. The functor between these  two categories is as follows. Let $M$ be a locally finite left $U(\Fg_2)$-module and define the coaction $\Db_M:M\ra M\ot R(\Fg_2)$ by
\begin{equation}\label{U(g)-dual-coaction}
\Db(m)=m\ns{0}\ot m\ns{1}, \quad \text{if and only if}, \quad  v\cdot m= m\ns{1}(v)m\ns{0}.
\end{equation}

Conversely, let $M$ be a right coaction via $\Db_M:M\ra M\ot R(\Fg_2)$. Then one defines the left action of $U(\Fg_2)$ on $M$ by
\begin{equation}\label{action-coaction}
v\cdot m:= m\ns{1}(v)m\ns{0}.
\end{equation}

\begin{proposition}\label{proposition-module-induced-module-Lie algebraas}
Let $M$ be a left $\Fg:=\Fg_1\bi\Fg_2$-module such that the restriction of the action results a locally finite $\Fg_2$-module. Then, via the $\Fg_1$ action on $M$, by restriction, and the coaction defined in  \eqref{U(g)-dual-coaction}, $M$ becomes an induced $(\Fg_1,R(\Fg_2))$-module.  Conversely, every induced $(\Fg_1, R(\Fg_2))$-module comes this way.
\end{proposition}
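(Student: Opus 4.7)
The plan is to reduce the equivalence to a simple translation between the matched pair compatibility from Lemma \ref{lemma-U-V-module} and the induced module axiom \eqref{def-induced}, using the Hochschild-Mostow duality between locally finite $U(\Fg_2)$-modules and $R(\Fg_2)$-comodules as the linchpin.

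For the forward direction, I would start with a $\Fg$-module $M$ that is locally finite over $\Fg_2$ and extract the $\Fg_1$-action by restriction and the $R(\Fg_2)$-coaction $\Db_M$ via \eqref{U(g)-dual-coaction}. The heart of the argument is verifying \eqref{def-induced}, i.e., $\Db_M(X\cdot m)=X\bullet \Db_M(m)$ for $X\in\Fg_1$. Since $R(\Fg_2)$ separates points of $U(\Fg_2)$ via the nondegenerate pairing \eqref{F-V-pairing}, it suffices to pair both sides with an arbitrary $v\in U(\Fg_2)$ in the second tensor factor. The LHS pairs to $v\cdot(X\cdot m)$. For the RHS, the term $X\ns{0}\cdot m\ns{0}\ot X\ns{1}m\ns{1}$ pairs, after applying $\D$ to $v$, to $(v\ps{1}\rhd X)\cdot(v\ps{2}\cdot m)$ since $\langle X\ns{1},v\ps{1}\rangle X\ns{0}=f_i^j(v\ps{1})X_j=v\ps{1}\rhd X$; and the term $m\ns{0}\ot X\rhd m\ns{1}$ pairs to $(v\lhd X)\cdot m$ by the very definition \eqref{actio-U-Uo} of the $U(\Fg_1)$-action on $R(\Fg_2)$. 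Thus \eqref{def-induced} is equivalent to
\begin{equation}
v\cdot(X\cdot m)=(v\ps{1}\rhd X)\cdot(v\ps{2}\cdot m)+(v\lhd X)\cdot m,
\end{equation}
which is exactly the specialization of Lemma \ref{lemma-U-V-module} to $u=X\in\Fg_1$: using $\D X=X\ot 1+1\ot X$ together with $1\lhd X=\varepsilon(X)=0$ and $v\rhd 1=\varepsilon(v)$, the matched pair compatibility $v\cdot(u\cdot m)=(v\ps{1}\rhd u\ps{1})\cdot((v\ps{2}\lhd u\ps{2})\cdot m)$ collapses to the displayed identity.

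For the converse, given an induced $(\Fg_1,R(\Fg_2))$-module $M$, I would recover the $U(\Fg_2)$-action by the inverse of the Hochschild-Mostow correspondence via \eqref{action-coaction}, keep the given $\Fg_1$-action, and reverse the pairing computation above to obtain the matched pair identity for $u=X\in\Fg_1$. To extend from $\Fg_1$ to all of $U(\Fg_1)$, I would invoke the extension $\Db_M(u\cdot m)=u\bullet\Db_M(m)$ for all $u\in U(\Fg_1)$ already proved in the paper (the computation establishing \eqref{YD-proof}); pairing this extended identity with $v\in U(\Fg_2)$ yields precisely the full hypothesis of Lemma \ref{lemma-U-V-module}, which then delivers the $U(\Fg_1)\bi U(\Fg_2)=U(\Fg)$-module structure on $M$. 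The two constructions are manifestly inverse to each other, since on one side the correspondence between locally finite $U(\Fg_2)$-actions and $R(\Fg_2)$-coactions is already an equivalence, and on the other the $\Fg_1$-action is preserved by restriction/extension.

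The only delicate point I foresee is the bookkeeping in the pairing calculation, particularly making sure that the coproduct in $U(\Fg_2)$ splits correctly so that the two summands of $X\bullet\Db_M(m)$ assemble into the matched pair identity with the right split between the $\rhd$ and $\lhd$ factors; once the duality $\langle u\ns{1},v\rangle u\ns{0}=v\rhd u$ between the $R(\Fg_2)$-coaction on $U(\Fg_1)$ and the matched pair action is in hand, the rest is essentially formal. No genuinely hard obstacle is expected, as everything is dictated by unfolding the definitions already assembled in Section \ref{S1}.
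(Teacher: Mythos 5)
Your proposal is correct and follows essentially the same route as the paper: both arguments pair the induced-module identity $\Db_M(X\cdot m)=X\bullet\Db_M(m)$ against an arbitrary $v\in U(\Fg_2)$ and identify the two resulting terms with $(v\ps{1}\rt X)\cdot(v\ps{2}\cdot m)$ and $(v\lt X)\cdot m$, reducing everything to the specialization of the matched-pair compatibility of Lemma \ref{lemma-U-V-module} to a primitive element, with the converse obtained by running the same computation backwards. The only cosmetic difference is that the paper verifies the converse at the level of the Lie-algebra compatibility \eqref{Lie-module-compatibility} while you verify it at the level of $U(\Fg_2)$ and then extend via \eqref{YD-proof}; this changes nothing of substance.
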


\begin{proof}
Let $M$ satisfies the criteria of the proposition. We prove that it is an induced $(\Fg_1, R(\Fg_2))$-module, i.e. $\Db_M(X\rt m)=X\bullet \Db(m)$. Using the compatibility condition \eqref{Hopf-module-compatibility} for $v\in U(\Fg_2)$ and $X \in \Fg_1$, we observe that
\begin{equation}\label{proof-compatibility}
(v\ps{1}\rt X)\cdot (v\ps{2} \cdot m)+ (v\lt X)\cdot m= v\cdot (X\cdot m).
\end{equation}
Translating \eqref{proof-compatibility} via \eqref{U(g)-dual-coaction}, we observe
\begin{align}\label{proof-label-2}
\begin{split}
& (X\bullet \Db_M(m))(v)= (X\ns{0}\cdot m\ns{0})\ot (X\ns{1}m\ns{1})(v)+ m\ns{0}\ot (X\rt m\ns{1})(v)=\\
&  X\ns{1}(v\ps{1})m\ns{1}(v\ps{2}) X\ns{0}\cdot m\ns{0}+ (X\rt m\ns{1})(v) m\ns{0}=\\
&  m\ns{1}(v\ps{2}) (v\ps{1}\rt X)\cdot m\ns{0}+ (X\rt m\ns{1})(v) m\ns{0}=\\
& (v\ps{1}\rt X)\cdot(v\ps{2}\cdot m)  +m\ns{0}m\ns{1}(v\lt X)=\\
&  (v\ps{1}\rt X)\cdot (v\ps{2}\cdot m) + (v\lt X)\cdot m = v\rt (X\cdot m)=\\
&(X \cdot m)\ns{0} (X\cdot m)\ns{1}(v)= \Db_M(X\cdot m)(v).
\end{split}
\end{align}
Conversely, from  a comodule over $R(\Fg_2)$ one obtains  a locally finite module over $\Fg_2$ by  \eqref{action-coaction}. One shows that the compatibility \eqref{Lie-module-compatibility} follows from $\Db_M(X\cdot m)=X\bullet \Db_M(m)$ via   \eqref{proof-label-2}.
\end{proof}

\bigskip

Now we investigate the same correspondence  for matched pair of Lie groups.
Let $(G_1, G_2)$ be a matched pair of Lie groups. Then $(\mathbb{C}G_1, \mathbb{C}G_2)$ is a mutual pair of Hopf algebras and for $G=G_1 \bi G_2$ we have $\mathbb{C}G = \mathbb{C}G_1 \dcp \mathbb{C}G_2$. Therefore, as it is indicated by Lemma \ref{lemma-U-V-module}, a  module $M$ over  $G_1$ and $G_2$  is a module over $G$ with
\begin{align}\label{G1G2module}
(\varphi \cdot \psi) \cdot m = \varphi \cdot (\psi \cdot m), \qquad \varphi \in G_1, \psi \in G_2
\end{align}
if and only if
\begin{align}\label{compatibilityforgroups}
(\psi \rhd \varphi) \cdot ((\psi \lhd \varphi) \cdot m) = \psi \cdot (\varphi \cdot m).
\end{align}
Now let $\Fg_1$ and $\Fg_2$ be the corresponding Lie algebras of the Lie groups $G_1$ and $G_2$ respectively. We define the $\Fg_1-$module structure of $M$ as follows
\begin{align}\label{g1action}
X \cdot m = \dt exp(tX) \cdot m, \qquad \mbox{ for any } X \in \Fg_1, m \in M.
\end{align}

Assuming $M$ to be locally finite as a left $G_2-$module, we define the right coaction $\nb:M \to M \ot R(G_2)$ of $R(G_2)$ on $M$ as usual.
\begin{align}\label{RG2coaction}
\nb(m) = m\ns{0} \ot m\ns{1}, \qquad \mbox{ if and only if }, \quad \psi \cdot m = m\ns{1}(\psi)m\ns{0}.
\end{align}
In this case, we can express an infinitesimal version of the compatibility condition (\ref{compatibilityforgroups}) as follows
\begin{align}\label{infinitesimalcompatibility}
(\psi \rhd X) \cdot (\psi \cdot m) + \psi \cdot m\ns{0}(X \rhd m\ns{1})(\psi) = \psi \cdot (X \cdot m),
\end{align}
for any $\psi \in G_2$, $X \in \Fg_1$ and $m \in M$.
\\\\
 Now we state the following  proposition whose proof is similar to that of Proposition \ref{proposition-module-induced-module-Lie algebraas}.
\begin{proposition}\label{proposition-module-induced-module-Lie groups}
For a matched pair of Lie groups $(G_1,G_2)$, let $M$ be a left $G = G_1 \bi G_2-$module by (\ref{G1G2module}) such that $G_2$ acts locally finitely. Then by the $\Fg_1$ action (\ref{g1action}) and the $R(G_2)$ coaction (\ref{RG2coaction}) defined, $M$ becomes an induced $(\Fg_1, R(G_2))$-module. Conversely, every induced $(\Fg_1, R(G_2))$-module comes this way.
\end{proposition}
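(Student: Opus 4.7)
The plan is to adapt the proof of Proposition \ref{proposition-module-induced-module-Lie algebraas} to the Lie group setting, with one-parameter subgroups $\exp(tX)$ replacing the role of $\Fg_1 \subset U(\Fg_1)$. The key step is the infinitesimal version of the matched-pair compatibility (\ref{compatibilityforgroups}), and the translation to the coaction identity is then formal using the defining formulas (\ref{psi-act-X-j}), (\ref{coactionforliegroup}), (\ref{actionforliegroup}), (\ref{RG2coaction}) together with the fact that $R(G_2)$ separates points of $G_2$.

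First I would establish the infinitesimal compatibility
\[
\psi \cdot (X \cdot m) \;=\; (\psi \rhd X) \cdot (\psi \cdot m) \;+\; (X \rhd m\ns{1})(\psi)\, m\ns{0} ,
\qquad X \in \Fg_1,\ \psi \in G_2,\ m \in M,
\]
by substituting $\varphi = \exp(tX)$ in (\ref{compatibilityforgroups}) and differentiating at $t=0$. The left-hand side yields $\psi \cdot (X \cdot m)$ by (\ref{g1action}). On the right, the product rule and the equalities $\psi \rhd e = e$, $\psi \lhd e = \psi$, $\dt (\psi \rhd \exp(tX))|_{t=0} = \psi \rhd X$ produce the first summand $(\psi \rhd X) \cdot (\psi \cdot m)$. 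The remaining summand is $\dt ((\psi \lhd \exp(tX))\cdot m)|_{t=0}$, which I evaluate using local finiteness of the $G_2$-action: writing $\psi' \cdot m = m\ns{1}(\psi') m\ns{0}$ and invoking the definition (\ref{actionforliegroup}) of $X \rhd$ on $R(G_2)$ gives $(X \rhd m\ns{1})(\psi)\, m\ns{0}$.

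Next I would translate this equality into the coaction identity $\nb(X \cdot m) = X \bullet \nb(m)$ in $M \ot R(G_2)$. Since $R(G_2)$ separates points of $G_2$, it is enough to compare values at each $\psi \in G_2$. The value of $\nb(X \cdot m)$ at $\psi$ is precisely $\psi \cdot (X \cdot m)$, while, using (\ref{psi-act-X-j}) so that $X\ns{0}\, X\ns{1}(\psi) = \psi \rhd X$, the value of
\[
X \bullet \nb(m) \;=\; X\ns{0} \cdot m\ns{0} \ot X\ns{1} m\ns{1} \;+\; m\ns{0} \ot X \rhd m\ns{1}
\]
at $\psi$ is $(\psi \rhd X) \cdot (\psi \cdot m) + (X \rhd m\ns{1})(\psi)\, m\ns{0}$. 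The infinitesimal compatibility shows these agree, concluding the forward direction.

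For the converse I would start from an induced $(\Fg_1, R(G_2))$-module $M$ and reconstruct a $G_2$-action from the coaction by (\ref{RG2coaction}); this is a locally finite action by the Hochschild--Mostow equivalence between locally finite $G_2$-representations and $R(G_2)$-comodules. The $\Fg_1$-action then integrates to a $G_1$-action (under the usual simple-connectedness hypothesis standing behind the correspondence between Lie group and Lie algebra representations), and the group compatibility (\ref{compatibilityforgroups}) is verified by fixing $\psi$ and $m$, differentiating both sides with respect to $\varphi$, and observing via the infinitesimal compatibility just established that the two sides are smooth maps on the connected group $G_1$ which agree at $\varphi = e$ and have matching derivatives at every point, so they coincide. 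The main obstacle is precisely this integration step: passing from the $\Fg_1$-infinitesimal identity to the global $G_1$-identity requires the usual connectivity and integrability hypotheses on $G_1$; once these are granted, the remaining verifications are formally dual to the forward computation above.
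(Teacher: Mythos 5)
Your argument is correct and follows the paper's intended route: the paper merely records the infinitesimal compatibility (\ref{infinitesimalcompatibility}) and declares the proof ``similar to that of Proposition \ref{proposition-module-induced-module-Lie algebraas}'', which is precisely the differentiate-at-$t=0$ and evaluate-at-$\psi$ scheme you carry out. Two small remarks: your form of the infinitesimal identity, with second term $(X\rhd m\ns{1})(\psi)\, m\ns{0}$ rather than the paper's printed $\psi\cdot m\ns{0}(X\rhd m\ns{1})(\psi)$, is the one that actually matches $X\bullet\nb(m)$ evaluated at $\psi$ (the paper's extra $\psi\cdot$ appears to be a misprint), and your caveat in the converse about integrating the $\Fg_1$-action to $G_1$ correctly flags a hypothesis the paper leaves implicit.
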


We conclude by discussing the case of affine algebraic groups. Let $(G_1, G_2)$ be a matched pair of affine algebraic groups and $M$ a locally finite polynomial representation of $G_1$ and $G_2$. Let us write the dual comodule structures as
\begin{align}
\bD_1:M \to M \ot \Pc(G_1), \qquad m \mapsto m\ps{0} \ot m\ps{1},
\end{align}
and
\begin{align}\label{affinePG2coaction}
\bD_2:M \to M \ot \Pc(G_2), \qquad m \mapsto m\ns{0} \ot m\ns{1}.
\end{align}
On the other hand, if we call the $G_1-$module structure as
\begin{align}
\rho:G_1 \to GL(M),
\end{align}
then we have the $\Fg_1-$module structure
\begin{align}\label{affineg1action}
\rho^{\circ}:\Fg_1 \to \Fg\Fl(M).
\end{align}
The compatibility condition is the same as the Lie group case and the main result is as follows.

\begin{theorem}
Let $M$ be a $G = G_1 \cdot G_2-$module as well as a locally finite polynomial representation of $G_1$ and $G_2$. Then via the action (\ref{affineg1action}) and the coaction (\ref{affinePG2coaction}), $M$ becomes an induced $(\Fg_1, \Pc(G_2))-$module. Conversely, any induced $(\Fg_1, \Pc(G_2))$-module comes this way.
\end{theorem}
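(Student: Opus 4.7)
The proof is an algebraic-group analog of Propositions \ref{proposition-module-induced-module-Lie algebraas} and \ref{proposition-module-induced-module-Lie groups}, and I would structure it in exactly the same fashion. The central translation device is that the matched pair compatibility $(\psi\rhd \varphi)\cdot((\psi\lhd\varphi)\cdot m)=\psi\cdot(\varphi\cdot m)$, after differentiating in the $\varphi$ variable at the identity along $X\in\Fg_1$, becomes an infinitesimal identity which, once rephrased via the comodule-action dictionary $\psi\cdot m=m\ns{1}(\psi)m\ns{0}$, is precisely the induced-module condition $\bD_2(X\cdot m)=X\bullet\bD_2(m)$. In the algebraic setting, the role of $\dt\exp(tX)$ is played by the derivation $X\colon \Pc(G_1)\to\mathbb{C}$ at the identity, applied to the polynomial matrix coefficients of $\rho$.

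For the forward direction I would evaluate the required identity
\[
\bD_2(X\cdot m)=X\ns{0}\cdot m\ns{0}\ot X\ns{1}\,m\ns{1}+m\ns{0}\ot X\rhd m\ns{1}
\]
by pairing the second tensor factor against an arbitrary $\psi\in G_2$. The left hand side gives $\psi\cdot(X\cdot m)$. For the right hand side, the first summand yields $X\ns{1}(\psi)X\ns{0}\cdot(\psi\cdot m)=(\psi\rhd X)\cdot(\psi\cdot m)$ by the defining formula \eqref{polcoaction} for $\Db_{\rm pol}$ and by \eqref{psi-act-X-j}; the second summand, using the definition \eqref{polaction}, is $m\ns{0}(m\ns{1})^{\ns{0}}(\psi)\,X((m\ns{1})^{\ns{1}})$, which is the derivation $X$ applied to the polynomial $\varphi\mapsto m\ns{1}(\psi\lhd\varphi)\,m\ns{0}=(\psi\lhd\varphi)\cdot m$ at $\varphi=e$. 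Hence equality of the two sides on every $\psi$ is precisely the $X$-derivative at $\varphi=e$ of $(\psi\rhd\varphi)\cdot((\psi\lhd\varphi)\cdot m)=\psi\cdot(\varphi\cdot m)$, which holds because $M$ is a $G$-module via \eqref{compatibilityforgroups}.

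For the converse, starting from an induced $(\Fg_1,\Pc(G_2))$-module $M$, the $\Pc(G_2)$-coaction reconstructs a locally finite rational $G_2$-action by $\psi\cdot m:=m\ns{1}(\psi)m\ns{0}$, every element lying in a finite-dimensional subcomodule. Reading the computation above backwards equates the induced-module identity with the infinitesimal compatibility $(\psi\rhd X)\cdot(\psi\cdot m)+\tfrac{d}{dt}|_{t=0}(\psi\lhd\exp(tX))\cdot m=\psi\cdot(X\cdot m)$, expressed algebraically; under the standing hypothesis that $M$ is also a locally finite polynomial $G_1$-representation, this integrates over $G_1$ (connected) to the full compatibility \eqref{compatibilityforgroups}, and Lemma \ref{lemma-U-V-module} assembles the $G_1$- and $G_2$-actions into a $G$-module structure. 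The main technical obstacle is precisely this last integration step: an abstract induced $(\Fg_1,\Pc(G_2))$-module only carries an $\Fg_1$-action, and recovering a $G_1$-action requires the extra polynomial-representation hypothesis in the statement so that the differential of a $\Pc(G_1)$-comodule structure matches the given $\Fg_1$-action; once that is granted, the remainder of the converse is a formal reversal of the forward computation, exactly as in Proposition \ref{proposition-module-induced-module-Lie groups}.
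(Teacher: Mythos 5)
Your argument is correct and follows essentially the same route as the paper: the forward direction is exactly the paper's computation of $\bD_2(X\cdot m)(\psi)$ via the matched-pair compatibility followed by the Leibniz rule for the point derivation $X$ on $\Pc(G_1)$, merely packaged as ``the $X$-derivative of the compatibility identity,'' and your treatment of the converse (reconstructing the $G_2$-action from the coaction and using connectedness of $G_1$ plus the standing polynomial-representation hypothesis) matches what the paper leaves implicit by analogy with Propositions \ref{proposition-module-induced-module-Lie algebraas} and \ref{proposition-module-induced-module-Lie groups}.
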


\begin{proof}
For arbitrary $\psi \in G_2$, $X \in \Fg_1$ and any $m \in M$
\begin{align}
\begin{split}
\bD_2(X \cdot m)(\psi) = \bD_2(m\ps{0})(\psi)X(m\ps{1}) = m\ps{0}\ns{0}m\ps{0}\ns{1}(\psi)X(m\ps{1}).
\end{split}
\end{align}
On the other hand, for any $\vp \in G_1$,
\begin{align}
\begin{split}
& m\ps{0}\ns{0}m\ps{0}\ns{1}(\psi)m\ps{1}(\vp) = (\vp \cdot m)\ns{0}(\vp \cdot m)\ns{1}(\psi) = \psi \cdot (\vp \cdot m) = \\
& (\psi \rhd \vp) \cdot ((\psi \lhd \vp) \cdot m) = ((\psi \rhd \vp) \cdot m\ns{0}) m\ns{1}(\psi \lhd \vp) = \\
& m\ns{0}\ps{0}(m\ns{0}\ps{1} \lhd \psi)(\vp)m\ns{1}^{\ns{0}}(\psi)m\ns{1}^{\ns{1}}(\vp) = \\
& m\ns{0}\ps{0}m\ns{1}^{\ns{0}}(\psi)((m\ns{0}\ps{1} \lhd \psi) \cdot m\ns{1}^{\ns{1}})(\vp)
\end{split}
\end{align}
Therefore,
\begin{align}
m\ps{0}\ns{0} \ot m\ps{0}\ns{1} \ot m\ps{1} = m\ns{0}\ps{0} \ot m\ns{1}^{\ns{0}} \ot (m\ns{0}\ps{1} \lhd \psi) \cdot m\ns{1}^{\ns{1}}.
\end{align}
Plugging this result in
\begin{align}
\begin{split}
& \bD_2(X \cdot m)(\psi) = m\ns{0}\ps{0}m\ns{1}^{\ns{0}}(\psi)X((m\ns{0}\ps{1} \lhd \psi) \cdot m\ns{1}^{\ns{1}}) = \\
& m\ns{0}\ps{0}m\ns{1}^{\ns{0}}(\psi)X((m\ns{0}\ps{1} \lhd \psi))m\ns{1}^{\ns{1}}(e_1) + \\
& m\ns{0}\ps{0}m\ns{1}^{\ns{0}}(\psi)(m\ns{0}\ps{1} \lhd \psi)(e_1)X(m\ns{1}^{\ns{1}}) = \\
& m\ns{0}\ps{0}m\ns{1}(\psi)(\psi \rhd X)(m\ns{0}\ps{1}) + m\ns{0}(X \rhd m\ns{1})(\psi) = \\
& m\ns{1}(\psi)(\psi \rhd X) \cdot m\ns{0} + m\ns{0}(X \rhd m\ns{1})(\psi) = \\
& (\psi \rhd X) \cdot (\psi \cdot m) + m\ns{0}(X \rhd m\ns{1})(\psi).
\end{split}
\end{align}
Also as before, we evidently have
\begin{align}
(X \bullet \bD_2(m))(\psi) = (\psi \rhd X) \cdot (\psi \cdot m) + m\ns{0}(X \rhd m\ns{1})(\psi).
\end{align}
So,  the equality
\begin{align}
\bD_2(X \cdot m) = X \bullet \bD_2(m),\qquad m\in M,\; X\in\Fg_1,
\end{align}
 is proved.
\end{proof}

\section{ Hopf cyclic cohomology of commutative geometric Hopf algebras }
\label{S3}
In this subsection, we compute the Hopf cyclic cohomology of the commutative Hopf algebras $R(G)$, $\Pc(G)$, and $R(\Fg)$ with coefficients in a suitable comodule.

\subsection{Preliminaries about Hopf cyclic cohomology}
\label{SS-Preliminaries about Hopf cyclic cohomology}
Hopf cyclic cohomology was first defined by Connes and Moscovici as a computational tool to compute the index cocycle defined by the local index formula \cite{Connes-Moscovici-98}. The original definition of Hopf cyclic cohomology  involved only a Hopf algebra and a character with involutive twisted antipode. However, they completed  the picture in \cite{Connes-Moscovici-00} by bringing the notion of modular pair in involution which is recalled in \eqref{def-MPI}. The theory has had  rather rapidly developments in different directions in the last decade. One of the development was to enlarge the space of coefficients to arbitrary dimension and also replace the Hopf algebra with a (co)algebra upon which the Hopf algebra (co)acts \cite{Hajac-Khalkhali-Rangipour-Sommerhauser-04-2}. The suitable  coefficients for this development are defined  in \cite{Hajac-Khalkhali-Rangipour-Sommerhauser-04-1} and called SAYD modules which are also recalled in \eqref{def-SAYD}. Here, we do not recall the Hopf cyclic cohomology in its full scope, it suffices to mention the case of Hopf algebras and SAYD modules will be. Let $M$ be a right-left SAYD module over a Hopf algebra $\Hc$. Let
 \begin{equation}
 C^q(\Hc,M):= M\ot \Hc^{\ot q}, \quad q\ge 0.
 \end{equation} We recall the following operators on $C^\ast(\Hc,M)$
\begin{align*}
&\text{face operators} \quad\p_i: C^q(\Hc,M)\ra C^{q+1}(\Hc,M), && 0\le i\le q+1\\
&\text{degeneracy operators } \quad\s_j: C^q(\Hc,M)\ra C^{q-1}(\Hc,M),&& \quad 0\le j\le q-1\\
&\text{cyclic operators} \quad\tau: C^q(\Hc,M)\ra C^{q}(\Hc,M),&&
\end{align*}
by
\begin{align}
\begin{split}
&\p_0(m\ot h^1\odots h^q)=m\ot 1\ot h^1\odots h^q,\\
&\p_i(m\ot h^1\odots h^q)=m\ot h^1\odots h^i\ps{1}\ot h^i\ps{2}\odots h^q,\\
&\p_{q+1}(m\ot h^1\odots h^q)=m\sns{0}\ot h^1\odots h^q\ot m\sns{-1},\\
&\s_j (m\ot h^1\odots h^q)= m\ot h^1\odots \ve(h^{j+1})\odots h^q,\\
&\tau(m\ot h^1\odots h^q)=m\sns{0}h^1\ps{1}\ot S(h^1\ps{2})\cdot(h^2\odots h^q\ot m\sns{-1}),
\end{split}
\end{align}
where $\Hc$ acts on $\Hc^{\ot q}$ diagonally.

The graded module $C^{\ast}(\Hc,M)$  endowed with the above operators is then a cocyclic module \cite{Hajac-Khalkhali-Rangipour-Sommerhauser-04-2}, which means that $\p_i,$ $\s_j$ and $\tau$ satisfy the following identities
\begin{eqnarray}
\begin{split}
& \p_{j}  \p_{i} = \p_{i} \p_{j-1},   \hspace{35 pt}  \text{ if} \quad\quad i <j,\\
& \sigma_{j}  \sigma_{i} = \sigma_{i} \sigma_{j+1},    \hspace{30 pt}  \text{    if} \quad\quad i \leq j,\\
&\sigma_{j} \p_{i} =   \label{rel1}
 \begin{cases}
\p_{i} \sigma_{j-1},   \quad
 &\text{if} \hspace{18 pt}\quad\text{$i<j$}\\
\text{Id}   \quad\quad\quad
 &\text{if}\hspace{17 pt} \quad   \text{$i=j$ or $i=j+1$}\\
\p_{i-1} \sigma_{j}  \quad
 &\text{    if} \hspace{16 pt}\quad \text{$i>j+1$},\\
\end{cases}\\
&\tau\p_{i}=\p_{i-1} \tau, \hspace{43 pt} 1\le i\le  q\\
&\tau \p_{0} = \p_{q+1}, \hspace{43 pt} \tau \sigma_{i} = \sigma _{i-1} \tau,  \hspace{33 pt} 1\le i\le q\\ \label{rel2}
&\tau \sigma_{0} = \sigma_{n} \tau^2, \hspace{43 pt} \tau^{q+1} = \Id.
\end{split}
\end{eqnarray}

One uses the face operators to define the Hochschild coboundary
\begin{align}
\begin{split}
&b: C^{q}(\Hc,M)\ra C^{q+1}(\Hc,M), \qquad \text{by}\qquad b:=\sum_{i=0}^{q+1}(-1)^i\p_i
\end{split}
\end{align}
It is known that $b^2=0$. As a result, one obtains the Hochschild complex of the coalgebra $\Hc$ with coefficients in bicomodule $M$. Here, the right comodule defined trivially. The cohomology of $(C^\bullet(\Hc,M),b)$ is denoted by $H^\bullet_{\rm coalg}(H,M)$.

One uses the rest of the operators to define the Connes boundary operator,
\begin{align}
\begin{split}
&B: C^{q}(\Hc,M)\ra C^{q-1}(\Hc,M), \qquad \text{by}\qquad B:=\left(\sum_{i=0}^{q}(-1)^{qi}\tau^{i}\right) \s_{q-1}\tau
\end{split}
\end{align}

It is shown in \cite{C-83} (can be found also in \cite{Connes-Book}) that
 for any cocyclic module  $b^2=B^2=(b+B)^2=0$. As a result, one defines the cyclic cohomology of $\Hc$ with coefficients in   SAYD module $M$, which is  denoted by $HC^\ast(\Hc,M)$, as the total  cohomology of the bicomplex
\begin{align}
C^{p,q}(\Hc,M)= \left\{ \begin{matrix} M\ot \Hc^{\ot q},&\quad\text{if}\quad 0\le p\le q, \\
&\\
0, & \text{otherwise.}
\end{matrix}\right.
\end{align}
One also defines the periodic cyclic cohomology of $\Hc$ with coefficients in $M$, which is denoted by $HP^\ast(\Hc,M)$, as the total cohomology of direct sum total  of the following bicomplex

\begin{align}
C^{p,q}(\Hc,M)= \left\{ \begin{matrix} M\ot \Hc^{\ot q},&\quad\text{if}\quad  p\le q, \\
&\\
0, & \text{otherwise.}
\end{matrix}\right.
\end{align}

It can be seen that the periodic cyclic complex and hence cohomology is $\Zb_2$ graded.

\subsection{Lie algebra cohomology and Hopf cyclic cohomology}
\label{SS-Lie algebra cohomology and Hopf cyclic cohomology}
In this subsection, we recall the Lie algebra cohomology and relate it to the Hopf cyclic cohomology of commutative Hopf algebras.

Let $\Fg $ be a finite dimensional  Lie algebra. Let also  $\{\t^i\}$ and $\{X_i\}$ be a pair of dual basis for $\Fg^\ast$ and $\Fg$. Assume that  $V$ is a right $\Fg$-module. The Chevalley-Eilenberg complex of the $(\Fg,V)$ is defined by
\begin{equation}
\xymatrix{V\ar[r]^{\p_0}&C^1(\Fg,V)\ar[r]^{\p_1}& C^2(\Fg,V)\ar[r]^{\p_2}&\cdots\;,}
\end{equation}
where $C^n(\Fg,V)=\Hom(\wdg^q\Fg^\ast ,V)$ is the vector space of all alternating linear maps on $\Fg^{\ot q}$ with values in $V$.
 \begin{align}
 \begin{split}
& \p_q(\om)(X_0, \ldots,X_q)=\sum_{i<j} (-1)^{i+j}\om([X_i,X_j], X_0\ldots \widehat{X_i}, \ldots, \widehat{X_j}, \ldots, X_q)+\\
& ~~~~~~~~~~~~~~~~~~~~~~~\sum_{i}(-1)^i\om(X_0,\ldots,\widehat{X_i},\ldots X_q)X_i.
\end{split}
 \end{align}

Alternatively, one identifies  $C^q(\Fg,V)$ with $V\ot \wdg^q\Fg^\ast$ and the coboundary $\p_\bullet$ with the following one
\begin{align}
\begin{split}\label{equivalent-CE}
&\p_0(v)= vX_i\ot \t^i,\\
&\p_q(v\ot \om)= vX_i\ot \t^i\wdg\om+ v\ot \p_{\rm dR}(\om).
\end{split}
\end{align}
Here $\p_{dR}: \wdg^q\Fg^\ast\ra \wdg^{q+1}\Fg^\ast$ is the de Rham coboundary which is a derivation of degree $1$  and  recalled here by
\begin{align}
\p_{\rm dR}(\t^k)=\frac{1}{2} C^k_{i,j}\t^i\wdg\t^j.
\end{align}
We denote the cohomology of $(C^\bullet(\Fg,V),\p)$ by $H^\bullet(\Fg,V)$ and refer to it as the Lie algebra cohomology of $\Fg$ with coefficients in $V$.
For a Lie subalgebra $\Fh\subseteq \Fg$ one defines the relative cochains by
\begin{equation}
C^q(\Fg,\Fh,V)=\{\om\in C^q(\Fg,V) |\, \quad \i_X\om=\Lc_X(\om)=0, \quad X\in \Fh\},
\end{equation}
where,
\begin{align}
&\i_X(\om)(X_1,\ldots,X_{q})=\om(X,X_1,\ldots,X_q), \\
&\Lc_X(\om)(X_1,\ldots,X_{q})=\\\notag
&\sum(-1)^i\om([X,X_i], X_1,\ldots, \widehat{X_i},\ldots,X_q)+\om(X_1,\ldots,X_q)X.
\end{align}
One also identifies $C^q(\Fg,\Fh,V)$ with $\Hom_{\Fh}(\wdg^q(\Fg/\Fh),V)$ which is $(V\ot \wdg^q(\Fg/\Fh)^\ast)^\Fh$, where the action of $\Fh$ on $\Fg/\Fh$ is induced  by the  adjoint action of $\Fh$ on $\Fg$.

It is checked in \cite{CE} that the Chevalley-Eilenberg coboundary  $\pi_\bullet$ is well defined on $C^n(\Fg,\Fh,V)$. We denote the cohomology of $(C^\bullet(\Fg,\Fh,V),\p_\bullet)$ by $H^\bullet(\Fg,\Fh,V)$ and refer  to it as the relative Lie algebra cohomology of $\Fh\subseteq \Fg$ with coefficients in $V$.

Let a coalgebra $C$ and an algebra $A$  be in duality, i.e, there is a pairing between $\langle\; ,\rangle: C\ot A\ra \Cb$ compatible with product and coproduct  i.e,
\begin{equation}\label{pairing-property}
\langle c,ab\rangle= \langle c\ps{1},a\rangle\langle c\ps{2},b\rangle, \quad \langle c,1\rangle=\ve(c).
\end{equation}
By using the duality, one turns any bicomodule $V$ over $C$ into a bimodule over $A$ via
\begin{equation}
av:= \langle v\ns{1},a\rangle v\ns{0}, \quad va:=\langle v\ns{-1},a\rangle v\ns{0}.
\end{equation}

Now we define the following map
\begin{align}\label{map-theta-Alg}
 \begin{split}
&\t_{(C,A)}: V\ot C^{\ot q}\ra \Hom(A^{\ot q},V),\\
&\t_{(C,A)}(v\ot c^1\odots c^q)(a_1 \odots a_q)= \langle c^1, a_1\rangle \cdots \langle c^q, a_q\rangle v.
\end{split}
 \end{align}

\begin{lemma} For any algebra $A$ , coalgebra $C$ with a pairing and any $C$-bicomodule $V$, the map $\t_{(C,A)}$ defined in \eqref{map-theta-Alg} is a map of complexes between Hochschild complex of the coalgebra $C$ with coefficients in the bicomodule $V$ and the Hochschild complex of the algebra $A$ with coefficients in the $A$-bimodule induced by $V$.
\end{lemma}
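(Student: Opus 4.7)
The plan is to verify that $\theta_{(C,A)}$ intertwines the coalgebra Hochschild coboundary on $V\otimes C^{\otimes q}$ with the algebra Hochschild coboundary on $\Hom(A^{\otimes q},V)$ by a direct term-by-term computation. The whole argument rests on two ingredients: the duality identity $\langle c,ab\rangle = \langle c\ps{1},a\rangle\langle c\ps{2},b\rangle$ from \eqref{pairing-property}, and the definitions $a\cdot v = \langle v\ns{1},a\rangle v\ns{0}$ and $v\cdot a = \langle v\ns{-1},a\rangle v\ns{0}$ of the induced $A$-bimodule structure on $V$.

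Recall that the coalgebra Hochschild coboundary of the bicomodule $V$ reads
\begin{align*}
b(v\otimes c^1\odots c^q) &= v\ns{0}\otimes v\ns{1}\otimes c^1\odots c^q \\
&\quad + \sum_{i=1}^q (-1)^i\, v\otimes c^1\odots c^i\ps{1}\otimes c^i\ps{2}\odots c^q \\
&\quad + (-1)^{q+1}\, v\ns{0}\otimes c^1\odots c^q\otimes v\ns{-1},
\end{align*}
while on the algebra side, for $\varphi\in\Hom(A^{\otimes q},V)$,
\begin{align*}
(b\varphi)(a_1,\ldots,a_{q+1}) &= a_1\cdot\varphi(a_2,\ldots,a_{q+1}) \\
&\quad + \sum_{i=1}^q (-1)^i\varphi(a_1,\ldots,a_ia_{i+1},\ldots,a_{q+1}) \\
&\quad + (-1)^{q+1}\varphi(a_1,\ldots,a_q)\cdot a_{q+1}.
\end{align*}
Evaluating $\theta_{(C,A)}(b(v\otimes c^1\odots c^q))$ at $(a_1,\ldots,a_{q+1})$, the first summand contributes $\langle v\ns{1},a_1\rangle\prod_{j=1}^q\langle c^j,a_{j+1}\rangle\,v\ns{0}$, which by the definition of the left action is precisely $a_1\cdot\theta_{(C,A)}(v\otimes c^1\odots c^q)(a_2,\ldots,a_{q+1})$; the third summand dually produces $\theta_{(C,A)}(v\otimes c^1\odots c^q)(a_1,\ldots,a_q)\cdot a_{q+1}$; and each middle summand, after contracting $\langle c^i\ps{1},a_i\rangle\langle c^i\ps{2},a_{i+1}\rangle$ into $\langle c^i,a_ia_{i+1}\rangle$ via \eqref{pairing-property}, reproduces the $i$-th middle summand of $(b\,\theta_{(C,A)}(v\otimes c^1\odots c^q))(a_1,\ldots,a_{q+1})$.

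Assembling these three contributions gives $\theta_{(C,A)}\circ b = b\circ\theta_{(C,A)}$, which is the desired chain-map property. No serious obstacle is expected; the only point requiring attention is to keep the $\ns{-1}$ and $\ns{1}$ legs aligned with the right and left $A$-actions respectively, so that the signs and the positioning of the comodule/module factors match on the two sides.
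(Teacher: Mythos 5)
Your verification is correct and follows exactly the route the paper intends: the paper itself only remarks that "the proof is elementary and uses only the pairing property \eqref{pairing-property}," and your term-by-term check — matching the two extreme cofaces to the left and right $A$-actions induced by the right and left $C$-coactions, and contracting the middle cofaces via $\langle c\ps{1},a\rangle\langle c\ps{2},b\rangle=\langle c,ab\rangle$ — is precisely that elementary argument written out. Nothing is missing.
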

\begin{proof}
The proof is elementary and uses only the pairing property \eqref{pairing-property}.
\end{proof}

Now let  $\Fc$ be a commutative Hopf algebra with a Hopf pairing with $U(\Fg_2)$, the enveloping Hopf algebra of some Lie algebra $\Fg_2$
 \begin{equation}\langle,\rangle:\Fc\ot U(\Fg)\ra \Cb,
  \end{equation}
  satisfying \eqref{pairing-property} and
  \begin{align}
  &\langle f^1f^2,u\rangle= \langle f^1,u\ps{1}\rangle\langle f^2,u\ps{2}\rangle,\quad  \langle 1,u\rangle= \ve(u),\\
 & \langle S(f), u\rangle= \langle f,S(u)\rangle.
  \end{align}

   In addition, we assume that $\Fg_2=\Fh\ltimes \Fl$, where the Lie subalgebra  $\Fh$ is reductive and every finite dimensional representation of $\Fh$ is semisimple, and $\Fl$ is an ideal of $\Fg$.

   For a $\Fg_2$-module $V$, we observe that the Lie algebra inclusion $\Fl\hookrightarrow \Fg_2$ induces a map of Hochschild complexes, where $\Fl$ acts on $V$ by restriction of the action of $\Fg_2$
   \begin{equation}
   \pi_\Fl: \Hom(U(\Fg_2)^{\ot q},V)\ra \Hom(U(\Fl)^{\ot l}, V)
   \end{equation}

   One uses the antisymmetrization map
   \begin{align}\label{map-antisymmetrization}
   &\a: \Hom(U(\Fl)^{\ot q},V)\ra C^q(\Fl,V):= V\ot \wdg^{q}\Fl^\ast,\\\notag
    &\a(\om)(X_1,\ldots,X_q)=\sum_{\s\in {S_q}}(-1)^\s \om(X_{\s(1)},\ldots,X_{\s(q)}).
   \end{align}
   It is known that $\a$ is a map of complexes between Hochschild cohomology of $U(\Fl)$ with coefficients in $V$ and the Lie algebra cohomology of $\Fl$ with coefficients in $V$.

  One then uses the fact that $\Fh$ acts  semisimply  to decompose the complex $C^\bullet(\Fl,V)$ into the weight spaces
   \begin{equation}
   C^\bullet(\Fl,V)= \bigoplus _{\mu\in \Fh^\ast}C^\bullet_\mu(\Fl,V).
   \end{equation}

   Since  $\Fh$ acts on  $\Fl$ by derivations, one observes that each $C_\mu^\bullet(\Fl,V)$ is  a complex for its own and the projection $\pi^\mu: C^{\bullet}(\Fl,V)\ra C^{\bullet}_\mu(\Fl,V)$ is a map of complexes. Composing $\t_{\Fc,U(\Fg)}$, $\pi_\Fl$ and $\pi^\mu$ we get a map of complexes
   \begin{equation}
   \t_{\Fc,U(\Fg),\Fl,\mu}:=\pi_\mu\circ\pi_\Fl\circ\t_{\Fc,U(\Fg)}: C^\bullet_{\rm coalg}(\Fc,V)\ra C^\bullet_\mu(\Fl,V).
   \end{equation}

   \begin{definition}
   Let a commutative Hopf algebra $\Fc$ be in a Hopf pairing with $U(\Fg)$, the enveloping Hopf algebra of $\Fg$. A decomposition of Lie algebras $\Fg=\Fh\ltimes \Fl$ is  called a $\Fc$-Levi decomposition if the map $\t_{\Fc,\Fl,\mu}$ is a quasi isomorphism for $\mu=0$ and any $\Fc$-comodule $V$.
   \end{definition}

\begin{theorem}\label{Theorem-F-Levi}
Let a commutative Hopf algebra $\Fc$ be in duality with the enveloping Hopf algebra of a Lie algebra $\Fg$, and assume that $\Fg=\Fh\ltimes \Fl$ is an $\Fc$-Levi decomposition. Then the map  $\t_{\Fc,\Fl,0}$ induces an isomorphism between Hopf cyclic cohomology of $\Fc$ with coefficients in $V$ and the relative Lie algebra cohomology of $\Fh\subseteq \Fg$ with coefficients in $V$. In other words,
\begin{equation}
HP^\bullet(\Fc,V)\cong \bigoplus_{\bullet=i\;\text{mod 2}}H^i(\Fg,\Fh,V).
\end{equation}
\end{theorem}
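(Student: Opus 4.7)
The plan is to leverage the hypothesis at the Hochschild level and then upgrade the quasi-isomorphism $\t_{\Fc,\Fl,0}$ to one on periodic cyclic cohomology via a spectral sequence argument. First I would identify the target of $\t_{\Fc,\Fl,0}$ with the relative Chevalley-Eilenberg complex. Since $\Fh$ is reductive and acts semisimply on $\wdg^\bullet \Fl^\ast$, the weight-zero component coincides with the $\Fh$-invariants, so $C^\bullet_0(\Fl,V)=(V\ot \wdg^\bullet \Fl^\ast)^\Fh$. Using that $\Fg/\Fh\cong \Fl$ as $\Fh$-modules, this equals $C^\bullet(\Fg,\Fh,V)$; moreover, since $\Fl$ is an ideal, the CE differential of $\Fl$ restricted to this subspace agrees with the relative CE differential of the pair $(\Fg,\Fh)$, and its cohomology is therefore $H^\bullet(\Fg,\Fh,V)$.

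By the $\Fc$-Levi hypothesis, $\t_{\Fc,\Fl,0}$ is a quasi-isomorphism between the Hochschild complex $(C^\bullet_{\rm coalg}(\Fc,V),b)$ and $(C^\bullet(\Fg,\Fh,V),\p_{CE})$, so $H^\bullet_{\rm coalg}(\Fc,V)\cong H^\bullet(\Fg,\Fh,V)$. To pass to periodic cyclic cohomology, consider the spectral sequence arising from the column filtration of the $(b,B)$ bicomplex computing $HP^\bullet(\Fc,V)$. Its $E_1$ page is $H^\bullet_{\rm coalg}(\Fc,V)\cong H^\bullet(\Fg,\Fh,V)$ in each column, and the $d_1$ differential is induced by the Connes operator $B$.

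The key remaining step is to verify that $d_1$ vanishes on $E_1$. The $B$-operator inserts a unit via the degeneracy $\sigma_{q-1}$ and rotates via the cyclic operator $\tau$. Under the pairing $\langle 1,v\rangle=\ve(v)$, the inserted unit becomes an $\ve$-evaluation on a $U(\Fl)$-argument; after the antisymmetrization $\a$, which restricts evaluation to primitives $X\in\Fl$ where $\ve(X)=0$, such terms vanish. A systematic accounting of the coproduct contributions from $\tau$ (arising from the antipode $S$) should show that the remaining terms in the cyclic sum $\sum_{i=0}^{q}(-1)^{qi}\tau^i$ are either $\p_{CE}$-coboundaries on their image or carry strictly positive $\Fh$-weight and so are killed by $\pi^0$. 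With $d_1=0$, the spectral sequence degenerates at $E_2=E_1$, and since $HP^\bullet$ is the cohomology of the direct-product total complex we conclude
\begin{equation}
HP^\bullet(\Fc,V)\cong \bigoplus_{i\equiv \bullet\;(\mathrm{mod}\;2)} H^i(\Fg,\Fh,V).
\end{equation}

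The main obstacle is the vanishing of $d_1$. The heuristic via $\ve(X)=0$ for primitives is straightforward, but the rigorous argument requires a delicate combinatorial analysis of how the cyclic rotations distribute the inserted unit across the $\Fc^{\ot(q-1)}$ slots, how the antipode-driven coproducts interact with the restriction to $U(\Fl)$, and how the resulting weight decomposition behaves under $\pi^0$. The hypothesis of the $\Fc$-Levi decomposition is essential precisely for making the discarded positive-weight contributions irrelevant; alternatively one could seek a cochain-level chain homotopy exhibiting $\t_{\Fc,\Fl,0}$ as a morphism of mixed complexes into $(C^\bullet(\Fg,\Fh,V),\p_{CE},0)$, from which the same conclusion follows by the standard comparison theorem for mixed complexes.
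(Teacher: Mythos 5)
Your overall architecture is the same as the paper's: identify the weight-zero target $C^\bullet_0(\Fl,V)=(V\ot\wdg^\bullet\Fl^\ast)^\Fh$ with the relative complex $C^\bullet(\Fg,\Fh,V)$ (using $\Fg/\Fh\cong\Fl$ and the fact that $\Fl$ is an ideal), invoke the $\Fc$-Levi hypothesis to get the isomorphism at the Hochschild level, and then argue that periodic cyclic cohomology is just the two-periodic direct sum of Hochschild cohomologies. The first two steps are fine and match the paper.

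The genuine gap is in the third step, which you yourself flag as "the main obstacle." Your argument that $d_1=0$ is a heuristic that is never completed, and as stated it rests on a misreading of the operators: the degeneracy $\s_{q-1}$ in $B=\bigl(\sum_i(-1)^{qi}\tau^i\bigr)\s_{q-1}\tau$ does not insert a unit, it applies $\ve$ to a tensor factor (insertion of units is what the outer faces $\p_0,\p_{q+1}$ in $b$ do), so the mechanism "the inserted unit pairs with a primitive $X\in\Fl$ and dies because $\ve(X)=0$" does not describe what actually has to be checked. More importantly, the vanishing you need is a statement about $B$ acting on $H^\bullet_{\rm coalg}(\Fc,V)$ itself, and the paper does not obtain it by pushing $B$ through $\t_{\Fc,\Fl,0}$ and analyzing weights at all: it quotes a general structural theorem (Khalkhali--Rangipour, \emph{Invariant cyclic homology}, the result cited as Theorem 3.22 elsewhere in this paper) asserting that for any commutative Hopf algebra the Connes boundary $B$ vanishes at the level of Hochschild cohomology, for any comodule coefficients. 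That fact makes no reference to $\Fl$, to weights, or to the pairing, and it immediately gives $HP^\bullet(\Fc,V)\cong\bigoplus_{i\equiv\bullet}H^i_{\rm coalg}(\Fc,V)$; the $\Fc$-Levi hypothesis then finishes the proof. Your proposed "delicate combinatorial analysis" through $\t_{\Fc,\Fl,0}$ would in principle suffice (since $\t$ is a quasi-isomorphism, $\t_\ast\circ B_\ast=0$ forces $B_\ast=0$), but it is not carried out, it is harder than necessary, and the claim that the surviving terms are "either $\p_{CE}$-coboundaries or of strictly positive $\Fh$-weight" is asserted rather than proved. As written, the proof is incomplete at its decisive point.
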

\begin{proof}
 First, one uses \cite{Khalkhali-Rangipour-03} to observe that for any commutative Hopf algebra $\Fc$ and trivial comodule, the Connes boundary $B$ vanishes in the level of Hochschild cohomology. The same proof works for any comodule and hence we have
\begin{equation}
HP^\bullet(\Fc,V)\cong \bigoplus_{\bullet= k\;\text{mod 2}} H^{i}_{\rm coalg}(\Fc,V).
\end{equation}
Since $\Fg=\Fh\ltimes \Fl$ is assumed to be a $\Fc$-Levi decomposition, the  map of complexes  $\t_{\Fc,\Fl,0}$ induces an isomorphism in the level of cohomologies.
\end{proof}

\subsection{Hopf cyclic cohomology of $R(G)$}
\label{SS-Hopf cyclic cohomology of R(G)}
In this subsection, we compute the Hopf cyclic cohomology of the commutative Hopf algebra $\Fc:=R(G)$, the Hopf algebra of all representative functions on a Lie group $G$,  with coefficients in a right comodule $V$ over $\Fc$. Indeed, let $V$ be a right comodule over $\Fc$, or equivalently a {\it representative } left $G$-module. Let us recall from \cite{Hochschild-Mostow-62} that a representative $G$-module is a locally finite $G$-module such that for any finite-dimensional $G$-submodule $W$ of $V$, the induced representation of $G$ on $W$ is continuous. The representative $G$-module  $V$ is called  {\it representatively injective } if for every exact sequence
\begin{equation}
\xymatrix{ 0\ar[r]& A\ar[r]^\rho \ar[d]_{\a}& B\ar[r]\ar@{.>}[dl]^{\b}& C\ar[r]&0\\
&V&&&}
\end{equation}
of $G$-module homomorphisms between representative $G$-modules $A$, $B$, and $C$, and for every $G$-module homomorphism $\a: A\ra V$, there is a $G$-module homomorphism  $\b:B\ra V$ such that $\b\circ\rho=\a$. A {\it representatively injective resolution }of the representative  $G$-module $V$ is an exact sequence of $G$-module homomorphisms
\begin{equation}
\xymatrix{ 0\ar[r]& X_0\ar[r] & X_1\ar[r]& \cdots}\;,
\end{equation}
where each $X_i$ is a representatively injective  $G$-module. The {\it representative cohomology group} of $G$ with value in the representative $G$-module $V$ is then defined to be the cohomology of
\begin{equation}
\xymatrix{ X_0^G\ar[r] & X_1^G\ar[r]& \cdots}\;,
\end{equation}
where $X_i^G$ are the elements of $X_i$ which are fixed by $G$.  We denote this cohomology by $H^\ast_{\rm rep}(G,V)$.
\begin{proposition}\label{Propositio-rep-coalgebra}
For any Lie group $G$ and any representative module $V$, the representative cohomology groups of $G$ with value in $V$ coincide with the coalgebra cohomology groups of the coalgebra $R(G)$ with coefficients in the induced comodule by $V$.
\end{proposition}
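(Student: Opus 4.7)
The plan is to identify both cohomologies as right derived functors of the same left-exact functor on equivalent abelian categories. First I would invoke the classical Hochschild--Mostow equivalence between the category of representative $G$-modules and the category of right $R(G)$-comodules, under which a representative action corresponds to a coaction $v \mapsto v_{\langle 0\rangle} \otimes v_{\langle 1\rangle}$ satisfying $\psi \cdot v = v_{\langle 0\rangle}\, v_{\langle 1\rangle}(\psi)$ for all $\psi \in G$. Being an equivalence of abelian categories, this functor is exact in both directions and identifies $\mathrm{Hom}$-spaces, so it matches short exact sequences and lifting problems on either side.

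Next I would verify two identifications under this equivalence. The space of $G$-invariants $V^G$ corresponds to the cotensor product $V \,\square_{R(G)}\, \mathbb{C} = \{v \mid \Delta_V(v) = v \otimes 1\}$, because $v$ is $G$-fixed iff $v_{\langle 0\rangle}\, v_{\langle 1\rangle}(\psi) = \varepsilon(\psi) v$ for every $\psi \in G$, and elements of $R(G)$ are determined by their values on $G$. Moreover, the notion of representative injectivity translates into categorical injectivity of the corresponding $R(G)$-comodule, since the defining lifting property is phrased entirely in terms of morphisms and short exact sequences of representative modules, which are mutually preserved by the equivalence.

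With these identifications in place, both theories become right derived functors of the same functor. The representative cohomology $H^\ast_{\rm rep}(G,V)$ is by definition computed from a representatively injective resolution of $V$. On the other hand, the coalgebra Hochschild cohomology $H^\ast_{\rm coalg}(R(G),V)$ coincides with $\mathrm{Cotor}^\ast_{R(G)}(\mathbb{C}, V)$: the standard cobar resolution $V \to V \otimes R(G) \to V \otimes R(G) \otimes R(G) \to \cdots$ is an injective resolution of $V$ in the category of right $R(G)$-comodules whose cotensor product with $\mathbb{C}$ reproduces exactly the Hochschild complex $(V \otimes R(G)^{\otimes q}, b)$ recalled in Section~\ref{SS-Preliminaries about Hopf cyclic cohomology}. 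Under the equivalence of Step~1 this matches the representatively injective resolution computing $H^\ast_{\rm rep}(G,V)$, so by uniqueness of derived functors the two cohomologies coincide.

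The main obstacle is the injectivity correspondence: representative injectivity is defined relative to the subcategory of representative $G$-modules rather than all $G$-modules, so one must carefully confirm that this class of test objects corresponds exactly to the class of all $R(G)$-comodules under the Hochschild--Mostow equivalence. Once this is in place, everything else is a routine application of homological algebra, but transporting the lifting diagram through the equivalence is the one place where one genuinely uses that it is a full exact equivalence of abelian categories and not merely a functor.
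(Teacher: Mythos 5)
Your argument is correct, but it runs along a more abstract track than the paper's. The paper's proof is hands-on: it takes the specific representatively injective resolution $V\to V\ot R(G)\to V\ot R(G)^{\ot 2}\to\cdots$ from Hochschild--Mostow, computes the $G$-fixed subcomplex explicitly via the identification $(V\ot R(G)^{\ot q})^G\cong V\ot R(G)^{\ot q-1}$, and checks that the resulting differentials are precisely the cobar (coalgebra Hochschild) coboundaries of $R(G)$ with coefficients in the comodule induced by $V$. You instead transport the whole problem through the Hochschild--Mostow equivalence of abelian categories and invoke uniqueness of derived functors: invariants correspond to coinvariants (the cotensor with $\Cb$), representatively injective objects correspond to injective comodules (automatic for an exact equivalence, since the lifting property only quantifies over objects of the category -- your worry about this step is unfounded for exactly the reason you give), and the cobar resolution is an injective resolution computing $\mathrm{Cotor}$. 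What your route buys is that you never have to match differentials by hand, and it makes transparent that the answer is independent of the chosen resolution; what the paper's route buys is an explicit chain-level isomorphism, which it needs later (e.g.\ to define the maps $\Dc_{\rm Gr}$ and $\Dc_{\rm Alg}$ and feed them into the van Est comparison), so a purely derived-functor identification would not suffice for the rest of Section 3. One cosmetic slip: in your characterization of $G$-fixed vectors the condition should read $v\ns{0}\,v\ns{1}(\psi)=v$ for all $\psi\in G$ (there is no $\varepsilon(\psi)$ for a group element); the conclusion $\Db_V(v)=v\ot 1$ then follows because a representative function is determined by its values on $G$, as you say.
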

\begin{proof}
 In \cite{Hochschild-Mostow-61} it is shown  that
\begin{equation}
\xymatrix{ V\ar[r]^{d_{-1}~~} &V\ot \Fc\ar[r]^{d_0~~}&V\ot \Fc^{\ot 2}\ar[r]^{~~d_1}&\cdots }\;,
\end{equation}
is a representatively injective resolution for the representative $G$-module $V$.
Here $G$ acts on $V\ot \Fc^{\ot n}$ by
\begin{equation}
\g(v\ot f^1\odots f^q)= \g v\ot f^1\cdot \g^{-1}\odots f^q\cdot \g^{-1},
\end{equation}
where $G$ acts on $\Fc$ by right translation. One look at $V\ot \Fc^{\ot q}$ as a group cochain with value in the $G$-module $V$ by embedding $V\ot \Fc^{\ot q}$ into $F(G^{\times q},V)$, the space of all continuous maps from $\underset{q~ {\rm times}}{\underbrace{G\times\cdots\times G}}$ to $V$,  by
 \begin{equation}
 (v\ot f^1\odots f^q)(\g_1,\ldots, \g_q)=f^1(\g_1)\cdots f^q(\g_q)v.
 \end{equation}
The coboundaries $d_i$ are defined by
\begin{align}
\begin{split}
&d_{-1}(v)(\g)=v,\\
&d_i(\phi)(\g_1,\ldots,\g_{q+1})= \sum_{i=0}^{q+1} \phi(\g_0,\ldots, \widehat{\g}_i,\ldots, \g_{q}).
\end{split}
\end{align}
One then identifies $(V\ot \Fc^{\ot q})^G$ with $V\ot \Fc^{\ot q-1}$ by
\begin{equation}
v\ot f^1\odots f^q\mapsto \ve(f^1)v\ot f^2 f^3\ps{1}\cdots f^q\ps{1}\ot f^3\ps{2}\cdots f^q\ps{2}\odots f^{q-1}\ps{q}f^q\ps{q}\ot f^q\ps{q+1}.
\end{equation}

The complex of the $G$-fixed part of the resolution is
 \begin{equation}
\xymatrix{  V\ar[r]^{\d_0~~} &V\ot \Fc\ar[r]^{~~\d_1}&\cdots}\;,
\end{equation}
where the coboundaries $\d_i$  are defined by
\begin{align*}
\begin{split}
&\d_0: V\ra V\ot \Fc, \quad \d(v)= v\ns{0}\ot v\ns{1}-v\ot 1,\\
&\d_i: V\ot\Fc^{\ot q}\ra V\ot \Fc^{\ot q+1},\\
&\d_i(v\ot f^1\odots f^q)= v\ns{0}\ot v\ns{1}\ot f^1\odots f^q+\\
&\sum (-1)^i v\ot f^1\odots f^i\ps{1}\ot f^i\ps{2}\odots f^q + (-1)^{q+1}v\ot f^1\odots f^q\ot 1.
\end{split}
\end{align*}
which is the complex who computes the coalgebra cohomology of $\Fc$ with coefficients in $\Fc$-comodule in $V$.
\end{proof}
One of course  has the version of the above proposition for right $G$-module and  corresponding  left $\Fc$-comodule.

\medskip

Let us recall from \cite{Hochschild-Mostow-61} that a {\it nucleus} of a Lie group $G$ is  a simply connected solvable closed normal Lie subgroup $L$ of $G$ such that $G/L$ is reductive. It means that $G/L$ has  a faithful representation and every finite dimensional analytic representation of $G/L$ is semisimple.    In this case one proves that $G=S\ltimes L$, where $S:=G/L$ is reductive.  Let, in addition,  $\Fs\subseteq\Fg$ be Lie algebras of  $S$ and $G$ respectively.

For a representative $G$-module $V$ one defines the following map.
\begin{align}\label{map-D}
\begin{split}
&\Dc_{\rm Gr}: V\ot \Fc^{\ot q}\ra C^q(\Fg,\Fh,V),\\
&\Dc_{\rm Gr}(v\ot f^{1}\odots f^q)(X_1,\ldots,X_q)=\\
 &~~~~~~~~~~\sum_{\mu\in S_q}(-1)^\mu\mdt{1}\cdots\;\mdt{q}f^1(exp(t_1X_{\mu(1)}))\cdots f^q(exp(t_qX_{\mu(q)})v.
\end{split}
\end{align}

\begin{theorem} \label{Theorem-R(G)-cohomology}Let $G$ be a  Lie group, $V$ a representative $G$-module, $L$ a  nucleus of $G$ and $\Fs\subset \Fg$ the Lie algebras of $S:=G/L$ and $G$ respectively. Then the map $\Dc$ defined in \eqref{map-D} induces an isomorphism of vector spaces between the periodic Hopf cyclic cohomology of $R(G)$, the Hopf algebra of all representative functions on $G$, with coefficients in the comodule induced by $V$, and the relative Lie algebra cohomology of $\Fs\subseteq \Fg$ with coefficients in  $\Fg$-module induced by $V$. In other words,
\begin{equation}
HP^\ast(R(G)\;,\;V)\cong \bigoplus_{\ast=i\;\;{\rm mod \; 2}}H^i(\Fg,\Fs,V).
\end{equation}
\end{theorem}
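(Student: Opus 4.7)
The plan is to recognize this statement as the specialization of Theorem \ref{Theorem-F-Levi} to the case $\Fc = R(G)$ in Hopf duality with $U(\Fg)$, with Levi-type decomposition $\Fg = \Fs \ltimes \Fl$, where $\Fl$ is the Lie algebra of the nucleus $L$. The Hopf pairing $\langle f, u\rangle := (u\cdot f)(e)$ is obtained by letting $\Fg$ act on $R(G)\subset C^\infty(G)$ as the derivative of the right-translation action $\psi\cdot f = f\ps{1}f\ps{2}(\psi)$ and extending to $U(\Fg)$; its Hopf compatibility is verified exactly as in Proposition \ref{starmapcomultiplicationcompatibility}.

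The crux of the argument is verifying that $\Fg = \Fs \ltimes \Fl$ is an $R(G)$-Levi decomposition in the sense preceding Theorem \ref{Theorem-F-Levi}. The reductivity of $\Fs$ and the semisimplicity of its finite-dimensional representations follow from the defining property of a nucleus, and $\Fl$ is an ideal because $L$ is normal in $G$. The substantive condition is that $\tau_{R(G),\Fl,0}$ is a quasi-isomorphism for every $R(G)$-comodule $V$. By Proposition \ref{Propositio-rep-coalgebra}, the source of this map computes the representative cohomology $H^\bullet_{\rm rep}(G,V)$. By the Hochschild-Mostow van Est theorem of \cite{Hochschild-Mostow-62}, $H^\bullet_{\rm rep}(G,V) \cong H^\bullet(\Fg,\Fs,V)$. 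Finally, the Hochschild-Serre spectral sequence for the ideal $\Fl \subset \Fg$ has $E_2^{p,q} = H^p(\Fs, H^q(\Fl,V))$, which collapses to the row $p=0$ because $\Fs$ is reductive with semisimple finite-dimensional representations, yielding $H^\bullet(\Fg,\Fs,V) \cong H^0(\Fs, H^\bullet(\Fl,V)) = H^\bullet_0(\Fl,V)$. Composing the three isomorphisms establishes the quasi-isomorphism condition.

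With the $R(G)$-Levi decomposition established, Theorem \ref{Theorem-F-Levi} yields
\begin{equation}
HP^\bullet(R(G),V) \cong \bigoplus_{\bullet = i \,{\rm mod}\, 2} H^i(\Fg,\Fs,V).
\end{equation}
It remains to match the abstract isomorphism with the explicit formula for $\Dc_{\rm Gr}$ in \eqref{map-D}. Under the pairing above, $\langle f, X\rangle = \frac{d}{dt}\big|_{t=0}f(\exp(tX))$ for $X\in\Fg$, so the composition of the antisymmetrization \eqref{map-antisymmetrization} with $\tau_{R(G),U(\Fg)}$, evaluated on $v\otimes f^1\odots f^q$ at $(X_1,\dots,X_q)$, produces exactly the alternating sum appearing in \eqref{map-D}; the further projection to weight zero and restriction along $\Fs\hookrightarrow\Fg$ identifies the target with $C^\bullet(\Fg,\Fs,V)$.

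The main obstacle lies in step two, specifically in the chain-level promotion of the cohomological isomorphism $H^\bullet_{\rm rep}(G,V)\cong H^\bullet_0(\Fl,V)$ to a quasi-isomorphism of the explicit complexes at the level demanded by the definition of $R(G)$-Levi decomposition. The spectral sequence collapse gives the isomorphism abstractly, but identifying the induced chain map with $\tau_{R(G),\Fl,0}$ up to chain homotopy requires tracing the van Est cocycle of Hochschild-Mostow through the antisymmetrization and weight-zero projection and comparing explicitly with the restriction of the coalgebra Hochschild complex of $R(G)$.
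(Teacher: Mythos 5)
Your overall strategy coincides with the paper's: reduce to Theorem \ref{Theorem-F-Levi} by exhibiting $\Fg=\Fs\ltimes\Fl$ as an $R(G)$-Levi decomposition, with the Hopf pairing $\langle f,X\rangle=\dt f(\exp(tX))$ and the identification of $\Dc_{\rm Gr}$ with $\t_{R(G),U(\Fg),\Fl,0}$. But there is a genuine gap in your verification of the quasi-isomorphism condition, and it is exactly the one you flag at the end. The definition of $\Fc$-Levi decomposition demands that the \emph{specific map} $\t_{R(G),\Fl,0}$ be a quasi-isomorphism. What you actually produce is a chain of three abstract isomorphisms: $H^\bullet_{\rm coalg}(R(G),V)\cong H^\bullet_{\rm rep}(G,V)$ (Proposition \ref{Propositio-rep-coalgebra}), $H^\bullet_{\rm rep}(G,V)\cong H^\bullet(\Fg,\Fs,V)$ (Hochschild--Mostow), and $H^\bullet(\Fg,\Fs,V)\cong H^\bullet_0(\Fl,V)$. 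Two complexes with isomorphic cohomology need not be linked by a quasi-isomorphism through any given chain map between them, so composing these identifications does not establish the required property of $\t_{R(G),\Fl,0}$ unless you also show that the composite is the map induced by $\t_{R(G),\Fl,0}$ --- which you explicitly defer. The paper avoids this entirely: Theorem 10.2 of Hochschild--Mostow is a statement about the explicit van Est comparison map from representative group cochains to relative Lie algebra cochains, and (after the identification via Proposition \ref{Propositio-rep-coalgebra}) that map \emph{is} $\Dc_{\rm Gr}=\t_{R(G),U(\Fg),\Fl,0}$; so the cited theorem already delivers the quasi-isomorphism at the level of the explicit map, with only a ``slight modification'' of its proof.

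A secondary, smaller point: your invocation of the Hochschild--Serre spectral sequence is both imprecise and unnecessary. For absolute cohomology the $E_2$-page $H^p(\Fs,H^q(\Fl,V))$ does not collapse to the row $p=0$ merely because $\Fs$ is reductive (e.g. $H^3(\Fs\Fl_2,\Cb)\neq 0$). What you actually need is the relative statement, and that one holds at the cochain level without any spectral sequence: since $\Fl$ is an ideal with $\Fg/\Fs\cong\Fl$ as $\Fs$-modules, one has $C^\bullet(\Fg,\Fs,V)=(V\ot\wdg^\bullet\Fl^\ast)^{\Fs}=C^\bullet_0(\Fl,V)$ compatibly with the differentials, using semisimplicity of the $\Fs$-action to identify invariants with the weight-zero component. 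Replacing your spectral-sequence step with this observation, and replacing the abstract composite by the direct citation of the Hochschild--Mostow theorem for the explicit map, closes the argument and recovers the paper's proof.
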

\begin{proof}
 One knows that $R(G)$ and $U(\Fg)$ are in Hopf duality via
   \begin{equation}
   \langle f,X\rangle =\dt f(exp(tX)), \quad X\in\Fg, f\in R(G).
   \end{equation}
   On the other hand it is easy to see that $\Dc_{\rm Gr}$ is nothing but $\t_{R(G),U(\Fg),\Fl,0}$  and hence, by  applying   Proposition \ref{Propositio-rep-coalgebra}, a map of complexes between complex of representative group cochains of $G$ with value in $V$ and Chevalley-Eilenberg complex of the Lie algebras $\Fs\subseteq\Fg$ with  coefficients in the $\Fg$-module induced by $V$. With a slight modification of the same proof as  in \cite[Theorem 10.2]{Hochschild-Mostow-62}, one shows that $\Dc_{\rm Gr}$ induces a quasi-isomorphism. So $\Fg=\Fs\ltimes\Fl$ is a $R(G)$-Levi decomposition and hence the rest follows from Theorem \ref{Theorem-F-Levi}.
\end{proof}

\subsection{Hopf cyclic cohomology of $R(\Fg)$}
\label{SS-Hopf cyclic cohomology of R(g)}
Let $\Fg$ be a  Lie algebra and $R(\Fg)$  the commutative Hopf algebra of representative functions on $U(\Fg)$ recalled in Subsection  \ref{SS-Lie algebra}. Let $V$  be  a locally finite $\Fg$-module or equally an $R(\Fg)$-comodule. In this subsection we compute the Hopf cyclic cohomology of $R(\Fg)$ with coefficients in $V$. To this end,  we need some new cohomology theory similar to the representative cohomology of Lie groups.  We assume all modules are locally finite.
The $\Fg$-module  $V$ is called  {\it injective } if for every exact sequence
\begin{equation}
\xymatrix{ 0\ar[r]& A\ar[r]^\rho \ar[d]_{\a}& B\ar[r]\ar@{.>}[dl]^{\b}& C\ar[r]&0\\
&V&&&}
\end{equation}
of $\Fg$-module homomorphisms between $\Fg$-modules $A$, $B$, and $C$, and for every $\Fg$-module homomorphism $\a: A\ra V$, there is a $\Fg$-module homomorphism  $\b:B\ra V$ such that $\b\circ\rho=\a$. An {\it injective resolution } of the $\Fg$-module $V$ is an exact sequence of $\Fg$-module homomorphisms
\begin{equation}
\xymatrix{ 0\ar[r]& X_0\ar[r] & X_1\ar[r]& \cdots}\;,
\end{equation}
where each $X_i$ is an injective  $\Fg$-module. The {\it representative cohomology groups} of $G$ with value in the $\Fg$-module $V$ is then defined to be the cohomology of
\begin{equation}
\xymatrix{ X_0^\Fg\ar[r] & X_1^\Fg\ar[r]& \cdots}\;,
\end{equation}
where $X_i^\Fg$ are the the coinvariant elements of $X_i$ i.e,
 \begin{equation}
 X_i^\Fg=\{\xi\in X_i\mid X\xi=0\quad \text{ for all}\;\; X\in \Fg \}.
 \end{equation}
   We denote this cohomology by $H^\ast_{\rm rep}(\Fg,V)$.

   Since the category of locally finite $\Fg$-modules and the category of $R(\Fg)$-comodules are equivalent,  we conclude that $H^\ast_{\rm rep}(\Fg,V)$ is the same as ${\rm Cotor}^\ast_{R(\Fg)}(V,\Cb)$ which is by definition   $H^\ast_{\rm calg}(R(\Fg),V)$.

Let $\Fl$ be the solvable radical of $\Fg$, i.e, $\Fl$ is the unique maximal solvable ideal of $\Fg$. Levi decomposition implies that  $\Fg=\Fs\ltimes \Fl$, where $\Fs$ is a  semisimple subalgebra of $\Fg$ called a Levi subalgebra .

We now consider the following map
\begin{align}\label{map-D-alg}
\begin{split}
&\Dc_{\rm Alg}: V\ot R(\Fg)^{\ot q}\ra (V\ot\wedge^{q} \Fl^\ast)^\Fs,\\
&\Dc_{\rm Alg}(v\ot f^1\odots f^q)(X_1,\ldots,X_q)= \sum_{\s\in S_q}(-1)^\s f^1(X_{\s(1)})\dots f^q(X_{\s(q)})v.
\end{split}
\end{align}

\begin{proposition}\label{Proposition-R(g)-cohomology}
Let $\Fg$ be a Lie algebra with $\Fg=\Fs\ltimes \Fl$ as a   Levi decomposition.  Then  for any finite dimensional $\Fg$-module $V$, the map $\Dc_{\rm Alg}$ induces an isomorphism between the representative cohomology of $H_{\rm rep}(\Fg,V)$ and the relative  Lie algebra cohomology $H(\Fg,\Fs,V)$.
\end{proposition}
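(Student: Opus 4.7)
The plan is to adapt the proof of Theorem \ref{Theorem-R(G)-cohomology} to the Lie-algebraic setting. Broadly, I would first identify $H_{\rm rep}(\Fg,V)$ with the coalgebra cohomology $H^{\ast}_{\rm coalg}(R(\Fg),V)$; then factor $\Dc_{\rm Alg}$ as the composition $\pi^{0}\circ \alpha\circ \pi_{\Fl}\circ \theta_{R(\Fg),U(\Fg)}$ of Subsection \ref{SS-Lie algebra cohomology and Hopf cyclic cohomology}; and finally establish the quasi-isomorphism, which is the crux.

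For the first step, I would prove the Lie-algebra analog of Proposition \ref{Propositio-rep-coalgebra}: that the complex $V\to V\otimes R(\Fg)\to V\otimes R(\Fg)^{\otimes 2}\to\cdots$, with $\Fg$ acting diagonally via the infinitesimal right-translation derivations on $R(\Fg)$, is a representatively injective resolution of $V$, and that its $\Fg$-invariant subcomplex coincides with the Hochschild coalgebra complex of $R(\Fg)$ with coefficients in the comodule $V$. The argument closely parallels the Lie-group case, with $\gamma\in G$ replaced by $\exp(tX)$ and translation replaced by its $t$-derivative at the origin.

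For the second step, I would verify that $\Dc_{\rm Alg}$ is exactly the map $\tau_{R(\Fg),\Fl,0}$: pair with $U(\Fg)$ through the Harish--Chandra duality \eqref{F-V-pairing}, restrict along the subalgebra inclusion $U(\Fl)\hookrightarrow U(\Fg)$, antisymmetrize via \eqref{map-antisymmetrization}, and project onto the zero weight space of the $\Fs$-action (which coincides with the $\Fs$-invariants, by Weyl's complete reducibility applied to the finite-dimensional $\Fs$-module $V\otimes\wedge^{\ast}\Fl^{\ast}$). Each factor is a cochain map, so $\Dc_{\rm Alg}$ is a cochain map with image in $(V\otimes\wedge^{\ast}\Fl^{\ast})^{\Fs}=C^{\ast}(\Fg,\Fs,V)$. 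Theorem \ref{Theorem-F-Levi} at the Hochschild level then delivers the statement, provided $\Fg=\Fs\ltimes\Fl$ is an $R(\Fg)$-Levi decomposition.

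The hard part will be this last condition, namely that $\Dc_{\rm Alg}$ is a quasi-isomorphism. One route is to invoke the classical van Est isomorphism for Lie algebras from \cite{Hochschild-Mostow-62}, which produces this statement essentially directly. A more self-contained route is to deploy a Hochschild--Serre-type spectral sequence for the ideal $\Fl\triangleleft\Fg$ inside the category of locally finite $\Fg$-modules, of the form $E_{2}^{p,q}=\Ext^{p}_{\Fs}(\Cb,H^{q}_{\rm rep}(\Fl,V))\Rightarrow H^{p+q}_{\rm rep}(\Fg,V)$: semisimplicity of $\Fs$ makes the locally finite $\Fs$-module category semisimple (Weyl), so $\Ext^{p}_{\Fs}$ vanishes for $p>0$, and the sequence collapses to $H^{n}_{\rm rep}(\Fg,V)\cong H^{n}_{\rm rep}(\Fl,V)^{\Fs}$; semisimplicity of the $\Fs$-action on the Chevalley--Eilenberg complex of $\Fl$ then lets invariants commute with cohomology, yielding $H^{n}_{\rm rep}(\Fl,V)^{\Fs}=H^{n}(\Fg,\Fs,V)$. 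The delicate points here are the rigorous construction of the spectral sequence in the locally finite setting (for which the PBW decomposition $U(\Fg)\cong U(\Fs)\otimes U(\Fl)$ is essential) and matching its edge map with $\Dc_{\rm Alg}$ on the nose.
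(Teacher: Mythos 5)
Your overall framing (identify $H_{\rm rep}(\Fg,V)$ with $H^\ast_{\rm coalg}(R(\Fg),V)$, recognize $\Dc_{\rm Alg}$ as $\t_{R(\Fg),U(\Fg),\Fl,0}$, and then prove the quasi-isomorphism) is consistent with how the paper uses the proposition, but your attack on the crux is genuinely different from the paper's, and one of your two sub-routes has a real gap. The paper's proof is a short reduction to the Lie group case already established in Theorem \ref{Theorem-R(G)-cohomology}: take $G$ to be the simply connected Lie group with Lie algebra $\Fg$, so that the Levi decomposition $\Fg=\Fs\ltimes\Fl$ integrates to a nucleus $L$ with $G=S\ltimes L$; since $G$ is simply connected, representations of $G$ and of $\Fg$ coincide and every representatively injective resolution for $G$ induces an injective resolution for $\Fg$, which makes the natural map $H_{\rm rep}(G,V)\ra H_{\rm rep}(\Fg,V)$ surjective; finally, $\Dc_{\rm Gr}$ (an isomorphism because $V$ is finite dimensional) factors as $\Dc_{\rm Alg}$ precomposed with that surjection, which forces $\Dc_{\rm Alg}$ to be an isomorphism. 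This buys a two-line argument at the cost of importing the group-level van Est theorem; your plan tries to stay entirely on the Lie algebra side. Note also that your first step is already done in the text preceding the proposition: the equivalence between locally finite $\Fg$-modules and $R(\Fg)$-comodules identifies $H^\ast_{\rm rep}(\Fg,V)$ with ${\rm Cotor}^\ast_{R(\Fg)}(V,\Cb)=H^\ast_{\rm coalg}(R(\Fg),V)$, so rebuilding the injective resolution of Proposition \ref{Propositio-rep-coalgebra} is unnecessary.

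The gap is in your spectral-sequence route. Granting the construction of the Hochschild--Serre sequence in the locally finite category and the vanishing of $\Ext^{p}_{\Fs}$ for $p>0$, the collapse gives $H^{n}_{\rm rep}(\Fg,V)\cong H^{n}_{\rm rep}(\Fl,V)^{\Fs}$. But your next step, ``semisimplicity of the $\Fs$-action on the Chevalley--Eilenberg complex of $\Fl$ lets invariants commute with cohomology, yielding $H^{n}_{\rm rep}(\Fl,V)^{\Fs}=H^{n}(\Fg,\Fs,V)$,'' silently substitutes the Chevalley--Eilenberg cohomology $H^{n}(\Fl,V)$ for the representative cohomology $H^{n}_{\rm rep}(\Fl,V)$. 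These are a priori different: $H^{n}_{\rm rep}(\Fl,V)$ is a derived functor computed with injectives in the locally finite category (equivalently ${\rm Cotor}_{R(\Fl)}$), not $\Ext_{U(\Fl)}(\Cb,V)$, and their agreement for the solvable radical is exactly the nontrivial content of the van Est/Levi-decomposition statement you are trying to prove; it does not follow from Weyl's theorem applied to $\Fs$. So the spectral sequence only reduces the problem to the solvable case without solving it. Your other route, invoking a Lie-algebra-level van Est theorem from \cite{Hochschild-Mostow-62} directly, is acceptable only if that reference actually contains the statement for representative cohomology of Lie algebras with the comparison map identified as $\Dc_{\rm Alg}$; the paper sidesteps this by deriving the Lie algebra case from the Lie group case via the simply connected group, and you would need either to do the same or to supply the solvable-radical comparison yourself.
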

\begin{proof}
 First one notes that $\Dc_{\rm Alg}$ induces a map of complexes. Now one lets $G$ be the  simply connected Lie group of the Lie algebra $\Fg$. The Levi decomposition  $\Fg=\Fs\ltimes \Fl$ induces a nucleus of $G$ and  $G=S\ltimes L$. Since $G$ is simply connected the representation of $\Fg$ and representation of $G$ coincides and any injective resolution of $\Fg$ is induced by an representatively injective  resolution of $G$. It means that the obvious map $H_{\rm rep}(G,V)\ra H_{\rm rep}(\Fg,V)$ is surjective. Since $V$ is finite dimensional , $\Dc_{Gr}: H^\ast_{\rm rep(G,V)}\ra H(\Fg,\Fs,V)$ is an isomorphism and it factors through $\Dc_{\rm Alg}: H_{\rm rep}(\Fg,V)\ra H^\ast(\Fg,\Fs,V)$, the latter map is an isomorphism.
\end{proof}

Finally we summarize the result of this subsection as the following theorem.

\begin{theorem}\label{theorem-cohomology-R(g)}  Let $\Fg$ be a finite dimensional Lie algebra with a   Levi decomposition $\Fg=\Fs\ltimes \Fl$. Then for any   finite dimensional $\Fg$-module $V$, the map $\Dc_{\rm Alg}$ defined in \eqref{map-D-alg} induces an isomorphism of vector spaces between the periodic Hopf cyclic cohomology of $R(\Fg)$, the Hopf algebra of all representative functions on $\Fg$, with coefficients in the comodule induced by $V$, and the relative Lie algebra cohomology of $\Fs\subseteq \Fg$ with coefficients in $V$. In other words,
\begin{equation}
HP^\ast(R(\Fg)\;,\;V)\cong \bigoplus_{\ast=i\;\;{\rm mod \; 2}}H^i(\Fg,\Fs,V).
\end{equation}
\end{theorem}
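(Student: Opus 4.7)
The plan is to reduce the statement to Theorem \ref{Theorem-F-Levi} by exhibiting $\Fg = \Fs \ltimes \Fl$ as an $R(\Fg)$-Levi decomposition with respect to the Harish-Chandra Hopf pairing $\langle f, u\rangle = f(u)$ between $R(\Fg)$ and $U(\Fg)$. This pairing was already recorded in Subsection \ref{SS-Lie algebra} and is a nondegenerate Hopf duality by Harish-Chandra's theorem. First I would recognize the map $\Dc_{\rm Alg}$ of \eqref{map-D-alg} as exactly the map $\t_{R(\Fg), U(\Fg), \Fl, 0}$ from Subsection \ref{SS-Lie algebra cohomology and Hopf cyclic cohomology}: $\t_{R(\Fg), U(\Fg)}$ sends $v \ot f^1 \odots f^q$ to the multilinear functional $(u^1, \ldots, u^q) \mapsto f^1(u^1)\cdots f^q(u^q)\, v$ on $U(\Fg)^{\ot q}$; restricting along $U(\Fl) \hookrightarrow U(\Fg)$, antisymmetrizing, and projecting onto the trivial $\Fs$-weight reproduces \eqref{map-D-alg} verbatim under the standard identification $(V \ot \wedge^q \Fl^*)^\Fs \cong C^q(\Fg, \Fs, V)$.

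Next I would identify the coalgebra cohomology $H^\ast_{\rm coalg}(R(\Fg), V)$ with the representative cohomology $H^\ast_{\rm rep}(\Fg, V)$ introduced in Subsection \ref{SS-Hopf cyclic cohomology of R(g)}. The equivalence of Hochschild--Mostow between locally finite $\Fg$-modules and $R(\Fg)$-comodules carries injective objects to injective objects, so both groups compute $\mathrm{Cotor}^\ast_{R(\Fg)}(V, \Cb)$. Combining this identification with Proposition \ref{Proposition-R(g)-cohomology} shows that $\Dc_{\rm Alg}$ is a quasi-isomorphism for every finite-dimensional $V$. Since every $R(\Fg)$-comodule is the directed union of its finite-dimensional sub-comodules, and both the source (coalgebra Hochschild complex) and the target (Chevalley--Eilenberg complex) commute with direct limits in the coefficient, $\Dc_{\rm Alg}$ remains a quasi-isomorphism for arbitrary $R(\Fg)$-comodule coefficients. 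This is precisely the $R(\Fg)$-Levi condition on $\Fg = \Fs \ltimes \Fl$.

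Theorem \ref{Theorem-F-Levi} applied to $\Fc = R(\Fg)$ then yields
$$HP^\ast(R(\Fg), V) \cong \bigoplus_{\ast = i\;\text{mod }2} H^i(\Fg, \Fs, V),$$
with the isomorphism induced by $\Dc_{\rm Alg}$. The principal subtlety is the passage from finite-dimensional to arbitrary $R(\Fg)$-comodule coefficients needed to verify the $R(\Fg)$-Levi definition; the direct-limit argument above handles this, since representativity forces every $R(\Fg)$-comodule to be locally finite. Aside from that, the argument is essentially a bookkeeping exercise matching Proposition \ref{Proposition-R(g)-cohomology} to the framework of Theorem \ref{Theorem-F-Levi}.
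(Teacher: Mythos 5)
Your argument is correct and follows essentially the same route as the paper: identify $\Dc_{\rm Alg}$ with $\t_{R(\Fg),U(\Fg),\Fl,0}$, use Proposition \ref{Proposition-R(g)-cohomology} (via the Hochschild--Mostow equivalence identifying $H^\ast_{\rm coalg}(R(\Fg),V)$ with $H^\ast_{\rm rep}(\Fg,V)$) to conclude that $\Fg=\Fs\ltimes\Fl$ is an $R(\Fg)$-Levi decomposition, and then invoke Theorem \ref{Theorem-F-Levi}. Your direct-limit step upgrading the quasi-isomorphism from finite-dimensional to arbitrary (necessarily locally finite) $R(\Fg)$-comodules is a point the paper's proof passes over silently, and it is a welcome addition since the definition of $\Fc$-Levi decomposition quantifies over all comodules.
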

\begin{proof}
We know that $R(\Fg)$ and $U(\Fg)$ are in (nondegenerate) Hopf duality via
   \begin{equation}
   \langle f,u\rangle = f(u), \quad u\in U(\Fg), f\in R(\Fg).
   \end{equation}
   On the other hand, it is easy to see that $\Dc_{\rm Alg}$ is  $\t_{R(\Fg),U(\Fg),\Fl,0}$  and hence, by  applying   Proposition \ref{Propositio-rep-coalgebra}, is a map of complexes between the complex of representative group cochains of $G$ with value in $V$ and the Chevalley-Eilenberg complex of the Lie algebras $\Fs\subseteq\Fg$ with coefficients in the $\Fg$-module $V$. By Proposition \ref{Proposition-R(g)-cohomology},   $\Fg=\Fs\ltimes\Fl$ is a $R(\Fg)$-Levi decomposition. Hence the proof is completed  by applying Theorem \ref{Theorem-F-Levi}.
\end{proof}

\subsection{Hopf cyclic cohomology of $\Pc(G)$}
\label{SS-Hopf cyclic cohomology of P(G)}
In this section, we compute the Hopf cyclic cohomology of $\Pc(G)$, the Hopf algebra of all complex polynomial functions of an affine algebraic group $G$ over $\Cb$.

Let $V$ be a finite dimensional polynomial right $G$-module. One defines the polynomial  group cohomology of $G$ with coefficients in $V$ as the cohomology of the following complex
\begin{equation}
\xymatrix{ C^0_{\rm pol}(G,V)\ar[r]^\d& C^1_{\rm pol}(G,V)\ar[r]^{~~~\d} &\cdots }
\end{equation}
Here $C^0_{\rm pol}(G,V)=V$, and

\begin{equation}
C^q_{\rm pol}(G,V)=\{ \phi:\underset{q\; {\rm times}}{\underbrace{G\times \dots \times G}}\ra V\mid \phi\;\; \text{is polynomial}\}
\end{equation}
 The coboundary $\d$ is the usual group cohomology coboundary which is recalled here by
 \begin{align}
 \begin{split}
 &\d:V\ra C^1_{\rm pol}(G,V), \quad \d(v)(\g)=v- v\cdot \g,\\
 &\d: C^q_{\rm pol}(G,V)\ra C^{q+1}_{\rm pol}(G,V), \\
& \d(\phi)(\g_1,\ldots,\g_{q+1})=\d(\phi)(\g_2,\ldots,\g_{q+1})+\\
 &\sum_{i=1}^q (-1)^i \phi(\g_1, \ldots, \g_i\g_{i+1}, \ldots, \g_{q+1})+ (-1)^{q+1}\phi(\g_1,\ldots, \g_q)\cdot\g_{q+1}
 \end{split}
 \end{align}

 One identifies $C^q_{\rm pol}(G,V)$ with $V\ot \Pc(G)^{\ot q}$ via
 \begin{equation}
 v\ot f^1\odots f^q(\g_1, \ldots,\g_q)= f^1(\g_1)\cdots f^q(\g_q)v.
 \end{equation}

 and observe that the coboundary $\d$ is identified with the Hochschild coboundary of the coalgebra $\Pc(G)$ with value in the bicomodule $V$, where the right comodule is trivial and the left comodule is induced by the right $G$-module.

 The  cohomology $(C^\ast_{\rm pol}(G,V),\d)$ is denoted by $H_{\rm pol}(G,V)$.
 One notes that $H_{\rm }(G,V)$ can be also calculated by the means of {\it polynomially injective resolutions} \cite{Hochschild-61}. Let us recall here {\it polynomially injective resolutions}. A  polynomial module $V$ over an affine algebraic group $G$ is called {\it polynomially  injective} if   for any exact sequence of polynomial modules over $G$
\begin{equation}
\xymatrix{ 0\ar[r]& A\ar[r]^\rho \ar[d]_{\a}& B\ar[r]\ar@{.>}[dl]^{\b}& C\ar[r]&0\\
&V&&&}
\end{equation}
 there is a polynomial $G$-module homomorphism  $\b:B\ra V$ such that $\b\circ\rho=\a$.
  A {\it polynomially injective  resolution} for a polynomial module $V$ over $G$ is an exact sequence of {polynomially injective} modules over $G$
    \begin{equation}
\xymatrix{ 0\ar[r]&V\ar[r]& X_0\ar[r] & X_1\ar[r]& \cdots}\;,
\end{equation}

 It is shown in \cite{Hochschild-61} that the $H_{\rm pol}(G,V)$ is computed by the cohomology of the $G$-fixed part of any {\it polynomially injective resolution}  i.e., the following complex
   \begin{equation}
 \xymatrix{  X_0^G\ar[r] & X_1^G\ar[r]& \cdots}\;.
\end{equation}
 The most natural  polynomially injective  resolution of a polynomial $G$-module $V$ is the resolution of differential polynomial forms with value in $V$ which is $V\ot \wdg^\bullet(\Fg^\ast)\ot \Pc(G)$, where $G$ acts by  $\g\cdot (u\ot f)= u\ot (f\cdot \g^{-1})$, and $G$ acts on $\Pc(G)$ by right translations. This yields the following map of complexes

\begin{align}\label{map-D-pol}
\begin{split}
&\Dc_{\rm Pol}: C^q_{\rm pol}(G,V)\ra C^q(\Fg,\Fg^{\rm red},V),\\
&\Dc_{\rm Pol}(v\ot f^1\odots f^q)(X_1,\ldots, X_q)= \\
&~~~~~~~~~~~~~~~~~~~~~~~~~~\sum_{\s\in S_q} (-1)^\s (X_{\s(1)}\cdot f^1)(e)\cdots (X_{\s(q)}\cdot f^q)(e)\;v\;.
\end{split}
\end{align}

Here, we identify $\Fg$ the Lie algebra of $G$ by the  left $G$-invariant derivations of $\Pc(G)$.

Here $G= G^{\rm red}\ltimes G^{\rm u}$, is a Levi decomposition of an affine algebraic group $G$, where $G^{\rm u}$ is the unipotent radical of $G$ and $G^{\rm red}$ is the maximal reductive subgroup of $G$. We also use   $\Fg^{\rm red}$ and $\Fg^{\rm u}$  to denote the Lie algebra of $G^{\rm red}$ and $G^{\rm u}$ respectively.

\begin{theorem}\label{theorem-cohomology-P(G)}
Let $G$ be an affine algebraic group over $\Cb$ and $V$ be a finite dimensional polynomial $G$-module. Let $G= G^{\rm red}\ltimes G^{\rm u}$ be a Levi decomposition of $G$ and $\Fg^{\rm red}\subseteq \Fg$ be the Lie algebras of $G^{\rm red}$ and $G$ respectively. Then the map $\Dc_{\rm Pol}$ defined in \ref{map-D-pol} induces and isomorphism between Hopf cyclic cohomology of $\Pc(G)$, the Hopf algebra of polynomial functions on $G$,  with coefficients in the comodule induced by $V$, and the  Lie algebra cohomology of $\Fg$ relative to $\Fg_{\rm red}$ with coefficients in the $\Fg$-module $V$. In other words
\begin{equation}
HP^\bullet(\Pc(G),V)\cong \bigoplus_{i= \bullet,\text{mod} 2} H^i(\Fg,\Fg^{\rm red},V)
\end{equation}

\end{theorem}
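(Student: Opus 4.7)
The plan is to show that $\Fg = \Fg^{\rm red}\ltimes \Fg^{\rm u}$ is a $\Pc(G)$-Levi decomposition in the sense of Subsection \ref{SS-Lie algebra cohomology and Hopf cyclic cohomology}, and then invoke Theorem \ref{Theorem-F-Levi}. The Hopf pairing between $\Pc(G)$ and $U(\Fg)$ needed to set up $\t_{\Pc(G),U(\Fg),\Fg^{\rm u},0}$ is already provided by Proposition \ref{starmapcomultiplicationcompatibility}, and it is straightforward to check (essentially by the same computation as in the proofs of Theorems \ref{Theorem-R(G)-cohomology} and \ref{theorem-cohomology-R(g)}) that, after restriction to $U(\Fg^{\rm u})$, antisymmetrization, and projection onto the zero-weight space for the reductive $\Fg^{\rm red}$-action, the map $\t_{\Pc(G),U(\Fg),\Fg^{\rm u},0}$ coincides with $\Dc_{\rm Pol}$ under the usual identification $C^q_{\rm pol}(G,V)\cong V\ot \Pc(G)^{\ot q}$.

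First I would invoke \cite{Khalkhali-Rangipour-03} to conclude that, as $\Pc(G)$ is commutative, the Connes boundary $B$ vanishes on Hochschild cohomology, so
\[
HP^{\bullet}(\Pc(G),V)\;\cong\;\bigoplus_{i\,\equiv\,\bullet\!\!\!\mod 2} H^{i}_{\rm coalg}(\Pc(G),V).
\]
Next, I identify $H^{\bullet}_{\rm coalg}(\Pc(G),V)$ with the polynomial group cohomology $H^{\bullet}_{\rm pol}(G,V)$ via the explicit isomorphism $v\ot f^{1}\odots f^{q}\mapsto \bigl((\g_{1},\ldots,\g_{q})\mapsto f^{1}(\g_{1})\cdots f^{q}(\g_{q})v\bigr)$ described in the text; the differentials match by direct inspection.

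The heart of the matter is then to prove that $\Dc_{\rm Pol}$ induces an isomorphism
\[
H^{\bullet}_{\rm pol}(G,V)\;\xrightarrow{\;\cong\;}\;H^{\bullet}(\Fg,\Fg^{\rm red},V).
\]
For this I would compute both sides from the polynomially injective resolution $V\ot \wdg^{\bullet}\Fg^{\ast}\ot \Pc(G)$ of $V$ recalled in the text. Its $G$-fixed part, computed by first taking $G^{\rm u}$-invariants (which kills the $\Pc(G)$-factor by the standard identification for the unipotent part, this is where one uses that $G^{\rm u}$ is unipotent and that $\Pc(G)$ is cofree on a suitable subcoalgebra in the sense needed) and then $G^{\rm red}$-invariants (which, since $G^{\rm red}$ is reductive, is exact and picks out the $\Fg^{\rm red}$-relative part of $V\ot \wdg^{\bullet}\Fg^{\ast}$), yields precisely $(V\ot \wdg^{\bullet}(\Fg/\Fg^{\rm red})^{\ast})^{\Fg^{\rm red}}=C^{\bullet}(\Fg,\Fg^{\rm red},V)$. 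This is Hochschild's polynomial van Est theorem and is the main obstacle; I would reduce to the finite-dimensional, polynomial analogue of \cite[Theorem 10.2]{Hochschild-Mostow-62}, replacing smooth injective resolutions and analytic group cohomology by polynomial ones, and using reductivity of $G^{\rm red}$ in place of compactness to split the $G^{\rm red}$-action on $V\ot\wdg^{\bullet}\Fg^{\ast}$ and on the resolution.

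Finally, comparing the two descriptions, the explicit formula \eqref{map-D-pol} for $\Dc_{\rm Pol}$ is precisely the composition of the edge map from the double complex to its $G$-fixed part with the chosen quasi-isomorphism onto $C^{\bullet}(\Fg,\Fg^{\rm red},V)$. Hence $\Dc_{\rm Pol}$ is a quasi-isomorphism, proving that $\Fg=\Fg^{\rm red}\ltimes \Fg^{\rm u}$ is a $\Pc(G)$-Levi decomposition. Applying Theorem \ref{Theorem-F-Levi} yields the claimed identification
\[
HP^{\bullet}(\Pc(G),V)\;\cong\;\bigoplus_{i\,\equiv\,\bullet\!\!\!\mod 2} H^{i}(\Fg,\Fg^{\rm red},V).
\]
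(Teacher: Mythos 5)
Your proposal follows the same route as the paper's own proof: identify $HP^\bullet(\Pc(G),V)$ with the polynomial group cohomology $H^\bullet_{\rm pol}(G,V)$, show that $\Dc_{\rm Pol}$ coincides with $\t_{\Pc(G),U(\Fg),\Fg^{\rm u},0}$ for the pairing of Proposition \ref{starmapcomultiplicationcompatibility} and is a quasi-isomorphism onto the relative Chevalley--Eilenberg complex (so that $\Fg=\Fg^{\rm red}\ltimes\Fg^{\rm u}$ is a $\Pc(G)$-Levi decomposition), and then apply Theorem \ref{Theorem-F-Levi}. The only difference is that where you sketch the polynomial van Est isomorphism by hand via the resolution $V\ot\wdg^\bullet\Fg^\ast\ot\Pc(G)$, the paper outsources exactly this step to \cite{Hochschild-61} and to Theorem 2.2 of \cite{KN}, so your argument is the same one carried one level deeper.
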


\begin{proof}
It is shown in \cite{Hochschild-61} that $V\ot \wdg^\bullet\Fg^\ast\ot \Pc(G)$ is a {\it polynomially injective resolution} for $V$. The comparison between this resolution and the standard resolution, i.e. $V\ot \Pc(G)^{\ast+1}$, results the map $\Dc_{\rm Pol}$.  The complete proof which shows that  the map $\Dc_{\rm Pol}$ is an isomorphism between  $H^\bullet_{\rm pol}(G,V)$ and $H^\bullet(\Fg,\Fg_{\rm red},V)$ is Theorem 2.2 in \cite{KN}.  On the other hand
 the map $\Dc_{\rm pol}$ equals to $\t_{\Pc(G),U(\Fg),\Fg^{\rm u},0}$, where  we naturally pair $\Pc(G)$ and $U(\Fg)$ by
\begin{equation}
\langle v\,,\, f\rangle = v\cdot f(e).
\end{equation}
This shows that $\Fg=\Fg^{\rm red}\ltimes \Fg^{\rm u}$ is a $\Pc(G)$-Levi decomposition. One then applies the Theorem \ref{Theorem-F-Levi} to finish the proof.
\end{proof}

\section{Hopf cyclic cohomology of noncommutative geometric Hopf algebras}
\label{S4}
We use the machinery developed for computing the Hopf cyclic cohomology of bicrossed product Hopf algebra by Moscovici  and the first author in \cite{Moscovici-Rangipour-09,Moscovici-Rangipour-011} to compute the Hopf cyclic cohomology of  the geometric bicrossed product Hopf algebras we constructed in Subsections \ref{SS-Lie algebra}, \ref{SS-Lie group}, and \ref{SS-algebraic group}.
Since most of the improvements done in \cite{Moscovici-Rangipour-011} are for special cases, we need  first to advance the  machinery  to cover the case of Lie-Hopf algebras in general.

\subsection{Bicocyclic module associated to Lie Hopf algebras}
\label{SS-Bicocyclic module associated to Lie Hopf algebras}

 Let $\Fg$ be a Lie algebra and  $\Fc$ a commutative $\Fg$-Hopf algebra. We denote the bicrossed product Hopf algebra $\Fc\acl U(\Fg)$ by $\Hc$.
 Let the character $\d$ and the group-like $\s$ be the canonical modular pair in involution defined  in \eqref{delta} and \eqref{sigma}. In addition, let $M$ be an induced $(\Fg,\Fc)$-module and $^\s{M}_\d$ be the associated  SAYD module over $\Hc$ defined in \eqref{action-SAYD} and \eqref{coaction-SAYD}.

The Hopf algebra $\Uc:=U(\Fg)$ admits the following right
 action on $\;^\s{M}_\d\ot \Fc^{\ot q}$, which plays
a key role in the definition of the next bicocyclic module:
\begin{equation}
(m\ot \td f)u= \d_\Fg(u\ps{1}) S(u\ps{2})\cdot m\ot S(u\ps{3})\bullet \td f,
\end{equation}

Where $\td f:= f^1\odots f^q$, and the left action of $\Uc$ on $\Fc^{\ot q}$ is defined by
\begin{align} \label{bullet}
\begin{split}
& u\bullet( f^1\odots f^n):=\\
& u\ps{1}\ns{0}\rt f^1\ot u\ps{1}\ns{1}u\ps{2}\ns{0}\rt f^2\odots
u\ps{1}\ns{n-1}\dots u\ps{n-1}\ns{1} u\ps{n}\rt  f^n.
\end{split}
 \end{align}

One then defines a bicocyclic module $C^{\bullet,\bullet}(\Uc,\Fc,M)$, where
\begin{equation}
C^{p,q}(\Uc,\Fc,^s{M}_\d):= ^s{M}_\d\ot \Uc^{\ot p}\ot \Fc^{\ot q}, \qquad p,q\ge 0,
\end{equation}

whose horizontal morphisms are given by
\begin{align}
\begin{split}
&\hd_0(m\ot \td{u}\ot \td{f})= m\ot 1\ot u^1\ot\dots\ot u^p\ot  \td{f}\\
&\hd_j(m\ot \td{u}\ot \td{f})= m\ot u^1\ot\dots\ot
\Delta(u^i)\ot\dots \ot u^p\ot  \td{f}\\
&\hd_{p+1}(m\ot \td{u}\ot \td{f})=m\ot u^1\ot \dots \ot
u^p\ot 1\ot \td{f}\\
 &\hs_j(m\ot \td u\ot \td f)=m\ot u^1\ot \dots \ot \epsilon(u^{j+1})\ot\dots\ot u^p\ot
 \td{f}\\
&\hta(m\ot \td{u}\ot  \td{f})=\\
 &\d_\Fg(u\ps{1})S(u\ps{2})\cdot m\ot S(u^1\ps{4})\cdot(u^2\ot\dots\ot u^p\ot 1)\ot S(u^1\ps{3})\bullet \td{f},
 \end{split}
\end{align}
while the vertical morphisms
 are defined by
\begin{align}
\begin{split}
&\vd_0(m\ot \td{u}\ot \td{f})= m\ot \td u\ot  1\ot \td{f},\\
 &\vd_j(m\ot \td{u}\ot \td{f})= m \ot \td u \ot    f^1\odots\Delta(f^j)\odots f^q,\\
&\vd_{q+1}(m\ot \td{u}\ot \td{f})=m\ns{0} \ot \td u\ns{0}\ot \td{f}\ot S(\td u\ns{1}m\ns{1})\s,\\
 &\vs_j(m\ot \td u\ot  \td f)=m\ot \td u\ot  f^1\odots \epsilon(f^{j+1})\odots f^n, \\
 &\vta(m\ot \td{u}\ot  \td{f})=\\
 &m\ns{0}\ot \td u\ns{0} \ot  S(f^1)\cdot(f^2\odots f^n\ot S(\td u\ns{1}m\ns{1})\s) .
  \end{split}
\end{align}
One notes that, by definition,  a bicocyclic module is  a bigraded module whose  rows and columns form  cocyclic modules which have their own Hochschild coboundary and Connes boundary maps. These boundaries and coboundaries are denoted by $\hB$, $\vB$, $\hb$, and $\vb$, which are demonstrated in the following diagram. We refer the reader to \cite{Moscovici-Rangipour-09,Moscovici-Rangipour-011} for details on the construction of $C^{\bullet,\bullet}(\Uc,\Fc, ^\s{M}_\d)$.

\begin{align}\label{UF}
\begin{xy} \xymatrix{  \vdots\ar@<.6 ex>[d]^{\uparrow B} & \vdots\ar@<.6 ex>[d]^{\uparrow B}
 &\vdots \ar@<.6 ex>[d]^{\uparrow B} & &\\
^\s{M}_\d \ot \Uc^{\ot 2} \ar@<.6 ex>[r]^{\hb}\ar@<.6
ex>[u]^{  \uparrow b  } \ar@<.6 ex>[d]^{\uparrow B}&
  ^\s{M}_\d\ot \Uc^{\ot 2}\ot \Fc   \ar@<.6 ex>[r]^{\hb}\ar@<.6 ex>[l]^{\hB}\ar@<.6 ex>[u]^{  \uparrow b  }
   \ar@<.6 ex>[d]^{\uparrow B}&^\s{M}_\d\ot \Uc^{\ot 2}\ot\Fc^{\ot 2}
   \ar@<.6 ex>[r]^{~~\hb}\ar@<.6 ex>[l]^{\hB}\ar@<.6 ex>[u]^{  \uparrow b  }
   \ar@<.6 ex>[d]^{\uparrow B}&\ar@<.6 ex>[l]^{~~\hB} \hdots&\\
^\s{M}_\d \ot \Uc \ar@<.6 ex>[r]^{\hb}\ar@<.6 ex>[u]^{  \uparrow b  }
 \ar@<.6 ex>[d]^{\uparrow B}&  ^\s{M}_\d \ot \Uc \ot\Fc \ar@<.6 ex>[r]^{\hb}
 \ar@<.6 ex>[l]^{\hB}\ar@<.6 ex>[u]^{  \uparrow b  } \ar@<.6 ex>[d]^{\uparrow B}
 &^\s{M}_\d\ot \Uc \ot \Fc^{\ot 2}  \ar@<.6 ex>[r]^{~~\hb}\ar@<.6 ex>[l]^{\hB}\ar@<.6 ex>[u]^{  \uparrow b  }
  \ar@<.6 ex>[d]^{\uparrow B}&\ar@<.6 ex>[l]^{~~\hB} \hdots&\\
^\s{M}_\d  \ar@<.6 ex>[r]^{\hb}\ar@<.6 ex>[u]^{  \uparrow b  }&
^\s{M}_\d\ot\Fc \ar@<.6 ex>[r]^{\hb}\ar[l]^{\hB}\ar@<.6
ex>[u]^{  \uparrow b  }&^\s{M}_\d\ot\Fc^{\ot 2}  \ar@<.6
ex>[r]^{~~\hb}\ar@<.6 ex>[l]^{\hB}\ar@<1 ex >[u]^{  \uparrow b  }
&\ar@<.6 ex>[l]^{~~\hB} \hdots& ,}
\end{xy}
\end{align}

In the next move,  we  identify the standard Hopf cocyclic module $C^\ast(\Hc,
\;^\s{M}_\d)$ with the diagonal subcomplex  $D^\ast$ of $C^{\bullet,\bullet}$.
This is achieved by means of the map
$\, \Psi_{\acl}:  D^\bullet \longrightarrow  C^\ast(\Hc,\;^\s{M}_\d)$
together with its inverse $\Psi^{-1}_{\acl}:C^\ast(\Hc, \;^\s{M}_\d) \longrightarrow D^{\ast}$ .
They are explicitly defined as follows:
 \begin{align}\label{PSI-1}
 \begin{split}
&\Psi_{\acl}(m\ot u^1\odots u^n\ot f^1\odots f^n)=\\
&m\ot f^1\acl u^1\ns{0}\ot f^2u^1\ns{1}\acl u^2\ns{0}\odots \\
& \odots f^n u^1\ns{n-1} \dots u^{n-1}\ns{1}\acl u^n ,
 \end{split}
\end{align}
respectively
 \begin{align}\label{PSI}
 \begin{split}
&\Psi^{-1}_{\acl}(m\ot   f^1\acl u^1\ot \dots\ot f^n\acl u^n)=\\
& m\ot u^1\ns{0}\odots u^{n-1}\ns{0}\ot u^n\ot f^1\ot\\
&\ot f^2S(u^1\ns{n-1})\ot f^3S(u^1\ns{n-2}u^2\ns{n-2}) \odots
f^nS(u^1\ns{1} \dots u^{n-1}\ns{1})  .
 \end{split}
\end{align}

The bicocyclic module \eqref{UF} can be further reduced to  the bicomplex
\begin{equation}
C^{\bullet, \bullet}(\Fg,\Fc, ^\s{M}_\d):=^\s{M}_\d\ot \wedge^{\bullet}\Fg\ot \Fc^{\ot \bullet} ,
\end{equation}
obtained by replacing the tensor algebra
 of $\Uc (\Fg)$ with the exterior algebra of  $\Fg$. To this end,
 recall the action \eqref{bullet}, which is restricted to $\Fg$ reads
 \begin{align} \label{bullet}
&X\bullet (f^1 \ot \cdots \ot f^q) =  \\ \notag
&X\ps{1}\ns{0}\rt f^1\ot X\ps{1}\ns{1}(X\ps{2}\ns{0}\rt f^2)\odots X\ps{1}\ns{q-1}\dots X\ps{q-1}\ns{1}(X\ps{q}\rt f^q),
\end{align}
and define
 $\,  \p_{\Fg} : \;^\s{M}_\d\ot \wg^p\Fg\ot\Fc^{\ot q} \ra \;^\s{M}_\d\ot \wg^{p-1}\Fg\ot\Fc^{\ot q} $
as the Lie algebra homology boundary with respect to the action of
$\Fg$ on $\;^\s{M}_\d\ot\Fc^{\ot q}$ defined by
\begin{equation} \label{Liebdact}
(m\ot\td f)\lt X= m\d_\Fg(X)\ot \td{f}\; - \; X\cdot m\ot \td{f}\;-\;m\ot X\bullet \td f.
\end{equation}
Via the antisymmetrization map
\begin{align} \label{antsym1}
&\td \a_{\Fg}: \;^\s{M}_\d\ot
\wg^q\Fg \ot \Fc^{\ot p}\ot\ra \;^\s{M}_\d\ot \Uc^{\ot q} \ot \Fc^{\ot p} ,
\qquad \td\a_{\Fg}= \a\ot  \Id, \\ \notag
&\a(m \ot X^1\wdots X^p)= \frac{1}{p!} \sum_{\s\in S_p}(-1)^\s  m \ot X^{\s(1)}\odots X^{\s(p)} ,
\end{align}
the pullback of the vertical $b$-coboundary in \eqref{UF}  vanishes, while
the vertical $B$-coboundary becomes
$\p_{\Fg}$ (\cf~\cite[Proposition 7]{Connes-Moscovici-98}).

On the other hand, since $\Fc$ is commutative, the coaction $\Db:\Fg\ra \Fg\ot \Fc$,
 extends  from $\Fg$ to
 a unique coaction $\Db_{\Fg}:\wg^p\Fg\ra \wg^p\Fg\ot \Fc$ . After tensoring with the right coaction of $^\s{M}_\d$ ,
\begin{align}
\begin{split} \label{wgcoact}
&\Db_{^\s{M}_\d\ot\wdg\Fg}(m\ot X^1\wdots X^q)\,=\\
&\,m\ns{0}\ot X^1\ns{0}\wdots X^q\ns{0}\ot
\s^{-1}m\ns{1}X^1\ns{1}\dots X^q\ns{1} .
\end{split}
\end{align}
\medskip
The coboundary $b_\Fc$ is given by
\begin{align} \label{b-cobd}
\begin{split}
&b_\Fc(m\ot\a\ot   f^1\odots f^p )=m \ot\a\ot  1\ot f^1\odots f^p+\\
&\sum_{m\ge i\ge p} (-1)^i m \ot\a\ot  f^1\odots \D(f^i)\odots f^p+\\
&(-1)^{p+1}m\ns{0}\ot\a\ns{0}\ot   f^1\odots f^p\ot  S(\a\ns{1})S(m\ns{1})\s  ,
\end{split}
\end{align}
while  the $B$-boundary is
\begin{align} \label{B-bd}
& B_\Fc= \left(\sum_{i=0}^{q-1}(-1)^{(q-1)i}\tau_\Fc^{i}\right) \s \tau_\Fc , \qquad \qquad
\text{where} \\ \notag
&\tau_\Fc(m\ot \a\ot f^1\odots f^q)= \\\notag
&m\ns{0}\ot\a\ns{0}\ot
S(f^1)\cdot(f^2\odots f^q\ot S(\a\ns{1})S(m\ns{1})\s) ,\\ \notag
&\s(m\ot\a\ot f^1\odots f^q)= \ve(f^q)m\ot\a\ot f^1\odots f^{q-1} .
\end{align}
Actually, since $\Fc$ is commutative and $\Fc$ acts on $\;^\s{M}_\d\ot \wg^q\Fg$ trivially,
by~\cite[Theorem 3.22]{Khalkhali-Rangipour-03}  $B_\Fc$ vanishes in Hochschild cohomology
and therefore can be omitted.

We arrive at the bicomplex $C^{\bullet, \bullet}(\Fg, \Fc, \;^\s{M}_\d)$, described by
the diagram
\begin{align}\label{UF+}
\begin{xy} \xymatrix{  \vdots\ar[d]^{\p_{\Fg}} & \vdots\ar[d]^{\p_{\Fg}}
 &\vdots \ar[d]^{\p_{\Fg}} & &\\
\;^\s{M}_\d \ot \wg^2\Fg \ar[r]^{b_{\Fc}} \ar[d]^{\p_{\Fg}}& \;^\s{M}_\d\ot \wg^2\Fg\ot\Fc  \ar[r]^{b_{\Fc}} \ar[d]^{\p_{\Fg}}
 &\;^\s{M}_\d\ot \wg^2\Fg\ot\Fc^{\ot 2} \ar[r]^{~~~~~~b_{\Fc}}   \ar[d]^{\p_{\Fg}}& \hdots&\\
\;^\s{M}_\d \ot \Fg \ar[r]^{b_{\Fc}} \ar[d]^{\p_{\Fg}}&  \;^\s{M}_\d\ot \Fg\ot\Fc \ar[r]^{b_{\Fc}} \ar[d]^{\p_{\Fg}}&\;^\s{M}_\d\ot \Fg\ot\Fc^{\ot 2}
   \ar[d]^{\p_{\Fg}} \ar[r]^{~~~~~~~b_{\Fc}}& \hdots&\\
\;^\s{M}_\d \ar[r]^{b_{\Fc}}&  \;^\s{M}_\d\ot\Fc \ar[r]^{b_{\Fc}}&\;^\s{M}_\d\ot\Fc^{\ot 2} \ar[r]^{~~~~~b_{\Fc}} & \hdots&  . }
\end{xy}
\end{align}

Referring to~\cite[Prop. 3.21 and \S 3.3]{Moscovici-Rangipour-09} for more details, we state the conclusion as follows.

\begin{proposition} \label{mixCE}
The map \eqref{antsym1} induces a quasi-isomorphism
between the total complexes  $\Tot C^{\bullet, \bullet}(\Fg, \Fc, \;^\s{M}_\d )$
and $\Tot C^{\bullet, \bullet} (\Uc, \Fc, \;^\s{M}_\d)$.
\end{proposition}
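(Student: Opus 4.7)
My plan is to prove the quasi-isomorphism by a spectral sequence comparison argument, filtering both bicomplexes in \eqref{UF} and \eqref{UF+} by the horizontal degree (the $\Fc^{\ot q}$ coordinate). The antisymmetrization $\td\a_\Fg = \a \ot \Id$ visibly preserves this filtration since it only alters the $\wedge^\bullet\Fg$ versus $\Uc^{\ot\bullet}$ coordinate. So if I can identify the $E_1$-pages of the two resulting spectral sequences and show that $\td\a_\Fg$ induces an isomorphism there, the classical comparison theorem (both filtrations are bounded below and exhaustive since we work with total complexes in a standard bigraded sense) gives a quasi-isomorphism of totals.

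Fix the horizontal degree $q$ and look at the $q$-th column of $C^{\bullet,\bullet}(\Uc,\Fc,\;^\s M_\d)$. With $N_q := \;^\s M_\d \ot \Fc^{\ot q}$ serving as the coefficient space, this column carries the standard mixed complex $(b,B)$ coming from the vertical cocyclic structure in the $\Uc$-direction, and the formulas for $\hd_i, \hs_j, \hta$ show that it is precisely the Hopf cocyclic module $C^\bullet(\Uc, N_q)$ of the enveloping Hopf algebra $\Uc = U(\Fg)$ with coefficients in the SAYD-like module $N_q$, with the $\Uc$-action given by \eqref{Liebdact} and the $\Fc$-coaction on $N_q$ inherited from $\;^\s M_\d$ and the iterated coaction on $\wedge^\bullet\Fg$ (here lifted to $\Uc^{\ot p}$). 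The classical Connes--Moscovici antisymmetrization theorem (see \cite[Prop. 7]{Connes-Moscovici-98} and the SAYD extension in \cite[\S 3]{Moscovici-Rangipour-09}) asserts that the map $\a$ induces a quasi-isomorphism from the Chevalley--Eilenberg complex $(\;^\s M_\d \ot \wedge^\bullet\Fg \ot \Fc^{\ot q}, \p_\Fg)$ to the $\Uc$-cyclic mixed complex $(\;^\s M_\d \ot \Uc^{\ot\bullet} \ot \Fc^{\ot q}, b+B)$. This computes the $E_1$-pages: both spectral sequences have $E_1^{q,p} = H_p(\Fg, N_q)$, where $H_\bullet$ is Lie algebra homology with respect to the twisted action \eqref{Liebdact}, and $\td\a_\Fg$ realizes the identity map on $E_1$.

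The main obstacle lies in making Step~2 rigorous in the present generality. One must verify that the action \eqref{Liebdact} together with the coaction \eqref{wgcoact} really endows $N_q = \;^\s M_\d \ot \Fc^{\ot q}$ with a SAYD-module structure over $U(\Fg)$ whose associated Hopf cyclic bicomplex is the $q$-th column of \eqref{UF}; this is a non-trivial compatibility check that relies on the $\Fg$-Hopf structure of $\Fc$ (Definition \ref{def-Lie-Hopf}) and on the fact that $\Db_\Fg$ satisfies the structure identity of $\Fg$ (Lemma \ref{Lemma-structure-identity}). Once that is established, one must confirm that the antisymmetrization intertwines the $(b,B)$ total differential with $\p_\Fg$ up to the usual homotopies; the argument is essentially that the Hochschild $b$-cohomology of $\Uc$ with primitive-type coefficients is concentrated in the antisymmetric part by Poincar\'e--Birkhoff--Witt, and the residual $B$-differential descends exactly to the Chevalley--Eilenberg boundary.

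Finally, the horizontal differentials of the two bicomplexes coincide by construction (only the $\Fc$-coordinate is involved and $\td\a_\Fg$ acts as the identity there), so the comparison map is a morphism of filtered complexes inducing an isomorphism on $E_1$. The filtration on each total complex is regular (supported in degrees $0 \le p, q$), so standard convergence yields that $\td\a_\Fg$ is a quasi-isomorphism $\Tot C^{\bullet,\bullet}(\Fg,\Fc,\;^\s M_\d) \xrightarrow{\sim} \Tot C^{\bullet,\bullet}(\Uc,\Fc,\;^\s M_\d)$, completing the proof.
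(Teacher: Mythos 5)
Your argument is essentially the paper's own: the proof given here simply cites \cite[Prop.~3.21 and \S 3.3]{Moscovici-Rangipour-09} after observing that under antisymmetrization the pullback of the vertical $b$-coboundary vanishes and the vertical $B$-coboundary becomes $\p_\Fg$ (\cf \cite[Proposition 7]{Connes-Moscovici-98}), which is exactly your column-wise quasi-isomorphism followed by the filtration comparison. The compatibility check you flag as the ``main obstacle'' is precisely what the cited construction in \cite{Moscovici-Rangipour-09} supplies, so your proposal matches the intended proof.
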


\bigskip

In order to convert the Lie algebra homology into Lie algebra cohomology,
 we shall resort the Poincar\'e isomorphism
\begin{align}\label{theta}
\begin{split}
&\FD_{\Fg} = \Id\ot \Fd_{\Fg} \ot \Id : M\ot\wedge^q\Fg^\ast\ot \Fc^{\ot p} \ra
  \;M\ot  \wedge^{m}\Fg^\ast \ot\wedge^{m-q}\Fg  \ot \Fc^{\ot p} \\
 &\Fd_{\Fg}(\eta) \, = \,   \varpi^\ast\ot \iota(\eta) \varpi,
 \end{split}
\end{align}
where $\varpi$ is a covolume element and
$\varpi^\ast $ is the
dual volume element. The contraction operator is defined as follows:
for $\lambda \in \Fg^\ast $,
$\iota(\lambda) : \wedge^{\bullet}\Fg \ra \wedge^{\bullet-1}\Fg$ is
the unique derivation of degree $-1$ which on $\Fg$ is the evaluation map
\begin{align*}
\iota(\lambda) (X) \, = \, \langle \lambda, v \rangle , \qquad \fl \, X \in \Fg ,
\end{align*}
while for $\eta = \lambda_1 \wdots \lambda_q \in \wedge^{q}\Fg^\ast$,
 $\iota(\eta):\wedge^{\bullet}\Fg \ra \wedge^{\bullet-q}\Fg$ is given by
\begin{align*}
\iota (\lambda_1 \wdots \lambda_q) := \iota (\lambda_q) \circ \ldots
 \circ \iota(\lambda_1) , \qquad \fl \, \lambda_1, \ldots , \lambda_q \in \Fg^\ast .
\end{align*}
Noting that the coadjoint action of $\Fg$ induces on $ \wedge^{m}\Fg^\ast$ the action
\begin{align}
\ad^\ast (X) \varpi^\ast = \d_\Fg(X) \varpi^\ast , \qquad \fl \, X \in \Fg,
\end{align}
we shall identify $\wedge^{m}\Fg^\ast$ with $\;\Cb_\d$ as $\Fg$-modules.

Let $\{X_i\}$ be a basis for $\Fg$ and $\{\t^j\}$ be its dual basis for $\Fg^\ast$. We use  $\Db_\Fg(X_i)=X_j\ot f^j_i$ to define the following left coaction $\Db_\Fg^\ast:\Fg^\ast\ra \Fc \ot \Fg^\ast$ which can be seen as the transpose of the  original right coaction $\Db_\Fg$.
\begin{equation}
\Db_\Fg^\ast(\t^i)= \sum_j f^i_j\ot \t^j.
\end{equation}
Let us check that it is a coaction. We know from \eqref{D-f-j-i} that $\D(f_j^i)= f^i_k\ot f^k_j$.
\begin{equation}
((\Id\ot \Db_{\Fg}^\ast)\circ\Db_\Fg^\ast)(\t^i)= \sum_{j,k}f^i_k\ot f^k_j\ot \t^j=
((\D\ot\Id)\circ\Db_\Fg^\ast)(\t^i).
\end{equation}
We extend this coaction on $\wdg ^\bullet \Fg^\ast$ diagonally and observe that the  result is a left coaction just because $\Fc$ is commutative. For $\a:= \t^{i_1}\wdots \t^{i_k}$, it is recorded below  by
\begin{align}\label{coaction-g-ast}
\begin{split}
\a\ns{-1}\ot \a\ns{0}= \sum_{1\le l_j\le m} f_{l_1}^{i_1}\cdots f_{l_k}^{i_k}\ot \t^{l_1}\wdots \t^{l_k}.
\end{split}
\end{align}

One easily sees that we have $\Db_\Fg^\ast(\varpi^\ast)=\s\ot \varpi^\ast$. In other words, as a right module and left comodule,
\begin{equation}
\wdg^{\dim \Fg}\Fg^\ast= \;^\s{\Cb}_\d.
\end{equation}

One uses  the antipode of $\Fc$ to turn $\Db_\Fg^\ast$ into a right coaction, we denote resulting right coaction by $\Db_{\Fg^\ast}$. We then apply this right coaction to endow $M\ot \wdg^p\Fg^\ast$ with a right coaction as follows,
\begin{align}
\begin{split}
&\Db_{M\ot\wdg\Fg^\ast}: M\ot \wdg^p\Fg^\ast\ra M\ot \wdg^p\Fg^\ast\ot \Fc,\\
&\Db_{M\ot\wdg\Fg^\ast}(m\ot \a)= m\ns{0}\ot \a\ns{1}\ot m\ns{1}S(\a\ns{-1}).
\end{split}
\end{align}

\begin{lemma}
The Poincar\'e isomorphism connects $\Db_{^\s{M}_\d\ot\wdg\Fg}$ and $\Db_{M\ot\wdg\Fg^\ast}$ in the following sense,
\begin{equation}\label{F-coaction-g*}
\Db_{M\ot\wdg\Fg^\ast}(m\ot\om)=\FD_{\Fg}^{-1}\circ\Db_{^\s{M}_\d\ot\wdg\Fg}\circ\FD_{\Fg}(m\ot\om).
\end{equation}
\end{lemma}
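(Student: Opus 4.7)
My plan is to reduce the claim, via linearity, to the classical Jacobi theorem on complementary minors. Both sides of the stated identity are linear in $\omega\in\wedge^q\Fg^\ast$, so it suffices to verify it for the basis monomials $\theta^I := \theta^{i_1}\wdots\theta^{i_q}$ with $I=(i_1<\cdots<i_q)$. Setting $n:=\dim\Fg$, $J:=I^c$, and letting $\varepsilon(I,J)\in\{\pm 1\}$ denote the sign of the $(q,n-q)$-shuffle reordering $(i_1,\dots,i_q,j_1,\dots,j_{n-q})$ to $(1,\dots,n)$, one immediately checks $\iota(\theta^I)\varpi = \varepsilon(I,J)\,X_J$, where $X_J:=X_{j_1}\wdots X_{j_{n-q}}$.

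I would then unwind both coactions explicitly on these basis elements. Because $\Fc$ is commutative, the coactions $\Db_\Fg$ on $\Fg$ and $\Db_\Fg^{\ast}$ on $\Fg^\ast$ extend multiplicatively to the exterior algebras, and antisymmetrization packages the resulting structure constants into minor determinants. Writing $M_{I,L}:=\det[f^{i_a}_{l_b}]_{a,b=1}^q$ for the $q\times q$ minor of $[f^i_j]$ at rows $I$ and columns $L$, the right coaction on $\wedge^q\Fg^\ast$ (the antipode flip of \eqref{coaction-g-ast}) sends $\theta^I\mapsto \sum_L \theta^L\otimes S(M_{I,L})$, while the right coaction on $\wedge^{n-q}\Fg$ sends $X_J\mapsto \sum_K X_K\otimes M_{K,J}$. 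Substituting these expressions into the two sides of the claim, passing $X_K$ back through $\FD_\Fg^{-1}$ (which identifies $\varpi^\ast\otimes X_K$ with $\varepsilon(K^c,K)\,\theta^{K^c}$), and comparing coefficients at each basis vector $\theta^L$, the identity reduces to the scalar equation
\[
S(M_{I,L}) \;=\; \varepsilon(I,I^c)\,\varepsilon(L,L^c)\,\sigma^{-1}\,M_{L^c,\,I^c}, \qquad |L|=q,
\]
to be proved for every pair of $q$-subsets $I,L\subseteq\{1,\dots,n\}$.

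This last equation is exactly the Jacobi identity for complementary minors of an invertible matrix, interpreted in the commutative ring $\Fc$. Indeed, relation \eqref{D-f-j-i} together with the antipode axiom $\mu\circ(\Id\otimes S)\circ\D=\varepsilon$ gives $[f^i_j]\cdot[S(f^i_j)]=I_n$ in $M_n(\Fc)$, so $[S(f^i_j)]$ is the genuine inverse of $[f^i_j]$, with $\det[f^i_j]=\sigma$ by Lemma~\ref{group-like-lemma}; hence $S(M_{I,L})$ is the $q\times q$ minor of the inverse matrix at rows $I$, columns $L$, and the classical Jacobi formula expresses it as $\sigma^{-1}$ times the complementary $(n-q)\times(n-q)$ minor of $[f^i_j]$ at rows $L^c$, columns $I^c$, with exactly the anticipated shuffle sign. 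The main --- and essentially only --- obstacle in carrying the plan out is the bookkeeping of the shuffle signs $\varepsilon(I,I^c)$ and $\varepsilon(L,L^c)$ together with the sign conventions built into $\Fd_\Fg$ through the choice of $\varpi$ and $\varpi^\ast$; once those orientation conventions are pinned down, the verification is mechanical.
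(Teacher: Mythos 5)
Your proposal is correct, and it reaches the same destination as the paper's proof by a more structural route. The paper also reduces to a basis monomial, but it fixes the representative $\om=\t^{p+1}\wdots\t^m$ (so that $I^c=\{1,\dots,p\}$ and all shuffle signs on one side are trivial), expands $\s^{-1}\ot\varpi^\ast$ as $\sum S(f^1_{i_1})\cdots S(f^m_{i_m})\ot\t^{i_1}\wdots\t^{i_m}$, and then cancels the factors $f^{l_t}_t$ against the corresponding $S(f^t_{i_t})$ by hand, reorganizing the surviving terms as a sum over shuffles; the key scalar identity is thus verified implicitly inside one long permutation computation. You instead isolate that scalar identity for arbitrary row and column sets $I,L$ and recognize it as Jacobi's theorem on complementary minors of the inverse matrix, which is legitimate here: commutativity of $\Fc$ makes $S$ multiplicative, so $S(M_{I,L})=\det[S(f^{i_a}_{l_b})]$ is genuinely the $(I,L)$-minor of $[f^i_j]^{-1}=[S(f^i_j)]$ (by \eqref{D-f-j-i}, $\ve(f^i_j)=\d^i_j$ and the antipode axiom), $\det[f^i_j]=\s$ by \eqref{sigma} and Lemma \ref{group-like-lemma}, and the Jacobi formula is a polynomial identity valid over any commutative ring. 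What your version buys is a uniform treatment of all pairs $(I,L)$ with the combinatorics packaged into a classical theorem, at the price of the sign bookkeeping $\varepsilon(I,I^c)\varepsilon(L,L^c)=(-1)^{\sum I+\sum L}$, which indeed matches the sign in Jacobi's formula; what the paper's version buys is that no external determinant identity needs to be quoted. Both are complete proofs.
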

\begin{proof}
Without loss of generality let $\om:=\t^{p+1}\wdots\t^m$. We observe that
\begin{equation}
\FD_{\Fg}(m\ot\om)= m\ot\varpi^\ast\ot X_1\wdots X_l.
\end{equation}
Applying $\Db_{^\s{M}_\d\ot\wdg\Fg}$ we get,
\begin{align}\label{proof-FD-Db}
\begin{split}
&\Db_{^\s{M}_\d\ot\wdg\Fg}(\FD_{\Fg}(m\ot\om))= \\
&\underset{1\le l_t,i_s\le m}{\sum m\ns{0}\ot} \t^{i_1}\wdots \t^{i_m}\ot X_{l_1}\wdots X_{l_p}\ot m\ns{1}S(f^1_{i_1})\cdots S(f_{i_m}^m)f^{l_{1}}_1\cdots f^{l_p}_p=\\
&\sum_{1\le l_t,i_s\le m}m\ns{0}\ot \t^{l_1}\wdots\t^{l_p}\wg\t^{i_{p+1}}\wdots \t^{i_m}\ot\\
&~~~~~~~~~~~~~~~~~~~~~~~~~~~~~~~~~~~~~~~ X_{l_1}\wdots X_{l_p}\ot m\ns{1}S(f^{p+1}_{i_{p+1}})\cdots S(f_{i_m}^m)\\
&\sum_{\mu\in S_m}(-1)^\mu m\ns{0}\ot\varpi^\ast \ot X_{\mu(1)}\wdots X_{\mu(p)}\ot m\ns{1}S(f^{p+1}_{\mu(p+1)})\cdots S(f^{m}_{\mu(m)})=\\
&\underset{1\le l_1< \ldots< l_p\le 1, \; \mu\in S_{m-p}}{\sum(-1)^\mu m\ns{0}\ot\varpi^\ast} \ot X_{l_1}\wdots X_{l_p}\ot m\ns{1}S(f^{p+1}_{j_{\mu(1)}})\cdots S(f^{m}_{j_{\mu(m-p)}}).\\
\end{split}
\end{align}
 Here in the last part we denote by $\{j_1<j_2<\ldots <j_{m-p}\}$ the complement of $\{l_1<l_2<\ldots< l_p\}$ in $\{1, \ldots, m\}$.
On the other hand,
\begin{align}
\begin{split}
&\FD_\Fg(\Db_{M\ot\wdg\Fg^\ast}(m\ot\om))= \\
&\sum_{l_{1},\ldots,l_{m-p}}\FD_\Fg( m\ns{0}\ot \t^{l_{1}}\wdots\t^{l_{m-p}}\ot m\ns{1}S(f^{p+1}_{l_1}\cdots f^{m}_{l_{m-p}}))=\\
&\sum_{l_t}\FD_\Fg( m\ns{0}\ot \t^{l_{1}}\wdots\t^{l_{m-p}}\ot m\ns{1}S(f^{p+1}_{l_1})\cdots S(f^{m}_{l_{m-p}})=\\
&\underset{\underset{\;\mu\in S_{m-p}}{1\le j_1<\ldots <j_{m-p}\le m,}}{\sum}(-1)^\mu\FD_\Fg( m\ns{0}\ot \t^{j_1}\wdots\t^{j_{m-p}}\ot m\ns{1}S(f^{p+1}_{j_{\mu(1)}}\cdots f^{m}_{j_{\mu(m-p)}})=\\
&\underset{\underset{\;\mu\in S_{m-p}}{1\le l_1< \ldots< l_p\le 1,} }{\sum}(-1)^\mu m\ns{0}\ot\varpi^\ast \ot X_{l_1}\wdots X_{l_p}\ot m\ns{1}S(f^{p+1}_{j_{\mu(1)}})\cdots S(f^{m}_{j_{\mu(m-p)}}).
\end{split}
\end{align}

\end{proof}

By transfer of structure,  the bicomplex  \eqref{UF+} becomes
$(C^{\bullet,\bullet}(\Fg^\ast,  \Fc, M) ,\p_{\Fg^\ast}, b^\ast_\Fc)$,
 \begin{align}\label{UF+*}
\begin{xy} \xymatrix{  \vdots & \vdots
 &\vdots &&\\
 M\ot \wdg^2\Fg^\ast  \ar[u]^{\p_{\Fg^\ast}}\ar[r]^{b^\ast_\Fc~~~~~~~}&  M\ot \wdg^2\Fg^\ast\ot\Fc \ar[u]^{\p_{\Fg^\ast}} \ar[r]^{b^\ast_\Fc}& M\ot \wdg^2\Fg^\ast\ot\Fc^{\ot 2} \ar[u]^{\p_{\Fg^\ast}} \ar[r]^{~~~~~~~~~b^\ast_\Fc} & \hdots&  \\
 M\ot \Fg^\ast  \ar[u]^{\p_{\Fg^\ast}}\ar[r]^{b^\ast_\Fc~~~~~}& M\ot \Fg^\ast\ot\Fc \ar[u]^{\p_{\Fg^\ast}} \ar[r]^{b^\ast_\Fc}& M\ot  \Fg^\ast\ot \Fc^{\ot 2} \ar[u]^{\p_{\Fg^\ast}} \ar[r]^{~~~~~b^\ast_\Fc }& \hdots&  \\
   M\ar[u]^{\p_{\Fg^\ast}}\ar[r]^{b^\ast_\Fc~~~~~~~}& M\ot \Fc \ar[u]^{\p_{\Fg^\ast}}\ar[r]^{b^\ast_\Fc}& M\ot \Fc^{\ot 2} \ar[u]^{\p_{\Fg^\ast}} \ar[r]^{~~~~~b^\ast_\Fc} & \hdots&. }
\end{xy}
\end{align}
 The vertical coboundary $\p_{\Fg^\ast}:C^{p,q}\ra C^{p,q+1}$ is the Lie algebra
cohomology coboundary of the Lie algebra $\Fg$ with coefficients in $M\ot\Fc^{\ot p}$,
 where the action of $\Fg$ is
\begin{multline}
( m\ot f^1\odots f^p) \blacktriangleleft X=\\
- X\cdot m\ot f^1\odots f^p\;-\; m\ot X\bullet(f^1\odots f^p).
\end{multline}
The horizontal $b$-coboundary $b^*_\Fc$ is precisely defined by
\begin{align}
\begin{split}
&b^\ast_\Fc(m\ot\a\ot f^1\odots f^q)=\\
&m\ot\a\ot 1\ot f^1\odots f^q +\sum_{i=1}^q(-1)^i m\ot\a\ot f^1\odots \D(f^i)\odots f^q+\\
&(-1)^{q+1}m\ns{0}\ot \a\ns{1}\ot f^1\odots f^q\ot S(m\ns{1})\a\ns{-1}.
\end{split}
\end{align}

For future reference, we record the conclusion.

\begin{proposition} \label{mixCE*}
The map \eqref{theta} induces a quasi-isomorphism
between the total complexes  $\Tot C^{\bullet, \bullet} (\Uc, \Fc, \;^\s{M}_\d) $
and $\Tot C^{\bullet, \bullet} (\Fg^\ast,  \Fc) $.
\end{proposition}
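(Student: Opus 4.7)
The plan is to piggyback on Proposition \ref{mixCE} and reduce everything to checking that the Poincar\'e map $\FD_\Fg$ intertwines the differentials of the bicomplex $C^{\bullet,\bullet}(\Fg,\Fc,{}^\s{M}_\d)$ from \eqref{UF+} with those of $C^{\bullet,\bullet}(\Fg^\ast,\Fc,M)$ from \eqref{UF+*}. Using the identification $\wedge^m\Fg^\ast\cong{}^\s\Cb_\d$ afforded by $\varpi^\ast$, the map $\FD_\Fg$ is a bigraded linear isomorphism
\begin{equation*}
M\otimes\wedge^q\Fg^\ast\otimes\Fc^{\otimes p}\;\xrightarrow{\;\cong\;}\;{}^\s{M}_\d\otimes\wedge^{m-q}\Fg\otimes\Fc^{\otimes p}
\end{equation*}
with a fixed total-degree shift. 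Once it is shown to be a chain map, the induced map on totalizations will be an isomorphism of complexes, and composing it with the quasi-isomorphism of Proposition \ref{mixCE} will finish the job.

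First I would handle the vertical compatibility $\FD_\Fg\circ\p_{\Fg^\ast}=\p_\Fg\circ\FD_\Fg$. This is a twisted version of classical Poincar\'e duality for Lie algebra (co)homology: the contraction $\iota(\cdot)\varpi:\wedge^{\bullet}\Fg^\ast\to\wedge^{m-\bullet}\Fg$ converts the Chevalley--Eilenberg coboundary into the Chevalley--Eilenberg boundary, provided the coadjoint twist on $\wedge^m\Fg^\ast$, namely scalar multiplication by $\d_\Fg(X)$, is absorbed into the coefficient action. The action \eqref{Liebdact} of $\Fg$ on ${}^\s{M}_\d\otimes\Fc^{\otimes p}$ was designed precisely to absorb this twist, so the verification reduces to a sign- and character-bookkeeping exercise on basis elements.

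Next I would verify the horizontal compatibility $\FD_\Fg\circ b_\Fc^\ast=b_\Fc\circ\FD_\Fg$. Every face on either side except the last one touches only the $\Fc^{\otimes p}$ tensor slot through unit insertion or comultiplication, and is manifestly intertwined by $\FD_\Fg$, which acts as the identity on that slot. The only delicate face is the last one, which involves the SAYD coaction of $\Fc$ on ${}^\s{M}_\d\otimes\wedge^{\bullet}\Fg$ on one side and the coaction on $M\otimes\wedge^{\bullet}\Fg^\ast$ on the other. The key input here is the Poincar\'e-transport identity \eqref{F-coaction-g*} proved immediately before the proposition, which asserts that the two coactions are conjugate under $\FD_\Fg$; combined with commutativity of $\Fc$ and the definition $\s=\det[f^i_j]$, it yields the required intertwining.

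The hard part is this last face: one must carefully track the interplay between the character $\d_\Fg$, the group-like $\s$, the antipode $S$, and the two coactions to arrive at the single-face identity without sign or twist mismatch. Granted this, composing the bigraded isomorphism $\FD_\Fg$ (together with its total-degree shift) with the quasi-isomorphism of Proposition \ref{mixCE} delivers the desired quasi-isomorphism between $\Tot C^{\bullet,\bullet}(\Uc,\Fc,{}^\s{M}_\d)$ and $\Tot C^{\bullet,\bullet}(\Fg^\ast,\Fc,M)$.
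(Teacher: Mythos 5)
Your proposal is correct and follows essentially the same route as the paper: the paper obtains $C^{\bullet,\bullet}(\Fg^\ast,\Fc,M)$ by transfer of structure along $\FD_\Fg$ (so the chain-map property is built in), identifies the transferred differentials as $\p_{\Fg^\ast}$ and $b^\ast_\Fc$ using the coaction-transport identity \eqref{F-coaction-g*} for the last Hochschild face, and then composes with the quasi-isomorphism of Proposition \ref{mixCE}. Your explicit verification that $\FD_\Fg$ intertwines the two sets of (co)boundaries is the same computation packaged in the opposite direction.
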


\subsection{ Hopf cyclic cohomology of $\Fc\acl U(\Fg_1)$, for $\Fc= R(G_2)$, $R(\Fg_2)$, and $\Pc(G_2)$}
\label{SS-Hopf cyclic cohomology of noncommutative Hopf algebras}
As we have seen for a matched pair of Lie algebras  $(\Fg_1, \Fg_2)$ there  is a right action of $\Fg_1$ on $\Fg_2$ and a left action of $\Fg_2$ on $\Fg_1$
satisfying the compatibility conditions \eqref{mp-L-1}, \ldots,  \eqref{mp-L-4}.

Given such a matched pair, one defines the double crossed sum Lie algebra whose underlying vector space is $\Fg_1\oplus\Fg_2$ by setting
\begin{equation*}
[X\oplus\z,  Z\oplus\x]=([X, Z]+\z\rt Z-\x\rt X)\oplus ([\z,\x]+\z\lt Z-\x\lt X).
\end{equation*}
 Conversely, given a Lie algebra $\Fa$ and
 two Lie subalgebras $\Fg_1$ and $\Fg_2$ so that $\Fa=\Fg_1\oplus\Fg_2$ as vector  spaces,  then
 $(\Fg_1, \Fg_2)$  forms a matched pair of Lie algebras and $\Fa\cong \Fg_1\bowtie \Fg_2$ as Lie algebras.
  In this case, the actions of $\Fg_1$ on $\Fg_2$ and $\Fg_2$ on $\Fg_1$  for $\z\in \Fg_2$ and $X\in\Fg_1$ are uniquely determined  by
\begin{equation*}
[\z,X]=\z\rt X+\z\lt X
\end{equation*}
Let $\Fh\subseteq \Fg_2$ be a $\Fg_1$-invariant subalgebra. Then one  easily sees that $\Fa/\Fh\cong \Fg_1\oplus \Fg_2/\Fh$. In addition, we let $\Fh$ act on $\Fa/\Fk$ by the induced adjoint action i.e.,
\begin{equation}\label{adjoint}
Ad_\z( Z\oplus \bar\x)= \overline{[0\oplus \z, Z\oplus \x]}= \z\rt Z\oplus\overline{ [\z,\x]}.
\end{equation}
For simplicity we denote $\Fg_2/\Fh$ by $\Fl$.
We like to make sure that the Chevalley-Eilenberg coboundary of $\Fg_1$ with coefficients in $M\ot \wdg^q\Fl^\bullet$ is $\Fh$-linear. To do so, we observe the restriction of  the action of $\Fg_2$ on $\Fg_1$ induces  an action of $\Fh$ on $\Fg_1$. We assume that this action of  $\Hc$ on $\Fg_1$  is given by derivations.

We now introduce the following bicomplex

\begin{equation}\label{g-1-g-2-bicomplex}
\xymatrix{\vdots&\vdots&\vdots&\\
(M\ot \wdg^2\Fg_1^\ast)^\Fh \ar[r]^{\hP}\ar[u]^{\vP}&(M\ot \wdg^2\Fg_1^\ast\ot \Fl^\ast)^\Fh\ar[r]^{\hP} \ar[u]^{\vP}& (M\ot \wdg^2\Fg_1^\ast\ot \wdg^2\Fl^\ast)^\Fh\ar[r]^{~~~~~~~~~~~\hP}\ar[u]^{\vP}&\cdots\\
(M\ot \Fg_1^\ast)^\Fh \ar[r]^{\hP}\ar[u]^{\vP}&(M\ot \Fg_1^\ast\ot \Fl^\ast)^\Fh\ar[r]^{\hP}\ar[u]^{\vP} & (M\ot \Fg_1^\ast\ot \wdg^2\Fl^\ast)^\Fh\ar[r]^{~~~~~~~~~~~\hP}\ar[u]^{\vP}&\cdots\\
M^\Fh \ar[r]^{\hP}\ar[u]^{\vP}&(M\ot \Fl^\ast)^\Fh\ar[r]^{\hP} \ar[u]^{\vP}& (M\ot \wdg^2\Fl^\ast)^\Fh\ar[r]^{~~~~~~~~~~\hP}\ar[u]^{\vP}&\cdots}
\end{equation}

Here $\hP$ is the relative Lie algebra cohomoloy of the pair  $(\Fg_2, \Fh)$ with coefficients in $M\ot\wdg^p\Fg_1^\ast$, where $\Fg_2$ acts on $M$ by restriction and on $\Fg_1$ by its natural action. The vertical coboundary $\vP$ is the Lie algebra cohomology of $\Fg_1$ with coefficients on $M\ot\wdg^q\Fl^\ast$, where $\Fg_1$ acts on $M$ in the obvious way, i.e, restriction of the action of $\Fa$ on $M$, and on $\Fl$ by the induced action of $\Fg_1$ on $\Fg_2$.
One notes that since action of $\Fh$ on $\Fg_1$ is given by derivations then the vertical coboundary is well-defined.

One identifies
\begin{equation}
\xymatrix{(M\ot \wdg^s (\Fa/\Fh)^\ast)^\Fh\ar[rr]^{\nr~~~~~~~~~~}&&\bigoplus_{p+q=s} \left(M\ot \wdg^p\Fg_1^\ast\ot \wdg^q \Fl^\ast\right)^\Fh}
\end{equation}

This isomorphism is implemented by the map \\
$\natural:  C^{s}(\Fg_1\bowtie\Fg_2, \Fh, M)\ra \bigoplus_{p+q=s} (M\ot \wdg^p\Fg^\ast\ot \wdg^q\Fl^\ast)^\Fh$,
\begin{align*}
\natural(\omega)(Z_1,\dots, Z_p\mid \z_1, \dots, \z_q)=\omega(Z_1\oplus 0,\dots, Z_p\oplus 0, 0\oplus\z_1, \dots, 0\oplus \z_q) ,
\end{align*}
whose inverse is given by
\begin{align*}
&\natural^{-1}(\mu\ot\nu)(Z_1\oplus\z_1, \dots,Z_{p+q}\oplus\z_{p+q})= \\
&\sum_{\s\in Sh(p,q)}(-1)^{\s}\mu(Z_{\s(1)}, \dots,Z_{\s(p)})\nu(\z_{\s(p+1)}, \dots, \z_{\s(p+q)}) .
\end{align*}
\begin{lemma}\label{lemma-natural}
The map $\nr$ is an isomorphism of complexes.
\end{lemma}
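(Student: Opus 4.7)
I would split the claim into two independent assertions: that $\natural$ is an isomorphism of graded vector spaces, and that it is a chain map.

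For the vector space statement, the decomposition $\Fa/\Fh \cong \Fg_1 \oplus \Fl$ (as vector spaces, with $\Fl = \Fg_2/\Fh$) yields the canonical splitting $\wedge^s(\Fa/\Fh)^\ast \cong \bigoplus_{p+q=s}\wedge^p\Fg_1^\ast \otimes \wedge^q\Fl^\ast$. The formula for $\natural$ is precisely the evaluation that corresponds to this splitting, and the shuffle formula for $\natural^{-1}$ is the standard antisymmetrization inverse; these are mutually inverse in any characteristic zero setting. The only new point is that the $\Fh$-invariance conditions on both sides correspond. On the left, $\Fh \subseteq \Fg_2$ acts on $\Fa/\Fh$ by the induced adjoint action \eqref{adjoint}, which on $Z \oplus \bar{\xi}$ gives $\zeta \rhd Z \oplus \overline{[\zeta,\xi]}$. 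So this adjoint action decomposes as the natural $\Fh$-action on $\Fg_1$ (by the restricted $\rhd$) plus the adjoint action on $\Fl$, and invariance of an $s$-form is therefore equivalent to the simultaneous $\Fh$-invariance of each bi-graded component, which is exactly the condition built into the right-hand side of \eqref{g-1-g-2-bicomplex}.

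For the chain-map property, I would decompose the Chevalley-Eilenberg coboundary on $C^\bullet(\Fa,\Fh,M)$ according to bi-grading. Evaluating $\partial_{\rm CE}\omega$ on arguments $(Z_1\oplus 0, \dots, Z_p\oplus 0, 0\oplus \zeta_1, \dots, 0\oplus \zeta_q)$ breaks each bracket term into three kinds: pure brackets $[Z_i,Z_j] \oplus 0$, pure brackets $0 \oplus [\zeta_i,\zeta_j]$ (which descend to $\Fl$ modulo $\Fh$), and mixed brackets $[Z_i\oplus 0, 0\oplus \zeta_j] = -\zeta_j\rhd Z_i \oplus -\overline{\zeta_j\lhd Z_i}$, using the matched-pair formula. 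Similarly, the action terms separate into contributions from the $Z_i$'s and the $\zeta_j$'s. Grouping: the pure $\Fg_1$-brackets together with the $Z_i$-action terms and the $\Fg_1$-components of the mixed brackets (namely $-\zeta_j\rhd Z_i$) reassemble into the CE differential of $\Fg_1$ with coefficients in $M\otimes\wedge^q\Fl^\ast$, where the $\Fg_2$-action on the coefficient $\wedge^q\Fl^\ast$ comes precisely from the matched-pair action $\Fg_2 \lhd \Fg_1$; this is $\vP$ up to the appropriate $(-1)^q$ sign coming from the shuffle. Dually, the pure $\Fg_2$-brackets (modulo $\Fh$), the $\zeta_j$-action terms, and the $\Fl$-components of the mixed brackets $-\overline{\zeta_j\lhd Z_i}$ reassemble into the relative CE differential of $(\Fg_2,\Fh)$ with coefficients in $M\otimes\wedge^p\Fg_1^\ast$, which is $\hP$.

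The main obstacle I expect is bookkeeping: controlling signs under the shuffle and verifying that the mixed-bracket terms $-\zeta_j\rhd Z_i$ and $-\overline{\zeta_j\lhd Z_i}$ exactly match the contributions expected from the $\Fg_2$-action on the $\wedge^q\Fl^\ast$-coefficients (in $\vP$) and the $\Fg_1$-action on the $\wedge^p\Fg_1^\ast$-coefficients (in $\hP$). The two hypotheses of the surrounding set-up are essential here: that $\Fh$ acts on $\Fg_1$ by derivations guarantees $\vP$ lands in $\Fh$-invariants and respects $\wedge^p \Fg_1^\ast$, while $\Fg_1$-invariance of $\Fh$ ensures the $\Fl$-part of the mixed bracket is well defined modulo $\Fh$, so that $\hP$ is a well-defined differential on $M\otimes\wedge^p\Fg_1^\ast\otimes\wedge^q\Fl^\ast$. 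Once these matchings are verified, the identity $\natural\circ\partial_{\rm CE} = (\hP + (-1)^p\vP)\circ \natural$ is immediate, and the lemma follows.
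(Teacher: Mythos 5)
Your overall strategy coincides with the paper's: the published proof of Lemma \ref{lemma-natural} is only a short sketch observing that $\nr$ is induced by the splitting $\Fa^\ast=\Fg_1^\ast\oplus\Fg_2^\ast$, that the two coboundaries of \eqref{g-1-g-2-bicomplex} are the bigraded components of the Chevalley--Eilenberg coboundary of $\Fa$ relative to $\Fh$ (via \eqref{adjoint}), and that the details are as in Lemma~2.7 of \cite{Moscovici-Rangipour-011}. You are supplying exactly those details, and your treatment of the graded vector space isomorphism and of the matching of $\Fh$-invariance conditions is correct.

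There is, however, a concrete error in the one substantive computation you carry out: the attribution of the two components of the mixed bracket $[Z_i\oplus 0,\,0\oplus\zeta_j]=(-\zeta_j\rt Z_i)\oplus(-\overline{\zeta_j\lt Z_i})$ is transposed, both in your grouping paragraph and in your concluding paragraph. Since $\mu\ot\nu\in\wdg^p\Fg_1^\ast\ot\wdg^q\Fl^\ast$ vanishes whenever an argument from $\Fg_1$ is fed to $\nu$ or an argument from $\Fl$ is fed to $\mu$, the $\Fg_1$-component $-\zeta_j\rt Z_i$ can only be absorbed by the $\wdg^p\Fg_1^\ast$ factor; it realizes the coefficient action of $\Fg_2$ on $\wdg^p\Fg_1^\ast$ and therefore belongs to the \emph{horizontal} relative $(\Fg_2,\Fh)$-differential $\hP$. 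Dually, the $\Fl$-component $-\overline{\zeta_j\lt Z_i}$ is absorbed by $\wdg^q\Fl^\ast$ and supplies the coefficient action of $\Fg_1$ (induced by $\lt$) on $\wdg^q\Fl^\ast$ inside the \emph{vertical} differential $\vP$ --- not the other way around, as you assert when you group ``the $\Fg_1$-components of the mixed brackets'' with the CE differential of $\Fg_1$. Relatedly, there is no ``$\Fg_2$-action on the $\wdg^q\Fl^\ast$-coefficients'' in $\vP$: the module structure there is the $\Fg_1$-action coming from $\lt$, while the $\Fg_2$-action via $\rt$ lives on the $\wdg^p\Fg_1^\ast$-coefficients of $\hP$. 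With this transposition corrected (and with the easy observation that a pure $\Fg_2$-bracket term or a $\zeta_j$-action term evaluated against $p+1$ arguments from $\Fg_1$ vanishes, which is what isolates the two bigraded components of $\partial_{\rm CE}$), your argument goes through and reproduces the paper's.
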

\begin{proof}
 We see that $\nr$ is induced by $\Fa^\ast=\Fg_1^\ast\oplus\Fg_2^\ast$. Then one uses \eqref{adjoint}  to show that the vertical and horizontal coboundaries of \eqref{g-1-g-2-bicomplex} are just restriction of the Chevalley-Eilenberg coboundary of $\Fa$ with coefficients in $V$.
 It is routine to show that the map $\nr$  is an isomorphism of complexes  between the relative Lie algebra cohomology of the pair  $(\Fa, \Fh)$ with coefficients in the $\Fa$-module  $M$ and the total complex of the bicomplex \eqref{g-1-g-2-bicomplex}. However, we refer the reader to Lemma 2.7 in \cite{Moscovici-Rangipour-011} for a proof in a very similar situation.
\end{proof}

\begin{definition}
 Let  a $\Fg_1$-Hopf algebra $\Fc$  be in Hopf duality with $U(\Fg_2)$ for  a matched pair of Lie algebras  $(\Fg_1,\Fg_2)$. Then we say  $\Fc$ is  $(\Fg_1, \Fg_2)$-related  if
 \begin{enumerate}
   \item  The pairing is $U(\Fg_1)$-balanced, i.e
\begin{equation}\label{g-balenced}
\langle v, u\rt f \rangle= \langle v \lt u, f \rangle, \qquad f\in\Fc, \;v\in U(\Fg_2),\; u\in U(\Fg_1).
\end{equation}
\item The action of $U(\Fg_2)$ on $U(\Fg_1)$ is compatible with the coaction of $\Fc$ on $U(\Fg_1)$ via the pairing, i.e
\begin{equation}\label{g-cobalenced}
u\ns{0}\langle v, u\ns{1}\rangle= v\rt u,\qquad u\in U(\Fg_1), \;\; v\in U(\Fg_2).
\end{equation}
 \end{enumerate}
\end{definition}
One uses the right action of $\Fg_1$ on $\Fg_2$ to induce a right action of $U(\Fg_1)$ on $U(\Fg_2)^{\otimes q}$ as follows,
\begin{align}\label{*-action}
\begin{split}
&(v^1\odots v^q)\ast u=\\
&v^1\lt (v^2\ps{1}\cdots v^q\ps{1}\rt u\ps{1})\odots v^{q-1}\ps{q-1}\lt (v^q\ps{q-1}\rt u\ps{q-1})\ot v^q\ps{q}\lt u\ps{q}
\end{split}
\end{align}
\begin{lemma}
The equation \eqref{*-action} defines an action of $U(\Fg_1)$ on $U(\Fg_2)^{\ot q}$.
\end{lemma}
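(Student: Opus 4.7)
The proof reduces to verifying the two right-action axioms: unitality $w \ast 1 = w$ and associativity $(w \ast u^1)\ast u^2 = w \ast (u^1 u^2)$ for $w = v^1\odots v^q$ and $u^1, u^2 \in U(\Fg_1)$.

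\emph{Unitality.} With $u = 1$, one has $\D^{q-1}(1) = 1^{\otimes q}$, so every $u\ps{i}$ appearing in \eqref{*-action} equals $1$. The identity $v\rt 1 = \ve(v)$ from \eqref{mutual-2} yields $v^{i+1}\ps{i}\cdots v^q\ps{i}\rt 1 = \ve(v^{i+1}\ps{i})\cdots \ve(v^q\ps{i})$ in the $i$-th slot for $i<q$, while $v \lt 1 = v$ takes care of the last slot. Collapsing these divided powers through the counits reassembles $v^1\odots v^q$ slot by slot.

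\emph{Associativity.} The cleanest route is via Hopf duality. Since $(\Fg_1,\Fg_2)$ is related, the pairing $\langle -,-\rangle : \Fc \otimes U(\Fg_2) \to \Cb$ is non-degenerate and $U(\Fg_1)$-balanced by \eqref{g-balenced}, while the coaction of $\Fc$ on $U(\Fg_1)$ encodes the right action $\rt$ via \eqref{g-cobalenced}. A direct induction on $q$, combining these two identities with the standard Hopf-pairing compatibilities $\langle fg,v\rangle = \langle f,v\ps{1}\rangle\langle g,v\ps{2}\rangle$ and $\langle \D f,v\otimes w\rangle = \langle f,vw\rangle$, yields
\begin{equation*}
\langle u \bullet (f^1 \odots f^q),\, v^1 \odots v^q\rangle \;=\; \langle f^1 \odots f^q,\, (v^1 \odots v^q) \ast u\rangle,
\end{equation*}
where $\bullet$ is the action defined in \eqref{bullet}. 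Since $\bullet$ is already known to be a left $U(\Fg_1)$-action on $\Fc^{\otimes q}$ -- it underpins the horizontal structure of the bicocyclic module \eqref{UF} -- non-degeneracy of the pairing propagates both action axioms to $\ast$.

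A self-contained alternative is to induct on $q$ starting from the recursion
\begin{equation*}
(v^1 \odots v^q) \ast u \;=\; \bigl[(v^1 \odots v^{q-1}) \ast (v^q\ps{1} \rt u\ps{1})\bigr] \ot (v^q\ps{2} \lt u\ps{2}),
\end{equation*}
which follows by inspection of \eqref{*-action} and the fact that $\lt$ and $\rt$ are coalgebra maps; the base case $q = 1$ is just the given right action of $U(\Fg_1)$ on $U(\Fg_2)$. The principal bookkeeping obstacle in either approach is the intertwining of iterated Sweedler components when expanding $\D^{q-1}(u^1 u^2)$ slot by slot, which requires systematic use of \eqref{mutual-3} to commute $\lt$ with $\rt$ across coproduct components, together with coassociativity to reconcile the different coproduct indexings that appear in each slot.
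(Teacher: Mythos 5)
Your ``self-contained alternative'' is, up to presentation, the paper's own proof: the paper first verifies $((v^1\ot v^2)\ast u^1)\ast u^2=(v^1\ot v^2)\ast(u^1u^2)$ directly from \eqref{mutual-1} together with the fact that $U(\Fg_2)$ is a $U(\Fg_1)$-module coalgebra, and then records the splitting recursion $(\td v\ot \td w)\ast u= \td v\ast(w^1\ps{1}\cdots w^l\ps{1}\rt u\ps{1})\ot (w^1\ps{2}\odots w^l\ps{2})\ast u\ps{2}$, of which your last-factor recursion is exactly the case $l=1$; the induction on $q$ you sketch is what closes the argument, with \eqref{mutual-1} doing the work in the inductive step. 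Your unitality check (which the paper leaves implicit) is also correct.

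The route you present as primary, however, has a genuine gap. Non-degeneracy of the pairing $\langle\,,\,\rangle:\Fc\ot U(\Fg_2)\ra\Cb$ is not part of the paper's notion of Hopf duality, nor of the definition of a $(\Fg_1,\Fg_2)$-related Hopf algebra (conditions \eqref{g-balenced} and \eqref{g-cobalenced} say nothing about it), so you cannot pass from the equality of $\langle \td f, (\td v\ast u^1)\ast u^2\rangle$ and $\langle \td f, \td v\ast (u^1u^2)\rangle$ for all $\td f\in\Fc^{\ot q}$ to an equality in $U(\Fg_2)^{\ot q}$. Non-degeneracy does hold in the three geometric instances (e.g.\ by Harish-Chandra for $R(\Fg_2)$), but not by fiat in the abstract setting in which the lemma is used. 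Moreover the lemma concerns the mutual pair $(U(\Fg_1),U(\Fg_2))$ alone --- the operation $\ast$ is built only from $\rt$, $\lt$ and the coproduct --- so importing $\Fc$ is both unnecessary and backwards: the adjunction $\langle u\bullet \td f,\td v\rangle=\langle \td f,\td v\ast u\rangle$ you invoke is precisely Proposition \ref{Proposition-theta-linear}, which the paper establishes afterwards (its proof is independent of this lemma, so there is no circularity, but it is the harder of the two computations). Keep the inductive argument and drop the duality detour.
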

\begin{proof}
We need to prove that for $\td v:= v^1\odots v^q\in U(\Fg_2)^{\ot q}$, and $u^1$, $u^2$ in $U(\Fg_1)$, we have $(\td v\ast u^1)\ast u^2= (\td v)\ast(u^1u^2)$. Indeed, first we use the fact that $U(\Fg_2)$ is $U(\Fg_1)$-module coalgebra and \eqref{mutual-1} to observe that
\begin{align}\label{proof-*-action}
\begin{split}
&((v^1\ot v^2)\ast u^1)\ast u^2= (v^1 \lt(v^2\ps{1}\rt u^1\ps{1})\ot v^2\ps{2} \lt u^1\ps{2})\ast u^2=\\
& (v^1 \lt(v^2\ps{1}\rt u^1\ps{1})\lt ((v^2\ps{2} \lt u^1\ps{2})\ps{1}\rt u^2\ps{1})\ot (v^2\ps{2} \lt u^1\ps{2})\ps{2}\lt u^2\ps{2})=\\
&v^1 \lt((v^2\ps{1}\rt u^1\ps{1})((v^2\ps{2} \lt u^1\ps{2})\rt u^2\ps{1}))\ot (v^2\ps{3} \lt u^1\ps{3}u^2\ps{2})=\\
&v^1\lt ( v^2\ps{1}\rt (u^1\ps{1}u^2\ps{2}))\ot v^2\ps{2}\lt (u^1\ps{2}u^2\ps{2})= (v^1\ot v^2)\ast u^1u^2.
\end{split}
\end{align}
Then  for $\td v\in U(\Fg_2)^{\ot m}$ and $\td w= w^1\odots w^l\in U(\Fg)^{\ot l}$, we observe that
\begin{equation}
(\td v\ot \td w)\ast u= \td v\ast(w^1\ps{1}\dots w^l\ps{1}\rt u\ps{1})\ot (w^1\ps{2}\odots w ^l\ps{2})\ast u\ps{2}.
\end{equation}
This observation and \eqref{proof-*-action} completes the proof.
\end{proof}

\begin{proposition}\label{Proposition-theta-linear}
Let $(\Fg_1, \Fg_2)$ be a matched pair of Lie algebras and $\Fc$ be a $(\Fg_1, \Fg_2)$-related Hopf algebra. Then the map $\t_{\Fc, U(\Fg_2)}$ defined in \eqref{map-theta-Alg} is $U(\Fg_1)$-linear provided $\Fg_1$ acts on $\Fc^{\ot q}$ by $\bullet$ defined in \eqref{bullet}, and on $U(\Fg_2)^{\ot q}$ by $\ast$ defined in \eqref{*-action}.
\end{proposition}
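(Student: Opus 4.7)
The plan is to prove the statement by induction on $q$. Unwinding definitions, if we let $U(\Fg_1)$ act on $\Hom(U(\Fg_2)^{\ot q},V)$ from the left by $(u\rhd \phi)(\td w)=\phi(\td w\ast u)$, then the desired linearity is the scalar identity
\begin{equation*}
\langle f^1,w^1\rangle_u:=\prod_{j}\langle (u\bullet \td f)_j,\,w^j\rangle=\prod_{j}\langle f^j,(\td w\ast u)_j\rangle,
\end{equation*}
for all $u\in U(\Fg_1)$, $\td f\in\Fc^{\ot q}$, $\td w\in U(\Fg_2)^{\ot q}$. The base case $q=1$ is precisely the $U(\Fg_1)$-balanced condition \eqref{g-balenced}: $\langle u\rt f, w\rangle=\langle f, w\lt u\rangle$.

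For the inductive step, I would peel off the last tensor factor. Split $\td f=\td f'\ot f^q$ and $\td w=\td w'\ot w^q$. The formula for $\bullet$ shows that the last slot of $u\bullet\td f$ is $u\ps{1}\ns{q-1}\cdots u\ps{q-1}\ns{1}\,(u\ps{q}\rt f^q)$, while the preceding $q-1$ slots are the $\bullet$-action (of appropriate Sweedler components of $u$) on $\td f'$, with extra $\Fc$-factors $u\ps{i}\ns{k}$ attached. On the other side, the $\ast$-action formula \eqref{*-action} shows that the last slot of $\td w\ast u$ is $w^q\ps{q}\lt u\ps{q}$, and that the earlier slots are $\td w'\ast\bigl(w^q\ps{q-1}\rt u\ps{q-1}\bigr)$ after shifting. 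I would then pair the last slots using \eqref{g-balenced} (exchanging $u\ps{q}\rt f^q$ with the $w^q\ps{q}\lt u\ps{q}$ side) and the Hopf pairing rule $\langle gh,v\rangle=\langle g,v\ps{1}\rangle\langle h,v\ps{2}\rangle$ to distribute the extra $u\ps{i}\ns{k}$ factors against appropriate Sweedler components of $w^q$.

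The critical rewriting step is to convert each factor of the form $u\ps{i}\ns{0}\langle w^q\ps{k},\,u\ps{i}\ns{1}\rangle$ that arises on the $\bullet$-side into $w^q\ps{k}\rt u\ps{i}$ by invoking \eqref{g-cobalenced}; this is the bridge that turns the coaction $\Db:U(\Fg_1)\to U(\Fg_1)\ot \Fc$ (hidden inside the $\bullet$-action) into the $U(\Fg_2)$-action on $U(\Fg_1)$ (governing the $\ast$-action). After this conversion, the leftover contributions organize themselves into the $(q-1)$-fold statement with $u$ replaced by the shifted coproduct component that also appears on the $\ast$-side, and the inductive hypothesis finishes the computation.

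The main obstacle is bookkeeping: the nested Sweedler indices $u\ps{i}\ns{k}$ have to be routed against the correct nested coproduct slots $w^j\ps{\ell}$ so that each application of \eqref{g-cobalenced} matches. A cleaner way to handle this is to interpret both sides as iterated pairings arising from the algebra/coalgebra duality between $\Fc^{\ot q}$ and $U(\Fg_2)^{\ot q}$, which reduces the matching to a single combinatorial identity involving $\D^{q-1}(u)$, $\Db^{q-1}(u)$, and the coproducts of the $w^j$'s; that identity then follows directly from \eqref{g-balenced} and \eqref{g-cobalenced} applied one variable at a time.
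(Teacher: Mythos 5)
Your plan is correct and matches the paper's argument in all essentials: the paper proves the proposition by a single direct Sweedler computation for general $q$ that expands the $\bullet$-action, distributes the pairing over products, converts each $u\ps{i}\ns{0}\langle\,\cdot\,,u\ps{i}\ns{1}\rangle$ into $(\cdot)\rt u\ps{i}$ via \eqref{g-cobalenced}, and then flips with \eqref{g-balenced} to recover the $\ast$-action --- exactly the three ingredients and the routing you identify. Your induction on $q$ is just a different way of organizing the same bookkeeping (the inductive step additionally leans on the module-coalgebra identity $\D(v\rt u)=v\ps{1}\rt u\ps{1}\ot v\ps{2}\rt u\ps{2}$ from the mutual-pair structure to recognize the leftover as the $(q-1)$-fold statement for $w^q\ps{1}\rt u\ps{1}$, which is available), so there is no gap.
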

\begin{proof} Without loss of generality we assume that $V=\Cb$. We use the Hopf  pairing properties,  \eqref{g-balenced}, and \eqref{g-cobalenced} to observe that
\begin{align}
\begin{split}
&\t_{\Fc,U(\Fg_2)}(u\bullet (f^1\odots f^q))(v^1\odots v^q)=\\
&\langle v^1,u\ps{1}\ns{0}\rt f^1\rangle \langle v^2,u\ps{1}\ns{1}(u\ps{2}\ns{0}\rt f^1)\rangle\cdots \langle v^q,u\ps{1}\ns{q-1}\cdots u\ps{q-1}\ns{1} (u\ps{q+1}\rt f^q)\rangle=\\
&\langle v^1,  u\ps{1}\ns{0}\rt f^1\rangle \langle v^2\ps{1},u\ps{1}\ns{1}\rangle\langle v^2\ps{2},u\ps{2}\ns{0}\rt f^2\rangle\cdots\\
&\langle v^q\ps{1}\,,\, u\ps{1}\ns{q-1}\rangle\langle v^q\ps{2}\,,\, u\ps{2}\ns{q-2}\rangle\cdots \langle v^q\ns{q-1}\,,\, u\ps{q-1}\ns{1}\rangle\langle v^q\ps{q}\,,\, u\ps{q+1}\rt f^q\rangle=\\
&\langle v^1,  u\ps{1}\ns{0}\rt f^1\rangle \langle v^2\ps{1}\cdots v^q\ps{1},u\ps{1}\ns{1}\rangle \langle v^2\ps{2},u\ps{2}\ns{0}\rt f^2\rangle\langle v^3\ps{2}\cdots v^q\ps{2}\,,\, u\ps{2}\ns{1} \rangle\cdots\\
& \cdots\langle v^{q-1}\ps{q-1},u\ps{q-1}\ns{0}\rt f^{q-1}\rangle\langle v^q\ps{q-1}\,,\, u\ps{q-1}\ns{1} \rangle\langle v^q\ps{q}, u\ps{q}\rt f^q\rangle=\\
&\langle v^1,  (  v^2\ps{1}\cdots v^q\ps{1}\rt  u\ps{1}  )\rt f^1\rangle \langle v^2\ps{2},
( v^3\ps{2}\cdots v^q\ps{2}\rt u\ps{2})\rt f^2\rangle\cdots\\
& \cdots\langle v^{q-1}\ps{q-1}, ( v^q\ps{q-1}\rt u\ps{q-1})\rt f^{q-1}\rangle\langle v^q\ps{q}, u\ps{q}\rt f^q\rangle=\\
&\langle v^1\lt (  v^2\ps{1}\cdots v^q\ps{1} \rt u\ps{1})\,,\, f^1\rangle \langle v^2\ps{2}\lt ( v^3\ps{2}\cdots v^q\ps{2}\rt u\ps{2})\,,\, f^2\rangle\cdots\\
& \cdots\langle v^{q-1}\ps{q-1}\lt( v^q\ps{q-1}\rt u\ps{q-1})\,,\, f^{q-1}\rangle\langle v^q\ps{q}\lt u\ps{q}\,,\, f^q\rangle=\\
&\t_{\Fc,U(\Fg_2)}(f^1\odots f^q)((v^1\odots v^q)\ast u).
\end{split}
\end{align}
\end{proof}

\begin{proposition}\label{proposition-antisymmetriza-linear}
For a matched pair of Lie algebras $(\Fg_1,\Fg_2)$, let $\Fg_1$ act on $U(\Fg_2)^{\ot q}$ by $\ast$ defined in \eqref{*-action}, and on $\wg^q\Fg_2$ by the intrinsic right action of $\Fg_1$ on $\Fg_2$. Then the antisymmetrization map is a  $\Fg_1$-linear map  of complexes between normalized Hochschild cochains of $U(\Fg_2)$ and Lie algebra cochains of $\Fg_2$.
\end{proposition}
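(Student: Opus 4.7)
The assertion bundles two facts. First, $\a$ is the Cartan--Eilenberg antisymmetrization, which classically implements a chain map from the normalized Hochschild cochain complex of $U(\Fg_2)$ with trivial coefficients to the Chevalley--Eilenberg complex $(\wg^\bullet\Fg_2^\ast, \p_{\rm dR})$; restricted to $\Fg_2^{\ot q}\hookrightarrow U(\Fg_2)^{\ot q}$, the normalized Hochschild coboundary matches the Lie coboundary term by term after antisymmetrization, and that classical argument goes through verbatim here. The new content is the compatibility with the $\Fg_1$-actions: on the source, $U(\Fg_1)$ acts on $\Hom(U(\Fg_2)^{\ot q},\Cb)$ by precomposition with the right action $\ast$ defined in \eqref{*-action}; on the target, $\Fg_1$ acts on $\wg^q\Fg_2$ diagonally via $\z\mapsto \z\lt X$, inducing the dual action on $\wg^q\Fg_2^\ast$.

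My plan is to reduce both actions to a purely Lie-theoretic identity on $\Fg_2^{\ot q}$. The central observation $(\star)$ is that, modulo simple tensors containing $1\in U(\Fg_2)$ in at least one slot, one has
\[
(\z_1\odots\z_q)\ast X \;\equiv\; \sum_{i=1}^q \z_1\ot\cdots\ot(\z_i\lt X)\odots\z_q
\]
for $\z_1,\ldots,\z_q\in\Fg_2$ and $X\in\Fg_1$. Since a normalized Hochschild cochain annihilates every tensor containing a $1$, precomposition by $\ast X$ realizes precisely the right-hand side of $(\star)$; antisymmetrizing and passing to the dual then produces the diagonal action on $\wg^q\Fg_2^\ast$, and the $\Fg_1$-linearity of $\a$ follows.

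The proof of $(\star)$ is a direct expansion of \eqref{*-action} exploiting primitivity. Because the iterated coproduct $\D^{(q-1)}X = \sum_{k=1}^q 1^{\otimes (k-1)}\ot X\ot 1^{\otimes(q-k)}$ places $X$ in exactly one slot with $1$'s elsewhere, and analogously for each $\D^{(j-1)}\z_j$, every Sweedler summand is indexed by a position assignment $p(X)\in\{1,\ldots,q\}$ and $p(j)\in\{1,\ldots,j\}$ for $j=1,\ldots,q$. Applying the module axioms $v\rt 1=\ve(v)$, $1\rt u=\ve(u)\cdot 1$, $1\lt u=\ve(u)\cdot 1$, together with $\ve(X)=0=\ve(\z_j)$, eliminates all but the $q$ ``diagonal'' configurations $p(X)=k_0$, $p(j)=j$ for every $j$, each of which contributes exactly one summand on the right of $(\star)$. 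The main obstacle is organizational---many configurations must be checked---but each non-diagonal one fails for an elementary reason: either $v\rt 1$ hits a primitive and returns $0$, or $1\rt 1$ leaves an unwanted $1$ in some slot, or $1\lt X$ with $X$ primitive vanishes. Once the configurations are parametrized as above, $(\star)$ falls out of a finite, well-organized check.
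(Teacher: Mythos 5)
Your proposal is correct and follows essentially the same route as the paper: the chain-map property is deferred to the classical antisymmetrization argument, and $\Fg_1$-linearity is reduced to exactly the identity $(\star)$, which the paper also establishes by expanding \eqref{*-action}, using primitivity of $X$ and the $\xi^j$, the counit identities $v\rt 1=\ve(v)$ and $1\lt u=\ve(u)$, and the normalization of the cochain to kill all non-diagonal Sweedler configurations. No substantive difference.
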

\begin{proof}
It is known that the antisymmetrization map $\a:\Hom(U(\Fg_2)^{\ot q}, V)\ra V\ot \wdg^q\Fg_2^\ast$ defined in \eqref{map-antisymmetrization}, is a  map of complexes. In the next proposition we prove that it is actually $\Fg_1$-linear.
One uses the fact that $v\rt 1=\ve(v)$, and that the elements of the Lie algebra is primitive in its enveloping Hopf algebra to see that for any $\xi^1, \ldots, \xi^q \in \Fg_2$, any $X\in \Fg_1$, and any normalized cochain $\phi$ we have
\begin{align}
\begin{split}
&\phi((\xi^1\odots \z^q)\ast X)=\phi(\xi^1\lt (\xi^2\ps{1}\cdots \xi^q\ps{1}\rt X)\ot \xi^2\ps{2}\lt (\xi^3\ps{2}\cdots \xi^q\ps{2}\rt 1)\ot\cdots\\
&~~~\cdots\ot \xi^{q-1}\ps{q-1}\lt (\xi^q\ps{q-1}\rt 1)\ot \xi^q\ps{q}+\\
&~~~~~~~\xi^1\lt (\xi^2\ps{1}\cdots \xi^q\ps{1}\rt 1)\ot \xi^2\ps{2}\lt (\xi^3\ps{2}\cdots \xi^q\ps{2}\rt X)\ot\cdots\\
&~~~~~~~~~~~~~\cdots\ot \xi^{q-1}\ps{q-1}\lt (\xi^q\ps{q-1}\rt 1)\ot \xi^q\ps{q}+\cdots\\
&~~~~~~~~~~~~~~~~~~~\cdots + \xi^1\lt (\xi^2\ps{1}\cdots \xi^q\ps{1}\rt 1)\ot \xi^2\ps{2}\lt (\xi^3\ps{2}\cdots \xi^q\ps{2}\rt 1)\ot\cdots\\
&~~~~~~~~~~~~~~~~~~~~~~~~~~~~~~~~\cdots\ot \xi^{q-1}\ps{q-1}\lt (\xi^q\ps{q-1}\rt 1)\ot \xi^q\ps{q}\lt X)=\\
&\sum _{i=1}^q  \phi(\xi^1\ot \xi^2\odots \xi^i\lt (\xi^{i+1}\ps{1}\cdots \xi^q\ps{1}\rt X)\ot \xi^{i+1}\ps{2}\odots \xi^q\ps{2})=\\
&\underset{\xi^{i+1}\ps{1}=\cdots=\xi^{q}\ps{1}=1}{\sum _{i=1}^q  \phi(\xi^1\ot \xi^2\odots} \xi^i\lt (\xi^{i+1}\ps{1}\cdots \xi^q\ps{1}\rt X)\ot \xi^{i+1}\ps{2}\odots \xi^q\ps{2})+\\
&\underset{\xi^{j}\ps{2}=1~~\text{for some }\; i+1\le j\le q}{\sum _{i=1}^q \phi( \xi^1\ot \xi^2\odots} \xi^i\lt (\xi^{i+1}\ps{1}\cdots \xi^q\ps{1}\rt X)\ot \xi^{i+1}\ps{2}\odots \xi^q\ps{2})=\\
&\sum_{i=1}^q \phi(\xi^1\ot \xi^2\odots \xi^i\lt  X\ot \xi^{i+1}\odots \xi^q )+0.
\end{split}
\end{align}
\end{proof}

Now let $(\Fg_1, \Fg_2)$ be a  matched pair of Lie algebras and $\Fc$  a    $(\Fg_1,\Fg_2)$-related Hopf algebra. In addition, let $\Fg_2= \Fh\ltimes \Fl$ be a semi-crossed-sum  of Lie algebras, where $\Fh$ is reductive and every $\Fh$-module is semisimple. We define the following map between the bicomplexes \eqref{UF+*} and  \eqref{g-1-g-2-bicomplex}

\begin{align}\label{VE}
&\Vc:= \t_{\Fc,\Fl,0}: M\ot \wdg^q\Fg_1^\ast\ot \Fc^{\ot q}\ra (M\ot\wdg^p \Fg_1\ot \wdg^q\Fl^\ast)^\Fh
\end{align}
In other words,
\begin{align}
\begin{split}
&\Vc(m\ot\om\ot f^1\odots f^q)( X^1,\ldots,X^p\mid \xi^1,\ldots, \xi^q)=\\
&\om(X^1,\ldots X^p)\sum_{\s\in S_q}(-1)^\s \langle \xi^{\s(1)}\,,\, f^1\rangle \cdots
\langle \xi^{\s(q)}\,,\, f^q\rangle m.
\end{split}
\end{align}

\begin{theorem}\label{Theorem-main}
Let $(\Fg_1,\Fg_2)$ be a matched pair of Lie algebras and $\Fc$ a $(\Fg_1,\Fg_2)$-related Hopf algebra. Assume that $\Fg_2=\Fh\ltimes \Fl$ is a $\Fc$-Levi decomposition such that $\Fh$ is $\Fg_1$-invariant and the natural action of $\Fh$ on $\Fg_1$ is given by derivations. Then for any $\Fc$-comodule and $\Fg_1$-module $M$, the map $\Vc$ defined in \eqref{VE}, is a map of bicomplexes and induces an isomorphism between Hopf cyclic cohomology of $\Fc\acl U(\Fg_1)$ with coefficients in $^\s{M}_\d$ and the Lie algebra cohomology of $\Fa:=\Fg_1\bowtie \Fg_2$ relative to $\Fh$ with coefficients in the $\Fa$-module induced by $M$. In other words,
\begin{equation}
HP^\bullet(\Fc\acl U(\Fg_1), ^\s{M}_\d)\cong \bigoplus_{i=\bullet\;\text{mod 2}} H^i(\Fg_1\bowtie \Fg_2, \Fh,\;M).
\end{equation}
\end{theorem}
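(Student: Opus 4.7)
The plan is to chain together the quasi-isomorphisms already established and then invoke the \textbf{$\Fc$-Levi} hypothesis row by row.

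First, I would reduce the Hopf cyclic side. By Proposition \ref{mixCE*} combined with $\Psi^{\pm 1}_\acl$ of \eqref{PSI}--\eqref{PSI-1}, the periodic Hopf cyclic cohomology $HP^\bullet(\Fc\acl U(\Fg_1),\,^\s M_\d)$ is computed by the total complex of the bicomplex $C^{\bullet,\bullet}(\Fg_1^\ast,\Fc,M)$ displayed in \eqref{UF+*}. Thus it suffices to produce a map of bicomplexes from \eqref{UF+*} to \eqref{g-1-g-2-bicomplex} and to show it induces an isomorphism on the total cohomology; the latter bicomplex computes $H^\bullet(\Fa,\Fh,M)$ by Lemma \ref{lemma-natural}.

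Next, I would verify that $\Vc = \t_{\Fc,\Fl,0}$ is in fact a map of bicomplexes. For a fixed row index $p$, the horizontal coboundary $b^\ast_\Fc$ of \eqref{UF+*} is the Hochschild coboundary of the coalgebra $\Fc$ with coefficients in the $\Fc$-bicomodule $M\ot\wdg^p\Fg_1^\ast$ (where the right coaction is that of \eqref{F-coaction-g*}). By Proposition \ref{Proposition-theta-linear}, the pairing map $\t_{\Fc,U(\Fg_2)}$ is a chain map that is $U(\Fg_1)$-linear for the actions $\bullet$ and $\ast$; composing with the restriction $\pi_{\Fl}$ and the weight-zero projection $\pi^0$ with respect to $\Fh$, and then antisymmetrizing via Proposition \ref{proposition-antisymmetriza-linear}, one obtains a chain map with values in $(M\ot\wdg^p\Fg_1^\ast\ot \wdg^q\Fl^\ast)^\Fh$. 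The vertical compatibility, i.e.\ that $\Vc$ intertwines $\p_{\Fg_1^\ast}$ with $\vP$, follows because on the Lie-algebra side the action of $\Fg_1$ on $\wdg^q \Fl^\ast$ used in \eqref{g-1-g-2-bicomplex} is the transpose (dualized) restriction of the matched-pair action $\Fg_2\lt\Fg_1$, which in turn is precisely the infinitesimal version of the $\bullet$-action inherited from the coaction $\Db_\Fg$ via the pairing identity \eqref{g-cobalenced}. Thus $\Vc$ is a morphism of bicomplexes.

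Then I would prove $\Vc$ induces an isomorphism row by row, using a spectral sequence argument on the column filtration. Fix $p$ and consider the $p$-th row of \eqref{UF+*}: this is the Hochschild complex $C^\bullet_{\rm coalg}(\Fc, M\ot\wdg^p\Fg_1^\ast)$. By hypothesis $\Fg_2 = \Fh\ltimes \Fl$ is a $\Fc$-Levi decomposition, so by definition the composite $\t_{\Fc,U(\Fg_2),\Fl,0} = \pi^0\circ\a\circ\pi_\Fl\circ\t_{\Fc,U(\Fg_2)}$ is a quasi-isomorphism onto $C^\bullet_0(\Fl, M\ot\wdg^p\Fg_1^\ast)$, which (using semisimplicity of the $\Fh$-action) identifies with the relative complex $C^\bullet(\Fg_2,\Fh,M\ot\wdg^p\Fg_1^\ast)$, i.e.\ the $p$-th row of \eqref{g-1-g-2-bicomplex}. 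This is exactly what $\Vc$ does, so $\Vc$ is a row-wise quasi-isomorphism. A standard spectral sequence comparison (the column filtration converges because both bicomplexes are concentrated in the first quadrant) yields a quasi-isomorphism on total complexes. Combining with Lemma \ref{lemma-natural} finishes the proof; the $\Zb/2$-grading of the periodic theory corresponds to parity of total degree, giving the stated direct-sum decomposition.

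The \emph{main obstacle} I expect is the bookkeeping in checking that $\Vc$ commutes with the vertical differentials: one must trace through how the right $\Fc$-coaction on $M\ot\wdg^p\Fg_1^\ast$ (coming from \eqref{F-coaction-g*}) corresponds, under the pairing $\langle\cdot,\cdot\rangle$ and the definition of a $(\Fg_1,\Fg_2)$-related Hopf algebra, to the ordinary right $\Fg_2$-action on $\wdg^p\Fg_1^\ast$ induced by the matched pair. Once this dictionary (a direct consequence of \eqref{g-cobalenced}) is set up and the extra $\Fh$-invariance is handled by appealing to the semisimplicity of $\Fh$-representations (so that taking $\Fh$-invariants is exact and commutes with cohomology), the remaining verifications are formal manipulations analogous to those already carried out in \cite{Moscovici-Rangipour-011}.
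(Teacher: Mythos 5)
Your proposal is correct and follows essentially the same route as the paper's own proof: reduce via Proposition \ref{mixCE*} to the bicomplex \eqref{UF+*}, check that $\Vc$ is a bicomplex map using Propositions \ref{Proposition-theta-linear} and \ref{proposition-antisymmetriza-linear} (horizontal compatibility from $\t_{\Fc,\Fl,\mu}$ being a chain map, vertical compatibility from $\Fg_1$-linearity), invoke the $\Fc$-Levi hypothesis for a row-wise quasi-isomorphism, and conclude by a spectral sequence comparison together with Lemma \ref{lemma-natural}. The only difference is expository detail in the verification of the vertical compatibility, which the paper handles identically by reducing to $\Fg_1$-linearity via \eqref{equivalent-CE}.
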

\begin{proof}
First we have to prove that $\Vc$ commutes with both of the boundaries of our bicomplexes. The commutation of $\Vc$ with horizontal coboundaries is  guaranteed by the fact that $\t_{\Fc,\Fl, \mu}$ is a complex map for $V=M\ot \wdg^p\Fg_1^\ast$. Since $\Vc$ does  not have any affect on  $M\ot\wdg^p\Fg_1^\ast$, by the equivalent definition of Chevalley-Eilenberg coboundary in \eqref{equivalent-CE}, to prove that $\Vc$ commutes with the vertical coboundary it is necessary and   sufficient  to show that  $\Vc$ is $\Fg_1$-linear. Since $\Vc$ is made of antisymmetrization map $\a$ and $\t_{\Fc, U(\Fg_2)}$, Proposition \ref{Proposition-theta-linear} and Proposition \ref{proposition-antisymmetriza-linear} prove that $\Vc$ commutes with vertical coboundaries which are both
 Lie algebra cohomology coboundaries of $\Fg_1$. Finally, one uses the assumption that  $\Fg_2=\Fh\ltimes \Fl$ is a  $\Fc$-Levi decomposition which implies, by definition, that $\Vc$ induces an isomorphism  in the first term of the spectral sequence of the bicomplexes. We then conclude that $\Vc$ induces an  isomorphism in the level of total cohomology of  total complexes. The proof is complete since  total complex of the \eqref{UF+*} computes the Hopf cyclic cohomology by Proposition \ref{mixCE*} and the total complex of \eqref{g-1-g-2-bicomplex} computes the relative Lie algebra cohomology by Lemma \ref{lemma-natural}.
\end{proof}

\begin{corollary}Let $(\Fg_1,\Fg_2)$ be a matched pair of finite dimensional Lie algebras. Assume that  $\Fg_2=\Fh\ltimes \Fl$ is a Levi decomposition of $\Fg_2$ such that $\Fh$ is $\Fg_1$-invariant and the action of $\Fh$ on $\Fg_1$ is given by derivations. Then for any finite dimensional $\Fg_1\bowtie \Fg_2$-module $M$ we have
\begin{equation}
HP^\bullet(R(\Fg_2)\acl U(\Fg_1),\; ^\s{M}_\d)\;\;\cong \bigoplus_{i=\bullet\text{~mod ~}2} H^i(\Fg_1\bowtie\Fg_2,\Fh,M).
\end{equation}
\end{corollary}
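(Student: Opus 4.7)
The plan is to exploit the two bicomplex presentations set up in Subsection \ref{SS-Bicocyclic module associated to Lie Hopf algebras}. On one side, Proposition \ref{mixCE*} identifies $HP^\bullet(\Fc\acl U(\Fg_1),\,{}^\s{M}_\d)$ with the total cohomology of $C^{\bullet,\bullet}(\Fg^\ast,\Fc,M)$ in \eqref{UF+*}. On the other side, Lemma \ref{lemma-natural} together with the splitting $\Fa^\ast=\Fg_1^\ast\oplus\Fg_2^\ast$ identifies $H^\bullet(\Fa,\Fh,M)$ with the total cohomology of the bicomplex \eqref{g-1-g-2-bicomplex}. So the theorem reduces to showing that $\Vc$ is a bicomplex map inducing an isomorphism on total cohomology.

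First I would verify horizontal compatibility. Since $\Vc$ acts as the identity on the $M\ot\wdg^p\Fg_1^\ast$ factor, its horizontal part is the composition of the pairing map $\t_{\Fc,U(\Fg_2)}$ from \eqref{map-theta-Alg}, the antisymmetrization \eqref{map-antisymmetrization}, the restriction map $\pi_\Fl$ along $\Fl\hookrightarrow\Fg_2$, and the weight-zero projection $\pi^0$. Each of these is a map of complexes in the relevant sense (and $\pi^0$ respects the differential precisely because $\Fh$ acts by derivations on $\Fl$), so their composition is a horizontal map of complexes with coefficients $M\ot\wdg^p\Fg_1^\ast$.

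Second I would check vertical compatibility, which under the reformulation \eqref{equivalent-CE} reduces to $\Fg_1$-equivariance of $\Vc$. Here Propositions \ref{Proposition-theta-linear} and \ref{proposition-antisymmetriza-linear} are tailor-made: the former says $\t_{\Fc,U(\Fg_2)}$ intertwines the $\bullet$-action on $\Fc^{\ot q}$ with the $\ast$-action on $U(\Fg_2)^{\ot q}$, and the latter says the antisymmetrization converts $\ast$ into the intrinsic $\Fg_1$-action on $\wdg^\bullet\Fg_2$. That this descends to $\wdg^\bullet\Fl$ and lands inside the $\Fh$-invariants is exactly where the hypotheses that $\Fh$ is $\Fg_1$-invariant and that $\Fh$ acts on $\Fg_1$ by derivations are used. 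Third, a standard row-filtration spectral sequence argument finishes the proof: by definition of a $\Fc$-Levi decomposition, the horizontal component $\t_{\Fc,\Fl,0}$ is a quasi-isomorphism in every column (taking as coefficients the $\Fc$-comodule $M\ot\wdg^p\Fg_1^\ast$), so $\Vc$ is an isomorphism at $E_1$ and hence on total cohomology; passing to the $\Zb_2$-graded periodic version is automatic since $B$ vanishes in Hochschild cohomology on the commutative side (as in Theorem \ref{Theorem-F-Levi}).

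The main obstacle I anticipate is the bookkeeping around $\Fh$-invariance: checking not just that $\Vc$ is a map of bicomplexes in the plain sense, but that it actually factors through the $\Fh$-invariants on the target, and that the vertical Chevalley--Eilenberg differential of $\Fg_1$ descends to those invariants. This forces one to use both structural hypotheses on $\Fh$ in an essential, intertwined way rather than as separate cosmetic assumptions; everything else is a matter of assembling the machinery from Sections \ref{S1}--\ref{S3}.
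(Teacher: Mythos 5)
There is a genuine gap: what you have written is essentially an outline of the proof of Theorem \ref{Theorem-main} itself, which the paper has already established, whereas the content of this corollary is the verification that its hypotheses hold for the specific Hopf algebra $\Fc=R(\Fg_2)$. The paper's proof consists precisely of that verification: (i) $R(\Fg_2)$ is a $\Fg_1$-Hopf algebra (Proposition \ref{Proposition-matched-Lie-Hopf-Lie}); (ii) $R(\Fg_2)$ is $(\Fg_1,\Fg_2)$-related, i.e.\ the Harish-Chandra pairing $\langle v,f\rangle=f(v)$ between $U(\Fg_2)$ and $R(\Fg_2)$ is $U(\Fg_1)$-balanced (this is built into the definition \eqref{actio-U-Uo} of the action) and the coaction of $R(\Fg_2)$ on $U(\Fg_1)$ is compatible with the $U(\Fg_2)$-action through the pairing; and (iii) the given decomposition is an $R(\Fg_2)$-Levi decomposition. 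None of these appears in your argument.

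Point (iii) is where your proposal would actually fail as written. You invoke ``by definition of a $\Fc$-Levi decomposition, $\t_{\Fc,\Fl,0}$ is a quasi-isomorphism in every column,'' but the corollary only assumes that $\Fg_2=\Fh\ltimes\Fl$ is a Levi decomposition in the ordinary Lie-algebra sense ($\Fh$ a Levi subalgebra, $\Fl$ the solvable radical). That an ordinary Levi decomposition is automatically an $R(\Fg_2)$-Levi decomposition is a nontrivial theorem --- Theorem \ref{theorem-cohomology-R(g)}, which rests on Proposition \ref{Proposition-R(g)-cohomology} and ultimately on the Hochschild--Mostow representative-cohomology machinery and their van Est isomorphism. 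Without citing or reproving that input, the spectral-sequence step of your argument has no starting point. (A smaller omission: you should also note that a finite-dimensional $\Fg_1\bowtie\Fg_2$-module is automatically locally finite over $\Fg_2$, hence an induced $(\Fg_1,R(\Fg_2))$-module by Proposition \ref{proposition-module-induced-module-Lie algebraas}, so that $^\s M_\d$ is indeed a SAYD module and Theorem \ref{Theorem-main} applies to it.)
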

\begin{proof}
The main task here to prove that the criteria of Theorem \ref{Theorem-main} are satisfied for $\Fc:=R(\Fg_2)$. It is shown in Proposition \ref{Proposition-matched-Lie-Hopf-Lie} that $R(\Fg_2)$ is a $\Fg_1$-Hopf algebra.

We know that $\Fc$  and $U(\Fg_2)$ are in a Hopf pairing via \eqref{F-V-pairing}, i.e,
\begin{equation}\label{pairing-R(g)-U(g)}
\langle v\,,\,  f\rangle_{\rm Alg} =f(v), \quad f\in R(\Fg_2), \;\;v\in U(\Fg_2).
\end{equation}
The pairing \eqref{pairing-R(g)-U(g)} is by definition, as it is defined in \eqref{actio-U-Uo},   $U(\Fg_1)$-balanced. The equation \eqref{coact-act-comp} shows that the coaction of $R(\Fg_2)$ on $U(\Fg_1)$ is compatible with the pairing.  So $R(\Fg_2)$ is $(\Fg_1,\Fg_2)$-related.  Finally  Theorem \ref{theorem-cohomology-R(g)} shows that any Levi decomposition of Lie algebra $\Fg_2$ implies  a $R(\Fg_2)$-Levi decomposition.
\end{proof}

\begin{corollary}
Let $(G_1,G_2)$ be a matched pair of  Lie groups.  Assume that $L$ is a nucleus of $G$. Let   $\Fh$, $ \Fg_1$ and $\Fg_2$ denote the Lie algebras of $H:=G/L$, $G_1$ and $G_2$ respectively. Let also assume that $\Fh$ is $\Fg_1$-invariant and the natural action of $\Fh$ on $\Fg_1$ is given by derivations. Then for any representative $G_1\bowtie G_2$ module $M$, we have
\begin{equation}
HP^\bullet(R(G_2)\acl U(\Fg_1),\; ^\s{M}_\d)\;\;\cong \bigoplus_{i=\bullet\text{~mod~}2} H^i(\Fg_1\bowtie\Fg_2,\Fh,M).
\end{equation}
\end{corollary}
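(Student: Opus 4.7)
The plan is to derive the corollary by applying Theorem \ref{Theorem-main} with $\Fc:=R(G_2)$. Following the template of the preceding Lie-algebra corollary, three hypotheses of the main theorem must be verified in the Lie-group setting: (i) $R(G_2)$ is a $\Fg_1$-Hopf algebra; (ii) $R(G_2)$ is $(\Fg_1,\Fg_2)$-related via a canonical Hopf pairing with $U(\Fg_2)$; and (iii) the decomposition $\Fg_2=\Fh\ltimes\Fl$ induced by $G_2=H\ltimes L$ is an $R(G_2)$-Levi decomposition.

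Item (i) is already established in Subsection \ref{SS-Lie group}. For (ii), the required Hopf pairing is defined by iterated differentiation at the identity,
\[
\langle \xi_1\cdots\xi_n, f\rangle := \mdt{1}\cdots\mdt{n} f(\exp(t_1\xi_1)\cdots\exp(t_n\xi_n)),
\]
which coincides with the composition $U(\Fg_2)\otimes R(G_2)\xrightarrow{\id\otimes\theta} U(\Fg_2)\otimes R(\Fg_2)\xrightarrow{\mathrm{ev}}\mathbb{C}$ through the Hopf algebra map $\theta$ of \eqref{map-R(G)->R(g)}. The $U(\Fg_1)$-balancedness of this pairing follows either by differentiating the defining identity $f(\psi\lhd\varphi)=(\varphi\rhd f)(\psi)$ in both variables and extending multiplicatively, or directly from the $U(\Fg_1)$-linearity of $\theta$ established in \eqref{theta-is-g-linear} combined with the algebraic-group analogue proved in Proposition \ref{starmapisg1linear}. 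The coaction-action compatibility $u\ns{0}\langle v,u\ns{1}\rangle = v\rhd u$ is built into the definition \eqref{coactionforliegroup}: on generators $u=X_i$ it reads $\psi\rhd X_i = f_i^j(\psi)X_j$, and taking $\psi=\exp(t\xi)$ and differentiating at $t=0$ gives the identity for $\xi\in\Fg_2$; one then propagates to arbitrary $v\in U(\Fg_2)$ by induction using the mutual actions \eqref{actionstoeachother} together with the coaction rule \eqref{mp4}.

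For (iii), Theorem \ref{Theorem-R(G)-cohomology} shows that the map $\mathcal{D}_{\rm Gr}$ of \eqref{map-D} induces an isomorphism $HP^\bullet(R(G_2),V)\cong\bigoplus_{i\equiv\bullet\bmod 2} H^i(\Fg_2,\Fh,V)$ for every representative $G_2$-module $V$. Inspecting its construction reveals $\mathcal{D}_{\rm Gr}$ to be precisely the antisymmetrization of the map $\tau_{R(G_2),\Fl,0}$ introduced in Subsection \ref{SS-Lie algebra cohomology and Hopf cyclic cohomology}. Hence that theorem asserts exactly that $\Fg_2=\Fh\ltimes\Fl$ is an $R(G_2)$-Levi decomposition. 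With the standing hypotheses that $\Fh$ is $\Fg_1$-invariant and acts on $\Fg_1$ by derivations, every assumption of Theorem \ref{Theorem-main} is met, and invoking it produces the asserted isomorphism. The main obstacle is conceptual rather than computational: one must transfer every structure between the Lie-group level and its infinitesimal counterpart via $\theta$ while tracking the matched-pair actions and coactions; the fact that $\theta$ is a $U(\Fg_1)$-equivariant Hopf algebra map (and a bona fide isomorphism when $G_2$ is simply connected) is what makes each required verification automatic once the correct dictionary is in place.
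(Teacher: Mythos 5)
Your proposal is correct and follows essentially the same route as the paper's own proof: reduce to Theorem \ref{Theorem-main} by transporting the Hopf pairing from $R(\Fg_2)$ to $R(G_2)$ through the $U(\Fg_1)$-linear Hopf algebra map $\theta$, check balancedness and the action--coaction compatibility via the commutative diagram \eqref{diagram-R(G)-R(g)}, and cite Theorem \ref{Theorem-R(G)-cohomology} for the $R(G_2)$-Levi decomposition. The minor variations you mention (re-deriving balancedness by direct differentiation, or propagating the compatibility by induction on generators) are just unpackings of the same facts the paper invokes.
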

\begin{proof}
The Hopf algebra map  $\t:R(G_2)\ra R(\Fg_2)$ defined in \eqref{map-R(G)->R(g)} and the Hopf duality between $R(\Fg_2)$ and  $U(\Fg_2)$ defined in \eqref{F-V-pairing} guarantee the desired  Hopf duality between  $R(G_2)$ and $U(\Fg_2)$ as it is recalled here by
\begin{equation}\label{pairing-R(G)-U(g)}
\langle f\,,\, \xi\rangle_{\rm Gr} = \langle\t( f)\,,\, \xi\rangle_{\rm Alg}=\dt f(exp(t\xi)),\qquad \xi\in \Fg_2,\; f\in R(G_2).
\end{equation}
By  \eqref{theta-is-g-linear}, the map $\t$ is $U(\Fg_1)$-linear and hence the pairing \eqref{pairing-R(G)-U(g)} is $U(\Fg_1)$-balanced since the pairing \eqref{pairing-R(g)-U(g)} is $U(\Fg_1)$ balanced. Let us use the Sweedler notation $u\ns{0}\ot u\ns{1}$ for the coaction of $R(G_2)$ on $U(\Fg_1)$,  and $u\sns{0}\ot u\sns{1}$ for the coaction of $R(\Fg_2)$ on $U(\Fg_1)$. Now one uses the commutativity of the diagram \eqref{diagram-R(G)-R(g)}, and the compatibility of the coaction of $R(\Fg_2)$ on $U(\Fg_2)$ with the pairing $\langle\,,\,\rangle_{\rm Alg}$ to observe that
\begin{equation}
u\ns{0}\langle v\,,\, u\ns{1}\rangle_{\rm Gr} = u\ns{0} \langle v\,,\, \t(u\ns{0})\rangle_{\rm Alg}= u\sns{0} \langle v\,,\, u\sns{1}\rangle_{\rm Alg}= v\rt u.
\end{equation}
So far we have proved  that $R(G_2)$ is $(\Fg_1,\Fg_2)$-related. Finally, Theorem \ref{Theorem-R(G)-cohomology} shows that $\Fg_2=\Fh\ltimes\Fl$ is a $R(G_2)$-Levi decomposition. Here $\Fl\subseteq\Fg_2$ is the Lie algebra of $L$. We are now ready to apply Theorem \ref{Theorem-main}.
\end{proof}

\begin{corollary}
Let $(G_1,G_2)$ be a matched pair of connected affine algebraic groups and
  $G_2= G_2^{\rm red}\rtimes G_2^{\rm u}$ be a Levi decomposition of $G_2$. Let $\Fg_1$,  $\Fg_2^{\rm red}\subseteq \Fg_2$ be the Lie algebras of $G_1$, $G_2^{\rm red}$ and $G_2$ respectively. We assume that $\Fg_2^{\rm red}$ is $\Fg_1$ invariant and the natural action of $\Fg_2^{\rm red}$ on $\Fg_1$ is given by derivations. Then for any finite dimensional polynomial module  $M$ over $G_1\bowtie G_2$, we have
  \begin{equation}
HP^\bullet(\Pc(G_2)\acl U(\Fg_1),\; ^\s{M}_\d)\;\;\cong \bigoplus_{i=\bullet\text{~mod~}2} H^i(\Fg_1\bowtie\Fg_2,\Fg_2^{\rm red},M).
\end{equation}
\end{corollary}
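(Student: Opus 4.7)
The plan is to chain together the reductions already built in the paper and verify that the map $\Vc$ respects all of the structure involved. Concretely, Proposition \ref{mixCE*} identifies the periodic Hopf cyclic cohomology $HP^\bullet(\Fc\acl U(\Fg_1), {}^\s M_\d)$ with the total cohomology of the bicomplex $C^{\bullet,\bullet}(\Fg_1^\ast, \Fc, M)$ in diagram \eqref{UF+*}, while Lemma \ref{lemma-natural} identifies the relative Lie algebra cohomology $H^\bullet(\Fa,\Fh,M)$ with the total cohomology of the bicomplex \eqref{g-1-g-2-bicomplex}. So the entire content of the theorem is that $\Vc$ is a map of bicomplexes from \eqref{UF+*} to \eqref{g-1-g-2-bicomplex} that is a quasi-isomorphism on $E_1$.

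First I would verify that $\Vc$ commutes with the horizontal coboundary. Since $\Vc$ acts as the identity on the $M\otimes\wedge^p\Fg_1^\ast$ factor and on the $\Fc$-factors it is the composition $\pi^0\circ\pi_\Fl\circ \tau_{\Fc,U(\Fg_2)}$, horizontal compatibility is exactly the statement that $\tau_{\Fc,U(\Fg_2),\Fl,0}$ is a map of Hochschild complexes with coefficients in the bicomodule $M\otimes\wedge^p\Fg_1^\ast$; this was established generically in Section \ref{SS-Lie algebra cohomology and Hopf cyclic cohomology}. Next I would verify that $\Vc$ commutes with the vertical coboundary. Via the equivalent form \eqref{equivalent-CE} of the Chevalley–Eilenberg differential, the vertical coboundary on either side splits into a de Rham part acting only on $\wedge^\bullet\Fg_1^\ast$ (which $\Vc$ preserves trivially) plus a coefficient action of $\Fg_1$ on $M\otimes\Fc^{\otimes q}$ on the source and on $(M\otimes\wedge^q\Fl^\ast)^\Fh$ on the target. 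Hence vertical compatibility reduces to $\Fg_1$-linearity of the coefficient map, which is Propositions \ref{Proposition-theta-linear} and \ref{proposition-antisymmetriza-linear} (noting that the $\ast$-action of $U(\Fg_1)$ on $U(\Fg_2)^{\otimes q}$ descends, under antisymmetrization, to the diagonal intrinsic action on $\wedge^q\Fg_2$, which restricts to the induced diagonal action on $\wedge^q\Fl$ once one mods out by $\Fh$).

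Once $\Vc$ is a map of bicomplexes, I would run the spectral sequence argument. Filter both bicomplexes by the column index (i.e.\ by the $\Fc$-degree on the source and the $\Fl^\ast$-degree on the target). On the source, the $E_1$-page computes, in each fixed vertical slot $M\otimes\wedge^p\Fg_1^\ast$, the coalgebra cohomology $H^\bullet_{\rm coalg}(\Fc, M\otimes\wedge^p\Fg_1^\ast)$; by the $\Fc$-Levi decomposition hypothesis together with Theorem \ref{Theorem-F-Levi}, the map $\tau_{\Fc,\Fl,0}$ induces an isomorphism from this to $H^\bullet(\Fg_2,\Fh, M\otimes\wedge^p\Fg_1^\ast)=(M\otimes\wedge^p\Fg_1^\ast\otimes\wedge^\bullet\Fl^\ast)^\Fh$ after taking cohomology of $\hP$ in the target. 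So $\Vc$ induces an isomorphism on $E_1$. Since both spectral sequences converge (the bicomplexes are concentrated in the first quadrant up to the $\Zb_2$-periodicity) one gets an isomorphism on $E_\infty$ and hence on total cohomologies, whence
\begin{equation*}
HP^\bullet(\Fc\acl U(\Fg_1), {}^\s M_\d)\;\cong\;\bigoplus_{i\equiv\bullet\bmod 2} H^i(\Fa,\Fh,M).
\end{equation*}

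The main obstacle, and the step that actually needs care, is the $\Fg_1$-linearity of $\Vc$. One has to match two superficially different $\Fg_1$-actions: on the source, $\Fg_1$ acts on $\Fc^{\otimes q}$ through the iterated bullet action $\bullet$ built from the coaction $\Db_\Fg$ and the derivation action on $\Fc$; on the target, $\Fg_1$ acts on $\wedge^q\Fl^\ast$ only through the intrinsic right action $\lt$ of $\Fg_1$ on $\Fg_2$ (modulo $\Fh$, which is $\Fg_1$-invariant by hypothesis). The bridge is exactly the $(\Fg_1,\Fg_2)$-relatedness: Proposition \ref{Proposition-theta-linear} translates $\bullet$ through the pairing into the $\ast$-action on $U(\Fg_2)^{\otimes q}$, and Proposition \ref{proposition-antisymmetriza-linear} then shows that, after antisymmetrization and on normalized cochains, $\ast$ collapses to the diagonal $\lt$-action. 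The hypothesis that $\Fh$ acts on $\Fg_1$ by derivations is what ensures the vertical Chevalley–Eilenberg differential on the target is well-defined on the $\Fh$-invariants, so that the whole diagram lives inside $(\,\cdot\,)^\Fh$; without it the spectral sequence argument does not even get off the ground.
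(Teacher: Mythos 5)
Your argument is, in substance, a re-proof of Theorem \ref{Theorem-main} itself (the reduction to the bicomplexes \eqref{UF+*} and \eqref{g-1-g-2-bicomplex}, the $\Fg_1$-linearity of $\Vc$ via Propositions \ref{Proposition-theta-linear} and \ref{proposition-antisymmetriza-linear}, and the spectral sequence comparison). That theorem is already stated and proved in the paper, so none of this is the content of the corollary. What the corollary actually requires — and what your proposal never addresses — is the verification that its hypotheses, which are phrased in terms of the affine algebraic groups $G_1$, $G_2$ and the polynomial module $M$, imply the hypotheses of Theorem \ref{Theorem-main} for the specific Hopf algebra $\Fc=\Pc(G_2)$. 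You invoke ``the $(\Fg_1,\Fg_2)$-relatedness'' and ``the $\Fc$-Levi decomposition hypothesis'' as if they were given, but in the corollary they are not given; they must be established.

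Concretely, three checks are missing. First, $\Pc(G_2)$ must be shown to be $(\Fg_1,\Fg_2)$-related: one needs the Hopf duality $\langle f,v\rangle=(v\cdot f)(e)$ of \eqref{pairing-polynomial} (Proposition \ref{starmapcomultiplicationcompatibility}), the fact that this pairing is $U(\Fg_1)$-balanced in the sense of \eqref{g-balenced} (this is exactly Proposition \ref{starmapisg1linear}, whose proof is a nontrivial induction), and the compatibility \eqref{g-cobalenced} of the coaction of $\Pc(G_2)$ on $U(\Fg_1)$ with the pairing, which follows because that coaction is obtained from the $G_2$-action on $U(\Fg_1)$. Second, the algebraic-group Levi decomposition $G_2=G_2^{\rm red}\rtimes G_2^{\rm u}$ must be converted into a $\Pc(G_2)$-Levi decomposition $\Fg_2=\Fg_2^{\rm red}\ltimes\Fg_2^{\rm u}$ in the technical sense of the paper (i.e.\ that $\t_{\Pc(G_2),U(\Fg_2),\Fg_2^{\rm u},0}$ is a quasi-isomorphism); this is the content of Theorem \ref{theorem-cohomology-P(G)}, which rests on the identification $H^\bullet_{\rm pol}(G_2,V)\cong H^\bullet(\Fg_2,\Fg_2^{\rm red},V)$ via polynomially injective resolutions, and it is not something one gets for free from the group-level decomposition. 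Third, the finite-dimensional polynomial $G_1\bowtie G_2$-module $M$ has to be turned into an induced $(\Fg_1,\Pc(G_2))$-module so that ${}^\s M_\d$ is a well-defined SAYD coefficient; this uses the correspondence established at the end of Section \ref{Induced Hopf cyclic coefficients in geometric cases}. Without these three steps the spectral-sequence argument you describe has no hypotheses to run on, so the proposal as written does not prove the corollary.
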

\begin{proof}
We need to prove that the criteria of Theorem \ref{Theorem-main} are satisfied.  To this end, we first  observe that $\Pc(G_2)$  is in a Hopf duality with  $U(\Fg_2)$ via the pairing defined in \eqref{pairing-polynomial} recalled here by
\begin{equation}
\langle v\,,\, f\rangle_{\rm pol} = f^*(v)=(v \cdot f)(e),\qquad f\in\Pc(G_2),\; v\in U(\Fg_2).
\end{equation}
By Proposition \ref{starmapisg1linear}, the pairing $\langle\,,\, \rangle_{\rm pol}$ is $U(\Fg_1)$-balanced.  Since the coaction of  $\Pc(\Fg_2)$ on $U(\Fg_1)$ is obtained by the action of $G_2$ on $U(\Fg_1)$, we conclude that the Hopf algebra $\Pc(G_2)$ is  $(\Fg_1,\Fg_2)$-related. Finally, $\Fg_2=\Fg_2^{\rm red}\ltimes \Fg_2^{\rm u}$ is a $\Pc(G_2)$-Levi decomposition by Theorem \ref{theorem-cohomology-P(G)}. Here $\Fg_2^{\rm u}$ is the Lie algebra of $G_2^{\rm u}$.
\end{proof}

\end{document}